\DeclareMathOperator{\Imag}{\im}
\DeclareMathOperator{\Realp}{\re}
\begin{document}
\newcommand\Mand{\ \text{and}\ }
\newcommand\Mor{\ \text{or}\ }
\newcommand\Mfor{\ \text{for}\ }
\newcommand\Real{\mathbb{R}}
\newcommand\RR{\mathbb{R}}
\newcommand\im{\operatorname{Im}}
\newcommand\re{\operatorname{Re}}
\newcommand\sign{\operatorname{sign}}
\newcommand\sphere{\mathbb{S}}
\newcommand\BB{\mathbb{B}}
\newcommand\HH{\mathbb{H}}
\newcommand\ZZ{\mathbb{Z}}
\newcommand\codim{\operatorname{codim}}
\newcommand\Sym{\operatorname{Sym}}
\newcommand\End{\operatorname{End}}
\newcommand\Span{\operatorname{span}}
\newcommand\Ran{\operatorname{Ran}}
\newcommand\Ker{\operatorname{Ker}}
\newcommand\ep{\epsilon}
\newcommand\Cinf{\cC^\infty}
\newcommand\dCinf{\dot \cC^\infty}
\newcommand\CI{\cC^\infty}
\newcommand\dCI{\dot \cC^\infty}
\newcommand\Cx{\mathbb{C}}
\newcommand\Nat{\mathbb{N}}
\newcommand\dist{\cC^{-\infty}}
\newcommand\ddist{\dot \cC^{-\infty}}
\newcommand\pa{\partial}
\newcommand\Card{\mathrm{Card}}
\renewcommand\Box{{\square}}
\newcommand\WF{\mathrm{WF}}
\newcommand\WFh{\mathrm{WF}_h}
\newcommand\WFb{\mathrm{WF}_\bl}
\newcommand\Vf{\mathcal{V}}
\newcommand\Vb{\mathcal{V}_\bl}
\newcommand\Vz{\mathcal{V}_0}
\newcommand\Hb{H_{\bl}}
\newcommand\Hom{\mathrm{Hom}}
\newcommand\Id{\mathrm{Id}}
\newcommand\sgn{\operatorname{sgn}}
\newcommand\ff{\mathrm{ff}}
\newcommand\tf{\mathrm{tf}}
\newcommand\supp{\operatorname{supp}}
\newcommand\vol{\mathrm{vol}}
\newcommand\Diff{\mathrm{Diff}}
\newcommand\Diffd{\mathrm{Diff}_{\dagger}}
\newcommand\Diffs{\mathrm{Diff}_{\sharp}}
\newcommand\Diffb{\mathrm{Diff}_\bl}
\newcommand\DiffbI{\mathrm{Diff}_{\bl,I}}
\newcommand\Diffbeven{\mathrm{Diff}_{\bl,\even}}
\newcommand\Diffz{\mathrm{Diff}_0}
\newcommand\Psih{\Psi_{\semi}}
\newcommand\Psihcl{\Psi_{\semi,\cl}}
\newcommand\Psib{\Psi_\bl}
\newcommand\Psibc{\Psi_{\mathrm{bc}}}
\newcommand\TbC{{}^{\bl,\Cx} T}
\newcommand\Tb{{}^{\bl} T}
\newcommand\Sb{{}^{\bl} S}
\newcommand\Lambdab{{}^{\bl} \Lambda}
\newcommand\zT{{}^{0} T}
\newcommand\Tz{{}^{0} T}
\newcommand\zS{{}^{0} S}
\newcommand\dom{\mathcal{D}}
\newcommand\cA{\mathcal{A}}
\newcommand\cB{\mathcal{B}}
\newcommand\cD{\mathcal{D}}
\newcommand\cE{\mathcal{E}}
\newcommand\cG{\mathcal{G}}
\newcommand\cH{\mathcal{H}}
\newcommand\cU{\mathcal{U}}
\newcommand\cO{\mathcal{O}}
\newcommand\cF{\mathcal{F}}
\newcommand\cM{\mathcal{M}}
\newcommand\cQ{\mathcal{Q}}
\newcommand\cR{\mathcal{R}}
\newcommand\cI{\mathcal{I}}
\newcommand\cL{\mathcal{L}}
\newcommand\cK{\mathcal{K}}
\newcommand\cC{\mathcal{C}}
\newcommand\cX{\mathcal{X}}
\newcommand\cY{\mathcal{Y}}
\newcommand\cP{\mathcal{P}}
\newcommand\cS{\mathcal{S}}
\newcommand\Ptil{\tilde P}
\newcommand\ptil{\tilde p}
\newcommand\chit{\tilde \chi}
\newcommand\yt{\tilde y}
\newcommand\zetat{\tilde \zeta}
\newcommand\xit{\tilde \xi}
\newcommand\taut{{\tilde \tau}}
\newcommand\phit{{\tilde \phi}}
\newcommand\mut{{\tilde \mu}}
\newcommand\sigmah{\hat\sigma}
\newcommand\zetah{\hat\zeta}
\newcommand\etah{\hat\eta}
\newcommand\loc{\mathrm{loc}}
\newcommand\compl{\mathrm{comp}}
\newcommand\reg{\mathrm{reg}}
\newcommand\GBB{\textsf{GBB}}
\newcommand\GBBsp{\textsf{GBB}\ }
\newcommand\bl{{\mathrm b}}
\newcommand{\sH}{\mathsf{H}}
\newcommand{\cte}{\digamma}
\newcommand\cl{\operatorname{cl}}
\newcommand\hsf{\mathcal{S}}
\newcommand\Div{\operatorname{div}}
\newcommand\hilbert{\mathfrak{X}}

\newcommand\bM{\bar M}
\newcommand\bdiff{{}^{\bl}d}

\newcommand\xib{{\underline{\xi}}}
\newcommand\etab{{\underline{\eta}}}
\newcommand\zetab{{\underline{\zeta}}}

\newcommand\xibh{{\underline{\hat \xi}}}
\newcommand\etabh{{\underline{\hat \eta}}}
\newcommand\zetabh{{\underline{\hat \zeta}}}

\newcommand\zn{z}
\newcommand\sigman{\sigma}
\newcommand\psit{\tilde\psi}
\newcommand\rhot{{\tilde\rho}}

\newcommand\hM{\hat M}

\newcommand\Op{\operatorname{Op}}
\newcommand\Oph{\operatorname{Op_{\semi}}}

\newcommand\innr{{\mathrm{inner}}}
\newcommand\outr{{\mathrm{outer}}}
\newcommand\full{{\mathrm{full}}}
\newcommand\semi{\hbar}

\newcommand\elliptic{\mathrm{ell}}
\newcommand\difford{k}
\newcommand\even{\mathrm{even}}
\newcommand\waveopen{\cO}

\setcounter{secnumdepth}{3}
\newtheorem{lemma}{Lemma}[section]
\newtheorem{prop}[lemma]{Proposition}
\newtheorem{thm}[lemma]{Theorem}
\newtheorem{cor}[lemma]{Corollary}
\newtheorem{result}[lemma]{Result}
\newtheorem*{thm*}{Theorem}
\newtheorem*{prop*}{Proposition}
\newtheorem*{cor*}{Corollary}
\newtheorem*{conj*}{Conjecture}
\numberwithin{equation}{section}
\theoremstyle{remark}
\newtheorem{rem}[lemma]{Remark}
\newtheorem*{rem*}{Remark}
\theoremstyle{definition}
\newtheorem{Def}[lemma]{Definition}
\newtheorem*{Def*}{Definition}

\newcommand{\mar}[1]{{\marginpar{\sffamily{\scriptsize #1}}}}
\newcommand\av[1]{\mar{AV:#1}}

\renewcommand{\theenumi}{\roman{enumi}}
\renewcommand{\labelenumi}{(\theenumi)}

\title[Microlocal asymptotically hyperbolic and Kerr-de Sitter]{Microlocal analysis of
asymptotically hyperbolic and Kerr-de Sitter spaces}
\author[Andras Vasy]{Andr\'as Vasy\\ \\{\smaller with an appendix by} Semyon Dyatlov}
\address{Department of Mathematics, Stanford University, CA
  94305-2125, USA}
\thanks{A.V.'s contact information:  Department of Mathematics, Stanford University, CA
  94305-2125, USA. Fax: 1-650-725-4066. Tel: 1-650-723-2226. E-mail: \texttt{andras@math.stanford.edu}}

\email{andras@math.stanford.edu}

\subjclass[2000]{Primary 35L05; Secondary 35P25, 58J47, 83C57}

\date{Revised: November 1, 2011. Original version: December 20, 2010}

\begin{abstract}
In this paper we develop a general, systematic, microlocal framework for the
Fredholm analysis of non-elliptic problems, including high energy (or
semiclassical) estimates, which is stable under perturbations.
This framework, described in
Section~\ref{sec:microlocal},
resides on a compact manifold without boundary, hence in the
standard setting of microlocal analysis.

Many natural
applications arise in the setting of non-Riemannian b-metrics in
the context of Melrose's b-structures. These include
asymptotically de Sitter-type metrics on a blow-up
of the natural compactification, Kerr-de Sitter-type metrics,
as well as asymptotically
Minkowski metrics.

The simplest application is a new approach to
analysis on Riemannian or Lorentzian (or indeed, possibly of other
signature)
conformally compact spaces (such as asymptotically hyperbolic or de
Sitter spaces), including a new construction
of the meromorphic extension of the resolvent of the Laplacian in the
Riemannian case, as well as high energy estimates for the spectral
parameter in strips of the
complex plane. These results are also available in a follow-up paper
which is more expository in nature, \cite{Vasy:Microlocal-AH}.

The appendix written by Dyatlov relates his analysis of resonances
on exact Kerr-de Sitter space (which then was used to analyze the wave equation
in that setting) to the more general method described here.
\end{abstract}

\maketitle

\section{Introduction}
In this paper we develop a general microlocal framework
which in particular allows us to analyze
the asymptotic behavior of solutions of the wave equation
on asymptotically
Kerr-de Sitter and Minkowski space-times, as well as the behavior of the analytic
continuation of the resolvent of the Laplacian on so-called conformally compact spaces.
This framework is non-perturbative,
and works, in particular, for black holes,
for relatively large angular momenta (the restrictions come {\em purely} from
dynamics, and not from methods of analysis of PDE), and also for perturbations of
Kerr-de Sitter space, where
`perturbation' is only relevant to the extent that it guarantees that the relevant
structures are preserved. In the context of analysis on conformally
compact spaces, our framework establishes a Riemannian-Lorentzian
duality; in this duality the spaces of different signature are smooth continuations of each
other across a boundary at which the differential operator we study
has some radial points in the sense of microlocal analysis.

Since it is particularly easy to state, and only involves Riemannian geometry,
we start by giving a result on manifolds with
{\em even} conformally compact metrics.
These are Riemannian metrics $g_0$
on the interior of a compact manifold with boundary
$X_0$ such that near the boundary $Y$,
with a product decomposition nearby and
a defining function $x$, they are
of the form
$$
g_0=\frac{dx^2+h}{x^2},
$$
where $h$ is a family of metrics on $\pa X_0$ depending on $x$ in an even manner,
i.e.\ only even powers of $x$ show up in the Taylor series. (There is a much more
natural way to phrase the evenness condition, see
\cite[Definition~1.2]{Guillarmou:Meromorphic}.) We also write $X_{0,\even}$ for
the manifold $X_0$ when the smooth structure has been changed so that
$x^2$ is a boundary defining function; thus, a smooth function on $X_0$ is
even if and only if it is smooth when regarded as a function on $X_{0,\even}$.
The analytic continuation of the resolvent in this category (but without the
evenness condition) was obtained
by Mazzeo and Melrose \cite{Mazzeo-Melrose:Meromorphic}, with possibly
some essential singularities at pure imaginary half-integers as noticed by Borthwick and Perry
\cite{Borthwick-Perry:Scattering}.
Using methods of Graham and Zworski
\cite{Graham-Zworski:Scattering}, Guillarmou
\cite{Guillarmou:Meromorphic} showed that for even metrics the latter do not
exist, but generically they do exist for non-even metrics. Further, if the manifold
is actually asymptotic to hyperbolic space (note that hyperbolic space is of this form
in view of the Poincar\'e model),
Melrose, S\'a Barreto and Vasy \cite{Melrose-SaBarreto-Vasy:Semiclassical} showed
high energy resolvent estimates in strips around the real axis via a parametrix
construction; these are exactly
the estimates that allow expansions for solutions of the wave equation in terms
of resonances. Estimates just on the real axis were obtained by
Cardoso and Vodev for more general conformal infinities
\cite{Cardoso-Vodev:Uniform, Vodev:Local}.
One implication of our methods is a generalization of these results.

Below $\dCI(X_0)$ denotes `Schwartz functions' on $X_0$, i.e.\ $\CI$ functions
vanishing with all derivatives at $\pa X_0$, and $\dist(X_0)$ is the dual space
of `tempered distributions' (these spaces are naturally identified for
$X_0$ and $X_{0,\even}$), while $H^s(X_{0,\even})$ is the standard Sobolev
space on $X_{0,\even}$ (corresponding
to extension across the boundary, see e.g.\ \cite[Appendix~B]{Hor}, where these
are denoted by $\bar H^s(X_{0,\even}^\circ)$)
and $H^s_h(X_{0,\even})$ is the standard
semiclassical Sobolev space, so for $h>0$ fixed this is
the same as $H^s(X_{0,\even})$;
see \cite{Dimassi-Sjostrand:Spectral, Evans-Zworski:Semiclassical}.

\begin{thm*}(See Theorem~\ref{thm:conf-compact-high} for the full statement.)
Suppose that $X_0$ is an $(n-1)$-dimensional
manifold with boundary $Y$ with
an even Riemannian conformally compact metric $g_0$. Then
the inverse of
$$
\Delta_{g_0}-\left(\frac{n-2}{2}\right)^2-\sigma^2,
$$
written as $\cR(\sigma):L^2\to L^2$,
has a meromorphic continuation from
$\im\sigma\gg0$ to $\Cx$,
$$
\cR(\sigma):\dCI(X_0)\to\dist(X_0),
$$
with poles with finite rank residues. If in addition
$(X_0,g_0)$ is non-trapping, then
non-trapping estimates hold in every strip $|\im\sigma|<C$, $|\re\sigma|\gg 0$:
for
$s>\frac{1}{2}+C$,
\begin{equation}\label{eq:intro-nontrap}
\|x^{-(n-2)/2+\imath\sigma} \cR(\sigma)f\|_{H^s_{|\sigma|^{-1}}(X_{0,\even})}
\leq \tilde C|\sigma|^{-1}\|x^{-(n+2)/2+\imath\sigma}f\|_{H^{s-1}_{|\sigma|^{-1}}(X_{0,\even})}.
\end{equation}
If $f$ has compact support in $X_0^\circ$,
the $s-1$ norm on $f$ can be replaced by the $s-2$ norm.
\end{thm*}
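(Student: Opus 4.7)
The plan is to reduce the theorem to the Fredholm theory of a smooth, non-elliptic operator on a closed manifold, and then invoke analytic Fredholm theory plus the semiclassical propagation estimates developed in Section~\ref{sec:microlocal}.

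\textbf{Step 1: Conjugate and extend across $Y$.} In coordinates $(x,y)$ near $Y$ the evenness hypothesis says $g_0 = \tfrac{dx^2 + h(x^2,y,dy)}{x^2}$, so after the change of variable $\mu = x^2$ all coefficients of $\Delta_{g_0}$ are smooth in $(\mu,y)$. Using $x^\beta = \mu^{\beta/2}$, conjugation of $\Delta_{g_0} - (\tfrac{n-2}{2})^2 - \sigma^2$ by $\mu^{(n-2)/4 - \imath\sigma/2}$ (with a further factor of $\mu^{-1}$ on one side) produces an operator
\[
P_\sigma \in \Diff^2(X_{0,\even})
\]
whose coefficients are smooth up to and across $\mu = 0$; gluing a collar to $X_{0,\even}$ across $Y$ yields a compact manifold without boundary $X$ on which $P_\sigma$ extends smoothly, holomorphically in $\sigma$. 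The weights in the theorem statement are precisely those that absorb this conjugation.

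\textbf{Step 2: Microlocal structure and threshold regularity.} Near $\mu=0$ the principal symbol of $P_\sigma$ has the model form $4\mu\,\xi_\mu^2 + |\eta|^2_{h(0,y)}$, so $P_\sigma$ is elliptic on $\{\mu>0\}$ (the Riemannian side, where $X_0$ lives) and of real principal type on $\{\mu<0\}$. The characteristic set meets $\{\mu=0\}$ in two smooth radial submanifolds $L_\pm \subset T^*X \setminus 0$, which are respectively source/sink for the Hamilton flow in the transverse direction. A direct computation of the subprincipal symbol along $L_\pm$ shows that its imaginary part is a nonzero multiple of $\im\sigma$, with sign such that the radial point estimates of Section~\ref{sec:microlocal} apply in the "above threshold" regime exactly when $s > \tfrac{1}{2}+C$ and $|\im\sigma|<C$.

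\textbf{Step 3: Fredholm property and meromorphic continuation.} Combining the radial point estimates at $L_\pm$ (propagating $H^s$ regularity out of $L_-$ and into $L_+$), real principal-type propagation inside $\{\mu<0\}$, and elliptic regularity on $\{\mu>0\}$, one obtains
\[
\|u\|_{H^s(X)} \le C\|P_\sigma u\|_{H^{s-1}(X)} + C\|u\|_{H^{-N}(X)},
\]
together with a dual estimate for $P_\sigma^*$ (with the roles of source/sink reversed). These estimates make $P_\sigma$ Fredholm of index zero between the appropriate variable-regularity Sobolev spaces, and depend holomorphically on $\sigma$. For $\im\sigma\gg0$ the invertibility of $\Delta_{g_0}-(\tfrac{n-2}{2})^2-\sigma^2$ on $L^2$ is classical, so analytic Fredholm theory yields a meromorphic inverse $P_\sigma^{-1}$ on all of $\Cx$ with finite-rank residues. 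Restricting the Schwartz kernel to $\{\mu>0\}$ and undoing the conjugation recovers $\cR(\sigma):\dCI(X_0)\to\dist(X_0)$.

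\textbf{Step 4: Semiclassical non-trapping estimate.} For the high-energy bound write $h = |\sigma|^{-1}$ and $z = h\sigma\in S^1$, so that $h^2 P_\sigma$ is a semiclassical operator $P_{h,z}\in\Psih^2(X)$. The radial sets $L_\pm$ persist as semiclassical sources/sinks, with the same threshold condition, and the bicharacteristics of $P_{h,z}$ in $\{\mu<0\}$ correspond (via the extension procedure) to reparametrized lifted null-geodesics of $g_0$. The non-trapping hypothesis on $(X_0,g_0)$ therefore forces every bicharacteristic of $P_{h,z}$ outside $L_\pm$ to reach $L_+$ in forward time. The semiclassical radial point estimate plus semiclassical propagation then give the loss-free bound \eqref{eq:intro-nontrap}. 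The compact-support refinement follows because such $f$ can be microlocalized away from the boundary, where only elliptic regularity of order $2$ is used.

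\textbf{Main obstacle.} The delicate point is Step~2: one must verify that after conjugation the operator is in fact smooth across $\mu=0$ (this is what the evenness hypothesis buys), identify $L_\pm$ precisely and check the source/sink character of the radial dynamics, and compute the subprincipal symbol there carefully enough to obtain exactly the threshold $s > \tfrac{1}{2}+C$. Everything else is a systematic application of the general Fredholm and semiclassical machinery of Section~\ref{sec:microlocal}.
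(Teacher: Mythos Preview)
Your outline has a genuine gap: you never introduce the complex absorbing operator $Q_\sigma$ (or an equivalent device such as a space-like boundary), and without it Step~3 cannot be carried out. The characteristic set of the extended operator $P_\sigma$ splits into two disjoint components $\Sigma_+$ and $\Sigma_-$ with $L_\pm\subset\Sigma_\pm$, so there is no bicharacteristic that leaves $L_-$ and arrives at $L_+$; your description ``propagating $H^s$ regularity out of $L_-$ and into $L_+$'' therefore does not describe a closed propagation picture. On the compact extension $X$, a bicharacteristic emanating from the source $L_-$ enters $\{\mu<0\}$ and has to terminate somewhere, and likewise a bicharacteristic arriving at the sink $L_+$ has to originate somewhere. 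Real-principal-type propagation alone gives you nothing unless the estimates are initialized, and in your scheme there is no place in $\{\mu<0\}$ where that happens. Consequently the global a~priori estimate you write down in Step~3 is not justified, and the Fredholm property does not follow.

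In the paper this is exactly the role of the complex absorption: one works with $P_\sigma-\imath Q_\sigma$ where $Q_\sigma\in\Psi^2_{\cl}(X)$ is supported in $\{\mu<\mu_0\}$ for some $\mu_0<0$, has real principal symbol $q$ with $\mp q\geq 0$ on $\Sigma_\pm$, and makes $p\pm\imath q$ elliptic near $X\setminus X_{\mu_0}$. Every bicharacteristic then either tends to $L_\pm$ or enters the elliptic set of $Q_\sigma$, which is what the non-trapping hypothesis \eqref{eq:non-trapping-impl} of Section~\ref{sec:microlocal} requires; only then do the estimates of Subsection~\ref{subsec:complex-absorb} close up to give the Fredholm statement. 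One must then argue separately (as in the proof of Proposition~\ref{prop:hyp-resolvent-construct}) that the restriction of $(P_\sigma-\imath Q_\sigma)^{-1}$ to $\{\mu>0\}$ is independent of the choice of $Q_\sigma$ and agrees with $\cR(\sigma)$ after undoing the conjugation; this uses either the energy-estimate localization of Subsection~\ref{subsec:local-wave} or a Carleman-type unique continuation on the de Sitter side. A secondary omission is the additional conjugation by $(1+\mu)^{\imath\sigma/4}$ (or the global choice of $\phi$ with $\frac{d\tau}{\tau}$ time-like), which is what guarantees semiclassical ellipticity for $\im z$ bounded away from $0$ and hence invertibility in a translated sector; without it your appeal to ``$L^2$ invertibility for $\im\sigma\gg 0$'' does not immediately transfer to the extended operator on $X$.
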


Further, as stated in Theorem~\ref{thm:conf-compact-high}, the resolvent
is {\em semiclassically outgoing} with a loss of $h^{-1}$, in the sense
of recent results of Datchev and Vasy \cite{Datchev-Vasy:Gluing-prop}
and \cite{Datchev-Vasy:Trapped}. This means that for mild trapping (where,
in a strip near the spectrum,
one has polynomially bounded resolvent for a compactly localized version of
the trapped model) one obtains resolvent bounds of the same kind as for the
above-mentioned trapped models, and lossless estimates microlocally
away from the trapping. In particular, one obtains logarithmic losses compared
to non-trapping on the spectrum for hyperbolic trapping in
the sense of \cite[Section~1.2]{Wunsch-Zworski:Resolvent}, and
polynomial losses in strips, since
for the compactly localized model this was
recently shown by Wunsch and Zworski \cite{Wunsch-Zworski:Resolvent}.

For conformally compact spaces, without using wave propagation as motivation,
our method is to change the smooth structure, replacing $x$ by $\mu=x^2$,
conjugate the operator by an appropriate weight as well as remove a vanishing
factor of $\mu$, and show that the new operator continues smoothly and
non-degenerately (in an appropriate sense) across $\mu=0$, i.e.\ $Y$,
to a (non-elliptic)
problem which we can analyze utilizing
by now almost standard tools of microlocal analysis. These steps are reflected in
the form of the estimate \eqref{eq:intro-nontrap}; $\mu$ shows up in the evenness,
conjugation due to the presence of $x^{-n/2+\imath\sigma}$, and the two halves
of the vanishing factor of $\mu$ being removed in $x^{\pm 1}$ on the left and
right hand sides. This approach is explained in full detail in the
more expository and self-contained follow-up article,
\cite{Vasy:Microlocal-AH}.

However, it is useful to think of a wave equation motivation --- then
$(n-1)$-dimensional hyperbolic
space shows up (essentially) as a model at infinity inside a backward light cone
from a fixed point $q_+$ at future infinity
on $n$-dimensional de Sitter space $\hM$, see \cite[Section~7]{Vasy:De-Sitter},
where this was
used to construct the Poisson operator.
More precisely, the light cone
is singular at $q_+$, so to desingularize it, consider $[\hM;\{q_+\}]$.
After a Mellin transform in the defining function of the front face;
the model continues smoothly
across the light cone $Y$ inside the front face of $[\hM;\{q_+\}]$. The inside of
the light cone corresponds to $(n-1)$-dimensional
hyperbolic space (after conjugation, etc.) while
the exterior is (essentially) $(n-1)$-dimensional
de Sitter space; $Y$ is the `boundary' separating them.
Here $Y$ should be thought of as the event horizon in black hole
terms (there is nothing more to event horizons in terms of local geometry!).

The resulting operator $P_\sigma$ has radial points at the
conormal bundle $N^*Y\setminus o$
of $Y$
in the sense of microlocal analysis, i.e.\ the Hamilton vector field is radial
at these points, i.e.\ is
a multiple of the generator of dilations of the fibers of the cotangent bundle
there. However, tools exist to deal with these, going back to Melrose's
geometric treatment of scattering theory on asymptotically Euclidean
spaces \cite{RBMSpec}. Note that $N^*Y\setminus o$ consists of two components,
$\Lambda_+$, resp.\ $\Lambda_-$, and in $S^*X=(T^*X\setminus o)/\RR^+$
the images, $L_+$, resp.\ $L_-$, of these
are sinks, resp.\ sources, for the Hamilton flow. At $L_\pm$ one has choices regarding
the direction one wants to propagate estimates (into or out of the radial points),
which directly correspond to working with strong or weak Sobolev spaces.
For the present problem, the relevant choice is propagating estimates {\em away from}
the radial points, thus working with the `good' Sobolev spaces (which can be
taken to have as positive order as one wishes; there is a minimum amount of
regularity imposed by our choice of propagation direction, cf.\ the requirement
$s>\frac{1}{2}+C$ above \eqref{eq:intro-nontrap}).
All other points are either elliptic, or real principal type.
It remains to either deal with the non-compactness of the `far end' of the
$(n-1)$-dimensional
de Sitter space --- or instead, as is indeed more convenient when one wants to
deal with more singular geometries, adding complex absorbing potentials,
in the spirit of works of Nonnenmacher and Zworski
\cite{Nonnenmacher-Zworski:Quantum} and Wunsch and Zworski
\cite{Wunsch-Zworski:Resolvent}. In fact, the complex absorption could be
replaced by adding a space-like boundary, see Remark~\ref{rem:add-bdy},
but for many microlocal purposes
complex absorption is more desirable, hence we follow the latter method.
However, crucially, these complex absorbing techniques (or the addition
of a space-like boundary) already enter
in the non-semiclassical problem in our case, as we are in a non-elliptic setting.

One can reverse the direction of the
argument and analyze the wave equation on an $(n-1)$-dimensional even
asymptotically
de Sitter space $X_0'$ by extending it across the boundary, much like the
the Riemannian conformally compact
space $X_0$ is extended in this approach. Then, performing microlocal propagation
in the opposite direction, which amounts to working with the adjoint operators
that we already need in order to prove existence of solutions for the Riemannian
spaces\footnote{This adjoint analysis also shows up for Minkowski space-time as
the `original' problem.},
we obtain existence, uniqueness and structure results for asymptotically
de Sitter spaces, recovering a large
part\footnote{Though not the parametrix construction for the Poisson operator, or
for the forward fundamental solution of Baskin \cite{Baskin:Parametrix};
for these we would need a parametrix construction in the present compact
boundaryless, but analytically non-trivial (for this purpose), setting.}
of the results of \cite{Vasy:De-Sitter}. Here we only briefly indicate this method
of analysis in Remark~\ref{rem:asymp-dS}.

In other words, we establish a Riemannian-Lorentzian duality, that will have
counterparts both in the pseudo-Riemannian setting of higher signature and in
higher rank symmetric spaces, though in the latter the analysis might become
more complicated. Note that asymptotically hyperbolic and de Sitter spaces
are not connected by a `complex rotation' (in the sense of an actual deformation);
they are smooth continuations of each other in the sense we just discussed.

To emphasize the simplicity of our method, we list all of the microlocal techniques
(which are relevant both in the classical and in the semiclassical setting)
that we use on a {\em compact manifold without boundary}; in all cases {\em only
microlocal Sobolev estimates} matter (not parametrices, etc.):
\begin{enumerate}
\item
Microlocal elliptic regularity.
\item
Real principal type propagation of singularities.
\item
{\em Rough} analysis at a Lagrangian invariant under the Hamilton flow
which roughly behaves like a collection of
radial points, though the internal structure does not matter, in the spirit
of \cite[Section~9]{RBMSpec}.
\item
Complex absorbing `potentials' in the spirit of
\cite{Nonnenmacher-Zworski:Quantum} and
\cite{Wunsch-Zworski:Resolvent}.
\end{enumerate}
These are almost `off the shelf' in terms of modern microlocal analysis, and thus
our approach, from a microlocal perspective, is quite simple. We use these
to show that on the continuation across the boundary of the conformally compact
space we have a Fredholm problem, on a perhaps slightly exotic function space,
which however is (perhaps apart from the complex absorption)
the simplest possible coisotropic function space based on
a Sobolev space, with order dictated by the radial points. Also, we propagate
the estimates along bicharacteristics in different directions depending
on the component $\Sigma_\pm$
of the characteristic set under consideration; correspondingly
the sign of the complex absorbing `potential' will vary with $\Sigma_\pm$, which
is perhaps slightly unusual. However, this is completely parallel to solving the
standard Cauchy, or forward, problem for the wave equation, where one propagates
estimates in {\em opposite} directions relative to the Hamilton vector field in the
two components.

The complex absorption we use modifies the operator $P_\sigma$ outside
$X_{0,\even}$. However, while $(P_\sigma-\imath Q_\sigma)^{-1}$ depends on $Q_\sigma$,
its behavior on $X_{0,\even}$, and even near $X_{0,\even}$, is independent of this
choice; see the proof of Proposition~\ref{prop:hyp-resolvent-construct}
for a detailed explanation. In particular, although $(P_\sigma-\imath Q_\sigma)^{-1}$
may have resonances
other than those of $\cR(\sigma)$, the resonant states of
these additional resonances are supported
outside $X_{0,\even}$, hence do not affect the singular behavior of the resolvent in
$X_{0,\even}$. In the setting of Kerr-de Sitter space an analogous
role is played by semiclassical versions of the standard energy
estimate; this is stated in Subsection~\ref{subsec:local-wave}.

While the results are stated for the scalar equation, analogous results hold
for operators on natural vector bundles, such as the Laplacian on differential
forms. This is so because the results work if the principal symbol of the extended
problem is scalar
with the demanded properties, and the imaginary part
of the subprincipal symbol is either scalar at the
`radial sets', or instead satisfies appropriate estimates (as an endomorphism of
the pull-back of the vector bundle to the cotangent bundle) at this location;
see Remark~\ref{rem:bundles}.
The only change in terms of results on asymptotically hyperbolic spaces
is that the threshold $(n-2)^2/4$ is shifted; in
terms of the explicit conjugation of Subsection~\ref{subsec:conf-comp-results}
this is so because of the change in the first order term in \eqref{eq:conf-comp-Lap-form}.

While here we mostly
consider conformally compact Riemannian or Lorentzian spaces
(such as hyperbolic space and de Sitter space)
as appropriate boundary values (Mellin transform)
of a blow-up of de Sitter space of one higher
dimension, they also show up as a boundary value of Minkowski space. This
is related to Wang's work on b-regularity \cite{Wang:Thesis}, though Wang
worked on a blown up version of Minkowski space-time; she also obtained
her results for the (non-linear) Einstein equation there. It is also related
to the work of Fefferman and Graham \cite{Fefferman-Graham:Conformal} on
conformal invariants by extending an asymptotically hyperbolic manifold to
Minkowski-type spaces of one higher dimension.
We discuss asymptotically Minkowski spaces briefly
in Section~\ref{sec:Minkowski}.

Apart from trapping --- which is well away from the event horizons for
black holes that do not rotate too fast --- the microlocal
structure on de Sitter space is {\em exactly} the same as on Kerr-de Sitter space,
or indeed Kerr space
near the event horizon. (Kerr space has a Minkowski-type end as well; although
Minkowski space also fits into our framework, it does so a different way than Kerr
at the event horizon, so the result there is not immediate; see the comments below.)
This is to be understood as follows: from the perspective we present here
(as opposed to the perspective of \cite{Vasy:De-Sitter}), the tools that go into
the analysis of de Sitter space-time suffice also for Kerr-de Sitter space, and indeed
a much wider class, apart from the need to deal with trapping.
The trapping itself was analyzed
by Wunsch and Zworski \cite{Wunsch-Zworski:Resolvent}; their work
fits immediately with our microlocal methods. Phenomena such as the ergosphere
are mere shadows of dynamics in the phase space which is barely changed, but
whose projection to the base space (physical space) undergoes serious changes.
It is thus of great value to work microlocally, although it is certainly possible
that for some non-linear purposes it is convenient to rely on physical space
to the maximum possible extent, as was done in the recent (linear) works of
Dafermos and Rodnianski \cite{Dafermos-Rodnianski:Black,
Dafermos-Rodnianski:Axi}.

Below we state theorems for Kerr-de Sitter space time. However, it is important
to note that all of these theorems have analogues in the general microlocal
framework discussed in Section~\ref{sec:microlocal}. In particular, analogous
theorems hold on conjugated, re-weighted, and even versions of Laplacians
on conformally compact spaces (of which one example was stated above as
a theorem), and similar results apply on `asymptotically Minkowski' spaces,
with the slight twist that it is adjoints of operators considered here that
play the direct role there.

We now turn to Kerr-de Sitter space-time and give some history.
In exact Kerr-de Sitter space and for small angular momentum,
Dyatlov \cite{Dyatlov:Quasi-normal, Dyatlov:Exponential}
has shown exponential decay to constants, even across the event horizon.
This followed earlier work of
Melrose, S\'a Barreto and Vasy \cite{Melrose-SaBarreto-Vasy:Asymptotics}, where
this was shown up to the event horizon in de Sitter-Schwarzschild space-times
or spaces strongly asymptotic to these (in particular, no rotation of the black hole
is allowed), and
of Dafermos and
Rodnianski in \cite{Dafermos-Rodnianski:Sch-dS} who had shown polynomial
decay in this setting.
These in turn followed up pioneering work of S\'a Barreto and Zworski
\cite{Sa-Barreto-Zworski:Distribution} and Bony and H\"afner \cite{Bony-Haefner:Decay}
who studied resonances and decay away from the event horizon in these settings.
(One can solve the wave equation explicitly on de Sitter space
using special functions, see \cite{Polarski:Hawking} and \cite{Yagdjian-Galstian:De-Sitter};
on asymptotically de Sitter spaces
the forward fundamental solution was constructed as an appropriate Lagrangian
distribution by Baskin \cite{Baskin:Parametrix}.) 

Also, polynomial decay on Kerr space
was shown recently by Tataru and Tohaneanu
\cite{Tataru-Tohaneanu:Local, Tataru:Local}
and Dafermos and Rodnianski \cite{Dafermos-Rodnianski:Black,
Dafermos-Rodnianski:Axi}, after pioneering work of Kay and Wald
in \cite{Kay-Wald:Linear}
and \cite{Wald:Stability} in the Schwarzschild setting.
(There was also recent work by Marzuola, Metcalf, Tataru and
Tohaneanu \cite{Marzuola-Metcalf-Tataru-Tohaneanu:Strichartz}
on Strichartz estimates, and by Donninger, Schlag and Soffer
\cite{Donninger-Schlag-Soffer:Price} on $L^\infty$
estimates on Schwarzschild black holes,
following $L^\infty$ estimates of Dafermos and Rodnianski
\cite{Dafermos-Rodnianski:Red-shift, Dafermos-Rodnianski:Price},
of Blue and Soffer \cite{Blue-Soffer:Phase}
on non-rotating charged black holes giving
$L^6$ estimates, and Finster, Kamran, Smoller and
Yau \cite{Finster-Kamran-Smoller-Yau:Decay,
Finster-Kamran-Smoller-Yau:Linear} on Dirac waves on Kerr.)
While some of these papers employ microlocal methods
at the trapped set, they are mostly based on physical space where the phenomena
are less clear than in phase space (unstable tools, such as
separation of variables, are often used in phase space though).
We remark that Kerr space is less amenable to
immediate
microlocal analysis to attack the decay of solutions of the wave equation due
to the singular/degenerate behavior at zero frequency, which will be explained below
briefly. This is closely related to the behavior of solutions of the wave equation
on Minkowski space-times. Although our methods also deal with Minkowski
space-times, this holds in a slightly different way than for de Sitter (or Kerr-de Sitter)
type spaces at infinity, and combining the two
ingredients requires some additional work.
On perturbations of Minkowski space itself, the full non-linear analysis
was done in the path-breaking work of Christodoulou and Klainerman
\cite{Christodoulou-Klainerman:Global}, and Lindblad and Rodnianski
simplified the analysis \cite{Lindblad-Rodnianski:Global-existence,
Lindblad-Rodnianski:Global-Stability},
Bieri \cite{Bieri:Extensions, Bieri-Zipser:Extensions}
succeeded in relaxing the
decay conditions, while Wang \cite{Wang:Thesis}
obtained additional, b-type, regularity as already mentioned. Here we only
give a linear result, but hopefully its simplicity will also shed new light
on the non-linear problem.

As already mentioned,
a microlocal study of the trapping in Kerr or Kerr-de Sitter was performed by
Wunsch and Zworski in \cite{Wunsch-Zworski:Resolvent}. This is particularly
important to us, as this is the only part of the phase space which does not
fit directly into a relatively simple microlocal framework. Our general method
is to use microlocal analysis to understand the rest of the phase space (with
localization away from trapping realized via a complex absorbing potential),
then use the gluing result of Datchev and Vasy \cite{Datchev-Vasy:Gluing-prop}
to obtain the full result.

Slightly more concretely, in the appropriate (partial) compactification of space-time,
near the boundary of which space-time has the form $X_\delta\times[0,\tau_0)_\tau$,
where $X_\delta$ denotes an extension of the space-time across the event horizon.
Thus, there is a manifold with boundary $X_0$, whose boundary $Y$
is the event horizon,
such that $X_0$ is embedded into $X_\delta$, a (non-compact)
manifold without boundary. We write
$X_+=X_0^\circ$ for `our side' of the event horizon and $X_-=X_\delta\setminus X_0$
for the `far side'. Then
the Kerr or Kerr-de Sitter d'Alembertians are b-operators in the sense
of Melrose \cite{Melrose:Atiyah} that extend smoothly
across the event horizon $Y$. Recall that in the Riemannian setting, b-operators
are usually called `cylindrical ends', see \cite{Melrose:Atiyah} for a general
description; here the form at the boundary
(i.e.\ `infinity') is similar, modulo ellipticity (which is lost). Our results
hold for small smooth perturbations of Kerr-de Sitter space in this b-sense. Here the
role of `perturbations' is simply to ensure that the microlocal picture, in particular
the dynamics, has not changed drastically. Although b-analysis is the
right conceptual framework, we mostly work with the Mellin transform, hence
on manifolds without boundary, so the reader need not be concerned
about the lack of familiarity with b-methods. However, we briefly discuss
the basics in Section~\ref{sec:Mellin-Lorentz}.

We {\em immediately} Mellin transform in the defining function
of the boundary (which is temporal infinity, though is not space-like everywhere)
--- in Kerr and Kerr-de Sitter spaces this is operation is `exact', corresponding to
$\tau\pa_\tau$ being a Killing vector field, i.e.\ is not merely
at the level of normal operators, but this makes little difference (i.e. the
general case is similarly treatable). After this transform we get a family of operators
that e.g.\ in de Sitter space is elliptic on $X_+$, but in Kerr space ellipticity
is lost there. We consider the event horizon as a completely artificial boundary
even in the de Sitter setting, i.e.\ work on a manifold that includes a neighborhood
of $X_0=\overline{X_+}$, hence a neighborhood of the
event horizon $Y$.

As already mentioned, one feature of these space-times is some
relatively mild trapping in $X_+$;
this only plays a role in high energy (in the Mellin parameter, $\sigma$),
or equivalently semiclassical (in $h=|\sigma|^{-1}$)
estimates. We ignore a (semiclassical) microlocal
neighborhood of the trapping for a moment; we place an absorbing
`potential' there. Another important feature of the space-times is that
they are not naturally compact on the `far side' of the event horizon
(inside the black hole), i.e.\ $X_-$,
and bicharacteristics from the event horizon
(classical or semiclassical) propagate into this region. However,
we place an absorbing `potential' (a second order operator) there to annihilate
such phenomena which do not affect what happens on `our side' of the event
horizon, $X_+$, in view of the characteristic nature of the latter. This absorbing
`potential' could
{\em easily} be replaced by a space-like boundary, in the spirit of introducing
a boundary $t=t_1$, where $t_1>t_0$,
when one solves the Cauchy problem from $t_0$
for the standard wave equation; note that such a boundary does not affect
the solution of the equation in $[t_0,t_1]_t$. Alternatively, if $X_-$
has a well-behaved infinity, such as in de Sitter space, the analysis could be
carried out more globally. However, as we wish to emphasize the microlocal
simplicity of the problem, we do not touch on these issues.

All of our results are in a general setting of microlocal analysis explained
in Section~\ref{sec:microlocal}, with the Mellin transform and
Lorentzian connection explained in Section~\ref{sec:Mellin-Lorentz}. However,
for the convenience of the reader here we state the results for perturbations
of Kerr-de Sitter spaces.
We refer to Section~\ref{sec:Kerr} for
details.
First, the general assumption is that
\begin{quote}
$P_\sigma$, $\sigma\in\Cx$,
is either the Mellin transform of the d'Alembertian $\Box_g$
for a Kerr-de Sitter spacetime, or more generally the Mellin transform of the
normal operator of the
d'Alembertian $\Box_g$
for a small perturbation, in the sense of b-metrics,
of such a Kerr-de Sitter space-time;
\end{quote}
see Section~\ref{sec:Mellin-Lorentz} for
an explanation of these concepts. Note that for such perturbations
the usual `time' Killing vector field (denoted by $\pa_{\tilde t}$ in
Section~\ref{sec:Kerr}; this is indeed time-like
in $X_+\times[0,\ep)_{\tilde t}$
sufficiently far from $\pa X_+$) is no longer Killing. Our results on these space-times
are proved by showing that the hypotheses of Section~\ref{sec:microlocal}
are satisfied. We show this in general (under the conditions \eqref{eq:digamma-def},
which corresponds to $0<\frac{9}{4}\Lambda r_s^2<1$ in de Sitter-Schwarzschild
spaces, and \eqref{eq:alpha-restrict}, which corresponds to the lack of classical
trapping in $X_+$;
see Section~\ref{sec:Kerr}), except where semiclassical dynamics matters.
As in the analysis of Riemannian conformally compact spaces, we use
a complex absorbing operator $Q_\sigma$; this means that its principal symbol
in the relevant (classical, or semiclassical) sense has the correct sign on
the characteristic set; see Section~\ref{sec:microlocal}.

When semiclassical dynamics does matter, the {\em non-trapping assumption} with
an absorbing operator $Q_\sigma$, $\sigma=h^{-1}z$, is
\begin{quote}
in both the forward and backward directions,
the bicharacteristics from any point in the semiclassical characteristic set
of $P_\sigma$
either enter the semiclassical elliptic set of $Q_\sigma$
at some finite
time, or tend to $L_\pm$;
\end{quote}
see Definition~\ref{Def:non-trap}. Here, as in the discussion above, $L_\pm$
are two components of the image of $N^*Y\setminus o$ in $S^*X$.
(As $L_+$ is a sink while $L_-$ is a source, even
semiclassically, outside $L_\pm$ the `tending' can only happen in the forward,
resp.\ backward, directions.) Note that the semiclassical non-trapping assumption
(in the precise sense used below) implies a classical non-trapping assumption, i.e.\ the
analogous statement for classical bicharacteristics, i.e.\ those in $S^*X$.
It is important to keep in mind that the classical non-trapping assumption can
always be satisfied with $Q_\sigma$ supported in $X_-$, far from $Y$.

In our first result in the Kerr-de Sitter type setting,
to keep things simple, we ignore semiclassical
trapping via the use of $Q_\sigma$; this means that $Q_\sigma$ will have support
in $X_+$. However, in $X_+$, $Q_\sigma$
only
matters in the semiclassical, or high energy, regime, and only for (almost) real
$\sigma$. If the black hole is rotating relatively slowly, e.g.\
$\alpha$ satisfies the bound \eqref{eq:semicl-alpha-limit},
the (semiclassical)
trapping is always far from the event horizon, and one can make $Q_\sigma$
supported away from there. Also, the Klein-Gordon
parameter $\lambda$ below is `free' in the sense that it does not affect any of
the relevant information in the analysis\footnote{It does affect the
  {\em location} of the poles and corresponding resonant states
of $(P_\sigma-\imath Q_\sigma)^{-1}$,
  hence the constant in Theorem~\ref{thm:exp-decay} has to be replaced by the
  appropriate resonant state and exponential growth/decay, as in the
  second part of that theorem.} (principal and subprincipal symbol; see
below). {\em Thus, we drop it in the following theorems for simplicity.}

\begin{thm}\label{thm:complete-absorb}
Let $Q_\sigma$ be an absorbing
formally self-adjoint operator such that the semiclassical non-trapping assumption
holds. Let $\sigma_0\in\Cx$, and
\begin{equation*}\begin{split}
&\cX^s=\{u\in H^s:\ (P_{\sigma_0}-\imath Q_{\sigma_0})u\in H^{s-1}\},\ \cY^s=H^{s-1},\\
&\qquad\|u\|_{\cX^s}^2=\|u\|_{H^s}^2+\|(P_{\sigma_0}-\imath Q_{\sigma_0})u\|^2_{H^{s-1}}.
\end{split}\end{equation*}
Let $\beta_\pm>0$ be given by the geometry at conormal bundle of the
black hole ($-$), resp.\ de Sitter ($+$) event horizons,
see Subsection~\ref{subsec:Kerr-geo}, and in particular \eqref{eq:subpr-Kerr}.
For $s\in\RR$, let\footnote{This means that we require the stronger
of $\im\sigma>\beta_\pm^{-1}(1-2s)$ to hold in \eqref{eq:half-space-intro}.
If we perturb Kerr-de Sitter space time, we need to increase the requirement
on $\im\sigma$ slightly, i.e.\ the size of the half space has to be slightly
reduced.}
$\beta=\max(\beta_+,\beta_-)$ if $s\geq 1/2$,
$\beta=\min(\beta_+,\beta_-)$ if $s<1/2$.
Then, for $\lambda\in\Cx$,
$$
P_\sigma-\imath Q_\sigma-\lambda:\cX^s\to\cY^s
$$
is an analytic family of Fredholm operators on
\begin{equation}\label{eq:half-space-intro}
\Cx_s=\{\sigma\in\Cx:\ \im\sigma>\beta^{-1}(1-2s)\}
\end{equation}
and
has a meromorphic inverse,
$$
R(\sigma)=(P_\sigma-\imath Q_\sigma-\lambda)^{-1},
$$
which is holomorphic in an upper half plane, $\im\sigma>C$.
Moreover,
given any $C'>0$, there are only finitely many poles in $\im\sigma>-C'$, and
the resolvent satisfies non-trapping estimates there, which e.g.\ with $s=1$ (which
might need a reduction in $C'>0$) take the form
$$
\|R(\sigma)f\|_{L^2}^2+|\sigma|^{-2}\|dR(\sigma)\|_{L^2}^2
\leq C''|\sigma|^{-2}\|f\|_{L^2}^2.
$$
\end{thm}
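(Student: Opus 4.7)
The plan is to deduce the theorem from the general framework established in Section~\ref{sec:microlocal} by checking that the Mellin-transformed Kerr-de Sitter d'Alembertian $P_\sigma$, together with the absorbing $Q_\sigma$, satisfies the structural hypotheses listed there. The geometric input is exactly what is indicated in the introduction and will be verified in detail in Section~\ref{sec:Kerr}: $P_\sigma$ has real scalar principal symbol, the characteristic set decomposes into two components $\Sigma_\pm$ on which the sign of $\sigma_1(Q_\sigma)$ is opposite, the only points where the Hamilton flow is not of real principal type are the two radial components $L_\pm$ lying over $Y$, with $L_+$ a sink and $L_-$ a source, and the imaginary part of the subprincipal symbol at $L_\pm$ is controlled by the constants $\beta_\pm$ computed in Subsection~\ref{subsec:Kerr-geo}. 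Once these are in hand, the theorem becomes a standard Fredholm-plus-analytic-continuation package.

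The first substantive step is to assemble the a priori estimate
\[
\|u\|_{H^s}\le C\bigl(\|(P_\sigma-\imath Q_\sigma-\lambda)u\|_{H^{s-1}}+\|u\|_{H^{-N}}\bigr)
\]
on $\Cx_s$ by stitching together four microlocal ingredients in the order dictated by the Hamilton flow: (i) microlocal elliptic regularity off the characteristic set; (ii) absorbing estimates on the elliptic set of $Q_\sigma$, where the sign of $\sigma_1(Q_\sigma)$ on $\Sigma_\pm$ is the one required to gain regularity; (iii) radial-point estimates at $L_\pm$, propagating \emph{away from} the radial sets and imposing the threshold $s>\tfrac12-\tfrac{\beta_\pm \im\sigma}{2}$, i.e.\ precisely \eqref{eq:half-space-intro} with $\beta=\min(\beta_+,\beta_-)$ when $s<1/2$ and $\beta=\max(\beta_+,\beta_-)$ when $s\ge 1/2$; and (iv) real principal type propagation along the rest of the characteristic set, run forward on $\Sigma_+$ into either the absorbing region or the sink $L_+$, and backward on $\Sigma_-$ into either the absorbing region or out of the source $L_-$. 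Running the same four-step argument on the adjoint $P_\sigma^*+\imath Q_\sigma^*-\bar\lambda$ in the dually weighted spaces, in which the flow direction and the sign of the absorbing term both reverse, yields the adjoint estimate. The pair of estimates gives the Fredholm property of $P_\sigma-\imath Q_\sigma-\lambda:\cX^s\to\cY^s$ with analytic dependence on $\sigma$.

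Next I would promote this Fredholm family to a meromorphic inverse. For $\im\sigma$ large and positive the positive commutator with a constant multiple of $\im\sigma$ dominates everything else, so $P_\sigma-\imath Q_\sigma-\lambda$ is invertible there; analytic Fredholm theory then produces $R(\sigma)$ as a meromorphic family on $\Cx_s$ with finite-rank residues. For the non-trapping pole-count and estimate in strips $\im\sigma>-C'$, I would rerun the four-step argument in the semiclassical calculus with $h=|\sigma|^{-1}$ and $z=h\sigma$, using the semiclassical versions of elliptic regularity, absorbing estimates, radial-point estimates (the threshold condition is automatic once $|\re\sigma|$ is large enough in a fixed strip), and propagation of singularities. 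The \emph{semiclassical non-trapping} assumption is precisely what makes step (iv) close: every semiclassical bicharacteristic terminates in $\mathrm{Ell}_\semi(Q_\sigma)\cup L_+$ forward and in $\mathrm{Ell}_\semi(Q_\sigma)\cup L_-$ backward. This produces the lossless bound
\[
\|R(\sigma)f\|_{L^2}^2+|\sigma|^{-2}\|dR(\sigma)f\|_{L^2}^2\le C''|\sigma|^{-2}\|f\|_{L^2}^2
\]
for $|\re\sigma|$ large with $\im\sigma>-C'$, which, combined with meromorphy on $\Cx_s$, forces the number of poles in the strip to be finite.

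The main obstacle is step (iii), the radial-point estimate. The threshold $s>\tfrac12-\tfrac{\beta_\pm\im\sigma}{2}$ and the peculiar shape of \eqref{eq:half-space-intro} emerge from a positive commutator argument with a weight of the form $\rho^{-(s-1/2)}$ microlocalized near $L_\pm$, in which the radial part of the Hamilton flow contributes a term proportional to $s-1/2$ and the subprincipal symbol contributes a term proportional to $\beta_\pm\im\sigma/2$; only when the sum has a definite sign does the commutator control $\|u\|_{H^s}$. The bookkeeping is further complicated by the need to ensure that the sign chosen for $Q_\sigma$ on each $\Sigma_\pm$ is compatible with the direction in which the radial estimate propagates, and that the complex absorption does not enter with the wrong sign into the radial commutator — these sign compatibilities are what ultimately force the min/max distinction on $\beta$ and the differing treatment of the two characteristic components.
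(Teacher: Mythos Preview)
Your proposal is correct and follows essentially the same route as the paper: verify that the Kerr--de Sitter normal operator family $P_\sigma$ with the chosen absorption $Q_\sigma$ satisfies the hypotheses of Section~\ref{sec:microlocal}, then invoke Theorem~\ref{thm:classical-absorb-strip} (which packages exactly your steps (i)--(iv) and their semiclassical counterparts). Two small corrections: invertibility in an upper half plane is obtained in the paper not from a direct positive-commutator-in-$\im\sigma$ argument but from the semiclassical strip estimates themselves (holomorphy for $|\re\sigma|$ large in every strip, plus meromorphy elsewhere, forces finitely many poles in any strip and hence a pole-free upper half plane); and the min/max on $\beta$ has nothing to do with sign compatibilities of $Q_\sigma$---it simply records that the radial threshold $s>(1-\beta_\pm\im\sigma)/2$ must hold at \emph{both} event horizons, so one takes the more restrictive of the two inequalities, which flips between $\max$ and $\min$ according to the sign of $1-2s$.
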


The analogous result also holds on Kerr space-time
if we suppress the Euclidean end by a complex absorption.

Dropping the semiclassical absorption in $X_+$,
i.e.\ if
we make $Q_\sigma$ supported only in $X_-$,
we have\footnote{Since we are not making a statement for almost real $\sigma$,
semiclassical trapping, discussed in the previous paragraph, does not matter.}

\begin{thm}\label{thm:spatial-absorb}
Let $P_\sigma$, $\beta$, $\Cx_s$ be as in Theorem~\ref{thm:complete-absorb},
and let $Q_\sigma$ be an absorbing
formally self-adjoint operator supported in $X_-$ which is classically non-trapping.
Let $\sigma_0\in\Cx$, and
$$
\cX^s=\{u\in H^s:\ (P_{\sigma_0}-\imath Q_{\sigma_0})u\in H^{s-1}\},\ \cY^s=H^{s-1},
$$
with
$$
\|u\|_{\cX^s}^2=\|u\|_{H^s}^2+\|\tilde Pu\|^2_{H^{s-1}}.
$$
Then,
$$
P_\sigma-\imath Q_\sigma:\cX^s\to\cY^s
$$
is an analytic family of Fredholm operators on $\Cx_s$, and
has a meromorphic inverse,
$$
R(\sigma)=(P_\sigma-\imath Q_\sigma)^{-1},
$$
which for any $\ep>0$ is holomorphic in a translated
sector in the upper half plane, $\im\sigma>C+\ep |\re \sigma|$.
The poles of the resolvent
are called {\em resonances}.
In addition, taking $s=1$ for instance, $R(\sigma)$ satisfies
non-trapping estimates, e.g.\ with $s=1$,
$$
\|R(\sigma)f\|_{L^2}^2+|\sigma|^{-2}\|dR(\sigma)\|_{L^2}^2
\leq C'|\sigma|^{-2}\|f\|_{L^2}^2
$$
in such a translated sector.
\end{thm}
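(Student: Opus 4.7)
The plan is to follow the three-stage argument that establishes Theorem~\ref{thm:complete-absorb}, with one substantive new ingredient that replaces the semiclassical absorption near the trapped set.

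First, I would verify the hypotheses of the microlocal framework of Section~\ref{sec:microlocal} for the pair $(P_\sigma,Q_\sigma)$ so as to obtain the Fredholm property on $\Cx_s$. On the elliptic set of $P_\sigma$, microlocal elliptic regularity applies; on the real principal type part of the characteristic set $\Sigma$, bicharacteristics either reach the elliptic set of $Q_\sigma$ (which sits entirely in $X_-$ and is classically non-trapping, so any bicharacteristic entering $X_-$ encounters it) or tend to the radial sets $L_\pm$; at $L_\pm$ the radial point estimates apply in the good direction precisely when $s$ is above the threshold set by $\im\sigma>\beta^{-1}(1-2s)$, with the two components $\Sigma_\pm$ handled by opposite propagation directions and the corresponding sign of $Q_\sigma$. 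This chain of estimates, combined with the analogous one for the adjoint, yields a Fredholm-type a priori bound of the form $\|u\|_{\cX^s}\leq C\|(P_\sigma-\imath Q_\sigma)u\|_{\cY^s}+C\|u\|_{H^{-N}}$, with analytic dependence on $\sigma$ coming from the polynomial (b-)dependence of $P_\sigma$. For $\im\sigma$ large a direct energy estimate shows $P_\sigma-\imath Q_\sigma$ is invertible, and analytic Fredholm theory on the connected set $\Cx_s$ then produces a meromorphic $R(\sigma)$ throughout.

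The essential new step is the high-energy analysis in the translated sector. Set $h=|\sigma|^{-1}$ and $z=h\sigma$; in the sector $\im\sigma>C+\ep|\re\sigma|$ one has $\im z$ bounded below by a positive constant depending on $\ep$ once $|\sigma|$ is sufficiently large. Because $P_\sigma$ depends polynomially (of degree two) on $\sigma$, the semiclassical principal symbol of $h^2 P_\sigma$ picks up an imaginary contribution proportional to $(\re z)(\im z)$ multiplied by a function non-vanishing on each component of the semiclassical characteristic set, in particular on the trapped set in $T^*X_+$. Thus the role played in Theorem~\ref{thm:complete-absorb} by a semiclassical absorbing $Q_\sigma$ supported near the trapped set is taken over here by $\im\sigma$ itself. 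Semiclassical propagation with this effective absorption, together with semiclassical versions of the elliptic and radial-point estimates at $L_\pm$, yields the non-trapping bound
\[
\|u\|_{H^s_h}\leq Ch\|(P_\sigma-\imath Q_\sigma)u\|_{H^{s-1}_h},
\]
from which invertibility, holomorphy in the sector, and the stated $L^2$ non-trapping estimate follow.

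The main obstacle is precisely this last verification: one must check that throughout the translated sector, the imaginary part of the semiclassical principal symbol of $h^2P_\sigma$ has the correct, non-degenerate sign on each component $\Sigma_\pm$ of the semiclassical characteristic set, so that it can act as a globally defined effective absorption replacing the local $Q_\sigma$ used in Theorem~\ref{thm:complete-absorb}. Once this is in place, the trapped set presents no further difficulty, and the positive commutator and propagation arguments go through unchanged.
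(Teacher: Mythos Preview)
Your overall architecture matches the paper: the Fredholm statement on $\Cx_s$ comes from Theorem~\ref{thm:classical-absorb}, and the sector statement from the abstract Theorem~\ref{thm:classical-absorb-sector}. But the mechanism you propose for the high-energy step in the sector is not the right one, and the ``main obstacle'' you flag is misdirected.

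In the sector $\im\sigma>C+\ep|\re\sigma|$ one has $|\im z|$ bounded below by a positive constant. The paper does \emph{not} treat $\im p_{\semi,z}$ as an effective absorbing term on a nonempty semiclassical characteristic set containing the trapped set; rather, it shows (see Subsection~\ref{subsec:Lorentz} and the paragraph preceding Theorem~\ref{thm:classical-absorb-sector}) that for $\im z$ bounded away from $0$ the full symbol $p_{\semi,z}-\imath q_{\semi,z}$ is semiclassically \emph{elliptic} on all of $T^*X$. Concretely, $\im p_{\semi,z}=2\im z\,\langle\varpi+\re z\,\tfrac{d\tau}{\tau},\tfrac{d\tau}{\tau}\rangle_G$, and where this vanishes the real part is strictly negative because $\tfrac{d\tau}{\tau}$ is timelike and $G$ is negative definite on its orthocomplement. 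So the trapped set, which lives in the finite part of $T^*X$, simply leaves the characteristic set once $\im z\neq 0$; there is nothing to propagate through there. Only at fiber infinity $S^*X$ does the classical characteristic set survive, and there the classical ingredients (radial points plus $Q_\sigma$ in $X_-$) already suffice, with the sign of $\im p_{\semi,z}$ near $S^*X$ entering only to fix the propagation direction.

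Two specific errors follow from this. First, $\im p_{\semi,z}$ is not ``proportional to $(\re z)(\im z)$'': it carries a factor $\im z$ but no separate factor $\re z$, and in particular it does not vanish identically when $\sigma$ is purely imaginary, which is certainly in the sector. Second, $\im p_{\semi,z}$ \emph{does} vanish on a hypersurface in $T^*X$, so it cannot serve as a globally nondegenerate absorption; what is actually checked, and what you should check, is ellipticity of the complex-valued symbol $p_{\semi,z}-\imath q_{\semi,z}$ in $T^*X$ for $|\im z|>\ep$, which Subsection~\ref{subsec:Lorentz} verifies for Lorentzian b-metrics with $\tfrac{d\tau}{\tau}$ timelike.
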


It is in this setting that $Q_\sigma$ could be replaced by working on a manifold
with boundary, with the boundary being space-like, essentially as a time level set
mentioned above, since it is supported in $X_-$.

Now we make the assumption that {\em the only semiclassical
trapping is due to hyperbolic
trapping with trapped set $\Gamma_z$, $\sigma=h^{-1}z$}, with
hyperbolicity
understood as in the `Dynamical Hypotheses' part of \cite[Section~1.2]{Wunsch-Zworski:Resolvent}, i.e.
\begin{quote}
in both the forward and backward directions,
the bicharacteristics from any point in the semiclassical characteristic set
of $P_\sigma$
either enter the semiclassical elliptic set of $Q_\sigma$
at some finite
time, or tend to $L_\pm\cup \Gamma_z$.
\end{quote}
We remark that just hyperbolicity of the trapped set suffices for the
results of \cite{Wunsch-Zworski:Resolvent}, see Section~1.2 of that
paper; however, if one wants stability of the results under
perturbations, one needs to assume that $\Gamma_z$ is {\em normally
hyperbolic}. We refer to \cite[Section~1.2]{Wunsch-Zworski:Resolvent}
for a discussion of these concepts.
We show in Section~\ref{sec:Kerr} that for 
black holes satisfying \eqref{eq:semicl-alpha-limit}
(so the angular momentum can be comparable to the mass)
the operators $Q_\sigma$ can be chosen so that they
are supported in $X_-$ (even quite far from $Y$)
and the hyperbolicity requirement is satisfied. Further, we also show
that for slowly
rotating black holes the trapping is normally hyperbolic.
Moreover, the (normally) hyperbolic trapping statement is
purely in Hamiltonian dynamics, not regarding PDEs. It might be known
for an even
larger range of rotation speeds, but the author is not aware of this.

Under this assumption, one can
combine Theorem~\ref{thm:complete-absorb} with
the results of Wunsch and Zworski
\cite{Wunsch-Zworski:Resolvent}
about hyperbolic trapping and the gluing results of Datchev and
Vasy \cite{Datchev-Vasy:Gluing-prop} to obtain a better result for
the merely spatially localized problem, Theorem~\ref{thm:spatial-absorb}:

\begin{thm}\label{thm:glued}
Let $P_\sigma$, $Q_\sigma$, $\beta$, $\Cx_s$, $\cX^s$ and $\cY^s$
be as in Theorem~\ref{thm:spatial-absorb}, and assume that
the only semiclassical trapping is due to hyperbolic
trapping.
Then,
$$
P_\sigma-\imath Q_\sigma:\cX^s\to\cY^s
$$
is an analytic family of Fredholm operators on $\Cx_s$, and
has a meromorphic inverse,
$$
R(\sigma)=(P_\sigma-\imath Q_\sigma)^{-1},
$$
which is holomorphic in an upper half plane, $\im\sigma>C$. Moreover, there
exists $C'>0$ such that there are only finitely many poles in $\im\sigma>-C'$,
and
the resolvent satisfies polynomial estimates there as $|\sigma|\to\infty$,
$|\sigma|^\varkappa$, for some $\varkappa>0$, compared to
the non-trapping case,
with merely a logarithmic
loss compared to non-trapping for real $\sigma$, e.g.\ with $s=1$:
$$
\|R(\sigma)f\|_{L^2}^2+|\sigma|^{-2}\|dR(\sigma)\|_{L^2}^2
\leq C''|\sigma|^{-2}(\log|\sigma|)^2\|f\|_{L^2}^2.
$$
\end{thm}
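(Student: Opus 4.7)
The Fredholm property and existence of a meromorphic inverse $R(\sigma) = (P_\sigma - \imath Q_\sigma)^{-1}$ on $\Cx_s$ are already provided by Theorem~\ref{thm:spatial-absorb} applied to the very same operators, since the setup of $Q_\sigma$ (classically non-trapping, supported in $X_-$) is unchanged. The new content under the additional normally hyperbolic trapping assumption is quantitative: the resolvent is in fact holomorphic in a full upper half-plane $\im\sigma > C$ (not merely a translated sector), has only finitely many poles in any strip $\im\sigma > -C'$, and satisfies the stated polynomial bound with a $(\log|\sigma|)^2$ loss on the real axis. The plan is to derive all three by \emph{gluing} a trapping-suppressed global resolvent (to which Theorem~\ref{thm:complete-absorb} applies) to the Wunsch--Zworski resolvent of a model localized near $\Gamma_z$, using the framework of Datchev--Vasy.

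The first step is to introduce an auxiliary semiclassical absorber $Q_\sigma^{\Gamma}$ which is a semiclassical pseudodifferential operator whose principal symbol is $\geq 0$, is supported in a small neighborhood of $\Gamma_z$, and is elliptic on $\Gamma_z$ itself. Writing $\tilde Q_\sigma = Q_\sigma + Q_\sigma^{\Gamma}$, the hypothesis that the only semiclassical trapping of $P_\sigma$ occurs at $\Gamma_z$ ensures that every semiclassical bicharacteristic of $P_\sigma$ either enters the elliptic set of $\tilde Q_\sigma$ in finite time or tends to $L_\pm$. Theorem~\ref{thm:complete-absorb} then applies to $P_\sigma - \imath\tilde Q_\sigma$ and yields, in any strip $\im\sigma > -C'$, meromorphy with only finitely many poles and a non-trapping resolvent estimate of the form $\|R_{\tilde Q}(\sigma)f\|_{L^2}^2 + |\sigma|^{-2}\|dR_{\tilde Q}(\sigma)f\|_{L^2}^2 \leq C|\sigma|^{-2}\|f\|_{L^2}^2$.

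The second ingredient is the analysis of \cite{Wunsch-Zworski:Resolvent}: for a compactly microlocally supported model capturing the dynamics near the normally hyperbolic $\Gamma_z$, the trapped resolvent obeys a polynomial bound $|\sigma|^{\varkappa}$ in an appropriate strip, with $(\log|\sigma|)^2$ loss on the real axis. Both $R_{\tilde Q}(\sigma)$ and the Wunsch--Zworski trapped resolvent must then be shown to be \emph{semiclassically outgoing with an $h^{-1}$ loss} in the sense of \cite{Datchev-Vasy:Gluing-prop}; granted this, the Datchev--Vasy gluing theorem produces an approximate inverse for $P_\sigma - \imath Q_\sigma$ whose operator norm is, up to a constant, the larger of the two input bounds. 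This directly yields the estimate stated in the theorem, and simultaneously upgrades the sector $\im\sigma > C + \ep|\re\sigma|$ from Theorem~\ref{thm:spatial-absorb} to a full half-plane $\im\sigma > C$, since the gluing estimate is uniform in $\re\sigma$; finiteness of poles in $\im\sigma > -C'$ then follows from analytic Fredholm theory together with the uniform polynomial bound at large $|\re\sigma|$.

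The main obstacle is the verification of the semiclassical outgoing property for the resolvent $R_{\tilde Q}(\sigma)$ produced by the general microlocal scheme of Section~\ref{sec:microlocal}. This is not automatic: one has to track the semiclassical wavefront set of $R_{\tilde Q}(\sigma)f$ along the Hamilton flow and show it propagates only in the forward (outgoing) direction from the wavefront set of $f$, in particular that radial-point behavior at $L_\pm$ is compatible with the outgoing convention used in \cite{Datchev-Vasy:Gluing-prop}. This however runs along the same lines as the source/sink propagation estimates at $L_\pm$ already employed in constructing the Fredholm problem, since the sink structure at $L_+$ and source structure at $L_-$ together with the choice of strong/weak Sobolev orders are precisely what forces the solution wavefront set to emanate outward. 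Once this compatibility is in hand, the gluing is a black box application and the theorem follows.
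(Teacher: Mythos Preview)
Your proposal is correct and follows essentially the same route as the paper: introduce an auxiliary absorber at the trapped set so that $P_\sigma-\imath(Q_\sigma+Q_\sigma^{\Gamma})$ is semiclassically non-trapping, invoke the non-trapping estimates (the paper uses the abstract Theorem~\ref{thm:classical-absorb-strip}, equivalently Theorem~\ref{thm:complete-absorb} in the Kerr--de Sitter setting), take the Wunsch--Zworski bound for the trapped model, and glue via \cite{Datchev-Vasy:Gluing-prop}. The one point you flag as the ``main obstacle'' --- the semiclassical outgoing property of the non-trapping resolvent --- is in the paper isolated and proved as a standalone result, Theorem~\ref{thm:semicl-outg}, so your sketch of why it holds (propagation from the source/sink structure at $L_\pm$) is exactly what the paper does; you may simply cite that theorem rather than re-deriving it.
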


Farther, there are approximate lattices of poles generated by the
trapping, as studied by S\'a Barreto and Zworski in
\cite{Sa-Barreto-Zworski:Distribution}, and further
by Bony and H\"afner in \cite{Bony-Haefner:Decay}, in the exact De Sitter-Schwarzschild
and Schwarzschild settings, and in ongoing work by Dyatlov in the
exact Kerr-de Sitter setting.

Theorem~\ref{thm:glued} immediately and directly gives the asymptotic
behavior of solutions of the wave equation across the event horizon. Namely,
the asymptotics of the wave equation depends on the finite number of resonances;
their precise behavior depends on specifics of the space-time, i.e.\ on these
resonances. This is true even in arbitrarily regular b-Sobolev spaces -- in fact,
the more decay we want
to show, the higher Sobolev spaces we need to work in. Thus, a forteriori,
this gives $L^\infty$ estimates. We state this formally as a theorem
in the simplest case of slow rotation; in the general case one needs
to analyze the (finite!) set of resonances along the reals to obtain
such a conclusion, and for the perturbation part also to show normal
hyperbolicity (which we only show for slow rotation):

\begin{thm}\label{thm:exp-decay}
Let $M_\delta$ be the partial compactification of Kerr-de Sitter space
as in Section~\ref{sec:Kerr}, with
$\tau$ the boundary defining function.
Suppose that $g$ is either a slowly rotating Kerr-de Sitter metric,
or a small perturbation as a symmetric bilinear form on
$\Tb M_\delta$. Then there exist
$C'>0$, $\varkappa>0$
such that for $0<\ep<C'$ and $s>(1+\beta \ep)/2$
solutions of $\Box_gu=f$
with $f\in \tau^\ep H^{s-1+\varkappa}_{\bl}(M_\delta)$ vanishing in $\tau>\tau_0$, and
with $u$ vanishing in $\tau>\tau_0$,
satisfy that for some constant $c_0$,
$$
u-c_0\in \tau^\ep H^s_{\bl,\loc}(M_\delta).
$$
Here the norms of both $c_0$ in $\Cx$ and $u-c_0$ in $\tau^\ep
H^s_{\bl,\loc}(M_{\delta'})$ are bounded by that of $f$ in $\tau^\ep
H^{s-1+\varkappa}_{\bl}(M_\delta)$ for $\delta'<\delta$.

More generally, if $g$ is a Kerr-de Sitter metric with hyperbolic
trapping\footnote{This is shown in Section~\ref{sec:Kerr} when
  \eqref{eq:semicl-alpha-limit} is satisfied.}, then there exist $C'>0$, $\varkappa>0$ such that
for $0<\ell<C'$ and $s>(1+\beta \ell)/2$
solutions of $\Box_gu=f$
with $f\in \tau^\ell H^{s-1+\varkappa}_{\bl}(M_\delta)$ vanishing in $\tau>\tau_0$, and
with $u$ vanishing in $\tau>\tau_0$,
satisfy that for some $a_{j\kappa}\in\CI(X_\delta)$ (which are
resonant states) and $\sigma_j\in\Cx$ (which are the resonances),
\begin{equation*}
u'=u-\sum_j\sum_{\kappa\leq m_j} \tau^{\imath\sigma_j}(\log
|\tau|)^\kappa a_{j\kappa}
\in \tau^\ell H^s_{\bl,\loc}(M_\delta).
\end{equation*}
Here the (semi)norms of both $a_{j\kappa}$ in $\CI(X_{\delta'})$ and $u'$ in $\tau^\ell
H^s_{\bl,\loc}(M_{\delta'})$ are bounded by that of $f$ in $\tau^\ell
H^{s-1+\varkappa}_{\bl}(M_\delta)$ for $\delta'<\delta$.
The same conclusion holds for sufficiently small perturbations of the
metric as a symmetric bilinear form on
$\Tb M_\delta$ provided the trapping is normally hyperbolic.
\end{thm}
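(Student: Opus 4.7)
The plan is to deduce Theorem~\ref{thm:exp-decay} from the meromorphic continuation and polynomial resolvent bounds of Theorem~\ref{thm:glued} via Mellin transform in the boundary defining function $\tau$, followed by a contour-shift argument that picks up the finitely many resonances lying above a chosen horizontal line $\im\sigma=-\ell$. The geometric set-up from Section~\ref{sec:Kerr} makes $\Box_g$ a b-operator on $M_\delta$; in the exact Kerr-de Sitter case $\tau\pa_\tau$ is Killing so $\Box_g$ coincides with its own normal operator, while in the perturbed case the difference $\Box_g - N(\Box_g)$ is a b-differential operator whose coefficients vanish at $\tau=0$, hence lives in $\tau\Diffb^2$ and acts as a lower-order perturbation in the b-Sobolev scale.

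First I would Mellin transform: since $u$ and $f$ vanish for $\tau>\tau_0$, the transforms $\hat u(\sigma)$, $\hat f(\sigma)$ are holomorphic in some half plane $\im\sigma>C$ with values in Sobolev spaces on $X_\delta$, and Plancherel for $\tau\pa_\tau$ translates the b-Sobolev statement $f\in\tau^\ell H^{s-1+\varkappa}_\bl(M_\delta)$ into: on the line $\im\sigma=-\ell$, $\hat f(\sigma)$ is square integrable with values in the semiclassical Sobolev space $H^{s-1+\varkappa}_{\langle\sigma\rangle^{-1}}(X_\delta)$. Next, supplement $\Box_g$ (after Mellin transform) by a complex absorbing operator $Q_\sigma$ of the form guaranteed by Section~\ref{sec:Kerr}, supported in $X_-$; by the characteristic nature of the event horizon and a propagation/energy argument (see the discussion after Theorem~\ref{thm:complete-absorb} and the remarks on $Q_\sigma$ in $X_-$), this modification does not affect the solution on $X_+=\overline{X_0^\circ}$, so it suffices to invert $P_\sigma-\imath Q_\sigma$. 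Applying Theorem~\ref{thm:glued} gives $\hat u(\sigma)=R(\sigma)\hat f(\sigma)$, meromorphic on $\Cx_s$ with polynomial bounds $|\sigma|^\varkappa$ times the non-trapping bound in strips $\im\sigma>-C'$, and merely a logarithmic loss on the real axis.

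Now shift the Mellin inversion contour from $\im\sigma=C$ down to $\im\sigma=-\ell$, with $\ell<C'$. The gain $\varkappa$ of b-regularity imposed on $f$ is exactly what is needed to absorb the polynomial loss of $R(\sigma)$ and make the shifted contour integral converge in $\tau^\ell H^s_{\bl,\loc}(M_\delta)$; this yields the remainder $u'$ in the claimed space. The residues picked up at the finitely many poles $\sigma_j$ with $\im\sigma_j>-\ell$ produce the asymptotic terms $\tau^{\imath\sigma_j}(\log|\tau|)^\kappa a_{j\kappa}$, with $a_{j\kappa}\in\CI(X_\delta)$ because the resonant states, being in the relevant $\cX^s$ with any $s$ above the radial-point threshold, are smooth away from $\pa X_\delta$ by the elliptic/radial-point regularity theory that underlies Theorem~\ref{thm:glued}. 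For slow rotation, a direct inspection (already implicit in the analyses of Dyatlov \cite{Dyatlov:Quasi-normal,Dyatlov:Exponential}) shows that the only resonance of $P_\sigma$ in a small strip $\im\sigma\geq -C'$ is a simple pole at $\sigma=0$ whose resonant state is the constant function, giving the $c_0$ term. In the perturbation case one treats the extra term $\tau E\hat u$ arising from $\Box_g-N(\Box_g)$ as an additional forcing and iterates, using that $\tau$ translates to a downward shift in the Mellin variable which is absorbable into the error in the strip.

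The main obstacle is the perturbation case: one must propagate both the meromorphic structure of $R(\sigma)$ and the polynomial resolvent bounds from the exact to the perturbed operator, and this is exactly where normal hyperbolicity of the trapped set $\Gamma_z$ is needed, because the bounds of Wunsch-Zworski used in Theorem~\ref{thm:glued} are stable only under normally hyperbolic perturbations. Once normal hyperbolicity is established in Section~\ref{sec:Kerr} for slowly rotating Kerr-de Sitter, and for more general $\alpha$ satisfying \eqref{eq:semicl-alpha-limit} under the dynamical assumption, the rest is a routine combination of the contour shift with the Fredholm bookkeeping: perturbations move resonances continuously, so in a fixed strip only finitely many remain, and their resonant states and multiplicities can be read off from a Grushin reduction. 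The uniform bounds on $a_{j\kappa}$ and $u'$ in terms of $f$ follow from the continuity of $R(\sigma)$ in the Fredholm sense and Plancherel.
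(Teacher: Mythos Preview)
Your proposal is correct and follows essentially the same route as the paper. The paper packages the Mellin transform plus contour-shift argument into Corollary~\ref{cor:Mellin-expand} (and, for non-dilation-invariant perturbations, the iterative Proposition~\ref{prop:Mellin-expand}), and makes your ``propagation/energy argument'' showing that $Q_\sigma$ supported in $X_-$ does not affect the solution in $X_+$ precise via the semiclassical energy estimate of Proposition~\ref{prop:wave-local}, using that $d\tilde\mu$ is time-like in $\tilde\mu<0$; for the identification of the resonance at $\sigma=0$ in the slowly rotating case the paper invokes the de Sitter--Schwarzschild analysis of \cite{Sa-Barreto-Zworski:Distribution,Bony-Haefner:Decay,Melrose-SaBarreto-Vasy:Asymptotics} together with stability under perturbation, rather than Dyatlov directly, but this is only a difference in citation.
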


In special
geometries (without the ability to add perturbations)
such decay has been described by delicate separation of variables
techniques, again
see Bony-H\"afner \cite{Bony-Haefner:Decay} in the De Sitter-Schwarzschild
and Schwarzschild settings, but only away from the event horizons,
and by Dyatlov \cite{Dyatlov:Quasi-normal, Dyatlov:Exponential} in the
Kerr-de Sitter setting. Thus, in these settings, we recover in a direct manner
Dyatlov's result across the event horizon \cite{Dyatlov:Exponential}, modulo
a knowledge of resonances near the origin contained in \cite{Dyatlov:Quasi-normal}.
In fact, for small angular momenta one can use the results from de Sitter-Schwarzschild
space directly to describe these finitely many resonances,
as exposed in the works of S\'a Barreto and Zworski
\cite{Sa-Barreto-Zworski:Distribution}, Bony and H\"afner \cite{Bony-Haefner:Decay}
and Melrose, S\'a Barreto and Vasy \cite{Melrose-SaBarreto-Vasy:Asymptotics},
since $0$ is an isolated resonance with multiplicity $1$ and eigenfunction $1$;
this persists under small deformations, i.e.\ for small angular momenta. Thus,
exponential decay to constants, Theorem~\ref{thm:exp-decay}, follows immediately.

One can also work with Kerr space-time,
apart from issues of analytic continuation. By using
weighted spaces and Melrose's results from \cite{RBMSpec} as well
as those of Vasy and Zworski in the semiclassical setting
\cite{Vasy-Zworski:Semiclassical}, one easily gets
an analogue of Theorem~\ref{thm:spatial-absorb} in $\im\sigma>0$, with
smoothness and the almost non-trapping estimates corresponding to
those of Wunsch and Zworski \cite{Wunsch-Zworski:Resolvent}
down to $\im\sigma=0$ for $|\re\sigma|$ large.
Since a proper treatment of this
would exceed the bounds of this paper, we refrain from this here. Unfortunately,
even if this analysis were carried out, low energy problems would still remain,
so the result is not strong enough to deduce the wave expansion. As already alluded
to, Kerr space-time
has features of both Minkowski and de Sitter space-times; though both
of these fit into our framework, they do so in different ways,
so a better way of dealing with the Kerr space-time, namely adapting our
methods to it, requires additional work.

While de Sitter-Schwarzschild space (the special case of Kerr-de Sitter space with
vanishing rotation), via the same methods as those on de Sitter space which
give rise to the hyperbolic Laplacian and its continuation across infinity, gives
rise essentially to the Laplacian of a conformally compact metric, with
similar structure but different curvature at the two ends (this was used by
Melrose, S\'a Barreto and Vasy \cite{Melrose-SaBarreto-Vasy:Asymptotics} to
do analysis up to the event horizon there), the analogous problem for Kerr-de Sitter
is of edge-type in the sense of Mazzeo's edge calculus \cite{Mazzeo:Edge}
apart from a degeneracy at the poles corresponding to the axis of rotation, though
it is not Riemannian. Note that edge operators have global properties in the fibers;
in this case these fibers are the orbits of rotation.
A reasonable interpretation of the appearance of this class of operators
is that the global properties in the fibers capture non-constant (or non-radial)
bicharacteristics (in the
classical sense) in the conormal bundle
of the event horizon, and also possibly the (classical) bicharacteristics entering $X_+$.
This
suggests that the methods of
Melrose, S\'a Barreto and Vasy \cite{Melrose-SaBarreto-Vasy:Asymptotics}
would be much harder to apply in the presence of rotation.

It is important to point out that the results of this paper are stable under small
$\CI$ perturbations\footnote{Certain kinds of perturbations conormal to the boundary,
in particular polyhomogeneous ones, would only change the analysis and the conclusions slightly.} of the Lorentzian metric on the b-cotangent bundle at
the cost of changing the function spaces slightly; this follows from the
estimates being stable in these circumstances. Note that the function spaces
depend on the principal symbol of the operator under consideration, and
the range of $\sigma$ depends on the subprincipal symbol at the conormal
bundle of the event horizon; under general small smooth perturbations, defining
the spaces exactly as before, the results remain valid if the range of $\sigma$
is slightly restricted.

In addition, the method is stable under gluing: already Kerr-de Sitter space behaves
as two separate black holes (the Kerr and the de Sitter end),
connected by semiclassical dynamics; since only one component
(say $\Sigma_{\semi,+}$)
of
the semiclassical characteristic set moves far into $X_+$,
one can easily add as many Kerr black holes as one wishes by gluing beyond the reach
of the other component, $\Sigma_{\semi,-}$.
Theorems~\ref{thm:complete-absorb} and
\ref{thm:spatial-absorb} automatically remain valid (for the semiclassical
characteristic set is then irrelevant), while Theorem~\ref{thm:glued} remains
valid provided that the resulting dynamics only exhibits mild trapping (so that
compactly localized models have at most polynomial resolvent growth),
such as normal hyperbolicity, found in Kerr-de Sitter space.

Since the specifics of Kerr-de Sitter space-time are, as already mentioned, irrelevant
in the microlocal approach we take, we start with
the abstract microlocal discussion in Section~\ref{sec:microlocal},
which is translated into the setting of the wave equation on manifolds with
a Lorentzian b-metric in Section~\ref{sec:Mellin-Lorentz},
followed by
the description of de Sitter, Minkowski
and Kerr-de Sitter space-times in Sections~\ref{sec:dS}, \ref{sec:Minkowski}
and \ref{sec:Kerr}. Theorems~\ref{thm:complete-absorb}-\ref{thm:exp-decay} are
proved in Section~\ref{sec:Kerr} by showing that they fit into
the abstract framework of Section~\ref{sec:microlocal}; the approach is completely
analogous to de Sitter and Minkowski spaces, where the fact that they
fit into the abstract framework is shown in Sections~\ref{sec:dS}
and \ref{sec:Minkowski}.
As another option, we encourage the reader to
read the discussion of de Sitter space first, which also includes the discussion
of conformally compact spaces, presented in Section~\ref{sec:dS}, as well
as Minkowski space-time presented in the section afterwards,
to gain some geometric insight, then the general microlocal machinery, and
finally the Kerr discussion to see how that space-time fits into our setting.
Finally, if the reader is interested how conformally compact metrics fit into
the framework and wants to jump to the relevant
calculation, a reasonable place to start
is Subsection~\ref{subsec:conf-comp-results}. We emphasize
that  for the conformally compact results, only
Section~\ref{sec:microlocal} and
Section~\ref{subsec:dS-Mellin}-\ref{subsec:conf-comp-results},
starting with the paragraph of \eqref{eq:p-sig-symbol-dS}, are
strictly needed.

\section{Microlocal framework}\label{sec:microlocal}
We now develop a setting which includes the geometry of the
`spatial' model of de Sitter space near its `event horizon',
as well as the model of Kerr and Kerr-de Sitter settings near
the event horizon, and the model at infinity for Minkowski space-time near the
light cone (corresponding to the adjoint of the problem described below in the last
case). As a general reference for microlocal analysis, we refer to
\cite{Hor}, while for semiclassical analysis, we refer to
\cite{Dimassi-Sjostrand:Spectral, Evans-Zworski:Semiclassical}; see also
\cite{Shubin:Pseudo} for the high-energy (or large parameter) point of view.

\subsection{Notation}\label{subsec:notation}
We recall the basic conversion between these frameworks. First,
$S^\difford(\RR^p;\RR^\ell)$ is
the set of $\CI$ functions on $\RR^p_z\times\RR^\ell_\zeta$
satisfying uniform
bounds
$$
|D_z^\alpha D_\zeta^\beta a|\leq C_{\alpha\beta}\langle \zeta\rangle^{\difford-|\beta|},
\ \alpha\in\Nat^p,\ \beta\in\Nat^\ell.
$$
If $O\subset\RR^p$ and $\Gamma\subset\RR^\ell_\zeta$ are open,
we define $S^\difford(O;\Gamma)$ by requiring\footnote{Another possibility would
be to require uniform estimates on compact subsets; this makes no difference
here.} these estimates to hold
only for $z\in O$ and $\zeta\in\Gamma$.
The class of classical (or one-step polyhomogeneous) symbols is
the subset $S_{\cl}^\difford(\RR^p;\RR^\ell)$ of $S^\difford(\RR^p;\RR^\ell)$
consisting of symbols possessing an asymptotic expansion
$$
a(z,r\omega)\sim \sum a_j(z,\omega) r^{\difford-j},
$$
where $a_j\in\CI(\RR^p\times\sphere^{\ell-1})$.
Then on $\RR^n_z$, pseudodifferential
operators $A\in\Psi^{\difford}(\RR^n)$ are of the form
\begin{equation*}\begin{split}
A=\Op(a);\ &\Op(a)u(z)
=(2\pi)^{-n}\int_{\RR^n} e^{i(z-z')\cdot\zeta} a(z,\zeta)\,u(z')\,d\zeta\,dz',\\
&\qquad u\in\cS(\RR^n),\ a\in S^\difford(\RR^n;\RR^n);
\end{split}\end{equation*}
understood as an oscillatory integral. Classical pseudodifferential operators,
$A\in\Psi_{\cl}^\difford(\RR^n)$, form the subset where $a$ is a classical symbol.
The principal symbol $\sigma_{\difford}(A)$
of $A\in\Psi^\difford(\RR^n)$ is the equivalence class
$[a]$ of $a$ in $S^\difford(\RR^n;\RR^n)/S^{\difford-1}(\RR^n;\RR^n)$.
For classical $a$, one can instead consider $a_0(z,\omega) r^\difford$ as the
principal symbol; it is a $\CI$ function on $\RR^n\times(\RR^n\setminus \{0\})$,
which is homogeneous of degree $\difford$ with respect to the $\RR^+$-action
given by dilations in the second factor, $\RR^n\setminus \{0\}$.

Differential operators on $\RR^n$ form the subset of $\Psi(\RR^n)$
in which $a$ is polynomial in the second factor, $\RR^n_\zeta$, so locally
$$
A=\sum_{|\alpha|\leq\difford} a_\alpha(z)D_z^\alpha,\qquad
\sigma_{\difford}(A)=\sum_{|\alpha|=\difford} a_\alpha(z)\zeta^\alpha.
$$

If $X$ is a manifold,
one can transfer these definitions to $X$ by localization and requiring that
the Schwartz kernels are $\CI$ densities away from the diagonal in $X^2=X\times X$;
then $\sigma_{\difford}(A)$ is in $S^\difford(T^*X)/S^{\difford-1}(T^*X)$,
resp.\ $S^\difford_{\hom}(T^*X\setminus o)$ when
$A\in\Psi^\difford(X)$, resp.\ $A\in\Psi^{\difford}_{\cl}(X)$; here $o$ is the zero section,
and $\hom$ stands for symbols homogeneous with respect to the $\RR^+$ action.
If $A$ is a differential operator, then the classical (i.e.\ homogeneous) version
of the principal symbol is a homogeneous polynomial in the fibers of the
cotangent bundle of degree $\difford$.
We can also work with operators depending on a parameter $\lambda\in O$ by
replacing $a\in S^\difford(\RR^n;\RR^n)$ by $a\in S^\difford(\RR^n\times O;\RR^n)$,
with $\Op(a_\lambda)\in\Psi^\difford(\RR^n)$ smoothly dependent on $\lambda\in O$.
In the case of differential operators, $a_\alpha$ would simply depend smoothly
on the parameter $\lambda$.

The large parameter, or high energy,
version of this, with the large parameter denoted by $\sigma$,
is that 
\begin{equation*}\begin{split}
A^{(\sigma)}=\Op^{(\sigma)}(a),
\ &\Op^{(\sigma)}(a) u(z)=(2\pi)^{-n}\int_{\RR^n} e^{i(z-z')\cdot\zeta} a(z,\zeta,\sigma)\,u(z')\,d\zeta\,dz',\\
&\qquad u\in\cS(\RR^n),\ a\in S^\difford(\RR^n;\RR^n_\zeta\times\Omega_\sigma),
\end{split}\end{equation*}
where $\Omega\subset\Cx$, with $\Cx$ identified with $\RR^2$; thus there
are joint symbol estimates in $\zeta$ and $\sigma$. The high energy
principal symbol now
should be thought of as an equivalence class of functions
on $\RR^n_z\times\RR^n_\zeta\times\Omega_\sigma$, or invariantly on
$T^*X\times\Omega$. Differential operators with polynomial dependence
on $\sigma$ now take the form
\begin{equation}\label{eq:large-param-diff}
A^{(\sigma)}=\sum_{|\alpha|+j \leq\difford} a_{\alpha,j}(z)\sigma^j D_z^\alpha,
\qquad \sigma^{(\sigma)}_{\difford}(A)=\sum_{|\alpha|+j =\difford} a_{\alpha,j}(z)\sigma^j \zeta^\alpha.
\end{equation}
Note that the principal symbol includes terms that would be subprincipal
with $A^{(\sigma)}$ considered
as a differential operator for a fixed value of $\sigma$.

The semiclassical operator algebra\footnote{We adopt the convention that $\semi$
denotes semiclassical objects, while $h$ is the actual semiclassical
parameter.}, $\Psih(\RR^n)$, is given by
\begin{equation*}\begin{split}
A_h=\Oph(a);\ &\Oph(a)u(z)=(2\pi h)^{-n}\int_{\RR^n} e^{i(z-z')\cdot\zeta/h} a(z,\zeta,h)\,u(z')\,d\zeta\,dz',\\
&\qquad u\in\cS(\RR^n),\ a\in \CI([0,1)_h;
S^\difford(\RR^n;\RR^n_\zeta));
\end{split}\end{equation*}
its classical subalgebra, $\Psihcl(\RR^n)$ corresponds to
$a\in \CI([0,1)_h;
S_{\cl}^\difford(\RR^n;\RR^n_\zeta))$.
The semiclassical principal symbol is now $\sigma_{\semi,\difford}(A)=a|_{h=0}\in
S^\difford(T^*X)$. We can again add an extra parameter $\lambda\in O$, so
$a\in \CI([0,1)_h;S^\difford(\RR^n\times O;\RR^n_\zeta))$; then in the invariant setting
the principal symbol is $a|_{h=0}\in
S^\difford(T^*X\times O)$.
Note that if $A^{(\sigma)}=\Op^{(\sigma)}(a)$ is a classical
operator with a large parameter, then
for $\lambda\in O\subset\Cx$, $\overline{O}$ compact, $0\notin \overline{O}$,
$$
h^\difford \Op^{(h^{-1}\lambda)}(a)
=\Oph(\tilde a),\ \tilde a(z,\zeta,h)=h^\difford a(z,h^{-1}\zeta,h^{-1}\lambda),
$$
and $\tilde a\in\CI([0,1)_h;S^\difford_{\cl}(\RR^n\times O_\lambda;\RR^n_\zeta))$.
The converse is not quite true: roughly speaking, the semiclassical algebra
is a blow-up of the large parameter algebra; to obtain an equivalence,
we would need to demand in the definition of the large parameter
algebra merely that
$a\in S^\difford(\RR^n;[\overline{\RR^n_\zeta\times\Omega_\sigma};
\pa\overline{\RR^n_\zeta\times\{0\}}])$, so in particular for bounded $\sigma$,
$a$ is merely a family of symbols depending smoothly on $\sigma$ (not jointly
symbolic); we do not discuss this here further. Note, however, that it
is the (smaller, i.e.\ stronger) large parameter algebra that arises naturally
when one Mellin transforms in the b-setting, see Subsection~\ref{subsec:Mellin}.

Differential operators now take the form
\begin{equation}\label{eq:semicl-diff}
A_{h,\lambda}=\sum_{|\alpha|\leq\difford} a_{\alpha}(z,\lambda;h)(hD_z)^\alpha.
\end{equation}
Such a family has two principal symbols, the standard one (but taking into
account the semiclassical degeneration, i.e.\ based on $(hD_z)^\alpha$ rather than
$D_z^\alpha$), which depends on $h$ and is homogeneous,
and the semiclassical one, which is at $h=0$, and is not homogeneous:
\begin{equation*}\begin{split}
&\sigma_{\difford}(A_{h,\lambda})=\sum_{|\alpha|=\difford} a_{\alpha}(z,\lambda;h)
\zeta^\alpha,\\
&\sigma_{\semi}(A_{h,\lambda})
=\sum_{|\alpha|\leq \difford} a_{\alpha}(z,\lambda;0)\zeta^\alpha.
\end{split}\end{equation*}
However, the restriction of $\sigma_{\difford}(A_{h,\lambda})$ to $h=0$ is the
principal part of $\sigma_{\semi}(A_{h,\lambda})$. In the special case in which
$\sigma_{\difford}(A_{h,\lambda})$ is independent of $h$ (which is true in
the setting considered below), one can simply regard the usual principal
symbol as the principal part of the semiclassical symbol.
Note that for $A^{(\sigma)}$ as in \eqref{eq:large-param-diff},
$$
h^\difford A^{(h^{-1}\lambda)}=\sum_{|\alpha|+j \leq\difford} h^{\difford-j-|\alpha|}a_{\alpha,j}(z)\lambda^j (hD_z)^\alpha,
$$
which is indeed of the form \eqref{eq:semicl-diff}, with polynomial dependence
on both $h$ and $\lambda$. Note that in this case the standard principal
symbol is independent of $h$ and $\lambda$.

\subsection{General assumptions}\label{subsec:micro-gen}
Let $X$ be a compact manifold and $\nu$ a smooth non-vanishing
density on it; thus $L^2(X)$ is well-defined as a Hilbert space (and not
only up to equivalence). We consider
operators
$P_\sigma\in\Psi_{\cl}^\difford(X)$
on $X$ depending on
a complex parameter $\sigma$, with the dependence being analytic
(thus, if $P_\sigma$ is a differential operator, the coefficients depend analytically on $\sigma$).
We also consider a complex absorbing `potential',
$Q_\sigma\in\Psi_{\cl}^\difford(X)$, defined for $\sigma\in \Omega$,
$\Omega\subset\Cx$
is open. It can be convenient to take
$Q_\sigma$ formally self-adjoint, which is possible when $Q_\sigma$ is
independent of $\sigma$, but this is inconvenient when one wants to
study the large $\sigma$ (i.e.\ semiclassical) behavior.
The operators we study
are
$P_\sigma-\imath Q_\sigma$ and $P_\sigma^*+\imath Q_\sigma^*$;
$P_\sigma^*$ depends on the choice of the density $\nu$.

Typically we shall be interested in $P_\sigma$ on an open subset $U$ of $X$,
and have $Q_\sigma$
supported in the complement of $U$, such that over some subset $K$ of
$X\setminus U$,
$Q_\sigma$ is elliptic on the characteristic set of $P_\sigma$. In the Kerr-de Sitter
setting, we would have $\overline{X_+}\subset U$. However,
{\em this is not part of the general set-up}.

It is often convenient to work with the fiber-radial compactification
$\overline{T}^*X$ of $T^*X$, in particular when discussing semiclassical analysis;
see for instance \cite[Sections~1 and 5]{RBMSpec}.
Thus, $S^*X$ should
be considered as the boundary of $\overline{T}^*X$. When one
is working with homogeneous objects, as is the case in classical microlocal
analysis, one can think of $S^*X$ as
$(\overline{T}^*X\setminus o)/\RR^+$, but this is not a useful point of view
in semiclassical analysis\footnote{In fact, even in classical microlocal
analysis it is better to keep at least a `shadow' of the interior of $S^*X$
by working with $T^*X\setminus o$ considered as a half-line bundle
over $S^*X$ with homogeneous objects on it; this keeps the action of the
Hamilton vector field on
the fiber-radial variable, i.e.\ the defining function of $S^*X$ in
$\overline{T}^*X$, non-trivial, which is important at radial points.}.
Thus, if $\rhot$ is a non-vanishing homogeneous
degree $-1$ function on $T^*X\setminus o$, it is
a defining function of $S^*X$ in $\overline{T}^*X\setminus o$; if the homogeneity
requirement is dropped it can be modified near the zero section to make
it a defining function of $S^*X$ in $\overline{T}^*X$. The principal
symbols $p,q$ of $P_\sigma,Q_\sigma$ are homogeneous degree $\difford$
functions on $T^*X\setminus o$, so $\rhot^\difford p,\rhot^\difford q$
are homogeneous degree $0$ there, thus are functions\footnote{This depends \label{footnote:factor}
on choices unless $\difford=0$; they are naturally sections of a line bundle that
encodes the differential of the boundary defining function at $S^*X$. However,
the only relevant notion here is ellipticity, and later the Hamilton vector
field up to multiplication by a positive function, which is independent of choices.
In fact, we emphasize that all the requirements listed for $p$, $q$ and later $p_{\semi,z}$
and $q_{\semi,z}$, except possibly \eqref{eq:subpr-form}-\eqref{eq:tilde-beta-form},
are also fulfilled if $P_\sigma-\imath Q_\sigma$
is replaced by {\em any} smooth positive multiple, so one may factor out
positive factors at will. This is useful in the Kerr-de Sitter space discussion.
For \eqref{eq:subpr-form}-\eqref{eq:tilde-beta-form}, see Footnote~\ref{footnote:subpr}.}
on $\overline{T}^*X$ near
its boundary, $S^*X$, and in particular on $S^*X$. Moreover,
$\sH_p$ is homogeneous degree $\difford-1$ on $T^*X\setminus o$, thus
$\rhot^{\difford-1}\sH_p$
a smooth vector field tangent to the boundary on $\overline{T}^*X$ (defined near
the boundary), and in particular induces a smooth vector field on $S^*X$.

We assume that the principal symbol $p$, resp.\ $q$, of $P_\sigma$,  resp.\ $Q_\sigma$,
are real, are independent of $\sigma$, $p=0$ implies $dp\neq 0$. We assume that
the characteristic set of $P_\sigma$ is of the form
$$
\Sigma=\Sigma_+\cup\Sigma_-,\ \Sigma_+\cap\Sigma_-=\emptyset,
$$
$\Sigma_\pm$ are relatively open\footnote{Thus, they are connected components
in the extended sense that they may be empty.} in $\Sigma$, and
$$
\mp q\geq 0\ \text{near}\ \Sigma_\pm.
$$
We assume that there are conic
submanifolds $\Lambda_\pm\subset\Sigma_\pm$
of $T^*X\setminus o$, outside which the Hamilton
vector field $\sH_p$ is not
radial, and to which the Hamilton vector field $\sH_p$ is tangent.
Here $\Lambda_\pm$ are typically Lagrangian, but this is not needed\footnote{An extreme example would be $\Lambda_\pm=\Sigma_\pm$. Another extreme
is if one or both are empty.}.
The properties we want at $\Lambda_\pm$ are (probably) not 
stable under general smooth perturbations; the perturbations need to have
certain properties at $\Lambda_\pm$. However, the estimates we then derive {\em are
stable} under such perturbations. First, we want that for a homogeneous
degree $-1$ defining function
$\tilde\rho$ of $S^*X$ near $L_\pm$, the image of $\Lambda_\pm$
in $S^*X$,
\begin{equation}\label{eq:weight-definite}
\rhot^{\difford-2}
\sH_p\tilde\rho|_{L_\pm}=\mp\beta_0,\ \beta_0\in\CI(L_\pm),\ \beta_0>0.
\end{equation}
Next, we require the existence
of a non-negative homogeneous degree zero
quadratic defining function $\rho_0$, of $\Lambda_\pm$
(i.e.\ it vanishes quadratically at $\Lambda_\pm$, and is non-degenerate)
and $\beta_1>0$ such that
\begin{equation}\label{eq:rho-0-property}
\mp\tilde\rho^{\difford-1}\sH_p\rho_0-\beta_1\rho_0
\end{equation}
is $\geq 0$ modulo cubic vanishing terms at $\Lambda_\pm$.
(The precise behavior of $\mp\tilde\rho^{\difford-1}\sH_p\rho_0$, or
of linear defining functions, is irrelevant, because we only need a relatively
weak estimate.
It would be relevant if one wanted to prove Lagrangian regularity.)
Under these assumptions, $L_-$ is a source and $L_+$ is a sink for
the $\sH_p$-dynamics in the sense
that nearby bicharacteristics tend to $L_\pm$ as the parameter along the
bicharacteristic goes to $\pm\infty$.
Finally, we assume that
the imaginary part of the subprincipal symbol at $\Lambda_\pm$, which is
the symbol of $\frac{1}{2\imath}(P_\sigma-P_\sigma^*)\in\Psi_{\cl}^{\difford-1}(X)$ as $p$ is
real, is\footnote{If $\sH_p$ is radial at $L_\pm$,
this is independent of the choice of the
density $\nu$. Indeed, with respect to $f\nu$,
the adjoint of $P_\sigma$ is $f^{-1}P_\sigma^* f$, with $P_\sigma^*$ denoting the
adjoint with respect to $\nu$. This is $P_\sigma^*+f^{-1}[P_\sigma^*,f]$, and
the principal symbol of
$f^{-1}[P_\sigma^*,f]\in\Psi_{\cl}^{\difford-1}(X)$ vanishes
at $L_\pm$ as $\sH_p f=0$. In general, we can only change the density by factors
$f$ with $\sH_p f|_{L_\pm}=0$, which in Kerr-de Sitter space-times would mean
factors independent of
$\phi$ at the event horizon. A similar argument shows the independence
of the condition from the choice of $f$
when one replaces $P_\sigma$ by $fP_\sigma$, under the same
conditions: either radiality, or just $\sH_p f|_{L_\pm}=0$.\label{footnote:subpr}}
\begin{equation}\label{eq:subpr-form}
\pm\tilde\beta\beta_0(\im \sigma)\tilde\rho^{-\difford+1},\ \tilde\beta\in\CI(L_\pm),
\end{equation}
$\tilde\beta$ is positive
along $L_\pm$, and write
\begin{equation}\label{eq:tilde-beta-form}
\beta_{\sup}=\sup\tilde\beta,\ \beta_{\inf}=\inf\tilde\beta>0.
\end{equation}
If $\tilde\beta$ is a constant, we may write
\begin{equation}\label{eq:beta-def}
\beta=\beta_{\inf}=\beta_{\sup}.
\end{equation}
The results take a little nicer form in this case since depending on various signs,
sometimes $\beta_{\inf}$ and sometimes $\beta_{\sup}$ is the relevant quantity.

\begin{figure}[ht]
\begin{center}
\mbox{\epsfig{file=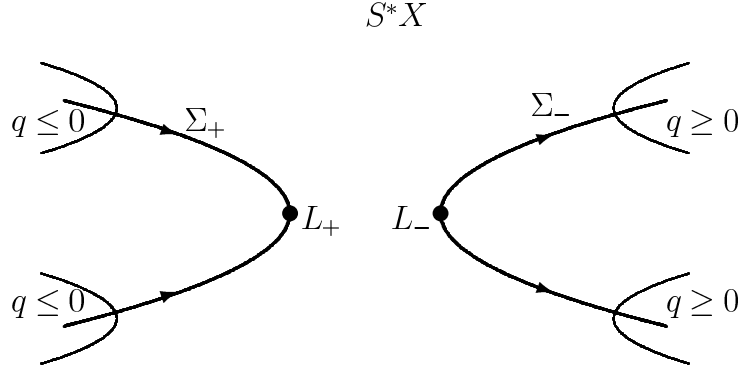}}
\end{center}
\caption{The components $\Sigma_\pm$ of the characteristic
set in the cosphere bundle $S^*X$. The submanifolds $L_\pm$ are points here,
with $L_-$ a source, $L_+$ a sink. The thin lined parabolic regions near the edges
show the absorbing region, i.e.\ the support of $q$.
For $P_\sigma-\imath Q_\sigma$,the estimates are
always propagated away from $L_\pm$ towards the support of $q$, so in the
direction of the Hamilton flow in $\Sigma_-$, and in the direction
opposite of the Hamilton flow in $\Sigma_+$; for $P_\sigma^*+\imath Q_\sigma^*$,
the directions
are reversed.}
\label{fig:microloc-damp1}
\end{figure}

We make the following {\em non-trapping} assumption.
For $\alpha\in S^*X$, let $\gamma_+(\alpha)$, resp.\ $\gamma_-(\alpha)$
denote the image of the forward, resp.\ backward, half-bicharacteristic
from $\alpha$. We write $\gamma_\pm(\alpha)\to L_\pm$
(and say $\gamma_\pm(\alpha)$ tends to $L_\pm$) if given any neighborhood $O$
of $L_\pm$, $\gamma_\pm(\alpha)\cap O\neq\emptyset$; by the source/sink property
this implies that the points on the curve are in $O$ for sufficiently large (in absolute
value) parameter values. We assume that, with $\elliptic(Q_\sigma)$
denoting the elliptic set of $Q_\sigma$,
\begin{multline}\label{eq:non-trapping-impl}
\alpha\in\Sigma_-\setminus L_-\Rightarrow \big(\gamma_-(\alpha)\to L_-\Mor
\gamma_-(\alpha)\cap\elliptic(Q_\sigma)\neq\emptyset\big)\Mand
\gamma_+(\alpha)\cap\elliptic(Q_\sigma)\neq\emptyset,\\
\alpha\in\Sigma_+\setminus L_+\Rightarrow \big(\gamma_+(\alpha)\to L_+\Mor
\gamma_+(\alpha)\cap\elliptic(Q_\sigma)\neq\emptyset\big) \Mand
\gamma_-(\alpha)\cap\elliptic(Q_\sigma)\neq\emptyset.\\
\end{multline}
That is, all forward and backward half-(null)bicharacteristics of
$P_\sigma$ either enter the elliptic set of $Q_\sigma$, or go to $\Lambda_\pm$,
i.e.\ $L_\pm$ in $S^*X$.
The point of the assumptions
regarding $Q_\sigma$ and the flow is that we are able to propagate estimates
forward near where $q\geq 0$, backward near where $q\leq 0$, so by our hypotheses
we can always propagate estimates for $P_\sigma-\imath Q_\sigma$
from $\Lambda_\pm$ towards the elliptic set of
$Q_\sigma$, and also if both ends of a bicharacteristic go to the elliptic set of $Q_\sigma$
then we can propagate the estimates from one of the directions. On the
other hand, for $P_\sigma^*+\imath Q_\sigma^*$, we can propagate estimates
from the elliptic set of $Q_\sigma$ towards $\Lambda_\pm$, and
again if both ends of a bicharacteristic go to the elliptic set of $Q_\sigma$
then we can propagate the estimates from one of the directions. This behavior
of $P_\sigma-\imath Q_\sigma$ vs.\ $P_\sigma^*+\imath Q_\sigma^*$ is important
for duality reasons.

\begin{rem}\label{rem:bundles}
For simplicity of notation we have not considered vector bundles on $X$. However,
if $E$ is a vector bundle on $X$ with a positive definite inner product on the fibers
and $P_\sigma,Q_\sigma\in\Psi_{\cl}^\difford(X;E)$ with scalar
principal symbol $p$, and in case of $P_\sigma$ the imaginary part
of the subprincipal symbol is of the form \eqref{eq:subpr-form} with
$\tilde\beta$ a bundle-endomorphism
satisfying an inequality in \eqref{eq:tilde-beta-form}
as a bundle endomorphism, the arguments we present go through.
\end{rem}

\subsection{Elliptic and real principal type points}\label{subsec:ellip-hyp}
We now turn to analysis.
First, by the usual elliptic theory, on the elliptic set of $P_\sigma-\imath Q_\sigma$,
so both on the elliptic set of $P_\sigma$ and on the elliptic set of $Q_\sigma$,
one has elliptic estimates\footnote{Our convention in estimates such as
\eqref{eq:micro-elliptic} and \eqref{eq:micro-hyperbolic} is that
if one assumes that all the quantities on the right hand side are in the
function spaces indicated by the norms
then so is the quantity on the left hand side, and the estimate holds.
As we see below, at $\Lambda_\pm$ not all relevant function space statements
appear in the estimate, so we need to be more explicit there.}:
for all $s$ and $N$, and for all $B,G\in\Psi^0(X)$ with
$G$ elliptic on $\WF'(B)$,
\begin{equation}\label{eq:micro-elliptic}
\|Bu\|_{H^s}\leq C(\|G(P_\sigma-\imath Q_\sigma)u\|_{H^{s-\difford}}+\|u\|_{H^{-N}}),
\end{equation}
with the estimate also holding for $P_\sigma^*+\imath Q_\sigma$.
By propagation of singularities,
in $\Sigma\setminus(\WF'(Q_\sigma)\cup L_+\cup L_-)$,
one can propagate regularity estimates either
forward or backward along bicharacteristics, i.e.\ for all $s$ and $N$,
and for all $A,B,G\in\Psi^0(X)$ such that $\WF'(G)\cap\WF'(Q_\sigma)=\emptyset$,
and forward (or backward) bicharacteristics
from $\WF'(B)$ reach the elliptic set of $A$, while remaining in the elliptic set of $G$,
one has estimates
\begin{equation}\label{eq:micro-hyperbolic}
\|Bu\|_{H^s}\leq C(\|GP_\sigma u\|_{H^{s-\difford+1}}+\|Au\|_{H^s}+\|u\|_{H^{-N}}).
\end{equation}
Here $P_\sigma$ can be replaced by $P_\sigma-\imath Q_\sigma$ or
$P_\sigma^*+\imath Q_\sigma^*$ by the
condition on $\WF'(G)$; namely $GQ_\sigma\in\Psi^{-\infty}(X)$, and can thus
be absorbed into the $\|u\|_{H^{-N}}$ term. As usual, there is a loss of one
derivative compared to the elliptic estimate, i.e.\ the assumption on $P_\sigma u$
is $H^{s-\difford+1}$, not $H^{s-\difford}$, and one needs to make $H^s$ assumptions on $Au$,
i.e.\ regularity propagates.

\subsection{Analysis near $\Lambda_\pm$}
At $\Lambda_\pm$, for $s\geq m>(\difford-1-\beta\im\sigma)/2$, $\beta$ given by
the subprincipal symbol at $\Lambda_\pm$, we can propagate estimates
{\em away} from $\Lambda_\pm$:

\begin{prop}\label{prop:micro-out}
For $\im\sigma\geq 0$, let\footnote{Note that this is consistent with
\eqref{eq:beta-def}.} $\beta=\beta_{\inf}$, for $\im\sigma<0$, let
$\beta=\beta_{\sup}$.
For all $N$, for $s\geq m>(\difford-1-\beta\im\sigma)/2$,
and for all $A,B,G\in\Psi^0(X)$ such that
$\WF'(G)\cap\WF'(Q_\sigma)=\emptyset$, $A$ elliptic at $\Lambda_\pm$,
and forward (or backward) bicharacteristics
from $\WF'(B)$ tend to $\Lambda_\pm$,
with closure in the elliptic set of $G$, one has estimates
\begin{equation}\label{eq:radial-out}
\|Bu\|_{H^s}\leq C(\|GP_\sigma u\|_{H^{s-\difford+1}}+\|Au\|_{H^{m}}+\|u\|_{H^{-N}}),
\end{equation} 
in the sense that if $u\in H^{-N}$, $Au\in H^m$ and $GP_\sigma u\in H^{s-\difford+1}$,
then $Bu\in H^s$, and \eqref{eq:radial-out} holds. In fact, $Au$ can be dropped
from the right hand side (but one must assume $Au\in H^m$):
\begin{equation}\label{eq:radial-out-p}
Au\in H^m\Rightarrow \|Bu\|_{H^s}\leq C(\|GP_\sigma u\|_{H^{s-\difford+1}}+\|u\|_{H^{-N}}),
\end{equation}
where $u\in H^{-N}$ and $GP_\sigma u\in H^{s-\difford+1}$ is considered implied by the
right hand side. Note that $Au$ does not appear on the right hand
side, hence the display before
the estimate.
\end{prop}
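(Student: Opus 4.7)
The plan is a positive commutator argument at $\Lambda_\pm$, modeled on Melrose's treatment of radial points in scattering theory \cite[Section~9]{RBMSpec}. I sketch the argument at $\Lambda_+$; the case of $\Lambda_-$ is symmetric under time reversal. Take a commutant $A\in\Psi^{s-(\difford-1)/2}$ with principal symbol
\[
a = \tilde\rho^{-s+(\difford-1)/2}\,\chi_0(\rho_0/\delta_0)\,\chi_1(\tilde\rho/\delta_1)\,\phi,
\]
where $\chi_0,\chi_1\in\CI_c(\RR)$ are nonnegative with $\chi_0(0)>0$ and $\chi_0'\le 0$ on $[0,\infty)$, $\chi_1\equiv 1$ near $0$, $\phi$ is a smooth conic cutoff to a small neighborhood of $L_+$, and $\delta_0,\delta_1>0$ are parameters. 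Then $A$ is elliptic at $\Lambda_+$, and each term produced by $\sH_p$ hitting the cutoffs is localized in a region we control.

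The basic identity
\[
\langle\imath^{-1}[A^*A,\re P_\sigma]u,u\rangle + \langle(A^*A\,\im P_\sigma + \im P_\sigma\,A^*A)u,u\rangle = 2\,\Imag\langle AP_\sigma u, Au\rangle,
\]
with $\re P_\sigma = (P_\sigma+P_\sigma^*)/2$ and $\im P_\sigma = (P_\sigma-P_\sigma^*)/(2\imath)\in\Psi^{\difford-1}$, produces an operator on the left whose principal symbol is $\sH_p(a^2) + 2a^2\sigma_{\difford-1}(\im P_\sigma)$. Using \eqref{eq:weight-definite} to expand $\sH_p\tilde\rho^{-(2s-\difford+1)} = (2s-\difford+1)\beta_0\tilde\rho^{-2s}+\ldots$ at $L_+$, combined with \eqref{eq:subpr-form} for the subprincipal term, this symbol restricted to $L_+$ has the form
\[
\beta_0\,\bigl[(2s-\difford+1)+2\tilde\beta\,\im\sigma\bigr]\,\tilde\rho^{-2s}\chi_0^2\chi_1^2\phi^2.
\]
The threshold $s>(\difford-1-\beta\,\im\sigma)/2$---with $\beta=\beta_{\inf}$ for $\im\sigma\ge 0$ and $\beta=\beta_{\sup}$ for $\im\sigma<0$, chosen so as to yield the worst-case positivity of the bracketed quantity---makes this coefficient strictly positive. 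The contribution from $\sH_p\chi_0(\rho_0/\delta_0)^2$ is supported where $\chi_0'<0$; by \eqref{eq:rho-0-property}, $\sH_p\rho_0$ has the correct sign there, so this term does as well, and we arrange it to be elliptic on $\WF'(B)$, yielding the targeted $\|Bu\|_{H^s}$ control. The term from $\sH_p\chi_1(\tilde\rho/\delta_1)^2$ is supported in the elliptic set of $G$ and is absorbed into the $\|GP_\sigma u\|_{H^{s-\difford+1}}$ term. Since $\WF'(Q_\sigma)\cap\WF'(G)=\emptyset$ while the commutant is supported inside $\WF'(G)$, the absorbing operator $Q_\sigma$ contributes only smoothing remainders.

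Pairing with $u$ and applying Cauchy--Schwarz on the right-hand side formally yields \eqref{eq:radial-out}. The main technical obstacle is that a priori $u\in H^{-N}$ only, so the distributional pairings in the commutator identity need not converge. This is handled by the standard regularization, replacing $A$ by $A_\varepsilon = A\Lambda_\varepsilon$ with $\Lambda_\varepsilon = \Op((1+\varepsilon|\zeta|^2)^{-1})$ uniformly bounded in $\Psi^0$ and converging to $I$ in $\Psi^{\varepsilon'}$ for every $\varepsilon'>0$ as $\varepsilon\downarrow 0$. The commutator identity and sign analysis both survive with constants uniform in $\varepsilon$, and passing to the limit $\varepsilon\downarrow 0$ makes the estimate rigorous; this limiting argument, modeled on \cite[Section~9]{RBMSpec}, is the technically delicate step.

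Finally, \eqref{eq:radial-out-p} follows because in the commutator inequality just obtained one may bound $2\,\Imag\langle AP_\sigma u,Au\rangle\le\varepsilon\|Au\|^2 + \varepsilon^{-1}\|AP_\sigma u\|^2$; choosing $\varepsilon$ small relative to the coefficient coming from the positive commutator, the $\varepsilon\|Au\|^2$ term is absorbed into the left-hand side---but this absorption is legitimate only once one knows a priori that $\|Au\|$ is finite, which is precisely the qualitative hypothesis $Au\in H^m$. After absorption, only $\|GP_\sigma u\|$ and $\|u\|_{H^{-N}}$ remain on the right-hand side, yielding \eqref{eq:radial-out-p}.
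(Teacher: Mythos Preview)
Your overall strategy is right and matches the paper's: a positive commutator with a weight $\tilde\rho^{-s+(\difford-1)/2}$ at $L_\pm$, the subprincipal contribution giving the $\beta\im\sigma$ shift in the threshold, and the $\rho_0$-cutoff contributing a term of the same sign by \eqref{eq:rho-0-property}. The extra cutoff $\chi_1(\tilde\rho/\delta_1)$ is unnecessary in the classical (homogeneous) setting---the paper only introduces it in the semiclassical analogue---and the $\chi_1'$ term it produces is supported in a bounded set of $T^*X$, hence smoothing, so it goes into $\|u\|_{H^{-N}}$, not into $\|GP_\sigma u\|_{H^{s-\difford+1}}$ as you say. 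This is cosmetic.

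The genuine gap is in the regularization. At a radial point the Hamilton vector field acts nontrivially on \emph{any} fiber-radial weight, so when $\sH_p$ hits your regularizer $\lambda_\varepsilon=(1+\varepsilon|\zeta|^2)^{-1}$ it produces a new term whose size, relative to the main positive term, is $2\cdot\frac{\varepsilon|\zeta|^2}{1+\varepsilon|\zeta|^2}$. This quantity ranges over $[0,2)$ and approaches $2$ as $|\zeta|\to\infty$ for each fixed $\varepsilon>0$. Hence the bracket $(2s-\difford+1)+2\tilde\beta\im\sigma$ is effectively replaced by $(2s-\difford+1)+2\tilde\beta\im\sigma-4\cdot\frac{\varepsilon|\zeta|^2}{1+\varepsilon|\zeta|^2}$, and positivity \emph{uniformly in $\varepsilon$} requires $s>(\difford-1-\beta\im\sigma)/2+2$, not the sharp threshold $s>(\difford-1-\beta\im\sigma)/2$. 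Your sentence ``the sign analysis survives with constants uniform in $\varepsilon$'' is therefore not correct as stated.

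The paper resolves this by using instead the regularizer $(1+\varepsilon\tilde\rho^{-1})^{-\delta}$ with $\delta>0$ small: its contribution shifts the threshold only by $\delta$, so positivity holds for $s>(\difford-1-\beta\im\sigma)/2+\delta$. One then \emph{iterates}, starting from the a priori regularity $H^m$ and gaining $\delta\le 1/2$ at each step until reaching $H^s$; at each step the shifted threshold is satisfied because $m>(\difford-1-\beta\im\sigma)/2$. This inductive bootstrap is exactly what explains the appearance of the auxiliary order $m$ in the statement and is the ``technically delicate step'' you allude to but do not carry out. Without it---or without replacing your regularizer by one of adjustable order $-\delta$---the argument does not reach the sharp threshold.
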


This is completely analogous to Melrose's estimates in asymptotically Euclidean
scattering theory at the radial sets \cite[Section~9]{RBMSpec}.
Note that the $H^s$ regularity of $Bu$
is `free' in the
sense that we do not need to impose $H^s$ assumptions on $u$ anywhere; merely
$H^m$ at $\Lambda_\pm$ does the job; of course, on $P_\sigma u$ one must make the
$H^{s-\difford+1}$ assumption, i.e.\ the loss of one derivative compared to the elliptic setting.
At the cost of changing regularity, one can
propagate estimate {\em towards} $\Lambda_\pm$. Keeping in mind that
for $P_\sigma^*$ the subprincipal symbol becomes $\beta\overline{\sigma}$,
we have the following:

\begin{prop}\label{prop:micro-in}
For $\im\sigma>0$, let\footnote{Note the switch compared to
Proposition~\ref{prop:micro-out}! Also, $\beta$ does not matter when $\im\sigma=0$;
we define it here so that the two Propositions are consistent via
dualization, which reverses the sign of the imaginary part.}
$\beta=\beta_{\sup}$, for $\im\sigma\leq 0$, let
$\beta=\beta_{\inf}$.
For $s<(\difford-1-\beta\im\sigma)/2$, for all $N$,
and for all $A,B,G\in\Psi^0(X)$ such that
$\WF'(G)\cap\WF'(Q_\sigma)=\emptyset$, $B,G$ elliptic at $\Lambda_\pm$,
and forward (or backward) bicharacteristics
from $\WF'(B)\setminus\Lambda_\pm$ reach $\WF'(A)$,
while remaining in the elliptic set of $G$, one has estimates
\begin{equation}\label{eq:radial-in}
\|Bu\|_{H^s}\leq C(\|GP_\sigma^* u\|_{H^{s-\difford+1}}+\|Au\|_{H^{s}}+\|u\|_{H^{-N}}).
\end{equation}
\end{prop}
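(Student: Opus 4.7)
\textbf{Proof plan for Proposition~\ref{prop:micro-in}.} The strategy is a positive commutator argument, dual to the one used for Proposition~\ref{prop:micro-out} (and parallel to Melrose's radial point estimates in \cite[Section~9]{RBMSpec}). The key observation is that replacing $P_\sigma$ by $P_\sigma^\ast$ reverses the sign of the imaginary part of the subprincipal symbol, so a commutant that fails to produce a definite sign at $\Lambda_\pm$ above the threshold now succeeds below it. The direction in which estimates propagate is correspondingly reversed: whereas for $P_\sigma$ one propagates away from $L_\pm$ along the flow, for $P_\sigma^\ast$ one propagates into $L_\pm$ from its punctured neighborhood.

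First, I would fix a defining function $\tilde\rho$ of $S^\ast X$ near $L_\pm$ and pick cutoffs $\chi \in C_c^\infty$ supported near $\tilde\rho=0$, $\phi \in C_c^\infty$ supported near $\rho_0=0$, both non-negative, decreasing, and identically $1$ near the origin, with $\phi'\le 0$ and $\chi'\le 0$. Set the commutant symbol
\begin{equation*}
a = \chi(\tilde\rho)\,\phi(\rho_0)\,\tilde\rho^{-s+(\difford-1)/2}
\in S^{-s+(\difford-1)/2},
\end{equation*}
quantize to $A \in \Psi^{-s+(\difford-1)/2}(X)$, and compute
$2\im\langle P_\sigma^\ast u, A^\ast A u\rangle$. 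At $L_\pm$ the principal symbol of $\frac{1}{i}[P_\sigma^\ast, A^\ast A]$ combined with $-2a^2 \cdot \sigma_{\mathrm{sub},\im}(P_\sigma^\ast)$ takes the schematic form
\begin{equation*}
\pm 2\beta_0\bigl(s-(\difford-1)/2 + \tilde\beta\,\im\sigma\bigr)\,a^2
\;+\;(\text{terms from }\phi',\chi')\;+\;(\text{subprincipal errors vanishing at }\Lambda_\pm).
\end{equation*}
The main coefficient is strictly negative at $L_\pm$ precisely when $s<(\difford-1-\tilde\beta\im\sigma)/2$, which after taking the uniform bounds from \eqref{eq:tilde-beta-form} gives $s<(\difford-1-\beta\im\sigma)/2$ with the sign convention for $\beta$ stated in the proposition (worst-case over $\Lambda_\pm$, matching the dualization convention noted in the footnote). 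The quadratic defining-function estimate \eqref{eq:rho-0-property} controls the $\phi(\rho_0)$ factor: $\mp\tilde\rho^{\difford-1}\sH_p\rho_0 \ge \beta_1\rho_0$ near $\Lambda_\pm$ ensures that the $\phi'(\rho_0)$ contribution has a fixed sign, and when combined with $\phi'\le0$ this term becomes a bounded quadratic form supported on an annulus around $\Lambda_\pm$, which lies in the elliptic set of $A$ by hypothesis.

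With this sign structure in hand, rearranging the positive commutator identity gives an estimate of the form
\begin{equation*}
\|Bu\|_{H^s}^2 \le C\bigl(\|GP_\sigma^\ast u\|_{H^{s-\difford+1}}^2 + \|Au\|_{H^s}^2 + \|u\|_{H^{-N}}^2\bigr),
\end{equation*}
where $B$ has principal symbol controlled by $|a|$ times an elliptic factor at $\Lambda_\pm$, the $\|Au\|_{H^s}$ term absorbs the annulus contribution, and the $\|u\|_{H^{-N}}$ term absorbs all lower-order remainders. The $Q_\sigma$ contribution is negligible because $\WF'(G)\cap\WF'(Q_\sigma)=\emptyset$ and the commutant can be chosen with $\WF'(A)\subset\elliptic(G)$, so any $GQ_\sigma^\ast$-type pairings are smoothing on the relevant piece. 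A subsequent application of real principal type propagation \eqref{eq:micro-hyperbolic} for $P_\sigma^\ast$ transports this estimate along bicharacteristics from $\WF'(B)\setminus\Lambda_\pm$ out to $\WF'(A)$, yielding the stated $B,G,A$ configuration.

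The main technical obstacle is the \emph{regularization}, since a priori we only know $u\in H^{-N}$, not $u\in H^s$ near $\Lambda_\pm$, yet the formal computation pairs $A^\ast A u$ of order $-2s+\difford-1$ against $P_\sigma^\ast u$. The standard fix is to insert a regularizer $\Lambda_r \in \Psi^{-1}(X)$ (say with symbol $(1+r|\xi|^2)^{-1}$) into the commutant, obtain the estimate uniformly in $r\in(0,1]$, and pass to the limit $r\to 0$. For this to succeed one must verify that the commutator $[\Lambda_r,P_\sigma^\ast]$ produces a term with the favorable (non-destructive) sign at $L_\pm$; this is where the below-threshold constraint on $s$, together with the sign of $\sH_p\tilde\rho$ at $L_\pm$, plays its second role, ensuring the regularized weights yield a uniform bound rather than a divergent one. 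Once the regularization is justified, passing to the limit and combining with propagation along bicharacteristics establishes \eqref{eq:radial-in}.
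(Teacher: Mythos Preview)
Your proposal is structurally sound and follows the same positive-commutator scheme as the paper, but you have inverted the emphasis on the one point that actually distinguishes this proposition from Proposition~\ref{prop:micro-out}. The paper (which proves both propositions together) explicitly notes that for the below-threshold estimate \emph{regularization is not an issue}: since $s-(\difford-1)/2+\beta\im\sigma<0$ holds strictly, the regularizer's contribution (the $\delta$-term in \eqref{eq:rad-comm-expand}) has the \emph{same} favorable sign as the main weight term for every $\delta>0$. One can therefore take $\delta$ as large as needed to make the commutant of arbitrarily negative order, so the pairings are well-defined and no iterative bootstrap is required. This is exactly the opposite of Proposition~\ref{prop:micro-out}, where $\delta$ is constrained from above and one must iterate from $H^m$ to $H^s$ in steps.

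What the paper singles out as the genuinely new feature in this direction is that the $\phi'$ term now carries the \emph{wrong} sign relative to the main term. It is this wrong-sign contribution, supported on the annulus $\supp d\phi$, that forces the a~priori $\|Au\|_{H^s}$ term on the right-hand side of \eqref{eq:radial-in}. You do mention this absorption into $\|Au\|_{H^s}$, but you present it as routine while labeling regularization ``the main technical obstacle''; the paper's assessment is the reverse. A minor additional point: the cutoff $\chi(\tilde\rho)$ in your commutant is unnecessary in the classical (homogeneous) setting---the paper uses only $c=\phi(\rho_0)\tilde\rho^{-s+(\difford-1)/2}$; a radial cutoff in $\tilde\rho$ appears only in the semiclassical versions of these propositions.
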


\begin{proof}[Proof of Propositions~\ref{prop:micro-out}-\ref{prop:micro-in}.]
It suffices to prove that there exist $O_j$ open with $L_\pm\subset O_{j+1}\subset O_j$,
$\cap_{j=1}^\infty O_j=L_\pm$, and
$A_j,B_j,G_j$ with $\WF'$ in $O_j$, $B_j$ elliptic on $L_\pm$, such that
the statements of the propositions hold. Indeed, in case of
Proposition~\ref{prop:micro-out} the general case follows by taking
$j$ such that $A,G$ are elliptic on $O_j$, use the estimate for $A_j,B_j,G_j$, where
the right hand side then can be estimated by $A$ and $G$, and then use microlocal
ellipticity, propagation of singularities and a covering argument to prove the
proposition. In case of Proposition~\ref{prop:micro-in}, the general case follows
by taking $j$ such that $G,B$ are elliptic on $O_j$, so all forward (or backward)
bicharacteristics
from $O_j\setminus\Lambda_\pm$ reach $\WF'(A)$, thus microlocal
ellipticity, propagation of singularities and a covering argument proves
$\|A_j u\|_{H^s}\leq C(\|GP_\sigma^* u\|_{H^{s-\difford+1}}+\|Au\|_{H^{s}}+\|u\|_{H^{-N}})$,
and then the special case of the proposition for this $O_j$ gives an estimate
for $\|B_j u\|_{H^s}$ in terms of the same quantities. The full estimate for
$\|B u\|_{H^s}$ is then again a straightforward consequence of
microlocal
ellipticity, propagation of singularities and a covering argument.

We now consider
commutants $C_\ep^*C_\ep$ with $C_\ep\in\Psi^{s-(\difford-1)/2-\delta}(X)$ for $\ep>0$,
uniformly bounded in $\Psi^{s-(\difford-1)/2}(X)$ as $\ep\to 0$; with the
$\ep$-dependence used to regularize the argument. More precisely, let
$$
c=\phi(\rho_0)\tilde\rho^{-s+(\difford-1)/2},\qquad c_\ep=c(1+\ep \tilde\rho^{-1})^{-\delta},
$$
where $\phi\in\CI_c(\RR)$ is identically $1$ near $0$, $\phi'\leq 0$ and
$\phi$ is supported sufficiently close to $0$ so that
\begin{equation}\label{eq:rad-localizer-est}
\rho_0\in\supp d\phi\Rightarrow \mp\tilde\rho^{\difford-1}\sH_p\rho_0>0;
\end{equation}
such $\phi$ exists by \eqref{eq:rho-0-property}.
Note that the sign of $\sH_p \tilde\rho^{-s+(\difford-1)/2}$ depends on the sign of
$-s+(\difford-1)/2$ which explains the difference between $s>(\difford-1)/2$
and $s<(\difford-1)/2$
in Propositions~\ref{prop:micro-out}-\ref{prop:micro-in} when
there are no other contributions to the threshold value of $s$. The contribution of
the subprincipal symbol, however, shifts the critical value $(\difford-1)/2$.

Now let $C\in\Psi^{s-(\difford-1)/2}(X)$
have principal symbol $c$, and have $\WF'(C)\subset \supp \phi\circ\rho_0$, and
let $C_\ep=C S_\ep$, $S_\ep\in\Psi^{-\delta}(X)$ uniformly bounded in
$\Psi^0(X)$ for $\ep>0$, converging
to $\Id$ in $\Psi^{\delta'}(X)$ for $\delta'>0$ as $\ep\to 0$, with principal symbol
$(1+\ep \tilde\rho^{-1})^{-\delta}$. Thus, the principal symbol of $C_\ep$ is $c_\ep$.

First, consider \eqref{eq:radial-out}. 
Then
\begin{equation}\begin{split}\label{eq:rad-comm-expand-1}
&\sigma_{2s}(\imath(P^*_\sigma C_\ep^*C_\ep-C_\ep^*C_\ep P_\sigma))
=\sigma_{\difford-1}(\imath(P^*_\sigma-P_\sigma)) c_\ep^2
+2c_\ep\sH_p c_\ep\\
&=\mp 2\left(-\tilde\beta\im\sigma\beta_0\phi
+\beta_0\left(-s+\frac{\difford-1}{2}\right)\phi\mp(\tilde\rho^{\difford-1}\sH_p\rho_0)\phi'
+\delta\beta_0 \frac{\ep}{\tilde\rho+\ep}\phi\right)\\
&\hspace{9cm}
\phi\tilde\rho^{-2s}(1+\ep \tilde\rho^{-1})^{-\delta},
\end{split}\end{equation}
so
\begin{equation}\begin{split}\label{eq:rad-comm-expand}
\mp\sigma_{2s}&(\imath(P^*_\sigma C_\ep^*C_\ep-C_\ep^*C_\ep P_\sigma))\\
&\leq
-2\beta_0\left(s-\frac{\difford-1}{2}+\tilde\beta\im\sigma-\delta\right) \tilde\rho^{-2s}(1+\ep \tilde\rho^{-1})^{-\delta}\phi^2\\
&\qquad\qquad+2(\mp\tilde\rho^{\difford-1}\sH_p\rho_0)
\tilde\rho^{-2s}(1+\ep \tilde\rho^{-1})^{-\delta}\phi'\phi
.
\end{split}\end{equation}
Here the first term on the right hand side
is negative if $s-(\difford-1)/2+\beta\im\sigma-\delta>0$
(since $\tilde\beta\im\sigma\geq\beta\im\sigma$ by our definition of $\beta$),
and this is the same sign as that of $\phi'$ term;
the presence of $\delta$ (needed for the regularization) is the reason for the
appearance of $m$ in the estimate.
To avoid using the sharp G{\aa}rding inequality,
we choose $\phi$ so that $\sqrt{-\phi\phi'}$
is $\CI$, and then
\begin{equation*}
\imath(P^*_\sigma C_\ep^*C_\ep-C_\ep^*C_\ep P_\sigma)=-S_\ep^*(B^*B+ B_1^*B_1+B_{2,\ep}^* B_{2,\ep})S_\ep+F_\ep,
\end{equation*}
with $B,B_1,B_{2,\ep}\in\Psi^s(X)$, $B_{2,\ep}$ uniformly bounded in $\Psi^s(X)$ as
$\ep\to 0$,
$F_\ep$ uniformly bounded in
$\Psi^{2s-1}(X)$, and $\sigma_s(B)$ an elliptic multiple of $\phi(\rho_0)\tilde\rho^{-s}$.
Computing the pairing, using an extra regularization (insert a
regularizer $\Lambda_r\in\Psi^{-1}(X)$, uniformly bounded in
$\Psi^0(X)$, converging to $\Id$ in $\Psi^\delta(X)$ to justify
integration by parts, and use that $[\Lambda_r,P^*_\sigma]$ is
uniformly bounded in $\Psi^{1}(X)$, converging to $0$ strongly,
cf.\ \cite[Lemma~17.1]{Vasy:Propagation-2} and its use in \cite[Lemma~17.2]{Vasy:Propagation-2}) yields
$$
\langle \imath(P^*_\sigma C_\ep^*C_\ep-C_\ep^*C_\ep P_\sigma)u,u\rangle=
\langle \imath C_\ep^*C_\ep u,P_\sigma u\rangle-\langle\imath P_\sigma,C_\ep^*C_\ep u\rangle.
$$
Using Cauchy-Schwartz on the right hand side, a standard functional analytic argument
(see, for instance, Melrose \cite[Proof of Proposition~7 and Section~9]{RBMSpec})
gives an estimate for $Bu$, showing $u$ is in $H^s$ on the elliptic set of $B$,
provided $u$ is microlocally in $H^{s-\delta}$. A standard
inductive argument, starting with $s-\delta=m$ and improving regularity by $\leq 1/2$
in each step proves \eqref{eq:radial-out}.

For \eqref{eq:radial-in}, the argument
is similar, but we want to change the sign of the first term on the right hand side of
\eqref{eq:rad-comm-expand}, i.e.\ we want it to be positive. This
is satisfied if
$s-(\difford-1)/2+\beta\im\sigma-\delta<0$
(since $\tilde\beta\im\sigma\leq\beta\im\sigma$ by our definition of $\beta$
in Proposition~\ref{prop:micro-in}), hence (as $\delta>0$)
if $s-(\difford-1)/2+\beta\im\sigma<0$, so regularization is not an issue.
On the other hand, $\phi'$ now has
the wrong sign, so one needs to make an assumption on $\supp d\phi$, which
is the $Au$ term in \eqref{eq:radial-in}. Since the details are standard,
see \cite[Section~9]{RBMSpec}, we leave these to the reader.
\end{proof}

\begin{rem}\label{rem:stable}
Fixing a $\phi$, it follows from the proof that the same $\phi$ works for (small)
smooth
perturbations of $P_\sigma$ with real principal
symbol\footnote{Reality is needed to ensure that \eqref{eq:rad-comm-expand-1} holds.},
even if those perturbations do not preserve
the event horizon, namely even if \eqref{eq:rho-0-property} does not hold any more:
only its implication, \eqref{eq:rad-localizer-est},
on $\supp d\phi$ matters, which {\em is} stable under
perturbations. Moreover, as the rescaled Hamilton vector field $\rhot^{\difford-1}\sH_p$
is a smooth vector field tangent to the boundary of the fiber-compactified
cotangent bundle, i.e.\ a b-vector field, and as such depends smoothly on
the principal symbol, and it is {\em non-degenerate} radially by
\eqref{eq:weight-definite}, the weight,
which provides the positivity at the radial points in the
proof above, still gives a positive Hamilton derivative for small perturbations.
Since this proposition thus holds for $\CI$ perturbations
of $P_\sigma$ with real principal symbol,
and this proposition is the only delicate estimate we use,
and it is only marginally so, we deduce that
all the other results below also hold in this generality.
\end{rem}

\subsection{Complex absorption}\label{subsec:complex-absorb}
Finally, one has propagation
estimates for complex absorbing operators, requiring a sign condition.
We refer to, for instance, \cite{Nonnenmacher-Zworski:Quantum}
and \cite[Lemma~5.1]{Datchev-Vasy:Gluing-prop} in the semiclassical setting;
the changes are minor in the `classical' setting. We also give a
sketch of the main `commutator' calculation below.

First, one can propagate regularity to $\WF'(Q_\sigma)$ (of course, in the elliptic
set of $Q_\sigma$ one has a priori regularity). Namely, for all $s$ and $N$,
and for all $A,B,G\in\Psi^0(X)$ such that $q\leq 0$, resp.\ $q\geq 0$, on $\WF'(G)$,
and forward, resp.\ backward, bicharacteristics of $P_\sigma$
from $\WF'(B)$ reach the elliptic set of $A$, while remaining in the elliptic set of $G$,
one has the usual propagation estimates
$$
\|Bu\|_{H^s}\leq C(\|G(P_\sigma-\imath Q_\sigma) u\|_{H^{s-\difford+1}}+\|Au\|_{H^s}+\|u\|_{H^{-N}}).
$$
Thus, for $q\geq 0$ one can propagate regularity in the forward direction along
the Hamilton flow, while for $q\leq 0$ one can do so in the backward direction.

On the other hand, one can propagate regularity away from the elliptic set
of $Q_\sigma$. Namely, for all $s$ and $N$,
and for all $B,G\in\Psi^0(X)$ such that $q\leq 0$, resp.\ $q\geq 0$, on $\WF'(G)$,
and forward, resp.\ backward, bicharacteristics of $P_\sigma$
from $\WF'(B)$ reach the elliptic set of $Q_\sigma$,
while remaining in the elliptic set of $G$,
one has the usual propagation estimates
$$
\|Bu\|_{H^s}\leq C(\|G(P_\sigma-\imath Q_\sigma) u\|_{H^{s-\difford+1}}+\|u\|_{H^{-N}}).
$$
Again, for $q\geq 0$ one can propagate regularity in the forward direction along
the Hamilton flow, while for $q\leq 0$ one can do so in the backward direction.
At the cost of reversing the signs of $q$, this also gives that
for all $s$ and $N$,
and for all $B,G\in\Psi^0(X)$ such that $q\geq 0$, resp.\ $q\leq 0$, on $\WF'(G)$,
and forward, resp.\ backward, bicharacteristics of $P_\sigma$
from $\WF'(B)$ reach the elliptic set of $Q_\sigma$,
while remaining in the elliptic set of $G$,
one has the usual propagation estimates
$$
\|Bu\|_{H^s}\leq C(\|G(P_\sigma^*+\imath Q_\sigma^*) u\|_{H^{s-\difford+1}}+\|u\|_{H^{-N}}).
$$

We remark that again, these estimates are stable under small
perturbations in $\Psi^\difford(X)$ of $P_\sigma$ and $Q_\sigma$
provided the perturbed operators still have real principal symbols,
and in the case of $Q_\sigma$, satisfy $q\geq 0$. This can be easily
seen by following the proof of
\cite[Lemma~5.1]{Datchev-Vasy:Gluing-prop}; the role of the absorbing
potential $W\geq 0$ there is played by the formally
self-adjoint operator
$\tilde Q_\sigma=\frac{1}{2}(Q_\sigma+Q_\sigma^*)$ with principal symbol $q$
here. Although there $W$ is a function on $X$
(rather than a general pseudodifferential operator), the only
properties that matter in the present notation are that
the principal symbols are real and $q\geq 0$. Indeed, in this case, writing
$C$ (analogously to the proof of
Propositions~\ref{prop:micro-out}-\ref{prop:micro-in} here)
instead of $Q$ for the commutant of
\cite[Lemma~5.1]{Datchev-Vasy:Gluing-prop} to avoid confusion, and
denoting its (real) principal symbol by $c$, and
letting $\tilde
P_\sigma=P_\sigma+\frac{1}{2\imath}(Q_\sigma-Q_\sigma^*)$, so
$P_\sigma-\imath Q_\sigma=\tilde P_\sigma-\imath\tilde Q_\sigma$, and
the principal symbol of the formally self-adjoint operator $\tilde P_\sigma$
is $p$, we have
\begin{equation}\label{eq:expand-cx-pair}
\langle u,-\imath [C^*C,\tilde P_\sigma]u\rangle=-2\re\langle
u,\imath C^*C(P_\sigma-\imath Q_\sigma)u\rangle-2\re\langle
u,C^*C\tilde Q_\sigma u\rangle.
\end{equation}
The operator on the left hand side has principal symbol $\sH_pc^2$,
and will preserve its signs under sufficiently small perturbations of $p$ using the same
construction of $c$ as in \cite[Lemma~5.1]{Datchev-Vasy:Gluing-prop}
(which is just a real-principal type construction), much as in the
radial point setting discussed in the previous subsection. On the
other hand, the second term on the right hand side can be rewritten as
$$
2\re\langle u,C^*C\tilde Q_\sigma u\rangle=2\re\langle u,C^*\tilde
Q_\sigma C u\rangle+2\re\langle u,C^*[C,\tilde Q_\sigma]u\rangle,
$$
where the second term is $\langle u,[C,[C,\tilde Q_\sigma]]u\rangle$
plus similar pairings involving $(C^*-C)[C,\tilde Q_\sigma$, etc.,
which are all lower order than the operator on the left hand side of
\eqref{eq:expand-cx-pair} due to the real principal symbol of $C$ and
the presence of a commutator, or to the presence of
the double commutator. The first term, on the other hand, is non-negative
modulo terms that can be absorbed into the left hand side of
\eqref{eq:expand-cx-pair}, since by the sharp G{\aa}rding
inequality\footnote{If one assumes that $q$ is microlocally the square
  of a symbol, one need not use the sharp G{\aa}rding inequality.}, 
$\langle u,C^*\tilde Q_\sigma C u\rangle\geq -\langle u, C^* R_\sigma
Cu\rangle$ where $R_\sigma$ is one order lower than $Q_\sigma$, i.e.\
is in $\Psi^{\difford-1}(X)$, and as the principal symbol of
$C^*R_\sigma C$ does not contain derivatives of $c$, an appropriate
choice of $C$ lets one use the $\sH_p c^2$ term,
i.e.\ the principal symbol of the left hand side of
\eqref{eq:expand-cx-pair},  to dominate this,
as
usual in real principal type estimates when subprincipal terms are
dominated (see also the treatment of the $\im\lambda$ term in
\cite[Lemma~5.1]{Datchev-Vasy:Gluing-prop}).

\begin{rem}\label{rem:add-bdy}
As mentioned in the introduction,
these complex absorption methods could be replaced in
specific cases, including all the specific examples we discuss here, by
adding a boundary $\tilde Y$ instead, provided that the Hamilton flow
is well-behaved relative to the base space, namely inside the characteristic
set $\sH_p$ is not tangent to $T^*_{\tilde Y}X$ with orbits crossing $T^*_{\tilde Y}X$
in the opposite directions in $\Sigma_\pm$ in the following way. If $\tilde Y$ is defined
by $\tilde y$ which is positive on `our side' $U$ with $U$ as discussed
at the beginning of Subsection~\ref{subsec:micro-gen}, we need
$\pm\sH_p\tilde y|_{\tilde Y}>0$ on
$\Sigma_\pm$. Then the functional analysis described in
\cite[Proof of Theorem~23.2.2]{Hor}, see also \cite[Proof of Lemma~4.14]{Vasy:AdS},
can be used to prove analogues of the
results we give below on $X_+=\{\tilde y\geq 0\}$. For instance, if one
has a Lorentzian
metric on $X$ near $\tilde Y$, and
$\tilde Y$ is space-like, then  (up to the sign)
this statement holds with $\Sigma_\pm$ being the two components of
the characteristic set.
However, in the author's opinion, this detracts from the clarity of the microlocal
analysis by introducing projection to physical space in an essential way.
\end{rem}

\subsection{Global estimates}
Recall now that $q\geq 0$ near $\Sigma_-$, and $q\leq 0$
on $\Sigma_+$, and recall our non-trapping assumptions, i.e.\ \eqref{eq:non-trapping-impl}.
Thus, we can piece together the estimates described earlier (elliptic,
real principal type, radial points, complex absorption) to propagate
estimates forward in $\Sigma_-$ and backward in $\Sigma_+$, thus away
from $\Lambda_\pm$ (as well as from one end of a bicharacteristic which
intersects the elliptic set of $q$ in both directions). This yields
that for any $N$, and for any $s\geq m>(\difford-1-\beta\im\sigma)/2$, and for any $A\in\Psi^0(X)$ elliptic
at $\Lambda_+\cup\Lambda_-$,
$$
\|u\|_{H^s}\leq C(\|(P_\sigma-\imath Q_\sigma)u\|_{H^{s-\difford+1}}+\|Au\|_{H^m}+\|u\|_{H^{-N}}).
$$ 
This implies that
for any $s> m>(\difford-1-\beta\im\sigma)/2$,
\begin{equation}\label{eq:unique-est}
\|u\|_{H^s}\leq C(\|(P_\sigma-\imath Q_\sigma)u\|_{H^{s-\difford+1}}+\|u\|_{H^m}).
\end{equation}
On the other hand, recalling
that the adjoint switches the sign of the imaginary part of the
principal symbol and also that of the subprincipal
symbol at the radial sets, propagating the estimates in the other direction,
i.e.\ backward in $\Sigma_-$ and forward in $\Sigma_+$, thus
towards $\Lambda_\pm$, from the elliptic set of $q$,
we deduce that for any $N$ (which we take to satisfy $s'>-N$)
and for any $s'<(\difford-1+\beta\im\sigma)/2$,
\begin{equation}\label{eq:exist-est}
\|u\|_{H^{s'}}\leq C(\|(P_\sigma^*+\imath
Q_\sigma^*)u\|_{H^{s'-\difford+1}}+\|u\|_{H^{-N}}).
\end{equation}
Note that the dual of $H^s$, $s>(\difford-1-\beta\im\sigma)/2$, is $H^{-s}=H^{s'-\difford+1}$, $s'=\difford-1-s$, so $s'<(\difford-1+\beta\im\sigma)/2$,
while the dual of $H^{s-\difford+1}$, $s>(\difford-1-\beta\im\sigma)/2$, is $H^{\difford-1-s}=H^{s'}$, with $s'=\difford-1-s<(\difford-1+\beta\im\sigma)/2$ again.
Thus, the spaces (apart from the residual spaces, into which the inclusion is
compact) in the left, resp.\ right, side of \eqref{eq:exist-est}, are exactly
the duals of those on the right, resp.\ left, side of \eqref{eq:unique-est}.
Thus, by a standard functional analytic argument,
see e.g.\ \cite[Proof of Theorem~26.1.7]{Hor} or indeed \cite[Section~4.3]{Vasy:Microlocal-AH}
in the present context,
namely dualization
and using the compactness
of the inclusion $H^{s'}\to H^{-N}$ for $s'>-N$, this gives the solvability of
$$
(P_\sigma-\imath Q_\sigma)u=f,\ s>(\difford-1-\beta\im\sigma)/2,
$$
for $f$ in the annihilator in $H^{s-\difford+1}$ (via the duality between $H^{s-\difford+1}$
and $H^{-s+\difford-1}$ induced by the $L^2$-pairing)
of the finite dimensional subspace $\Ker(P_\sigma^*+\imath Q_\sigma^*)$
of $H^{-s+\difford-1}=H^{s'}$,
and
indeed elements of this finite dimensional subspace have wave front
set\footnote{Since the original version of this paper, the work of
  Haber and Vasy \cite{Haber-Vasy:Radial} showed that elements of this kernel are in fact
  Lagrangian distributions, i.e.\ they possess iterative regularity
  under the module of first order pseudodifferential operators with
  principal symbol vanishing on the Lagrangian.}
in $\Lambda_+\cup\Lambda_-$ and lie in $\cap_{s'<(\difford-1+\beta\im\sigma)/2}H^{s'}$.
Thus, there is the usual real principal type loss of one derivative relative
to the elliptic problem, and in addition,
there are restrictions on the orders for which is valid.

In addition, one also has almost uniqueness by a standard compactness
argument
(using the compactness of the inclusion of $H^s$ into $H^m$ for $s>m$), by
\eqref{eq:unique-est}, namely not only is
the space of $f$ in the space as above is finite codimensional, but the nullspace of $P_\sigma-\imath Q_\sigma$ on $H^s$, $s>(\difford-1-\beta\im\sigma)/2$,
is also finite dimensional, and its elements are in $\CI(X)$; again,
see \cite[Section~4.3]{Vasy:Microlocal-AH} for details in this setup.

In order to analyze the $\sigma$-dependence of solvability of the PDE,
we reformulate our problem as a more conventional Fredholm problem.
Thus, let $\tilde P$ be any
operator with principal symbol $p-\imath q$; e.g.\ $\tilde P$ is
$P_{\sigma_0}-\imath Q_{\sigma_0}$ for some
$\sigma_0$. Then consider
\begin{equation}\label{eq:XY-def}
\cX^s=\{u\in H^s:\ \tilde Pu\in H^{s-\difford+1}\},\ \cY^s=H^{s-\difford+1},
\end{equation}
with
$$
\|u\|_{\cX^s}^2=\|u\|_{H^s}^2+\|\tilde Pu\|^2_{H^{s-\difford+1}}.
$$
Note that $\cX^s$ only depends on the principal symbol of $\tilde P$. Moreover,
$\CI(X)$ is dense in $\cX^s$; this follows by considering $R_\ep\in\Psi^{-\infty}(X)$,
$\ep>0$,
such that $R_\ep\to\Id$ in $\Psi^\delta(X)$ for $\delta>0$, $R_\ep$ uniformly
bounded in $\Psi^0(X)$; thus $R_\ep\to\Id$ strongly (but not in the operator norm
topology) on $H^s$ and $H^{s-\difford+1}$.
Then for $u\in \cX^s$, $R_\ep u\in\CI(X)$ for $\ep>0$, $R_\ep u\to u$
in $H^s$ and $\tilde P R_\ep u=R_\ep\tilde Pu+[\tilde P,R_\ep]u$, so the first
term on the right converges to $\tilde Pu$ in $H^{s-\difford+1}$, while $[\tilde P,R_{\ep}]$
is uniformly bounded in $\Psi^{\difford-1}(X)$, converging to $0$ in $\Psi^{\difford-1+\delta}(X)$ for
$\delta>0$, so converging to $0$ strongly as a map $H^s\to H^{s-\difford+1}$. Thus,
$[\tilde P,R_\ep]u\to 0$ in $H^{s-\difford+1}$, and
we conclude that $R_\ep u\to u$ in $\cX^s$.
(In fact, $\cX^s$ is a first-order coisotropic space, more general
function spaces of this nature are discussed
by Melrose, Vasy and Wunsch in \cite[Appendix~A]{Melrose-Vasy-Wunsch:Corners}.)

With these preliminaries,
$$
P_\sigma-\imath Q_\sigma:\cX^s\to \cY^s
$$
is Fredholm for each $\sigma$ with $s\geq m>(\difford-1-\beta\im\sigma)/2$,
and is an analytic family of bounded operators in this half-plane of $\sigma$'s.

\begin{thm}\label{thm:classical-absorb}
Let $P_\sigma$, $Q_\sigma$ be as above,
and $\cX^s$, $\cY^s$ as in \eqref{eq:XY-def}.
If $\difford-1-2s>0$, let $\beta=\beta_{\inf}$, if
$\difford-1-2s<0$, let $\beta=\beta_{\sup}$.
Then
$$
P_\sigma-\imath Q_\sigma:\cX^s\to\cY^s
$$
is an analytic family of Fredholm operators on $\Cx_s\cap\Omega$, where
\begin{equation}\label{eq:Cx-s-def}
\Cx_s=\{\sigma\in\Cx:\ \im\sigma>\beta^{-1}(\difford-1-2s)\}.
\end{equation}
\end{thm}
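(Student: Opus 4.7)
The plan is to derive Theorem~\ref{thm:classical-absorb} by packaging together the two global estimates \eqref{eq:unique-est} and \eqref{eq:exist-est} that have already been assembled, together with a standard functional-analytic argument using the compactness of Sobolev inclusions. The key observation is that the half-plane $\Cx_s$ is defined precisely so that the threshold condition $s>(\difford-1-\beta\im\sigma)/2$ appearing in \eqref{eq:unique-est} is satisfied (with the correct choice of $\beta=\beta_{\inf}$ when $\difford-1-2s>0$ and $\beta=\beta_{\sup}$ when $\difford-1-2s<0$), and, dualizing, so that $s'=\difford-1-s<(\difford-1+\beta\im\sigma)/2$ is satisfied for the adjoint estimate.

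First, I would fix $\sigma\in\Cx_s\cap\Omega$ and pick $m$ with $s>m>(\difford-1-\beta\im\sigma)/2$. Then \eqref{eq:unique-est} reads $\|u\|_{H^s}\leq C(\|(P_\sigma-\imath Q_\sigma)u\|_{H^{s-\difford+1}}+\|u\|_{H^m})$ for $u\in \cX^s$, where the left hand side controls $\|u\|_{\cX^s}$ because $\|(P_\sigma-\imath Q_\sigma)u\|_{\cY^s}$ is one of the right hand terms and the discrepancy between $P_\sigma-\imath Q_\sigma$ and the fixed reference operator $\tilde P$ defining $\cX^s$ is of order $\difford-1$ (hence lower order and absorbed). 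Since $H^s\hookrightarrow H^m$ is compact and the other term on the right is bounded by $\|u\|_{\cX^s}$, this is a semi-Fredholm estimate: it yields finite-dimensional kernel and closed range for $P_\sigma-\imath Q_\sigma:\cX^s\to\cY^s$.

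For finite codimension of the range, I would identify the annihilator of the range in the dual of $\cY^s$. The dual of $\cY^s=H^{s-\difford+1}$ is $H^{-s+\difford-1}=H^{s'}$ with $s'=\difford-1-s$, and on $\Cx_s$ the inequality $s'<(\difford-1+\beta\im\sigma)/2$ holds with the $\beta$ relevant for the adjoint statement (the sign of $\difford-1-2s'=2s-\difford+1$ is opposite, consistent with the $\beta_{\sup}/\beta_{\inf}$ swap in Proposition~\ref{prop:micro-in}). Therefore \eqref{eq:exist-est} applies and gives $\|v\|_{H^{s'}}\leq C(\|(P_\sigma^*+\imath Q_\sigma^*)v\|_{H^{s'-\difford+1}}+\|v\|_{H^{-N}})$, and by the same compactness argument the kernel of $P_\sigma^*+\imath Q_\sigma^*$ on $H^{s'}$ is finite-dimensional. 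A standard duality argument (as in \cite[Theorem~26.1.7]{Hor} or \cite[Section~4.3]{Vasy:Microlocal-AH}) then identifies this kernel with the annihilator of the range, giving finite codimension and hence Fredholmness.

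Analyticity is straightforward because $\sigma\mapsto P_\sigma-\imath Q_\sigma$ is analytic as a family of bounded operators $\cX^s\to\cY^s$ (the space $\cX^s$ is defined via a fixed $\tilde P$ and depends only on its principal symbol, so is independent of $\sigma$), and analyticity of a Fredholm-valued family reduces to analyticity as a bounded-operator-valued family on any region where Fredholmness holds uniformly, which is the case on each compact subset of $\Cx_s\cap\Omega$. The only mildly delicate point, and the one I would be most careful about, is the bookkeeping of which of $\beta_{\inf},\beta_{\sup}$ is relevant for the estimate versus its adjoint, and verifying that the definition of $\Cx_s$ in \eqref{eq:Cx-s-def} simultaneously places us in the valid range for both \eqref{eq:unique-est} and \eqref{eq:exist-est} with $s'=\difford-1-s$; once this bookkeeping is checked the rest is entirely standard.
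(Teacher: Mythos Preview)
Your proposal is correct and follows essentially the same approach as the paper: the paper does not give a separate formal proof after the theorem statement, but the discussion in the ``Global estimates'' subsection immediately preceding it assembles exactly the ingredients you describe---the estimate \eqref{eq:unique-est}, its dual \eqref{eq:exist-est} with $s'=\difford-1-s$, compactness of the Sobolev inclusion, and the standard functional-analytic argument from \cite[Theorem~26.1.7]{Hor}---and then packages them into the Fredholm statement on $\cX^s\to\cY^s$. Your treatment of the $\beta_{\inf}/\beta_{\sup}$ bookkeeping and of analyticity (via the $\sigma$-independence of $\cX^s$) also matches what the paper sketches.
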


Thus, analytic Fredholm theory applies, giving meromorphy of the inverse
provided the inverse exists for a particular value of $\sigma$.

\begin{rem}\label{rem:dual-Fredholm}
Note that the Fredholm property means that $P_\sigma^*+\imath Q_\sigma^*$ is
also Fredholm on the dual spaces; this can also be seen directly from
the estimates; rather than being a holomorphic family, it is an
anti-holomorphic family.
The analogue of this remark also applies to the semiclassical discussion below.
\end{rem}

\begin{rem}\label{rem:inv-indep-of-s}
Note that if $s'>s\geq m>(\difford-1-\beta\im\sigma)/2$ and if
$P_\sigma-\imath Q_\sigma: \cX^s\to \cY^s$ and $P_\sigma-\imath
Q_\sigma: \cX^{s'}\to \cY^{s'}$ are both invertible, then, as
$\cX^{s'}\subset\cX^s$ and $\cY^{s'}\subset\cY^s$,
$(P_\sigma-\imath Q_\sigma)^{-1}|_{\cY^{s'}}$ agrees with $(P_\sigma-\imath
Q_\sigma)^{-1}:\cY^{s'}\to\cX^{s'}$. Moreover, as $\cY^{s'}=H^{s'-\difford+1}$ is dense in
$\cY^s$, $(P_\sigma-\imath Q_\sigma)^{-1}:\cY^{s'}\to\cX^{s'}$ determines $(P_\sigma-\imath
Q_\sigma)^{-1}:\cY^s\to\cX^s$, i.e.\ if $A:\cY^s\to\cX^s$ is
continuous and
$A|_{\cY^{s'}}$ is $(P_\sigma-\imath
Q_\sigma)^{-1}:\cY^{s'}\to\cX^{s'}$ then $A$ is $(P_\sigma-\imath
Q_\sigma)^{-1}:\cY^{s}\to\cX^{s}$.
Thus, in this sense, $(P_\sigma-\imath
Q_\sigma)^{-1}$ is independent of $s$ (satisfying $s\geq m>(\difford-1-\beta\im\sigma)/2$).
\end{rem}

\subsection{Stability}\label{subsec:stability}
We also want to understand the behavior of $P_\sigma-\imath Q_\sigma$
under perturbations. To do so, assume that $P_\sigma=P_\sigma(w)$, $Q_\sigma=Q_\sigma(w)$
depend continuously on a parameter $w\in\RR^l$, with values in
(analytic functions of $\sigma$ with values in)
$\Psi^\difford(X)$ and the principal symbols of $P_\sigma(w)$ and
$Q_\sigma(w)$ are real and independent of $\sigma$ with that of
$Q_\sigma(w)$ being non-negative. We {\em do not} assume that the principal
symbols are independent of $w$, in fact, fixing some $w_0$, we do not
even assume that for $w\neq w_0$ the other assumptions on
$P_\sigma(w)-\imath Q_\sigma(w)$ are satisfied for $w\neq w_0$. (So,
for instance, as already mentioned in Remark~\ref{rem:stable}, the structure of the radial set at
$w_0$ may drastically change for $w\neq w_0$.) However, see
Remark~\ref{rem:stable} for the most delicate part, our estimates
at $w_0$ are stable just under the assumption of continuous dependence
with values $\Psi^\difford(X)$, thus there exists $\delta_0>0$ such
that for $|w-w_0|<\delta_0$, we have uniform versions of the
estimates
\eqref{eq:unique-est}-\eqref{eq:exist-est}, i.e.\ the constant $C$ and
the orders $m$ and $N$ can be taken to be uniform in these
(independent of $w$), so e.g.
\begin{equation}\label{eq:unique-est-unif}
\|u\|_{H^s}\leq C(\|(P_\sigma(w)-\imath Q_\sigma(w))u\|_{H^{s-\difford+1}}+\|u\|_{H^m}).
\end{equation}
Thus,
$P_\sigma(w)-\imath
Q_\sigma(w):\cX^s(w)\to\cY^s(w)$ is Fredholm, depending analytically
on $\sigma$, for each $w$ with
$|w-w_0|<\delta_0$,
$\cY^s(w)=\cY^s=H^{s-\difford+1}$ is independent of $w$ (and of $\sigma$), but
$\cX^s(w)=\{u\in H^s:\ P_\sigma(w)u\in H^{s-\difford+1}\}\subset H^s$
does depend on $w$ (but not on $\sigma$).
We claim, however, that,
assuming that $(P_\sigma(w_0)-\imath Q_\sigma(w_0))^{-1}$ is
meromorphic in $\sigma$
(i.e.\ the inverse exists at
least at one point $\sigma$),
$(P_\sigma(w)-\imath Q_\sigma(w))^{-1}$ is also meromorphic in
$\sigma$ for $w$ close to $w_0$, and it depends continuously
on $w$ in the weak operator topology of $\cL(\cY^s,H^s)$, and thus in
the norm topology of $\cL(H^{s-\difford+1+\ep},H^{s-\ep})$ for $\ep>0$.

To see this, note first that if $P_{\sigma_0}(w_0)-\imath Q_{\sigma_0}(w_0):\cX^s(w_0)\to\cY^s$
is invertible, then so is $P_{\sigma}(w)-\imath
Q_{\sigma}(w):\cX^s(w)\to\cY^s$
for $w$ near $w_0$ and $\sigma$
near $\sigma_0$. Once this is shown, the meromorphy of
$(P_\sigma(w)-\imath Q_\sigma(w))^{-1}$ follows when $w$ is close to
$w_0$, with this operator
being the inverse of an analytic Fredholm family which is invertible
at a point. To see the invertibility of $P_\sigma(w)-\imath
Q_\sigma(w)$ for $w$ near $w_0$ and $\sigma$ near $\sigma_0$, first suppose
there exist sequences $w_j\to w_0$ and $\sigma_j\to\sigma_0$
such that $P_{\sigma_j}(w_j)-\imath
Q_{\sigma_j}(w_j)$ is not invertible, so either $\Ker(P_{\sigma_j}(w_j)-\imath
Q_{\sigma_j}(w_j))$ on $H^s$ or $\Ker(P_{\sigma_j}(w_j)^*+\imath
Q_{\sigma_j}(w_j)^*)$ on $(H^{s-\difford+1})^*$ is non-trivial in view of
the preceding Fredholm discussion. By passing to a subsequence, we may
assume that the same one of these two possibilities holds for all $j$, and
as the case of the adjoint is completely analogous, we may also assume
that $\Ker(P_{\sigma_j}(w_j)-\imath Q_{\sigma_j}(w_j))$ on $H^s$ is
non-trivial for all $j$. Now, if $u_j\in H^s$, $\|u_j\|_{H^s}=1$, and
$(P_{\sigma_j}(w_j)-\imath Q_{\sigma_j}(w_j))u_j=0$ then
\eqref{eq:unique-est-unif} gives $1\leq C\|u_j\|_{H^m}$. Now, $u_j$
has a weakly convergent subsequence in $H^s$ to some $u_0\in H^s$, which is thus
norm-convergent in $H^m$; so $(P_{\sigma_0}(w_0)-\imath
Q_{\sigma_0}(w_0))u_0=0$. Since $1\leq C\|u_j\|_{H^m}$, and the
subsequence is norm-convergent in $H^m$, $u_0\neq 0$, and thus
$\Ker(P_{\sigma_0}(w_0)-\imath Q_{\sigma_0}(w_0))$ on $H^s$ is
non-trivial, so $P_{\sigma_0}(w_0)-\imath Q_{\sigma_0}(w_0)$ is not
invertible, proving our claim.

So suppose now that $f_j\in H^{s-\difford+1}$ and
$\|f_j\|_{H^{s-\difford+1}}\leq 1$. Let $w_j\to w_0$,
$\sigma_j\to\sigma_0$ (with $w_j$ sufficiently close to $w_0$,
$\sigma_j$ sufficiently close to $\sigma_0$ for invertibility), and let $u_j=(P_{\sigma_j}(w_j)-\imath
Q_{\sigma_j}(w_j))^{-1}f_j$. Suppose first that $u_j$ is not bounded
in $H^s$, and let $v_j=\frac{u_j}{\|u_j\|_{H^s}}$. Then by \eqref{eq:unique-est-unif},
$1\leq C(\|u_j\|_{H^s}^{-1}+\|v_j\|_{H^m})$, so for $j$ sufficiently
large, $\|v_j\|_{H^m}\geq\frac{1}{2C}$. On the other hand, a
subsequence $v_{j_r}$ of $v_j$ converges weakly to some $v_0$ in $H^s$, and
$\frac{f_{j_r}}{\|u_{j_r}\|_{H^s}}=(P_{\sigma_{j_r}}(w_{j_r})-\imath Q_{\sigma_{j_r}}(w_{j_r}))v_{j_r}\to
(P_{\sigma_0}(w_0)-\imath Q_{\sigma_0}(w_0)) v_0$ weakly in $H^{s-\difford}$,
so as the left hand side converges to $0$ in $H^{s-\difford+1}$,
$(P_{\sigma_0}(w_0)-\imath Q_{\sigma_0}(w_0)) v_0=0$. As $v_{j_r}\to
v_0$ in norm in $H^m$, we deduce that $v_0\neq 0$, contradicting the
invertibility of $P_{\sigma_0}(w_0)-\imath Q_{\sigma_0}(w_0)$. Thus,
$u_j$ is uniformly bounded in $H^s$.

Next, suppose that $f_j\to f$ in
$H^{s-\difford+1}$, so $u_j$ is bounded in $H^s$ by what we just showed. Then any subsequence
of $u_j$ has a weakly convergent subsequence $u_{j_r}$ with some limit $u_0\in
H^s$. Then $f_{j_r}=(P_{\sigma_{j_r}}(w_{j_r})-\imath Q_{\sigma_{j_r}}(w_{j_r}))u_{j_r}\to
(P_{\sigma_0}(w_0)-\imath Q_{\sigma_0}(w_0)) u_0$ weakly in $H^{s-\difford}$,
so $(P_{\sigma_0}(w_0)-\imath Q_{\sigma_0}(w_0)) u_0=f$. By the
injectivity of $P_{\sigma_0}(w_0)-\imath Q_{\sigma_0}(w_0)$, $u_0$ is
thus independent of the subsequence of $u_j$, i.e.\ every subsequence
of $u_j$ has a subsequence converging weakly to $u_0$, and thus $u_j$
converges weakly to $u_0$ in $H^s$. This gives the convergence of 
$(P_{\sigma}(w)-\imath Q_{\sigma}(w))^{-1}$ to
$(P_{\sigma_0}(w_0)-\imath Q_{\sigma_0}(w_0))^{-1}$ in the weak
operator topology on $\cL(\cY^s,H^s)$ as $\sigma\to\sigma_0$ and $w\to
w_0$, and thus in the norm topology
on $\cL(\cY^{s+\ep},H^{s-\ep})$ for $\ep>0$.

\subsection{Semiclassical estimates}\label{subsec:semi-abstract}
For reasons of showing meromorphy of the inverse, and also for wave propagation,
we also want to know the $|\sigma|\to\infty$ asymptotics
of $P_\sigma-\imath\sigma$ and $P_\sigma^*+\imath Q_\sigma^*$; here
$P_\sigma,Q_\sigma$ are operators with a large parameter.
As discussed earlier, this can be translated into a
semiclassical problem, i.e.\ one obtains families of operators $P_{h,z}$,
with $h=|\sigma|^{-1}$, and $z$ corresponding to $\sigma/|\sigma|$ in the unit
circle in $\Cx$. As usual, we multiply through by $h^\difford$ for convenient notation
when we define $P_{h,z}$:
$$
P_{h,z}=h^\difford P_{h^{-1}z}\in\Psihcl^\difford(X).
$$

From now on, we merely require $P_{h,z},Q_{h,z}\in\Psihcl^\difford(X)$.
Then the semiclassical principal symbol $p_{\semi,z}$, $z\in O\subset\Cx$,
$0\notin\overline{O}$ compact, which is a function
on $T^*X$, has limit $p$ at infinity in the fibers of the cotangent bundle, so is
in particular real in the limit. More precisely,
as in the classical setting, but now $\rhot$ made smooth at the zero section as well
(so is not homogeneous there), we consider
$$
\rhot^\difford p_{\semi,z}
\in\CI(\overline{T}^*X\times O);
$$
then $\rhot^\difford p_{\semi,z}|_{S^*X\times O}=\rhot^\difford p$, where
$S^*X=\pa \overline{T}^*X$.
We assume that $p_{\semi,z}$ and $q_{\semi,z}$ are real when $z$ is
real.
We shall be interested in $\im z\geq -Ch$, which
corresponds to $\im\sigma\geq -C$ (recall that $\im\sigma\gg 0$ is where
we expect holomorphy). Note that when $\im z=\cO(h)$, $\im p_{\semi,z}$ still
vanishes, as the contribution of $\im z$ is semiclassically subprincipal in view
of the order $h$ vanishing.

We write the semiclassical characteristic set of $p_{\semi,z}$ as
$\Sigma_{\semi,z}$, and sometimes drop the $z$ dependence and write
$\Sigma_{\semi}$ simply; assume
that
$$
\Sigma_{\semi}=\Sigma_{\semi,+}\cup\Sigma_{\semi,-},
\ \Sigma_{\semi,+}\cap\Sigma_{\semi,-}=\emptyset,
$$
$\Sigma_{\semi,\pm}$ are relatively open
in $\Sigma_{\semi}$, and
$$
\pm\im p_{\semi,z}\geq 0\Mand\mp q_{\semi,z}\geq 0\ \text{near}\ \Sigma_{\semi,\pm}.
$$

\begin{figure}[ht]
\begin{center}
\mbox{\epsfig{file=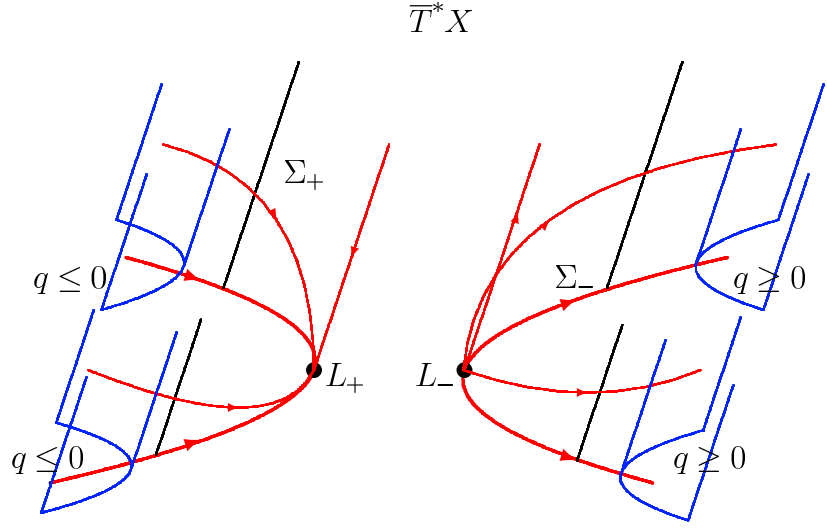}}
\end{center}
\caption{The components $\Sigma_{\semi,\pm}$ of the semiclassical characteristic
set in $\overline{T}^*X$, which are now two-dimensional in the figure.
The cosphere bundle is the horizontal plane at the bottom
of the picture; the intersection of this figure with the cosphere bundle is what
is shown on Figure~\ref{fig:microloc-damp1}.
The submanifolds $L_\pm$ are still points,
with $L_-$ a source, $L_+$ a sink. The red lines are bicharacteristics, with the
thick ones inside $S^*X=\pa\overline{T}^*X$. The blue regions near the edges
show the absorbing region, i.e.\ the support of $q$. For $P_{h,z}-\imath Q_{h,z}$,
the estimates are
always propagated away from $L_\pm$ towards the support of $q$, so in the
direction of the Hamilton flow in $\Sigma_{\semi,-}$, and in the direction
opposite of the Hamilton flow in $\Sigma_{\semi,+}$; for $P_{h,z}^*+\imath Q_{h,z}^*$,
the directions
are reversed.}
\label{fig:microloc-damp-sc1}
\end{figure}

Microlocal results analogous to the classical results also
exist in the semiclassical setting. In the interior of $\overline{T}^*X$,
i.e.\ in $T^*X$, only the microlocal elliptic, real principal type and
complex absorption estimates are relevant. At $L_\pm\subset S^*X$ we in addition need
the analogue of Propositions~\ref{prop:micro-out}-\ref{prop:micro-in}.
As these are the only non-standard estimates, though they are
very similar to estimates of \cite{Vasy-Zworski:Semiclassical}, where, however,
only global estimates were stated, we explicitly state these here and indicate
the very minor changes needed in the proof compared to
Propositions~\ref{prop:micro-out}-\ref{prop:micro-in}.

\begin{prop}
For all $N$, for $s\geq m>(\difford-1-\beta\im\sigma)/2$, $\sigma=h^{-1}z$,
and for all $A,B,G\in\Psih^0(X)$ such that
$\WFh'(G)\cap\WFh'(Q_\sigma)=\emptyset$, $A$ elliptic at $L_\pm$,
and forward (or backward) bicharacteristics
from $\WFh'(B)$ tend to $L_\pm$,
with closure in the elliptic set of $G$, one has estimates
\begin{equation}\label{eq:radial-out-h}
Au\in H^m_h\Rightarrow
\|Bu\|_{H^s_h}\leq C(h^{-1}\|GP_{h,z} u\|_{H^{s-\difford+1}_h}+h\|u\|_{H^{-N}_h}),
\end{equation}
where, as usual, $GP_{h,z}u\in H^{s-\difford+1}_h$ and $u\in H^{-N}_h$ are assumptions implied
by the right hand side.
\end{prop}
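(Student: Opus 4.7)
The plan is to mirror the proof of Propositions~\ref{prop:micro-out}--\ref{prop:micro-in}, transplanted into the semiclassical calculus. As in the classical argument, a covering/shrinking neighborhood reduction (using semiclassical elliptic regularity, real principal type propagation, and the complex absorption estimates of Subsection~\ref{subsec:complex-absorb} to patch small neighborhoods $O_j$ of $L_\pm$ into the general $A,B,G$) reduces matters to proving the estimate with $A_j, B_j, G_j \in \Psih^0(X)$ having $\WFh'$ in $O_j$ and $B_j$ elliptic on $L_\pm$. After this reduction, the only genuinely new ingredient over Propositions~\ref{prop:micro-out}--\ref{prop:micro-in} is carrying out the positive commutator calculation in the semiclassical calculus, on the fiber-compactified cotangent bundle $\overline{T}^*X$, with $L_\pm$ now sitting on the boundary $S^*X$.

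I would take a commutant $C_\ep \in \Psih^{s-(\difford-1)/2-\delta}(X)$ with semiclassical principal symbol
$$c_\ep = \phi(\rho_0)\,\rhot^{-s+(\difford-1)/2}\,(1+\ep\rhot^{-1})^{-\delta},$$
with $\phi \in \CI_c(\RR)$ chosen exactly as in the classical proof so that $\phi' \leq 0$ and \eqref{eq:rad-localizer-est} holds on $\supp d\phi$, and so that $\sqrt{-\phi\phi'} \in \CI$. Because $\rhot^{\difford-1}\sH_p$ is a smooth b-vector field on $\overline{T}^*X$ in view of \eqref{eq:weight-definite}, and $\rho_0$ extends to a smooth defining function of $\Lambda_\pm$ on a neighborhood of $S^*X$, the classical construction of $\phi$ works without change at $L_\pm \subset S^*X$.

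The key identity is the computation of the semiclassical principal symbol of
$$\tfrac{i}{h}\bigl(P_{h,z}^*\,C_\ep^*C_\ep - C_\ep^*C_\ep\,P_{h,z}\bigr),$$
which reproduces \eqref{eq:rad-comm-expand-1}--\eqref{eq:rad-comm-expand} verbatim, now with $p_{\semi,z}$ playing the role of $p$ (and agreeing with $p$ on $S^*X$ after weighting by $\rhot^\difford$). The $\tilde\beta\,\beta_0\,\im\sigma$ contribution at $L_\pm$ appears through the semiclassical subprincipal level: when $\im z = O(h)$, $\tfrac{1}{h}\im p_{\semi,z}$ at $L_\pm$ evaluates to $\pm\tilde\beta\beta_0\,\im\sigma\,\rhot^{-\difford+1}$ via \eqref{eq:subpr-form} and the identification $\im\sigma = h^{-1}\im z$. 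The same sign analysis as in the classical case then yields, with $\sqrt{-\phi\phi'}\in\CI$,
$$\tfrac{i}{h}\bigl(P_{h,z}^*C_\ep^*C_\ep - C_\ep^*C_\ep P_{h,z}\bigr) = \mp S_\ep^*\bigl(B^*B + B_1^*B_1 + B_{2,\ep}^*B_{2,\ep}\bigr) S_\ep + F_\ep + h R_\ep,$$
with $B, B_1, B_{2,\ep} \in \Psih^s(X)$ uniformly bounded, $\sigma_{\semi,s}(B)$ an elliptic multiple of $\phi(\rho_0)\,\rhot^{-s}$, and $F_\ep \in \Psih^{2s-1}(X)$, $R_\ep \in \Psih^{2s+\difford-2}(X)$ uniformly bounded; the term $hR_\ep$ reflects the usual semiclassical subprincipal error beyond the one captured by \eqref{eq:subpr-form}.

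Pairing with $u$, inserting an additional regularizer $\Lambda_r \in \Psi^{-1}(X)$ (uniformly bounded in $\Psi^0(X)$) to justify integration by parts as in the classical proof, and using
$$\left\langle \tfrac{i}{h}\bigl(P_{h,z}^*C_\ep^*C_\ep - C_\ep^*C_\ep P_{h,z}\bigr)u, u\right\rangle = \tfrac{2}{h}\,\im\langle C_\ep^*C_\ep u,\,P_{h,z}u\rangle,$$
together with Cauchy--Schwarz, bounds $\|Bu\|_{H^s_h}^2$ by $Ch^{-1}\|GP_{h,z}u\|_{H^{s-\difford+1}_h}\|C_\ep u\|_{H^s_h}$ plus lower-order semiclassical terms, giving \eqref{eq:radial-out-h} via the standard absorption argument; the $h^{-1}$ is inherited from the pairing identity, while the $h$ on the residual term comes from the $hR_\ep$ error above. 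As in the classical case, a short inductive bootstrap starting at $s-\delta = m$ and improving by at most $1/2$ derivative per step, justified by the a priori assumption $Au \in H^m_h$, removes the regularization $\delta > 0$ and yields the proposition. The main obstacle, and the step needing the most care, is verifying that the $h^{-1}\im z$ contribution to $p_{\semi,z}$ combines correctly with the semiclassical subprincipal symbol of $P_{h,z}$ at $L_\pm$ to produce precisely the term $\tilde\beta\beta_0\im\sigma$ in \eqref{eq:rad-comm-expand-1}; this is where the $\im z = O(h)$ regime (equivalently the half-plane $\Cx_s$ of Theorem~\ref{thm:classical-absorb}) enters in a fundamental way.
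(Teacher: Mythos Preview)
Your overall strategy—run the positive-commutator argument of Propositions~\ref{prop:micro-out}--\ref{prop:micro-in} in the semiclassical calculus—is right, but you have missed the one genuinely new ingredient. You take the commutant $c_\ep = \phi(\rho_0)\,\rhot^{-s+(\difford-1)/2}(1+\ep\rhot^{-1})^{-\delta}$ unchanged from the classical proof and assert that ``the classical construction of $\phi$ works without change at $L_\pm\subset S^*X$.'' The difficulty is that in the semiclassical setting $L_\pm$ is defined in $\overline{T}^*X$ by $\rho_0=0$ \emph{and} $\rhot=0$, whereas $\phi(\rho_0)$ localizes only in the first of these. Your commutant is therefore supported on a full conic set, extending into the interior of $\overline{T}^*X$ where $\rhot$ is not small. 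In the classical argument this is harmless because homogeneity confines the analysis to $S^*X$, so localizing in the degree-zero function $\rho_0$ \emph{is} localizing near $L_\pm$; semiclassically that is no longer true.

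Concretely, the semiclassical commutator has principal symbol governed by $\rhot^{\difford-1}\sH_{p_{\semi,z}}$ rather than $\rhot^{\difford-1}\sH_p$. These agree at $S^*X$ but differ by $O(\rhot)$ in the interior, and on $\supp d\phi$ with $\rhot$ bounded away from zero that correction is not small compared to the main term coming from \eqref{eq:weight-definite}--\eqref{eq:rho-0-property}. So \eqref{eq:rad-comm-expand} does not reproduce ``verbatim'': the sign of the commutator can fail on the part of $\supp c_\ep$ away from fiber infinity, and the argument collapses there. The paper's remedy is to replace the localizer by $\phi(\rho_0+\rhot^2)$, so that $\rho_0+\rhot^2$ is a quadratic defining function of $L_\pm$ in $\overline{T}^*X$; one then checks, using \eqref{eq:weight-definite} and \eqref{eq:rho-0-property} together, that
\[
\mp\rhot^{\difford-1}\big(\sH_p\rho_0+2\rhot\,\sH_p\rhot\big)\geq \beta_1\rho_0+2\beta_0\rhot^2-\cO\big((\rho_0+\rhot^2)^{3/2}\big),
\]
so $\phi$ supported sufficiently close to $0$ gives the required positivity on $\supp d\phi$, and $\mp\rhot^{\difford-2}\sH_p\rhot>0$ on all of $\supp\phi$. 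With this single modification in place, the rest of your outline (pairing, Cauchy--Schwarz with the $h^{-1}$ factor, inductive bootstrap from $m$) is correct.
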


\begin{prop}
For $s<(\difford-1-\beta\im\sigma)/2$, for all $N$, $\sigma=h^{-1}z$,
and for all $A,B,G\in\Psih^0(X)$ such that
$\WFh'(G)\cap\WFh'(Q_\sigma)=\emptyset$, $B,G$ elliptic at $L_\pm$,
and forward (or backward) bicharacteristics
from $\WFh'(B)\setminus L_\pm$ reach $\WFh'(A)$,
while remaining in the elliptic set of $G$, one has estimates
\begin{equation}\label{eq:radial-in-h}
\|Bu\|_{H^s_h}\leq C(h^{-1}\|GP_{h,z}^* u\|_{H^{s-\difford+1}_h}+\|Au\|_{H^{s}_h}+h\|u\|_{H^{-N}_h}).
\end{equation}
\end{prop}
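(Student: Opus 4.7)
The strategy is to adapt the positive commutator argument from Proposition~\ref{prop:micro-in} to the semiclassical setting, using the same commutant structure but tracking the semiclassical factor of $h$ arising from the commutator. After the shrinking-neighborhood reduction used in the proof of Propositions~\ref{prop:micro-out}-\ref{prop:micro-in}, it suffices to prove the estimate for a family $A_j,B_j,G_j$ microsupported in neighborhoods $O_j$ of $L_\pm$ with $B_j,G_j$ elliptic at $L_\pm$; the full statement then follows from semiclassical microlocal elliptic regularity, semiclassical real principal-type propagation, and the complex absorption estimates from Subsection~\ref{subsec:complex-absorb}, with the hypothesis $\WFh'(G)\cap\WFh'(Q_\sigma)=\emptyset$ used to eliminate the absorbing contribution microlocally on $\WFh'(G)$.

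For the local estimate at $L_\pm$, I would take
$$c_\epsilon=\phi(\rho_0)\,\tilde\rho^{-s+(\difford-1)/2}(1+\epsilon\tilde\rho^{-1})^{-\delta},$$
with $\phi\in\CI_c(\RR)$ supported sufficiently close to $0$, identically $1$ near $0$, and $\phi'\leq 0$, exactly as in the proof of Propositions~\ref{prop:micro-out}-\ref{prop:micro-in}, and quantize to $C_\epsilon=\Oph(c_\epsilon)$, uniformly bounded in $\Psih^{s-(\difford-1)/2}(X)$ as $\epsilon\to 0$. The semiclassical principal symbol of $\frac{\imath}{h}[P_{h,z}^*, C_\epsilon^*C_\epsilon]$ then has the structure of \eqref{eq:rad-comm-expand-1}: a Hamilton-derivative piece $\sH_p c_\epsilon^2$ plus a subprincipal contribution. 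The key sign flip is that $P_{h,z}^*$ reverses the sign of the imaginary part of the subprincipal symbol at $L_\pm$, so using $h\im\sigma = \im z$, the subprincipal contribution becomes $\mp\tilde\beta\beta_0(\im z)\tilde\rho^{-\difford+1}c_\epsilon^2$ (with the opposite sign relative to the away-radial case); this is precisely why the $\sup$-$\inf$ convention on $\beta$ is reversed from the previous proposition. Combining the terms as in \eqref{eq:rad-comm-expand}, the coefficient of $\phi^2$ now has the sign opposite to the classical `away' case, and our assumption $s<(\difford-1-\beta\im\sigma)/2$ makes it have the \emph{desired} sign (i.e.\ positive in the convention appropriate for $P_{h,z}^*$) with a strict gap; in particular regularization in $\delta$ is painless.

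By contrast, the $\phi'\phi$ contribution now has the `wrong' sign and cannot be absorbed on the left. Instead we move it to the right-hand side, and since $\supp d\phi$ is disjoint from $L_\pm$, by choosing $\phi$ with sufficiently small support we arrange $\supp d\phi$ to lie in the elliptic set of the given $A$; this term is then controlled by $\|Au\|_{H^s_h}$ and accounts for that norm appearing in \eqref{eq:radial-in-h}. Pairing with $u$ — inserting an additional regularizer $\Lambda_r\in\Psi^{-1}(X)$ exactly as in the proof of Proposition~\ref{prop:micro-in} to justify the integration by parts — and applying Cauchy--Schwarz, the semiclassical gain of one factor of $h$ at the principal symbol level of the commutator delivers $h^{-1}\|GP_{h,z}^* u\|_{H^{s-\difford+1}_h}$ on the right and a residual error of size $h\|u\|_{H^{-N}_h}$ (semiclassical residual operators gain a power of $h$). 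A standard iteration in regularity, raising by $\leq 1/2$ at a time, completes the argument, exactly as in \cite[Section~9]{RBMSpec} and \cite{Vasy-Zworski:Semiclassical}.

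The main obstacle is the combined bookkeeping of the regularization parameter $\epsilon$ and the semiclassical parameter $h$, ensuring that the $\epsilon\to 0$ limit passes through the semiclassical estimate uniformly and that the contributions of $\tilde Q_\sigma = \frac{1}{2}(Q_\sigma + Q_\sigma^*)$ do not spoil the sign of the commutator on $\WFh'(G)$. However, this is completely parallel to the classical argument in Proposition~\ref{prop:micro-in} and to the global semiclassical radial estimates of \cite{Vasy-Zworski:Semiclassical}, with the only substantive change being the semiclassical powers of $h$; no genuinely new analytic input is required.
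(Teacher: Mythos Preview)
Your argument has a genuine gap in the choice of commutant. You take $c_\epsilon=\phi(\rho_0)\,\tilde\rho^{-s+(\difford-1)/2}(1+\epsilon\tilde\rho^{-1})^{-\delta}$, i.e.\ exactly the classical commutant from the proof of Proposition~\ref{prop:micro-in}. But in the semiclassical setting the radial set $L_\pm$ lives at fiber infinity $S^*X=\{\tilde\rho=0\}\subset\overline{T}^*X$, and $\phi(\rho_0)$ alone does not localize there: $\rho_0$ is a (degree-zero) quadratic defining function of $\Lambda_\pm$ \emph{within} $S^*X$, so $\{\rho_0\ \text{small}\}$ is a full conic neighborhood of $\Lambda_\pm$ extending into the interior of $\overline{T}^*X$. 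Consequently $\WFh'(C_\epsilon)$ is not contained in a small neighborhood of $L_\pm$, and the commutator computation you invoke from \eqref{eq:rad-comm-expand-1} is only valid at $\tilde\rho=0$; for $\tilde\rho>0$ the semiclassical principal symbol $p_{\semi,z}$ differs from the homogeneous $p$, the Hamilton flow is no longer radial along $\{\rho_0=0\}$, and the sign analysis you rely on fails. In particular the claim that ``the semiclassical principal symbol of $\frac{\imath}{h}[P_{h,z}^*, C_\epsilon^*C_\epsilon]$ has the structure of \eqref{eq:rad-comm-expand-1}'' is not justified away from $S^*X$.

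The paper addresses exactly this point: it replaces $\phi(\rho_0)$ by $\phi(\rho_0+\tilde\rho^2)$, using that $\rho_0+\tilde\rho^2$ is a quadratic defining function of $L_\pm$ in $\overline{T}^*X$ (not just in $S^*X$). One then needs to check two support conditions, namely that on $\supp\phi$ one has $\mp\tilde\rho^{\difford-2}\sH_p\tilde\rho>0$ and on $\supp d\phi$ one has $\mp\tilde\rho^{\difford-1}\sH_p(\rho_0+\tilde\rho^2)>0$; both follow from \eqref{eq:weight-definite} and \eqref{eq:rho-0-property} for $\phi$ supported near enough to $0$. With this modification the rest of your outline (sign of the weight term under $s<(\difford-1-\beta\im\sigma)/2$, the $\phi'$ term controlled by $\|Au\|_{H^s_h}$, the $h^{-1}$ and $h$ factors from the semiclassical commutator) goes through as you describe.
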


\begin{proof}
We just need to localize in $\tilde\rho$ in addition to $\rho_0$;
such a localization in the
classical setting is implied by working on $S^*X$ or with homogeneous
symbols.
We achieve this by modifying the localizer $\phi$ in the commutant
constructed in the proof of Propositions~\ref{prop:micro-out}-\ref{prop:micro-in}.
As already remarked, the proof is
much like at radial points in semiclassical scattering on asymptotically
Euclidean spaces, studied by Vasy and Zworski \cite{Vasy-Zworski:Semiclassical}, but
we need to be more careful about localization in $\rho_0$ and $\tilde\rho$ as we
are assuming less about the structure.

First, note that $L_\pm$ is defined by $\tilde\rho=0$, $\rho_0=0$, so $\tilde\rho^2+
\rho_0$ is a quadratic defining function of $L_\pm$.
Thus, let $\phi\in\CI_c(\RR)$ be identically $1$ near $0$,
$\phi'\leq 0$ and
$\phi$ supported sufficiently close to $0$ so that
\begin{equation*}\begin{split}
&\tilde\rho^2+\rho_0\in\supp d\phi\Rightarrow
\mp \tilde\rho^{\difford-1}(\sH_p\rho_0+2\tilde\rho\sH_p\tilde\rho)>0
\end{split}\end{equation*}
and
\begin{equation*}\begin{split}
&\tilde\rho^2+\rho_0\in\supp \phi\Rightarrow
\mp \tilde\rho^{\difford-2}\sH_p\tilde\rho>0.
\end{split}\end{equation*}
Such $\phi$ exists by
\eqref{eq:weight-definite} and \eqref{eq:rho-0-property} as
$$
\mp\tilde\rho(\sH_p\rho_0+2\tilde\rho\sH_p\tilde\rho)\geq \beta_1\rho_0+
2\beta_0\tilde\rho^2-\cO((\tilde\rho^2+
\rho_0)^{3/2}).
$$
Then let $c$ be given by
$$
c=\phi(\rho_0+\tilde\rho^2)\tilde\rho^{-s+(\difford-1)/2},\qquad
c_\ep=c(1+\ep \tilde\rho^{-1})^{-\delta}.
$$
The rest of the proof proceeds exactly as for
Propositions~\ref{prop:micro-out}-\ref{prop:micro-in}.
\end{proof}

We first show that under extra assumptions, giving semiclassical ellipticity for
$\im z$ bounded away from $0$, we have non-trapping estimates.
So assume that for $|\im z|>\ep>0$, $p_{\semi,z}$ is semiclassically elliptic on $T^*X$
(but not necessarily at $S^*X=\pa\overline{T}^*X$, where the standard principal
symbol $p$ already describes the behavior). Also assume that $\pm\im p_{\semi,z}\geq 0$
near the classical characteristic set $\Sigma_{\semi,\pm}\subset
S^*X$. Assume also that
$p_{\semi,z}-\imath q_{\semi,z}$ is elliptic for $|\im z|>\ep>0$,
$h^{-1}z\in\Omega$, and $\mp\re q_{\semi,z}\geq 0$
near the classical characteristic set $\Sigma_{\semi,\pm}\subset
S^*X$.
Then the semiclassical version of the classical results (with ellipticity
in $T^*X$ making these trivial except at $S^*X$) apply. Let $H^s_h$
denote the usual semiclassical function spaces. Then,
on the one hand,
for any $s\geq m>(\difford-1-\beta\im z/h)/2$, $h<h_0$,
\begin{equation}\label{eq:unique-est-h-sector}
\|u\|_{H^s_h}\leq Ch^{-1}(\|(P_{h,z}-\imath Q_{h,z})u\|_{H^{s-\difford+1}_h}+h^2\|u\|_{H^m_h}),
\end{equation}
and on the other hand, for any $N$ and for any $s<(\difford-1+\beta\im z/h)/2$, $h<h_0$,
\begin{equation}\label{eq:exist-est-h-sector}
\|u\|_{H^s_h}\leq Ch^{-1}(\|(P_{h,z}^*+\imath Q_{h,z}^*)u\|_{H^{s-\difford+1}_h}+h^2\|u\|_{H^{-N}_h}).
\end{equation}
The $h^2$ term can be absorbed in the left hand side for sufficiently small $h$,
so we automatically obtain
invertibility of $P_{h,z}-\imath Q_{h,z}$.

In particular, $P_{h,z}-\imath Q_{h,z}$ is invertible for $h^{-1}z\in\Omega$
with $\im z>\ep>0$ and $h$ small,
i.e.\ $P_\sigma-\imath Q_\sigma$ is such for $\sigma\in\Omega$ in a
cone bounded away from the real axis with $\im\sigma$ sufficiently large, proving the
meromorphy of $P_\sigma-\imath Q_\sigma$ under these extra
assumptions. Note also that for instance
$$
\|u\|^2_{H_{|\sigma|^{-1}}^1}=\|u\|_{L^2}^2+|\sigma|^{-2}\|du\|_{L^2}^2,\qquad
\|u\|_{H_{|\sigma|^{-1}}^0}=\|u\|_{L^2},
$$
(with the norms with respect to any positive definite inner product).

\begin{thm}\label{thm:classical-absorb-sector}
Let $P_\sigma$, $Q_\sigma$, $\beta$, $\Cx_s$ be as above,
and $\cX^s$, $\cY^s$ as in \eqref{eq:XY-def}.
Then, for $\sigma\in\Cx_s\cap \Omega$,
$$
P_\sigma-\imath Q_\sigma:\cX^s\to\cY^s
$$
has a meromorphic inverse
$$
R(\sigma):\cY^s\to\cX^s.
$$
Moreover, for all $\ep>0$ there is $C>0$
such that it is invertible in $\im\sigma>C+\ep|\re \sigma|$, $\sigma\in\Omega$, and
{\em non-trapping estimates} hold:
$$
\|R(\sigma)f\|_{H^s_{|\sigma|^{-1}}}
\leq C'|\sigma|^{-\difford+1}\|f\|_{H^{s-1}_{|\sigma|^{-1}}}.
$$
\end{thm}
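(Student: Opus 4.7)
The plan is to combine three ingredients that are already essentially at hand: the Fredholm framework of Theorem~\ref{thm:classical-absorb}, analytic Fredholm theory, and the sharpened semiclassical estimates \eqref{eq:unique-est-h-sector}--\eqref{eq:exist-est-h-sector}. First, I would simply observe that Theorem~\ref{thm:classical-absorb} already guarantees that $P_\sigma-\imath Q_\sigma:\cX^s\to\cY^s$ is an analytic family of Fredholm operators on $\Cx_s\cap\Omega$. Consequently, once one exhibits a single $\sigma_0\in\Cx_s\cap\Omega$ at which the operator is invertible, analytic Fredholm theory gives a meromorphic inverse $R(\sigma)$ on the whole connected component of $\Cx_s\cap\Omega$ containing $\sigma_0$, with poles of finite rank.

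The main task is therefore to produce, for each $\ep>0$, a region of the form $\{\im\sigma>C+\ep|\re\sigma|\}\cap\Omega$ on which $P_\sigma-\imath Q_\sigma$ is actually invertible, together with the claimed non-trapping bound. Here I would pass to the semiclassical rescaling $h=|\sigma|^{-1}$, $z=\sigma/|\sigma|$, so $P_{h,z}=h^\difford P_{h^{-1}z}$ lies in $\Psihcl^\difford(X)$. The condition $\im\sigma>C+\ep|\re\sigma|$ translates, after a routine trigonometric manipulation, to a lower bound $\im z\geq c(\ep)>0$ uniformly on the unit circle (with $h$ correspondingly small, $h<h_0(\ep,C)$). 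In this regime, the hypothesis on $p_{\semi,z}-\imath q_{\semi,z}$ (through $\pm\im p_{\semi,z}\ge 0$ and $\mp\re q_{\semi,z}\ge 0$ near $\Sigma_{\semi,\pm}$) makes the full semiclassical symbol elliptic in the interior $T^*X$, while the boundary behavior at $S^*X$ is controlled by the classical analysis (including the radial point Propositions) exactly as in Subsection~\ref{subsec:semi-abstract}. Thus the semiclassical estimates \eqref{eq:unique-est-h-sector} and \eqref{eq:exist-est-h-sector} apply, with $\im z/h=\im\sigma$ large enough (for $h$ small and $C$ large) that the order threshold $(\difford-1-\beta\im\sigma)/2$ is well below $s$, making the threshold harmless.

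With these semiclassical estimates in hand, I would absorb the $h^2\|u\|_{H^m_h}$ term on the right-hand side of \eqref{eq:unique-est-h-sector}, and similarly the corresponding remainder in \eqref{eq:exist-est-h-sector}, into the left-hand sides for $h<h_0$ sufficiently small. This yields uniform injectivity of both $P_{h,z}-\imath Q_{h,z}$ and its adjoint $P_{h,z}^*+\imath Q_{h,z}^*$, hence invertibility by the Fredholm property, with
\[
\|u\|_{H^s_h}\leq C'h^{-1}\|(P_{h,z}-\imath Q_{h,z})u\|_{H^{s-\difford+1}_h}.
\]
Unwinding the rescaling via $P_{h,z}=h^\difford P_\sigma$ turns this into
\[
\|u\|_{H^s_{|\sigma|^{-1}}}\leq C' |\sigma|^{-\difford+1}\|(P_\sigma-\imath Q_\sigma) u\|_{H^{s-\difford+1}_{|\sigma|^{-1}}},
\]
which is precisely the stated non-trapping estimate on $R(\sigma)f$. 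Combined with the Fredholm analyticity of the first paragraph, this proves both invertibility in the sector $\im\sigma>C+\ep|\re\sigma|$ and meromorphic continuation to $\Cx_s\cap\Omega$.

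The only mildly delicate step is the passage from the semiclassical estimates, which are phrased with $\sigma=h^{-1}z$ and $|\im z|$ bounded below, back to statements in $\sigma$; concretely, one has to check that a sector $\im\sigma>C+\ep|\re\sigma|$, with $C$ allowed to depend on $\ep$, is contained in $\{h<h_0(\ep)\}\cap\{\im z>c(\ep)\}$, and that the threshold $s>(\difford-1-\beta\im\sigma)/2$ used in \eqref{eq:unique-est-h-sector}--\eqref{eq:exist-est-h-sector} is automatic once $C$ is taken large enough compared to $|\difford-1-2s|/\beta$. Both are elementary, so I do not expect any substantive obstacle beyond bookkeeping; the real content is already contained in the earlier propagation, radial point, and complex absorption estimates that were assembled in Subsection~\ref{subsec:semi-abstract}.
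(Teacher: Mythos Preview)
Your proposal is correct and follows essentially the same route as the paper: the theorem is stated immediately after the discussion establishing \eqref{eq:unique-est-h-sector}--\eqref{eq:exist-est-h-sector}, and the paper's argument is exactly the one you outline---use the Fredholm property from Theorem~\ref{thm:classical-absorb}, obtain invertibility in a cone $\im z>\ep>0$ with $h$ small by absorbing the $h^2$ error term in the semiclassical estimates, and invoke analytic Fredholm theory for meromorphy. One small imprecision: you write that the sign conditions $\pm\im p_{\semi,z}\geq 0$, $\mp\re q_{\semi,z}\geq 0$ ``make the full semiclassical symbol elliptic in the interior $T^*X$'', but in the paper's setup the interior ellipticity of $p_{\semi,z}$ and of $p_{\semi,z}-\imath q_{\semi,z}$ for $|\im z|>\ep$ is an \emph{assumption}, while the sign conditions are used at $S^*X$ to handle propagation near the classical characteristic set; this does not affect the logic of your argument.
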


\begin{rem}\label{rem:off-real-not-imp}
In fact, the large $\im\sigma$ behavior of $P_\sigma-\imath Q_\sigma$
does not matter for our main results, except the support conclusion of
the existence part of Lemma~\ref{lemma:Mellin-expand}, and the
analogous statement in its
consequences, Proposition~\ref{prop:Mellin-expand} and
Corollary~\ref{cor:Mellin-expand}. In particular, when the solution is
known to exist in a weighted b-Sobolev space, the large $\im\sigma$
behavior is not used at all; for existence the only loss would be that
the solution would not have the stated support property (which is desirable to have in the wave equation setting).
The behavior when $\im\sigma$ is bounded, but
$\re\sigma$ goes to infinity, which we now discuss, is, on the other
hand, more crucial, and depends on the more delicate Hamiltonian dynamics (but not on the
ellipticity for non-real $z$ which we just discussed).
\end{rem}

To deal with estimates for $z$ (almost) real, we need additional assumptions.
We make
the non-trapping assumption into a definition:

\begin{Def}\label{Def:non-trap}
We say that $p_{\semi,z}-\imath q_{\semi,z}$ is {\em semiclassically non-trapping} if
the bicharacteristics from any point in $\Sigma_{\semi}\setminus (L_+\cup L_-)$
flow to $\elliptic(q_{\semi,z})\cup L_+$ (i.e.\ either enter $\elliptic(q_{\semi,z})$ at some finite
time, or tend to $L_+$) in the forward direction, and to $\elliptic(q_{\semi,z})\cup L_-$
in the backward direction.
\end{Def}

\begin{rem}
The part of the semiclassically non-trapping property on $S^*X$ is just
the classical non-trapping property; thus, the point is its extension into
to the interior $T^*X$ of $\overline{T}^*X$. Since the classical principal
symbol is assumed real, there did not need to be any additional
restrictions on $\im p_{\semi,z}$ there.
\end{rem}

The semiclassical
version of all of the above
estimates are then applicable for $\im z\geq -Ch$, and one obtains on the one hand that
for any $s\geq m>(\difford-1-\beta\im z/h)/2$, $h<h_0$,
\begin{equation}\label{eq:unique-est-h}
\|u\|_{H^s_h}\leq Ch^{-1}(\|(P_{h,z}-\imath Q_{h,z})u\|_{H^{s-\difford+1}_h}+h^2\|u\|_{H^m_h}),
\end{equation}
On the other hand, for any $N$ and for any $s<(\difford-1+\beta\im z/h)/2$, $h<h_0$,
\begin{equation}\label{eq:exist-est-h}
\|u\|_{H^s_h}\leq Ch^{-1}(\|(P_{h,z}^*+\imath Q_{h,z}^*)u\|_{H^{s-\difford+1}_h}+h^2\|u\|_{H^{-N}_h}),
\end{equation}
The $h^2$ term can again be absorbed in the left hand side for sufficiently
small $h$, so we automatically obtain
invertibility of $P_{h,z}-\imath Q_{h,z}$.

Translated into the classical setting this gives

\begin{thm}\label{thm:classical-absorb-strip}
Let $P_\sigma$, $Q_\sigma$, $\Cx_s$, $\beta$ be as above, in particular
semiclassically non-trapping,
and $\cX^s$, $\cY^s$ as in \eqref{eq:XY-def}. Let $C>0$. Then there exists $\sigma_0$
such that
$$
R(\sigma):\cY^s\to\cX^s,
$$
is holomorphic
in $\{\sigma\in\Omega:\ \im\sigma>-C,\ |\re\sigma|>\sigma_0\}$, assumed to be a subset
of $\Cx_s$,
and {\em non-trapping estimates}
$$
\|R(\sigma)f\|_{H^s_{|\sigma|^{-1}}}
\leq C'|\sigma|^{-\difford+1}\|f\|_{H^{s-\difford+1}_{|\sigma|^{-1}}}
$$
hold.
For $s=1$, $\difford=2$ this states that for $|\re\sigma|>\sigma_0$, $\im\sigma>-C$,
$$
\|R(\sigma)f\|_{L^2}^2+|\sigma|^{-2}\|dR(\sigma)\|_{L^2}^2
\leq C''|\sigma|^{-2}\|f\|_{L^2}^2.
$$
\end{thm}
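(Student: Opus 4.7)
The plan is to reduce to the semiclassical setting via $h=|\sigma|^{-1}$, $z=h\sigma$, so that the hypothesis $\im\sigma>-C$ on a region with $|\re\sigma|>\sigma_0$ translates to $\im z>-Ch$ and $z$ confined to a small neighborhood of $\{\pm 1\}$ on the unit circle. In this setting $P_{h,z}=h^\difford P_{h^{-1}z}$ is a semiclassical pseudodifferential operator of order $\difford$, and the theorem follows once we establish the a priori estimates \eqref{eq:unique-est-h} for $P_{h,z}-\imath Q_{h,z}$ and \eqref{eq:exist-est-h} for the adjoint $P_{h,z}^*+\imath Q_{h,z}^*$, which together with the Fredholm statement of Theorem~\ref{thm:classical-absorb} give injectivity (hence invertibility), holomorphy in the region, and the quantitative non-trapping bound.

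The estimates are assembled by patching together the standard microlocal semiclassical tools in the same pattern used in the classical case of Theorem~\ref{thm:classical-absorb}. Concretely: (i) semiclassical microlocal elliptic regularity gives control off $\Sigma_\semi\cup\WFh'(Q_{h,z})$; (ii) semiclassical real principal type propagation carries estimates forward in $\Sigma_{\semi,-}$ and backward in $\Sigma_{\semi,+}$ (the directions dictated by the signs of $q_{\semi,z}$), as recalled in Subsection~\ref{subsec:complex-absorb}; (iii) the radial point propositions of this subsection, with commutant constructed from $\phi(\rho_0+\tilde\rho^2)\tilde\rho^{-s+(\difford-1)/2}$, yield $H^s_h$ bounds near $L_\pm$ provided $s>(\difford-1-\beta\im z/h)/2$; (iv) the complex absorption estimates allow one to absorb the contributions in $\elliptic(Q_{h,z})$. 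The semiclassical non-trapping hypothesis of Definition~\ref{Def:non-trap} is precisely what guarantees that every bicharacteristic through $\Sigma_\semi\setminus(L_+\cup L_-)$ enters $\elliptic(Q_{h,z})\cup L_+$ in forward time and $\elliptic(Q_{h,z})\cup L_-$ in backward time, so these local estimates piece together by a finite covering argument into \eqref{eq:unique-est-h}; running the same argument for the adjoint with reversed propagation directions gives \eqref{eq:exist-est-h}.

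Once \eqref{eq:unique-est-h} is in hand, we absorb the residual $h^2\|u\|_{H^m_h}$ term into the left-hand side using $\|u\|_{H^m_h}\leq\|u\|_{H^s_h}$ for $s\geq m$ and taking $h<h_0$ sufficiently small; the same absorption works in \eqref{eq:exist-est-h}. This yields invertibility of $P_{h,z}-\imath Q_{h,z}$ for $h<h_0$ and $\im z>-Ch$, with operator norm $\cO(h^{-1})$ from $H^{s-\difford+1}_h$ to $H^s_h$. Translating back via $P_\sigma-\imath Q_\sigma=h^{-\difford}(P_{h,z}-\imath Q_{h,z})$ produces the advertised bound
\[
\|R(\sigma)f\|_{H^s_{|\sigma|^{-1}}}\leq C'|\sigma|^{-\difford+1}\|f\|_{H^{s-\difford+1}_{|\sigma|^{-1}}},
\]
and the $s=1$, $\difford=2$ display unwinds this norm as in the statement.

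Holomorphy of $R(\sigma)$ in the region then follows by combining the meromorphy already supplied by Theorem~\ref{thm:classical-absorb} (via analytic Fredholm theory on $\Cx_s\cap\Omega$) with the fact that the uniform estimate above forces injectivity, and hence invertibility, at every $\sigma$ in the semiclassical region, so no poles can occur there. The main obstacle is step (iii): the radial point bound requires the commutant to be simultaneously localized in $\rho_0$ and in $\tilde\rho$ (so that one stays near $L_\pm$ in the fiber-compactified cotangent bundle), and the sign of $\sH_p$ applied to the weight must combine correctly with the contribution of $\im\sigma$ from the subprincipal symbol; this is exactly the threshold relation $s>(\difford-1-\beta\im z/h)/2$ that makes the strip in $\sigma$ anisotropic and dictates the size of the holomorphic region relative to $s$.
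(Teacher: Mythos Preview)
Your proposal is correct and follows essentially the same route as the paper: the argument there is precisely the semiclassical rescaling $h=|\sigma|^{-1}$, $z=h\sigma$, followed by patching the semiclassical elliptic, real principal type, radial point (with the commutant localized via $\phi(\rho_0+\tilde\rho^2)$), and complex absorption estimates under the non-trapping hypothesis to obtain \eqref{eq:unique-est-h}--\eqref{eq:exist-est-h}, absorbing the $h^2$ remainder for small $h$, and translating back. The holomorphy conclusion is also as you say, from the analytic Fredholm theory of Theorem~\ref{thm:classical-absorb} combined with the uniform invertibility in the strip.
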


Analogous results work for other Sobolev spaces; $H^1_h$ was chosen above
for simplicity.

\begin{rem}\label{rem:off-spectrum-est}
We emphasize that if semiclassical non-trapping assumptions
are made, but not ellipticity for $z$ non-real,
meromorphy
still follows by taking $h$ small and $z>0$, say, to get a point of invertibility. This
is useful because one can eliminate the need to conjugate by a factor to
induce such ellipticity when the resulting estimate is irrelevant. (Mostly estimates
in strips for $h^{-1}z$, i.e.\ $\cO(h)$ estimates for $z$, matter.) However, there
is a cost: while only finitely many poles can lie in any strip $|\im\sigma|<C$,
there is no need for this statement to hold if we allow $\im \sigma>-C$.
Since,
for the application to
the wave equation, $\im\sigma$ depends
on the a priori growth rate of the solution $u$ which we are Mellin transforming,
this would mean that depending on the a priori growth rate one could get
more (faster growing) terms in the expansion of $u$ if one relaxes the growth condition on $u$.
\end{rem}

While we stated just the global results here, of course one has microlocal estimates
for the solution. In particular we have the following, stated in the semiclassical
language, as immediate from the estimates used to derive from the Fredholm
property:

\begin{thm}\label{thm:semicl-outg}
Let $P_\sigma$, $Q_\sigma$, $\beta$ be as above, in particular
semiclassically non-trapping,
and $\cX^s$, $\cY^s$ as in \eqref{eq:XY-def}. 

For $\re z>0$ and $s'>s$, the resolvent $R_{h,z}$ is
{\em semiclassically outgoing with a loss of $h^{-1}$} in
the sense that if $\alpha\in \overline{T}^*X\cap\Sigma_{\semi,\pm}$, and if for the forward ($+$),
resp.\ backward ($-$), bicharacteristic $\gamma_\pm$, from $\alpha$,
$\WFh^{s'-\difford+1}(f)\cap\overline{\gamma_\pm}=\emptyset$ then $\alpha\notin\WFh^{s'}(hR_{h,z}f)$.

In fact, for any $s'\in\RR$, the resolvent $R_{h,z}$ extends to $f\in H^{s'}_h(X)$,
with non-trapping bounds, provided that $\WFh^s(f)\cap(L_+\cup L_-)=\emptyset$.
The semiclassically outgoing with a loss of $h^{-1}$ result holds for such $f$ and
$s'$ as well.
\end{thm}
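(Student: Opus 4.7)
For the outgoing assertion, suppose $\alpha \in \Sigma_{\semi,+}$; the case $\alpha \in \Sigma_{\semi,-}$ is symmetric under time reversal. The plan is to chain microlocal estimates along $\gamma_+(\alpha)$, using the propagation direction dictated by the sign of $q_{\semi,z}$ on $\Sigma_{\semi,\pm}$ and anchored at one end either by the semiclassical radial-point estimate \eqref{eq:radial-out-h} at $L_+$ or by semiclassical ellipticity of $P_{h,z} - \imath Q_{h,z}$ inside $\elliptic(q_{\semi,z})$. By the semiclassical non-trapping hypothesis, $\gamma_+(\alpha)$ either enters $\elliptic(q_{\semi,z})$ at some finite time or tends to $L_+$. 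In the latter case, $u = R_{h,z}f \in \cX^s \subset H^s_h$ provides the background regularity required by \eqref{eq:radial-out-h}, while the assumption $\WFh^{s'-\difford+1}(f) \cap \overline{\gamma_+(\alpha)} = \emptyset$ supplies the regularity of $f$ near $L_+$, so \eqref{eq:radial-out-h} yields $H^{s'}_h$-regularity of $hu$ in a punctured neighborhood of $L_+$ along $\gamma_+(\alpha)$; in the former case, semiclassical ellipticity of $P_{h,z} - \imath Q_{h,z}$ gives the same microlocal conclusion on $\gamma_+(\alpha) \cap \elliptic(q_{\semi,z})$.

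From that seed we propagate backward along $\gamma_+(\alpha)$ to $\alpha$ using the semiclassical analogue of the propagation-of-singularities estimates of Subsection~\ref{subsec:complex-absorb}, which is available in the backward direction on $\Sigma_{\semi,+}$ because $q_{\semi,z} \leq 0$ there; the hypothesis on $f$ along $\overline{\gamma_+(\alpha)}$ feeds the inhomogeneous term at each step, delivering $\alpha \notin \WFh^{s'}(hu)$.

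For the extension statement, fix $f \in H^{s'}_h(X)$ with $\WFh^s(f) \cap (L_+ \cup L_-) = \emptyset$, and pick $\psi \in \Psih^0(X)$ microlocally equal to the identity on a neighborhood of $L_+ \cup L_-$ with $\WFh'(\psi)$ disjoint from $\WFh^s(f)$, so that $\psi f \in H^s_h \subset \cY^s$ and $u_1 := R_{h,z}(\psi f)$ is given directly by the Fredholm inverse with the non-trapping bound of Theorem~\ref{thm:classical-absorb-strip}. Approximating $(1-\psi)f$ by $g_n \in \CI(X)$ with $g_n \to (1-\psi)f$ in $H^{s'}_h$ and $\WFh(g_n) \cap (L_+ \cup L_-) = \emptyset$ uniformly, the outgoing estimate of the first part applied to $R_{h,z}(g_n - g_m)$ shows this sequence is Cauchy in $H^{s'}_h$ microlocally away from $L_\pm$ (with the $h^{-1}$ loss) and in $H^N_h$ for any $N$ microlocally near $L_\pm$; the limit $u_2 := R_{h,z}(1-\psi)f$ exists in the appropriate local spaces, and $R_{h,z}f := u_1 + u_2$ satisfies the stated non-trapping bounds, with the outgoing conclusion of the first part carrying over by the same chain argument. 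The main book-keeping point is that the single $h^{-1}$ loss in the semiclassical propagation and radial-point estimates is not compounded across the chain: each link loses one power of $h$ on the driving term, and combining them keeps the total loss at $h^{-1}$, consistent with the multiplier $h$ in the statement $\alpha \notin \WFh^{s'}(hR_{h,z}f)$.
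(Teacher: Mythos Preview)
Your treatment of the outgoing assertion matches the paper's: the paper simply declares it ``immediate by what has been discussed,'' and what has been discussed is exactly the chain of semiclassical elliptic, real-principal-type, complex-absorption, and radial-point estimates you invoke, propagated in the backward direction on $\Sigma_{\semi,+}$ (forward on $\Sigma_{\semi,-}$) and anchored either at $L_\pm$ via \eqref{eq:radial-out-h} or at the elliptic set of $Q_{h,z}$.

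For the extension claim your route differs from the paper's. You split $f=\psi f+(1-\psi)f$ with a microlocal cutoff $\psi$ concentrated near $L_\pm$, feed $\psi f\in H^s_h$ directly into the already-constructed resolvent, and define $R_{h,z}(1-\psi)f$ by an approximation/Cauchy argument driven by the quantitative propagation and radial-point bounds. The paper instead invokes \emph{microlocal solvability} in arbitrary Sobolev orders away from the radial points: one builds a microlocal parametrix $v$ with $(P_{h,z}-\imath Q_{h,z})v$ equal to $f$ modulo a remainder that is smooth away from $L_\pm$ and lies in $H^s_h$ near $L_\pm$; this remainder is then handled by the resolvent already in hand, and one sets $R_{h,z}f:=v+R_{h,z}(\text{remainder})$. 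The paper's approach is shorter because the parametrix does the work in one stroke and the semiclassical estimates come packaged with it; your approximation argument is more elementary in that it avoids explicit parametrix construction, but it requires you to track uniform microlocal bounds through the limit (which you do correctly, since $R_{h,z}(g_n-g_m)\in H^s_h$ globally supplies the a~priori regularity needed to run \eqref{eq:radial-out-h} near $L_\pm$, and the propagation estimates then close away from $L_\pm$). Both yield the same extended operator with the same non-trapping bounds.
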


\begin{proof}
The only part that is not immediate by what has been discussed is the last
claim. This follows immediately, however, by microlocal solvability in arbitrary
ordered Sobolev spaces away from the radial points (i.e.\ solvability modulo
$\CI$, with semiclassical estimates), combined with our preceding
results to deal
with this smooth remainder plus the contribution near $L_+\cup L_-$, which
are assumed to be in $H^s_h(X)$.
\end{proof}

This result is needed for gluing constructions as in \cite{Datchev-Vasy:Gluing-prop},
namely polynomially bounded trapping with appropriate microlocal geometry
can be glued to our resolvent. Furthermore,
it gives non-trapping estimates microlocally away from the trapped set provided
the overall (trapped) resolvent is polynomially bounded as shown by Datchev
and Vasy \cite{Datchev-Vasy:Trapped}.

\begin{Def}\label{Def:mild-trap}
Suppose $K_\pm\subset T^*X$ is compact, and
$O_\pm$ is a neighborhood of $K$ with compact
closure and $O_\pm\cap\Sigma_{\semi}\subset\Sigma_{\semi,\pm}$.
We say that $p_{\semi,z}$ is {\em semiclassically locally
mildly trapping of order $\varkappa$ in a $C_0$-strip}
if
\begin{enumerate}
\item
there is a function\footnote{\label{footnote:convex-escape} For $\ep>0$,
such a function $F$ provides an escape function,
$\tilde F=e^{-C F}\sH_{p_{\semi,z}}F$ on the set where $1+\ep\leq F\leq 2-\ep$.
Namely,
by taking $C>0$ sufficiently large,
$\sH_{p_{\semi,z}}\tilde F<0$ there; thus, every bicharacteristic must leave the
compact set $F^{-1}([1+\ep,2-\ep])$ in finite time. However, the existence
of such an $F$ is a stronger statement than that of an escape function: a
bicharacteristic segment cannot leave $F^{-1}([1+\ep,2-\ep])$ via the boundary
$F=2-\ep$ in both directions since $F$ cannot have a local minimum.
This is exactly the way this condition is used in \cite{Datchev-Vasy:Gluing-prop}.} $F\in\CI(T^*X)$, $F\geq 2$ on $K_\pm$,
$F\leq 1$ on $T^*X\setminus O_\pm$, and for $\alpha\in (O_\pm\setminus K_\pm)\cap
\Sigma_{\semi,\pm}$,
$(\sH_{p_{\semi,z}} F)(\alpha)=0$ implies $(\sH_{p_{\semi,z}}^2 F)(\alpha)<0$; and
\item
there exists $\tilde Q_{h,z}\in\Psih(X)$ with $\WFh'(\tilde Q_{h,z})\cap K_\pm=\emptyset$,
$\mp\tilde q_{\semi,z}\geq 0$ near $\Sigma_{\semi,\pm}$,
$\tilde q_{\semi,z}$ elliptic on $\Sigma_{\semi}\setminus (O_+\cup O_-)$ and $h_0>0$
such that if $\im z>-C_0 h$ and $h<h_0$ then
\begin{equation}\label{eq:poly-trapping-loss}
\|(P_{h,z}-\imath \tilde Q_{h,z})^{-1}f\|_{H^s_h}
\leq C h^{-\varkappa-1}\|f\|_{H^{s-\difford+1}_h},\ f\in H^{s-\difford+1}_h.
\end{equation}
\end{enumerate}

We say that $p_{\semi,z}-\imath q_{\semi,z}$ is  {\em semiclassically
mildly trapping of order $\varkappa$ in a $C_0$-strip} if it is
semiclassically locally
mildly trapping of order $\varkappa$ in a $C_0$-strip and if
the bicharacteristics from any point in
$\Sigma_{\semi,+}\setminus(L_+\cup K_+)$
flow to $\{q_{\semi,z}<0\}\cup O_+$ in the backward direction and
to $\{q_{\semi,z}<0\}\cup O_+\cup L_+$ in the forward direction, while
the bicharacteristics from any point in
$\Sigma_{\semi,-}\setminus(L_-\cup K_-)$
flow to $\{q_{\semi,z}>0\}\cup O_-\cup L_-$ in the backward direction and
to $\{q_{\semi,z}>0\}\cup O_-$ in the forward direction.
\end{Def}

An example\footnote{Condition (i) follows by letting $\tilde F=\varphi_+^{2\kappa}
+\varphi_-^{2\kappa}$
with the notation of \cite[Lemma~4.1]{Wunsch-Zworski:Resolvent}; so
$$
\sH_p^2\tilde F=4\kappa^2\big((c_+^4-\kappa^{-1}c_+\sH_p c_+)\varphi_+^{2\kappa}
+4\kappa^2\big((c_-^4+\kappa^{-1}c_-\sH_p c_-)\varphi_-^{2\kappa}
$$
near the trapped set, $\varphi_+=0=\varphi_-$.
Thus, for sufficiently large $\kappa$, $\sH_p\tilde F>0$ outside $\tilde F=0$.
Since $\tilde F=0$
defines the trapped set, in order to satisfy Definition~\ref{Def:mild-trap}, writing
$K$ and $O$ instead of $K_\pm$ and $O_\pm$,
one lets $K=\{\tilde F\leq\alpha\}$, $O=\{\tilde F<\beta\}$ for suitable
(small) $\alpha$ and $\beta$, $\alpha<\beta$, and
takes $F=G\circ\tilde F$ with $G$ strictly decreasing, $G|_{[0,\alpha]}>2$,
$G|_{[\beta,\infty)}<1$.}
of locally mild trapping
is hyperbolic trapping, studied by Wunsch and Zworski
\cite{Wunsch-Zworski:Resolvent}, which is of order $\varkappa$
for some $\varkappa>0$. Note that (i) states that the sets $K_c=\{F\geq c\}$, $1<c<2$,
are bicharacteristically convex in $O_\pm$, for by (i) any critical points of $F$ along
a bicharacteristic are strict local maxima.

As a corollary, we
have:

\begin{thm}\label{thm:classical-absorb-glued}
Let $P_\sigma$, $Q_\sigma$, $\Cx_s$, $\beta$ be as above, satisfying mild
trapping assumptions with order $\varkappa$ estimates in a $C_0$-strip,
and $\cX^s$, $\cY^s$ as in \eqref{eq:XY-def}.
Then there exists $\sigma_0$
such that
$$
R(\sigma):\cY^s\to\cX^s,
$$
is holomorphic
in $\{\sigma\in\Omega:\im\sigma>-C_0,\ |\re\sigma|>\sigma_0\}$, assumed to be
a subset of $\Cx_s$,
and
\begin{equation}\label{eq:high-energy-glued}
\|R(\sigma)f\|_{H^s_{|\sigma|^{-1}}}
\leq C'|\sigma|^{\varkappa-\difford+1}\|f\|_{H^{s-\difford+1}_{|\sigma|^{-1}}}.
\end{equation}
Further, if one has logarithmic loss in \eqref{eq:poly-trapping-loss}, i.e.\ if $h^{-\varkappa}$ can be replaced by $\log (h^{-1})$, for $\sigma\in\RR$, \eqref{eq:high-energy-glued}
also holds with a logarithmic loss, i.e.\ $|\sigma|^{\varkappa}$ can be replaced
by $\log|\sigma|$ for $\sigma$ real.
\end{thm}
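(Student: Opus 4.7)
\smallskip

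\noindent\textbf{Proof proposal.} The plan is to reduce this to the gluing result of Datchev--Vasy \cite{Datchev-Vasy:Gluing-prop}, combining two ingredients: a semiclassically non-trapping global resolvent in which the trapping has been killed by an additional absorber, and the given locally trapping resolvent $(P_{h,z}-\imath\tilde Q_{h,z})^{-1}$ with polynomial loss $h^{-\varkappa-1}$.

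First, I would construct the \emph{outer} (non-trapping) resolvent. Let $\tilde Q_\sigma$ be a classical operator whose semiclassical rescaling is $\tilde Q_{h,z}$ as in Definition~\ref{Def:mild-trap}(ii), so $\mp\tilde q_{\semi,z}\geq 0$ near $\Sigma_{\semi,\pm}$ and $\tilde q_{\semi,z}$ is elliptic on $\Sigma_{\semi}\setminus(O_+\cup O_-)$. Consider $P_\sigma-\imath(Q_\sigma+\tilde Q_\sigma)$: the sign conditions on $q$ and $\tilde q$ are compatible on each $\Sigma_{\semi,\pm}$, and the second half of Definition~\ref{Def:mild-trap} says precisely that every bicharacteristic in $\Sigma_{\semi}\setminus (L_+\cup L_-)$ reaches the elliptic set of $Q_\sigma+\tilde Q_\sigma$ (which now includes a neighbourhood of $K_\pm$) in both directions, or tends to $L_\pm$. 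Hence the combined operator is semiclassically non-trapping in the sense of Definition~\ref{Def:non-trap}, and Theorem~\ref{thm:classical-absorb-strip} gives a meromorphic resolvent $R_{\mathrm{nt}}(\sigma)=(P_\sigma-\imath(Q_\sigma+\tilde Q_\sigma))^{-1}$ with non-trapping bounds $\|R_{\mathrm{nt}}(\sigma)f\|_{H^s_{|\sigma|^{-1}}}\leq C|\sigma|^{-\difford+1}\|f\|_{H^{s-\difford+1}_{|\sigma|^{-1}}}$ on the strip in question, provided $|\re\sigma|$ is large enough.

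Second, I would record the semiclassical microlocal structure of $R_{\mathrm{nt}}$: by Theorem~\ref{thm:semicl-outg} it is semiclassically outgoing with a loss of $h^{-1}$, i.e.\ forward/backward propagation from the elliptic set of $\tilde Q_\sigma$ in $\Sigma_{\semi,\pm}$ matches exactly the orientation needed to feed its output into the trapped resolvent $(P_{h,z}-\imath\tilde Q_{h,z})^{-1}$. The convexity hypothesis in Definition~\ref{Def:mild-trap}(i), via the function $F$ (whose level-set family $\{F\geq c\}$, $1<c<2$, is bicharacteristically convex, cf.\ Footnote~\ref{footnote:convex-escape}), supplies the separation of the trapped region $K_\pm$ from the support of $Q_\sigma$ that the gluing argument requires.

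Third, I would invoke the Datchev--Vasy gluing theorem: with $R_{\mathrm{nt}}$ as the outer piece and $(P_{h,z}-\imath\tilde Q_{h,z})^{-1}$ as the inner piece, one writes an approximate parametrix $E_{h,z}=\chi_{\mathrm{in}}(P_{h,z}-\imath\tilde Q_{h,z})^{-1}\chi_{\mathrm{in}}'+\chi_{\mathrm{out}} R_{\mathrm{nt}}(h^{-1}z)\chi_{\mathrm{out}}'$ with microlocal cutoffs chosen adapted to the convex regions of Definition~\ref{Def:mild-trap}(i). One then shows $(P_{h,z}-\imath Q_{h,z})E_{h,z}=I+A_{h,z}$ with $A_{h,z}$ a small semiclassical error (thanks to the outgoing property, errors from $[P_{h,z},\chi]$ are pushed onto regions where the other piece is controlled), and inverts $I+A_{h,z}$ by Neumann series. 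The worst bound entering this composition is $h^{-\varkappa-1}$ from the inner piece, which after converting back to the large-parameter picture $\sigma=h^{-1}z$ and multiplying by a factor of $h^\difford$ gives exactly \eqref{eq:high-energy-glued}. Meromorphy on the strip follows from Theorem~\ref{thm:classical-absorb} together with the invertibility for large $|\re\sigma|$. I expect the main obstacle to be checking carefully that the microlocal cutoffs and the commutator errors in the gluing satisfy the hypotheses of \cite{Datchev-Vasy:Gluing-prop} in our (not a priori compactly supported) setting, but this is essentially a verification of the convex-escape condition and the wavefront-matching, both of which are built into Definition~\ref{Def:mild-trap}.

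Finally, for the logarithmic-loss statement on the real axis: the same gluing construction goes through verbatim, with the hypothesis $h^{-\varkappa-1}$ replaced by $Ch^{-1}\log(h^{-1})$, which is exactly the strengthening available from \cite{Wunsch-Zworski:Resolvent} for real $z$ under normally hyperbolic trapping. The resulting bound on $R(\sigma)$ then carries a $\log|\sigma|$ loss instead of $|\sigma|^\varkappa$, as claimed.
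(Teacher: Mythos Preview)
Your overall strategy matches the paper's: reduce to the gluing theorem of \cite{Datchev-Vasy:Gluing-prop} by pairing a semiclassically non-trapping auxiliary resolvent with the given trapped resolvent $(P_{h,z}-\imath\tilde Q_{h,z})^{-1}$, invoking Theorem~\ref{thm:semicl-outg} for the outgoing property and using the function $F$ of Definition~\ref{Def:mild-trap}(i) in the role of the base-space function $x$ of \cite{Datchev-Vasy:Gluing-prop}.

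There is, however, a concrete error in your first step. You assert that the elliptic set of $Q_\sigma+\tilde Q_\sigma$ contains a neighbourhood of $K_\pm$. This is false: by Definition~\ref{Def:mild-trap}(ii) one has $\WFh'(\tilde Q_{h,z})\cap K_\pm=\emptyset$, so $\tilde q_{\semi,z}$ vanishes microlocally near $K_\pm$, and there is no assumption that $q_{\semi,z}$ is elliptic there either (the whole point of the mild-trapping hypothesis is that the original absorber $Q_\sigma$ does \emph{not} control the trapped set --- otherwise Theorem~\ref{thm:classical-absorb-strip} would already apply directly). Hence a bicharacteristic trapped in $K_+$ never reaches $\elliptic(q_{\semi,z}+\tilde q_{\semi,z})$ and never tends to $L_\pm$, so $P_\sigma-\imath(Q_\sigma+\tilde Q_\sigma)$ is \emph{not} semiclassically non-trapping and you cannot invoke Theorem~\ref{thm:classical-absorb-strip} for it.

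The repair is simple and is exactly what the paper does: rather than adding $\tilde Q_\sigma$ to $Q_\sigma$, replace $Q_{h,z}$ by an operator $Q'_{h,z}$ with $\WFh'(Q_{h,z}-Q'_{h,z})\subset O_+\cup O_-$, $\mp q'_{\semi,z}\geq 0$ on $\Sigma_{\semi,\pm}$, and $Q'_{h,z}$ \emph{elliptic on} $K_+\cup K_-$. Then $P_{h,z}-\imath Q'_{h,z}$ is semiclassically non-trapping, and your Steps~2 and~3 go through with $(P_\sigma-\imath Q'_\sigma)^{-1}$ as the outer resolvent; the inner resolvent remains $(P_{h,z}-\imath\tilde Q_{h,z})^{-1}$ from Definition~\ref{Def:mild-trap}(ii), and the gluing proceeds as you outlined.
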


\begin{proof}
This is an almost immediate consequence of \cite{Datchev-Vasy:Gluing-prop}. To get
into that setting, we replace $Q_{h,z}$ by $Q'_{h,z}$ with $\WFh'(Q_{h,z}-Q'_{h,z})
\subset O_+\cup O_-$ and $Q'_{h,z}$ elliptic
on $K_+\cup K_-$, with $\mp q'_{\semi,z}\geq 0$ on $\Sigma_{\semi,\pm}$.
Then $P_{h,z}-\imath Q'_{h,z}$ is semiclassically
non-trapping in the sense discussed earlier, so all of our estimates apply. With
the polynomial resolvent bound assumption on $P_{h,z}-\imath\tilde Q_{h,z}$,
and the function $F$ in place of $x$ used in \cite{Datchev-Vasy:Gluing-prop},
the results of \cite{Datchev-Vasy:Gluing-prop} apply, taking into account
Theorem~\ref{thm:semicl-outg} and \cite[Lemma~5.1]{Datchev-Vasy:Gluing-prop}.
Note that the results of \cite{Datchev-Vasy:Gluing-prop} are stated in a
slightly different context for convenience, namely the function $x$ is defined
on the manifold $X$ and not on $T^*X$, but this is a minor issue:
the results and proofs apply verbatim
in our setting.
\end{proof}

\section{Mellin transform and Lorentzian b-metrics}\label{sec:Mellin-Lorentz}

\subsection{The Mellin transform}\label{subsec:Mellin}
In this section we discuss the basics of Melrose's b-analysis
on an $n$-dimensional manifold with boundary $\bM$, where the boundary
is denoted by $X$.
We refer
to \cite{Melrose:Atiyah} as a general reference. In the main cases of interest here,
the b-geometry is trivial, and $\bM=X\times[0,\infty)_\tau$ with respect
to some (almost) canonical (to the problem) product decomposition. Thus,
the reader should feel comfortable in trivializing all the statements below
with respect to this decomposition. In this trivial case, the main result
on the Mellin transform, Lemma~\ref{lemma:Mellin-expand} is fairly standard,
with possibly different notation of the function spaces;
we include it here for completeness.

First, recall that the Lie algebra of b-vector fields, $\Vb(\bM)$ consists of
$\CI$ vector fields on $\bM$ tangent to the boundary. In local coordinates
$(\tau,y)$, such that $\tau$ is a boundary defining function, they are
of the form $a_n\tau\pa_\tau+\sum_{j=1}^{n-1}a_j \pa_{y_j}$, with $a_j$
arbitrary $\CI$ functions. Correspondingly, they are the set of all smooth
sections of a $\CI$ vector bundle, $\Tb\bM$, with local basis
$\tau\pa_\tau,\pa_{y_1},\ldots,\pa_{y_{n-1}}$. The dual bundle, $\Tb^*\bM$, thus
has a local basis given by $\frac{d\tau}{\tau},dy_1,\ldots,dy_{n-1}$. All tensorial
constructions, such as form and density bundles, go through as usual.

The natural bundles related to the boundary are
reversed in the b-setting. Thus, the b-normal bundle of the boundary $X$
is well-defined as
the span of $\tau\pa_\tau$ defined using any coordinates, or better yet, as
the kernel of the natural map $\iota:\Tb_m\bM\to T_m\bM$, $m\in X$, induced
via the inclusion $\Vb(\bM)\to\Vf(\bM)$, so
$$
a_n\tau\pa_\tau+\sum_{j=1}^{n-1}a_j \pa_{y_j}\mapsto \sum_{j=1}^{n-1}a_j \pa_{y_j},
\qquad a_j\in\RR.
$$
Its annihilator
in $\Tb^*_m\bM$ is called
the b-cotangent bundle of the boundary; in local coordinates $(\tau,y)$
it is spanned by $dy_1,\ldots,dy_{n-1}$. Invariantly, it is the image of $T^*_m\bM$
in $\Tb^*_m\bM$ under the adjoint of the tangent bundle map $\iota$; as this
has kernel $N^*_mX$, $\Tb^*_m\bM$ is naturally
identified with $T^*_mX=T^*_m\bM/N^*_m X$.

The algebra of differential operators generated by $\Vb(\bM)$ over $\CI(\bM)$
is denoted $\Diffb(\bM)$; in local coordinates as above, elements of
$\Diffb^{\difford}(\bM)$ are of the form
$$
\cP=\sum_{j+|\alpha|\leq\difford} a_{j\alpha} (\tau D_\tau)^j D_y^\alpha
$$
in the usual multiindex notation, $\alpha\in\Nat^{n-1}$, with $a_{j\alpha}\in\CI(\bM)$.
Writing b-covectors as
$$
\sigma\,\frac{d\tau}{\tau}+\sum_{j=1}^{n-1} \eta_j\,dy_j,
$$
we obtain canonically dual coordinates to $(\tau,y)$, namely $(\tau,y,\sigma,\eta)$
are local coordinates on $\Tb^*\bM$. The principal symbol of $\cP$ is
\begin{equation}\label{eq:b-symbol}
\tilde p=\sigma_{\bl,\difford}(\cP)=\sum_{j+|\alpha|=\difford} a_{j\alpha} \sigma^j \eta^\alpha;
\end{equation}
it is a $\CI$ function, which is a homogeneous polynomial of degree $\difford$ in
the fibers, on $\Tb^*\bM$. Its Hamilton vector field, $\sH_{\tilde p}$,
is a $\CI$ vector field, which is just the extension of the standard Hamilton vector
field from $\bM^\circ$, is
homogeneous of degree $\difford-1$,
on $\Tb^*\bM$, and it is tangent to $\Tb^*_X\bM$. Explicitly, as a change of variables
shows, in local coordinates,
\begin{equation}\label{eq:b-Ham}
\sH_{\tilde p}=(\pa_\sigma\tilde p)(\tau\pa_\tau)+\sum_j(\pa_{\eta_j}\tilde p)\pa_{y_j}
-(\tau\pa_\tau\tilde p)\pa_\sigma-\sum_j(\pa_{y_j}\tilde p)\pa_{\eta_j},
\end{equation}
so the restriction of $\sH_{\tilde p}$ to $\tau=0$ is
\begin{equation}\label{eq:b-Ham-rest}
\sH_{\tilde p}|_{\Tb^*_X\bM}
=\sum_j(\pa_{\eta_j}\tilde p)\pa_{y_j}-\sum_j(\pa_{y_j}\tilde p)\pa_{\eta_j},
\end{equation}
and is thus tangent to the fibers (identified with $T^*X$)
of $\Tb^*_XM$ over $\Tb^*_X M/T^*X$ (identified with $\RR_\sigma$).

We next want to define normal operator\footnote{In fact, $\cP\in\Psib^\difford(\bM)$
works similarly.} of $\cP\in\Diffb^\difford(\bM)$,
obtained by freezing coefficients at $X=\pa\bM$. To do this naturally,
we want to extend the `frozen operator' to one invariant under dilations in the fibers of
the inward pointing normal bundle ${}_+N(X)$ of $X$; see
\cite[Equation~(4.91)]{Melrose:Atiyah}. The latter can always be trivialized by the
choice of an inward-pointing vector field $V$, which in turn fixes the
differential of a boundary defining function $\tau$ at $X$ by $V\tau|_X=1$;
given such a choice we can
identify ${}_+N(X)$ with a product
$$
\bM_\infty=X\times[0,\infty)_\tau,
$$
with the normal operator being invariant
under dilations in $\tau$. Then for $m=(x,\tau)$,
$\Tb_m \bM_\infty$ is identified with $\Tb_{(x,0)}\bM$.

On $\bM_\infty$ operators of the form
$$
\sum_{j+|\alpha|\leq\difford} a_{j\alpha}(y) (\tau D_\tau)^j D_y^\alpha,
$$
i.e.\ $a_{j\alpha}\in\CI(X)$, are invariant under the $\RR^+$-action on $[0,\infty)_\tau$;
its elements are denoted by $\DiffbI(\bM_\infty)$. The {\em normal operator}
of $\cP\in\Diffb^{\difford}(\bM)$ is given by freezing the coefficients
at $X$:
$$
N(\cP)=
\sum_{j+|\alpha|\leq\difford} a_{j\alpha}(0,y) (\tau D_\tau)^j D_y^\alpha\in\DiffbI^{\difford}(\bM_\infty).
$$ 
The {\em normal operator family} is then defined as
$$
\hat N(\cP)(\sigma)=P_\sigma=
\sum_{j+|\alpha|\leq\difford} a_{j\alpha}(0,y) \sigma^j D_y^\alpha\in\DiffbI^{\difford}(\bM_\infty).
$$
Note also that we can identify a neighborhood of $X$ in $\bM$ with a neighborhood
of $X\times\{0\}$ in $\bM_\infty$ (this depends on choices), and then transfer
$\cP$ to an operator (still denoted by $\cP$)
on $\bM_\infty$, extended in an arbitrary smooth manner; then
$\cP-N(\cP)\in\tau\Diffb^{\difford}(\bM_\infty)$.

The principal symbol $p$ of the normal operator family, including in the
high energy (or, after rescaling, semiclassical)
sense, is given by $\sigma_{\bl,\difford}(\cP)|_{\Tb^*_X \bM}$.
Correspondingly, the Hamilton vector field, including in the high-energy sense,
of $p$ is given by $\sH_{\sigma_{\bl,\difford}(\cP)}|_{\Tb^*_X\bM}$; see
\eqref{eq:b-Ham-rest}. It is useful to note that via this restriction we drop information
about $\sH_{\sigma_{\bl,\difford}(\cP)}$ as a b-vector field, namely the $\tau\pa_\tau$
component is neglected. Correspondingly, the dynamics (including at high
energies) for the normal operator family is the same at radial points
of the Hamilton flow regardless of the behavior of the $\tau\pa_\tau$ component,
thus whether on $\Sb^*\bM=(\Tb^*\bM\setminus o)/\RR^+$, with the $\tau$
variable included, we have a source/sink, or a saddle point, with the other
(stable/unstable) direction being transversal to the boundary. This is reflected
by the same normal operator family showing up in both de Sitter space and
in Minkowski space, even though in de Sitter space (and also in Kerr-de Sitter space)
in the full b-sense the radial points are saddle points, while in Minkowski space they
are sources/sinks (with a neutral direction along the conormal bundle
of the event horizon/light cone inside the boundary in both cases).

We now translate our results to solutions of $(\cP-\imath\cQ) u=f$
when $P_\sigma-\imath Q_\sigma$ is the
normal operator family of the b-operator $\cP-\imath\cQ$.
A typical application is when $\cP=\Box_g$ is the d'Alembertian of a Lorentzian
b-metric on $\bM$, discussed in Subsection~\ref{subsec:Lorentz}.

Thus, consider the Mellin transform in $\tau$, i.e.\ consider the map
\begin{equation}\label{eq:Mellin}
\cM:u\mapsto \hat u(\sigma,.)=\int_0^\infty
\tau^{-\imath\sigma} u(\tau,.)\,\frac{d\tau}{\tau},
\end{equation} 
with inverse transform
\begin{equation}\label{eq:Mellin-inv}
\cM^{-1}:v\mapsto \check v(\tau,.)=\frac{1}{2\pi}\int_{\RR+\imath\alpha}
\tau^{\imath\sigma} v(\sigma,.)\,d\sigma,
\end{equation}
with $\alpha$ chosen in the region of holomorphy.
Note that for polynomially bounded (in $\tau$)
$u$ (with values in a space, such as $\CI(X)$, $L^2(X)$, $\dist(X)$),
for $u$ supported near $\tau=0$,
$\cM u$ is holomorphic in $\im\sigma>C$, $C>0$ sufficiently large,
with values in the same space (such as $\CI(X)$, etc). We discuss more
precise statements below.
The Mellin transform is described in detail in \cite[Section~5]{Melrose:Atiyah},
but it is also merely a renormalized Fourier transform, so the results below
are simply those for the Fourier transform (often of Paley-Wiener type)
after suitable renormalization.

First, Plancherel's theorem is that
if $\nu$ is a smooth non-degenerate density on $X$ and $r_c$ denotes
restriction to the line $\im\sigma=c$,
then
\begin{equation}\label{eq:Mellin-weight-L2}
r_{-\alpha}\circ\cM:\tau^\alpha L^2(X\times[0,\infty);\frac{|d\tau|}{\tau}\nu)\to
L^2(\RR;L^2(X;\nu))
\end{equation}
is an isomorphism. We are interested in functions $u$ supported near $\tau=0$,
in which case, with $r_{(c_1,c_2)}$ denoting restriction to the strip $c_1<\im\sigma<c_2$,
for $N>0$,
\begin{equation}\begin{split}\label{eq:Mellin-range-L2}
&r_{-\alpha,-\alpha+N}\circ\cM:\tau^\alpha(1+\tau)^{-N}
L^2(X\times[0,\infty);\frac{|d\tau|}{\tau}\nu)\\
&\qquad\to
\Big\{v:\RR\times\imath(-\alpha,-\alpha+N)\ni\sigma
\to v(\sigma)\in L^2(X;\nu);\\
&\qquad\qquad v\ \text{is holomorphic in}\ \sigma\Mand
\sup_{-\alpha<r<-\alpha+N}\|v(.+\imath r,.)\|_{L^2(\RR;L^2(X;\nu))}<\infty\Big\},
\end{split}\end{equation}
see \cite[Lemma~5.18]{Melrose:Atiyah}. Note that in accordance
with \eqref{eq:Mellin-weight-L2}, $v$ in \eqref{eq:Mellin-range-L2} extends
continuously to the boundary values, $r=-\alpha$ and $r=-\alpha-N$, with
values in the same space as for holomorphy. Moreover, for functions supported
in, say, $\tau<1$, one can take $N$ arbitrary.

Analogous results also hold for the b-Sobolev spaces $\Hb^s(X\times[0,\infty))$.
For $s\geq 0$, these can be defined as in \cite[Equation~(5.41)]{Melrose:Atiyah}:
\begin{equation}\begin{split}\label{eq:b-Sobolev-iso}
&r_{-\alpha}\circ\cM:\tau^\alpha \Hb^s(X\times[0,\infty);\frac{|d\tau|}{\tau}\nu)\\
&\to
\Big\{v\in L^2(\RR;H^s(X;\nu)):\ v\in (1+|\sigma|^2)^{s/2}v\in L^2(\RR;L^2(X;\nu))\Big\},
\end{split}\end{equation}
with the analogue of \eqref{eq:Mellin-range-L2} also holding; for $s<0$ one
needs to use the appropriate dual statements.
See also \cite[Equations~(5.41)-(5.42)]{Melrose:Atiyah} for differential versions
for integer order spaces. Note that the right hand side of
\eqref{eq:b-Sobolev-iso} is equivalent to
\begin{equation}\label{eq:b-Sobolev-iso-mod}
\langle|\sigma|\rangle^{s/2}v\in L^2(\RR;H^s_{\langle|\sigma|\rangle^{-1}}(X;\nu)),
\end{equation}
where the space on the right hand side is the standard semiclassical
Sobolev space and $\langle|\sigma|\rangle=(1+|\sigma|^2)^{1/2}$;
indeed, for $s\geq 0$ integer both are equivalent to the statement
that for all $\alpha$ with $|\alpha|\leq s$,
$\langle|\sigma|\rangle^{s-|\alpha|} D_y^\alpha v\in
L^2(\RR;L^2(X;\nu))$. Here by equivalence we mean not only the
membership of a set, but also that of the standard
norms\footnote{Standard up to equivalence, such as
$\Big(\sum_{|\alpha|\leq s}\int_{\im\sigma=-\alpha}\langle|\sigma|\rangle^{2(s-|\alpha|)}
\|D_y^\alpha v\|^2_{L^2(X;\nu)}\,d\sigma\Big)^{1/2}$.}
corresponding
to these spaces. Note that by dualization,
\eqref{eq:b-Sobolev-iso-mod} holds for all $s\in\RR$.

If $\cP-\imath\cQ$
is invariant under dilations in $\tau$ on $\bM_\infty=X\times[0,\infty)$ then
$N(\cP-\imath \cQ)$ can be identified with $\cP-\imath\cQ$ and
we have the following simple lemma:

\begin{lemma}\label{lemma:Mellin-expand}
Suppose $\cP-\imath\cQ$
is invariant under dilations in $\tau$ for functions supported near
$\tau=0$,
and the normal operator family $\hat N(\cP-\imath\cQ)$ is of the form
$P_\sigma-\imath Q_\sigma$ satisfying the conditions of Section~\ref{sec:microlocal},
including
semiclassical non-trapping. Let $\sigma_j$ be the poles of the meromorphic
family $(P_\sigma-\imath Q_\sigma)^{-1}$. Then for $\ell<\beta^{-1}(2s-\difford+1)$,
$\ell\neq-\im\sigma_j$ for any $j$,
$(\cP-\imath\cQ) u=f$, $u$ tempered, supported near $\tau=0$,
$f\in \tau^\ell \Hb^{s-\difford+1}(\bM_\infty)$,
$u$ has an asymptotic expansion
\begin{equation}\label{eq:Mellin-simple-non-trap}
u=\sum_j\sum_{\kappa\leq m_j} \tau^{\imath\sigma_j}(\log |\tau|)^\kappa a_{j\kappa}+u'
\end{equation}
with $a_{j\kappa}\in\CI(X)$ and $u'\in \tau^\ell \Hb^{s}(\bM_\infty)$.

If instead $N(\cP-\imath\cQ)$ is semiclassically mildly trapping
of order $\varkappa$ in a $C_0$-strip then for $\ell<C_0$ (still with
$\ell<\beta^{-1}(2s-\difford+1)$,
$\ell\neq-\im\sigma_j$ for any $j$) and
$f\in \tau^\ell \Hb^{s-\difford+1+\varkappa}(\bM_\infty)$ one has
\begin{equation}\label{eq:Mellin-simple-mild-trap}
u=\sum_j\sum_{\kappa\leq m_j} \tau^{\imath\sigma_j}(\log |\tau|)^\kappa a_{j\kappa}+u'
\end{equation}
with $a_{j\kappa}\in\CI(X)$ and $u'\in \tau^\ell \Hb^{s}(\bM_\infty)$.

Conversely, given $f$ in the indicated spaces, with $f$
supported near $\tau=0$, a solution $u$ of $(\cP-\imath\cQ) u=f$ of the form
\eqref{eq:Mellin-simple-non-trap}, resp.\ \eqref{eq:Mellin-simple-mild-trap},
supported near $\tau=0$ exists.

In either case,
the coefficients $a_{j\kappa}$ are given by the Laurent coefficients
of $(\cP-\imath\cQ)^{-1}$ at the poles $\sigma_j$ applied to $f$,
with simple poles corresponding
to $m_j=0$.

If $f=\sum_j\sum_{\kappa\leq m'_j} \tau^{\alpha_j} (\log |\tau|)^\kappa b_{j\kappa}+f'$, with
$f'$ in the spaces indicated above for $f$, and $b_{j\kappa}\in H^{s-\difford+1}(X)$,
analogous results hold when the expansion of $f$ is added to the form of
\eqref{eq:Mellin-simple-non-trap} and \eqref{eq:Mellin-simple-mild-trap},
in the sense of the extended union of index sets, see
\cite[Section~5.18]{Melrose:Atiyah}.

Further, the result is stable under sufficiently small dilation-invariant
perturbations in the b-sense, i.e.\ if $\cP'$ and $\cQ'$ are sufficiently close to
$\cP$ and $\cQ$ in $\Psib^\difford(\bM_\infty)$ with $P'_\sigma$ and
$Q'_\sigma$ possessing real principal symbols, and that of $Q'_\sigma$
is non-negative, then there is a
similar expansion for solutions of $(\cP'-\imath\cQ')u=f$.

For $\cP^*+\imath\cQ^*$ in place of $\cP-\imath\cQ$, analogous results apply, but
we need $\ell<-\beta^{-1}(2s-\difford+1)$, and the $a_{j\kappa}$ are
not smooth, rather have wave front set\footnote{See the discussion
  after \eqref{eq:exist-est}.} in the Lagrangians
$\Lambda_\pm$.
\end{lemma}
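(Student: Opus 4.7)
The plan is to Mellin transform the equation $(\cP-\imath\cQ)u = f$, apply the meromorphic inverse $R(\sigma)$ from Theorems~\ref{thm:classical-absorb-sector} and \ref{thm:classical-absorb-strip}, and recover $u$ by an inverse Mellin integral combined with a contour shift past the poles $\sigma_j$. Since $\cP-\imath\cQ$ is dilation invariant, Mellin transforming yields $(P_\sigma-\imath Q_\sigma)\hat u(\sigma) = \hat f(\sigma)$ wherever both sides make sense. Temperedness of $u$ and its support near $\tau=0$ make $\hat u$ holomorphic in some half-plane $\im\sigma > C_1$, while the hypothesis $f \in \tau^\ell \Hb^{s-\difford+1}(\bM_\infty)$ combined with \eqref{eq:b-Sobolev-iso-mod} shows $\hat f$ is holomorphic in $\im\sigma > -\ell$ with values in $H^{s-\difford+1}_{\langle|\sigma|\rangle^{-1}}(X)$, square-integrable on horizontal lines. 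Thus $\hat u(\sigma) = R(\sigma)\hat f(\sigma)$ in the original half-plane, and $R(\sigma)\hat f(\sigma)$ extends meromorphically to $\im\sigma > -\ell$.

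The heart of the argument is to shift the contour in \eqref{eq:Mellin-inv} from $\im\sigma = -\alpha$ (large $\alpha$) down to $\im\sigma = -\ell$. The inequality $\ell < \beta^{-1}(2s-\difford+1)$ places this new contour inside $\Cx_s$ from \eqref{eq:Cx-s-def}, so Theorem~\ref{thm:classical-absorb-strip} guarantees that $R(\sigma)$ is meromorphic with only finitely many poles in any horizontal strip and satisfies the non-trapping bound $\|R(\sigma)g\|_{H^s_{|\sigma|^{-1}}} \leq C|\sigma|^{-\difford+1}\|g\|_{H^{s-\difford+1}_{|\sigma|^{-1}}}$. Via \eqref{eq:b-Sobolev-iso-mod}, this bound converts the remainder integral along $\im\sigma = -\ell$ into an element of $\tau^\ell \Hb^s(\bM_\infty)$, providing $u'$. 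Picking up the residues at the poles $\sigma_j$ converts the Laurent expansion of $R(\sigma)$ at $\sigma_j$, which is a polynomial in $(\sigma-\sigma_j)^{-1}$ of degree $m_j+1$, into the finite sum $\sum_{\kappa \leq m_j} \tau^{\imath\sigma_j}(\log|\tau|)^\kappa a_{j\kappa}$, with each $a_{j\kappa}$ the corresponding Laurent coefficient applied to $\hat f(\sigma_j)$ up to a combinatorial factor. These coefficients lie in the generalized kernel of $P_{\sigma_j}-\imath Q_{\sigma_j}$ on $H^s(X)$, and are smooth by the regularity discussion surrounding \eqref{eq:unique-est}: microlocal elliptic regularity on the elliptic set of $Q_\sigma$, real principal type propagation, and Proposition~\ref{prop:micro-out} at $L_\pm$ jointly give $\CI(X)$ regularity. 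The mildly trapped case is identical, substituting Theorem~\ref{thm:classical-absorb-glued} for Theorem~\ref{thm:classical-absorb-strip} and absorbing the loss $|\sigma|^\varkappa$ into the additional $\varkappa$ derivatives assumed on $f$; the condition $\ell < C_0$ ensures the shifted contour lies in the strip for which the polynomial estimate \eqref{eq:high-energy-glued} is available.

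For the converse direction, define $u$ by the contour integral along $\im\sigma = -\ell$ plus the residue sum at the poles crossed; the same estimates show $u$ is in the stated space. The support property $\supp u \subset \{\tau \leq \tau_0\}$ is obtained by a Paley--Wiener argument: Theorem~\ref{thm:classical-absorb-sector} produces $R(\sigma)$ holomorphic and decaying in the sector $\im\sigma > C + \ep|\re\sigma|$, so shifting the contour upward into this sector produces exponential decay of $\tau^{\imath\sigma}$ when $\tau$ is large. When $f$ itself has a polyhomogeneous expansion, its Mellin transform $\hat f$ acquires additional poles at $\sigma = -\imath\alpha_j$ with logarithmic multiplicity $m'_j+1$, and residues of $R(\sigma)\hat f(\sigma)$ at these poles produce the additional terms in $u$ with the combined index set. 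Stability under small dilation-invariant perturbations $\cP', \cQ'$ in $\Psib^\difford(\bM_\infty)$ follows directly from Subsection~\ref{subsec:stability} together with Remark~\ref{rem:stable}: the perturbed family $R'(\sigma)$ is meromorphic, varies continuously in the operator topology, and inherits uniform versions of all the estimates used above. For the adjoint statement with $\cP^*+\imath\cQ^*$, the estimate \eqref{eq:exist-est} reverses the inequality on $s'$, which in turn reverses the sign condition on $\ell$; and the kernel of $P_\sigma^* + \imath Q_\sigma^*$ has wave front set in $\Lambda_\pm$ rather than being smooth, by Proposition~\ref{prop:micro-in} (one cannot propagate regularity outward from $L_\pm$ for the adjoint).

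The main obstacle is controlling the contour integral along $\im\sigma = -\ell$ and verifying that the vertical connecting segments between $\im\sigma = -\alpha$ and $\im\sigma = -\ell$ vanish as $|\re\sigma| \to \infty$. This is precisely what the semiclassical strip estimates of Theorem~\ref{thm:classical-absorb-strip}, or \ref{thm:classical-absorb-glued} in the trapping case, achieve, because the semiclassical Sobolev norms on $X$ match the Mellin realization of the b-Sobolev norm in \eqref{eq:b-Sobolev-iso-mod} exactly as needed. Without a uniform resolvent bound in a strip, one would only get distributional conclusions rather than the b-Sobolev regularity of the remainder $u'$.
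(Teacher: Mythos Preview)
Your proof is correct and follows essentially the same route as the paper's: Mellin transform the equation, invert via $R(\sigma)$, shift the contour from a line in the holomorphic upper half-plane down to $\im\sigma=-\ell$ picking up residues at the $\sigma_j$, use the strip estimates to control the remainder integral, and invoke Paley--Wiener via Theorem~\ref{thm:classical-absorb-sector} for the support statement in the existence part. Your added remarks on the vanishing of the vertical connecting segments and on the $\CI(X)$ regularity of the $a_{j\kappa}$ are correct elaborations of points the paper leaves implicit; the only slip is the parenthetical ``(large $\alpha$)'', which should read $\alpha$ large negative (equivalently $\im\sigma$ large positive), but your surrounding discussion makes clear you have the correct picture.
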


\begin{rem}\label{rem:expand-reg}
Thus, for $\cP-\imath\cQ$,
the more terms we wish to obtain in an expansion, the better Sobolev space
we need to work in. For $\cP^*+\imath\cQ^*$, dually, we need to be in a weaker
Sobolev space under the same circumstances. However, these spaces only need
to be worse at the radial points, so under better regularity assumptions on $f$
we still get the expansion in better Sobolev spaces away from the radial points
--- in particular in elliptic regions. This is relevant in our description of
Minkowski space.
\end{rem}

\begin{rem}\label{rem:off-real-not-imp-for-exp}
If the large $\im\sigma$, i.e.\ $\im z\neq 0$, assumptions in
Subsection~\ref{subsec:semi-abstract} are not satisfied (but the
assumptions corresponding to a strip, i.e.\ roughly real $z$, still are), the proof of this lemma
still goes through apart from the support conclusion in the existence
part. For the existence argument, one can then pick any $\alpha<\ell$
with $(P_\sigma-\imath Q_\sigma)^{-1}$ having no
poles on the line $\im\sigma = -\alpha$; different choices of $\alpha$
result in different solutions.
See also Remark~\ref{rem:off-real-not-imp}.
\end{rem}

\begin{proof}
First consider the expansion.
Suppose $\alpha,r\in\RR$ are such that
$u\in\tau^\alpha \Hb^r(\bM_\infty)$ and $(P_\sigma-\imath Q_\sigma)^{-1}$ has no
poles on the line $\im\sigma = -\alpha$; note
that the vanishing of $u$ for $\tau>1$ and the absence of poles of
$(P_\sigma-\imath Q_\sigma)^{-1}$ near infinity inside
strips (by the semiclassical non-trapping/mildly trapping assumptions)
means that this can be arranged, and then also
$u\in\tau^\alpha (1+\tau)^{-N}\Hb^r(\bM_\infty)$ for all $N$.
The Mellin transform of the PDE, a priori on $\im\sigma = -\alpha$, is
$(P_\sigma-\imath Q_\sigma)\cM u=\cM f$. Thus,
\begin{equation}\label{eq:Mellin-PDE}
\cM u=(P_\sigma-\imath Q_\sigma)^{-1}\cM f
\end{equation}
there. If $f\in \tau^\ell \Hb^{s-\difford+1}(\bM_\infty)$, then
shifting the contour of integration to $\im\sigma=-\ell$, we
obtain contributions from the poles of $(P_\sigma-\imath Q_\sigma)^{-1}$, giving
the expansion in \eqref{eq:Mellin-simple-non-trap} and
\eqref{eq:Mellin-simple-mild-trap} by Cauchy's theorem.
The error term $u'$ is what one obtains
by integrating along the new contour in view of the high energy bounds
on $(P_\sigma-\imath Q_\sigma)^{-1}$ (which differ as one changes one's
assumption from non-trapping to mild trapping), and the assumptions on $f$.

Conversely, to obtain existence, let $\alpha<\min(\ell,-\sup\im\sigma_j)$ and
define $u\in \tau^\alpha \Hb^{s}(\bM_\infty)$ by
\eqref{eq:Mellin-PDE} using the inverse Mellin transform with $\im\sigma=-\alpha$.
Then $u$ solves the PDE, hence the expansion follows by the first part of the
argument. The support property of $u$ follows from Paley-Wiener, taking
into account holomorphy in $\im\sigma>-\alpha$, and the
estimates on
$\cM f$ and $(P_\sigma-\imath Q_\sigma)^{-1}$ there.

Finally, stability of the expansion follows from
Subsection~\ref{subsec:stability} since the meromorphy and the large $\sigma$
estimates are stable under such a perturbation. Note that the
condition on the principal symbol of $P'_\sigma$ and $Q'_\sigma$ to be
independent of $\sigma$ is automatically satisfied, for this is just
the principal symbol in $\Psib^\difford(\bM_\infty)$ (which stands for
{\em one-step}, or {\em classical} b-pseudodifferential operators) of $\cP'$ and
$\cQ'$ evaluated at $\sigma=0$ (or any other finite constant), cf.\
the large parameter discussion at the end of Subsection~\ref{subsec:notation}.
\end{proof}

\begin{rem}\label{rem:estimates-for-expansion}
One actually gets estimates for the coefficients $a_{j\kappa}$ and
$u'$ in
\eqref{eq:Mellin-simple-non-trap}-\eqref{eq:Mellin-simple-mild-trap}.
Indeed,
in view of the isomorphism \eqref{eq:b-Sobolev-iso-mod} and the contour
deformation after \eqref{eq:Mellin-PDE}, $u'$ is bounded in $\tau^\ell
\Hb^{s}(\bM_\infty)$ by $f$ in $\tau^\ell
\Hb^{s-\difford+1}(\bM_\infty)$ in the non-trapping case, with the
$\varkappa$ shift in the mildly trapping case. Then, in the
nontrapping case, the norm of $f$ in $\tau^\ell
\Hb^{s-\difford+1}(\bM_\infty)$ gives a bound for the norm of $\cM f$
on the line $\im\sigma=-\ell$
in $\langle|\sigma|\rangle^{-(s-\difford+1)}
L^2(\RR;H_{\langle|\sigma|\rangle^{-1}}^{s-\difford+1}(X))$;
now the non-trapping bounds for $(P_\sigma-\imath
Q_\sigma)^{-1}$ imply that $(P_\sigma-\imath
Q_\sigma)^{-1}\cM f$ is bounded in
$\langle|\sigma|\rangle^{-s}
L^2(\RR;H_{\langle|\sigma|\rangle^{-1}}^{s}(X))$, and thus $u'$ is bounded in $\tau^{\ell}\Hb^{s}(\bM_\infty)$.
One gets similar bounds for the $a_{j\kappa}$; indeed only the norm of
$f$ in slightly
less weighted spaces (i.e.\ with a weight $\ell'<\ell$) corresponding
to the location of the poles of $(P_\sigma-\imath
Q_\sigma)^{-1}$ is needed to estimate these. In the case of mild
trapping, one needs stronger norms on $f$ corresponding to $\varkappa$
in view of the bounds on $(P_\sigma-\imath
Q_\sigma)^{-1}$ then.
\end{rem}

One can iterate this to obtain a full expansion even when
$\cP-\imath\cQ$ is not dilation invariant. Note that in most cases considered below,
Lemma~\ref{lemma:Mellin-expand} suffices; the exception is if we allow
general, non-stationary, b-perturbations of Kerr-de Sitter or
Minkowski metrics.

\begin{prop}\label{prop:Mellin-expand}
Suppose $(\cP-\imath\cQ) u=f$,
and the normal operator family $\hat N(\cP-\imath\cQ)$ is of the form
$P_\sigma-\imath Q_\sigma$ satisfying the conditions of Section~\ref{sec:microlocal},
including
semiclassical non-trapping. Then for $\ell<\beta^{-1}(2(s-|\ell-\alpha|)-\difford+1)$,
$\ell\notin-\im\sigma_j+\Nat$ for any $j$, $u\in\tau^\alpha\Hb^r(\bM_\infty)$
supported near $0$, $(\cP-\imath\cQ) u=f$,
$f\in \tau^\ell \Hb^{s-\difford+1}(\bM_\infty)$, $u$ has an asymptotic expansion
\begin{equation}\label{eq:Mellin-gen-non-trap}
u=\sum_j\sum_l\sum_{\kappa\leq m_{jl}} \tau^{\imath\sigma_j+l}(\log |\tau|)^\kappa
a_{j\kappa l}+u'
\end{equation}
with $a_{j\kappa}\in\CI(X)$ and $u'\in \tau^\ell \Hb^{s-[\ell-\alpha]}(\bM_\infty)$, $[\ell-\alpha]$
being the integer part of $\ell-\alpha$.

If instead $N(\cP-\imath\cQ)$ is semiclassically mildly trapping
of order $\varkappa$ in a $C_0$-strip then for $\ell<C_0$ and
$f\in \tau^\ell \Hb^{s-\difford+1+\varkappa}(\bM_\infty)$ one has
\begin{equation}\label{eq:Mellin-gen-mild-trap}
u=\sum_j\sum_l\sum_{\kappa\leq m_{jl}} \tau^{\imath\sigma_j+l}(\log |\tau|)^\kappa a_{j\kappa l}+u'
\end{equation}
with $a_{j\kappa l}\in\CI(X)$ and $u'\in \tau^\ell \Hb^{s-[\ell-\alpha]}(\bM_\infty)$.

If $f=\sum_j\sum_{\kappa\leq m'_j} \tau^{\alpha_j} (\log |\tau|)^\kappa b_{j\kappa}+f'$,
with
$f'$ in the spaces indicated above for $f$, and $b_{j\kappa}\in H^{s-\difford+1}(X)$,
analogous results hold when the expansion of $f$ is added to the form of
\eqref{eq:Mellin-gen-non-trap} and \eqref{eq:Mellin-gen-mild-trap}
in the sense of the extended union of index sets, see
\cite[Section~5.18]{Melrose:Atiyah}.

If $\sigma_{\bl,\difford}(\cP-\imath\cQ)$ vanishes on the characteristic set of
$N(\cP-\imath\cQ)$ to infinite order in Taylor series at $\tau=0$, then there
are no losses in the order of $u'$, i.e.\ one can replace $u'\in \tau^\ell \Hb^{s-[\ell-\alpha]}(\bM_\infty)$ by $u'\in \tau^\ell \Hb^{s}(\bM_\infty)$, and
$\ell<\beta^{-1}(2(s-|\ell-\alpha|)-\difford+1)$ by
$\ell<\beta^{-1}(2s-\difford+1)$, giving the same form as in Lemma~\ref{lemma:Mellin-expand}.

Conversely, under the characteristic assumption in the previous paragraph,
given $f$ in the indicated spaces, with $f$
supported near $\tau=0$, a solution $u$ of $(\cP-\imath\cQ) u=f+f^\sharp$
of the form
\eqref{eq:Mellin-simple-non-trap}, resp.\ \eqref{eq:Mellin-simple-mild-trap},
$f^\sharp\in\tau^\infty \Hb^{s-\difford+1}(\bM_\infty)$,
resp.\ $\Hb^{s-\difford+1+\varkappa}(\bM_\infty)$,
supported near $\tau=0$, exists.

Again, the result is stable under sufficiently small perturbations, in
the b-sense, of $\cP$ and $\cQ$, in the same sense, apart from
dilation invariance, as stated in Lemma~\ref{lemma:Mellin-expand}.
\end{prop}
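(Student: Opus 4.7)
The plan is to reduce to Lemma~\ref{lemma:Mellin-expand} by treating the non-dilation-invariant part of $\cP-\imath\cQ$ as an error that gains a power of $\tau$ but costs derivatives, and then to iterate. First I would localize $u$ near $\tau=0$ by a cutoff $\chi(\tau)$, supported in $\tau<\tau_0$ and identically $1$ near $0$; the commutator $[\cP-\imath\cQ,\chi]$ is supported in $\tau>0$, so $\chi u$ differs from $u$ by something handled trivially, and we may identify a collar neighborhood of $X$ in $\bM$ with a corresponding neighborhood in $\bM_\infty$ and write
$$
\cP-\imath\cQ=N(\cP-\imath\cQ)+\tilde R,\qquad \tilde R\in\tau\,\Diffb^\difford(\bM_\infty),
$$
so that $N(\cP-\imath\cQ)u=f-\tilde R u$. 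Expanding $\tilde R=\sum_{l\ge 1}\tau^l \cP_l$ with $\cP_l\in\DiffbI^\difford$ (the Taylor expansion at $\tau=0$) explains on the Mellin side why the new poles appear at shifted locations $\sigma_j+il$: the operator $\tau^l$ shifts $\sigma\mapsto\sigma-il$, so each iteration pushes the poles of $N(\cP-\imath\cQ)^{-1}$ further down by integer steps.

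Next, I would run an inductive improvement. Starting from $u\in\tau^\alpha\Hb^r$, the operator $\tilde R$ sends this into $\tau^{\alpha+1}\Hb^{r-\difford}$; feeding $f-\tilde R u$ into Lemma~\ref{lemma:Mellin-expand} (applied to $N(\cP-\imath\cQ)$) peels off the contour contributions from the poles on the line $\im\sigma=-\alpha-1$, leaving a remainder one power of $\tau$ better and, via the $\difford-1$ derivative gain in the normal-operator estimates, one Sobolev order worse. Repeating this $[\ell-\alpha]$ times gives the expansion of the stated form with remainder $u'\in\tau^\ell\Hb^{s-[\ell-\alpha]}(\bM_\infty)$; the order condition $\ell<\beta^{-1}(2(s-|\ell-\alpha|)-\difford+1)$ is exactly what is needed to keep applying the radial-point estimates of Proposition~\ref{prop:micro-out} at each stage. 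The new coefficients $a_{j\kappa l}$ are determined by Laurent-expanding $(P_\sigma-\imath Q_\sigma)^{-1}$ at $\sigma_j$ and feeding in the Mellin transforms of the lower-order terms already produced. Adding an expansion for $f$ goes through by extended-union bookkeeping of index sets as in Melrose \cite{Melrose:Atiyah}.

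For the characteristic vanishing refinement, the hypothesis means that microlocally on the characteristic set of $N(\cP-\imath\cQ)$ the operator $\tilde R$ is $\tau^\infty$-smoothing in b-sense; off the characteristic set one has elliptic regularity, which is lossless. Consequently, the Sobolev order is preserved through each iteration step and one recovers $u'\in\tau^\ell\Hb^s(\bM_\infty)$ with the stronger condition $\ell<\beta^{-1}(2s-\difford+1)$, matching Lemma~\ref{lemma:Mellin-expand}. For existence under this hypothesis, I would construct the coefficients $a_{j\kappa l}$ formally by matching orders in the Taylor expansion of $(\cP-\imath\cQ)$, then asymptotically (Borel) sum the resulting double series in $(\tau^{\imath\sigma_j+l},(\log\tau)^\kappa)$ to obtain $u_0$ with $(\cP-\imath\cQ)u_0-f\in\tau^\infty\Hb^{s-\difford+1}$; the residual equation is then solved by invoking Lemma~\ref{lemma:Mellin-expand} directly on this $\tau^\infty$ right-hand side, producing the $f^\sharp$ term in the statement.

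The hardest part, and the one deserving the most care, is the bookkeeping of Sobolev orders and index sets through the iteration, together with keeping track of the order constraint on $s$ relative to $\ell$ at each step, since the radial-point threshold $(\difford-1-\beta\im\sigma)/2$ shifts as the contour moves. Stability under small b-perturbations of $\cP$ and $\cQ$ is then a formal consequence: the normal operator family deforms continuously in $\Psib^\difford$, so by the discussion of Subsection~\ref{subsec:stability} the meromorphic family $(P_\sigma-\imath Q_\sigma)^{-1}$ and its high-energy bounds deform continuously, and the iteration above depends only on these data plus the continuous b-dependence of $\tilde R$.
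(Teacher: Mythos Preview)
Your proposal is correct and follows essentially the same route as the paper: write $\cP-\imath\cQ=N(\cP-\imath\cQ)+\tilde R$ with $\tilde R\in\tau\Diffb^\difford$, move $\tilde R u$ to the right-hand side, apply Lemma~\ref{lemma:Mellin-expand} to the dilation-invariant normal operator, and iterate, gaining one power of $\tau$ and losing one Sobolev order per step. The paper's proof is extremely terse (a single paragraph), and you have usefully spelled out why the shifted poles $\sigma_j-\imath l$ appear and why the loss is exactly one order per step.

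Two small points of imprecision. In the characteristic-vanishing refinement, it is not that $\tilde R$ is ``$\tau^\infty$-smoothing'' on the characteristic set; rather, the principal symbol of each Taylor coefficient of $\tilde R$ vanishes on $\Sigma$, so modulo $\tau^\infty$ these coefficients are of order $\difford-1$ there (and elliptic regularity handles the complement), which is what makes the net loss zero. For the existence part, the statement only asserts a solution of $(\cP-\imath\cQ)u=f+f^\sharp$ with $f^\sharp\in\tau^\infty\Hb^{s-\difford+1}$, so once your Borel sum $u_0$ satisfies $(\cP-\imath\cQ)u_0-f\in\tau^\infty\Hb^{s-\difford+1}$ you are done: set $u=u_0$ and $f^\sharp=(\cP-\imath\cQ)u_0-f$. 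There is no further residual equation to solve.
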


\begin{rem}
The losses in the regularity of $u'$ without further assumptions are natural due to the
lack of ellipticity. Specifically, if, for instance,
$u$ is conormal to a hypersurface $S$ transversal to $X$, as is the case in many
interesting examples, the orbits of the $\RR^+$-action on $\bM_\infty$ must
be tangent to $S$ to avoid losses of regularity in the Taylor series expansion.

In particular, there are no losses if
$(\cP-\imath \cQ)- N(\cP-\imath \cQ)\in\tau\Diffb^{\difford-1}(\bM_\infty)$,
rather than merely in $\tau\Diffb^{\difford}(\bM_\infty)$.

We only stated the converse result under the extra characteristic assumption to
avoid complications with the Sobolev orders. Global solvability depends on more
than the normal operator, which is why we do not state such a result here.
\end{rem}

\begin{proof}
One proceeds as in Lemma~\ref{lemma:Mellin-expand}, Mellin transforming the
problem, but replacing $\cP-\imath \cQ$ by $N(\cP-\imath\cQ)$. Note
that $(\cP-\imath \cQ)- N(\cP-\imath \cQ)\in\tau\Diffb^{\difford}(\bM_\infty)$.
We treat
$$
\tilde f=((\cP-\imath \cQ)- N(\cP-\imath \cQ))u
$$
as part of the right hand side, subtracting it from $f$, so
$$
N(\cP-\imath\cQ)u=f-\tilde f.
$$
If $u\in\tau^\alpha \Hb^{r}(\bM_\infty)$ is supported near $0$, then
$\tilde f\in \tau^{\alpha+1} \Hb^{r}(\bM_\infty)$, so Lemma~\ref{lemma:Mellin-expand}
is applicable with $\ell$ replaced by $\min(\ell,\alpha+1)$.
If $\ell\leq\alpha+1$, we are done, otherwise we repeat
the argument. Indeed, we now know that
$u$ is given by an expansion giving rise to poles of $\cM u$
in $\im\sigma>\alpha+1$ plus an element of $\tau^{\alpha+1} \Hb^{s}(\bM_\infty)$, so
we also have better information on $\tilde f$, namely it is also given by a partial
expansion, plus an element of $\tau^{\alpha+2} \Hb^{s-\difford}(\bM_\infty)$, or indeed
$\tau^{\alpha+2}\Hb^{s-\difford+1}(\bM_\infty)$ under the characteristic assumption on
$\cP-\imath\cQ$.
Using the $f$ with a partial expansion part of Lemma~\ref{lemma:Mellin-expand}
to absorb the $\tilde u$ terms,
we can work with $\ell$ replaced by $\min(\ell,\alpha+2)$. It is this step
that starts generating the sum over $l$ in \eqref{eq:Mellin-gen-non-trap}
and \eqref{eq:Mellin-gen-mild-trap}.
The iteration stops in
a finite number of steps, completing the proof.

For the existence, define a zeroth approximation $u_0$ to $u$ using
$N(\cP-\imath\cQ)$ in
Lemma~\ref{lemma:Mellin-expand}, and iterate away the error
$\tilde f=((\cP-\imath\cQ)u-N(\cP-\imath\cQ))u_0-f$ in Taylor series.
\end{proof}

\subsection{Lorentzian metrics}\label{subsec:Lorentz}
We now review common properties of Lorentzian b-metrics $g$
on $\bM$.
Lorentzian b-metrics are symmetric non-degenerate bilinear forms on $\Tb_m \bM$,
$m\in\bM$, of signature
$(1,n-1)$, i.e.\ the maximal dimension of a subspace on which $g$ is positive
definite is {\em exactly} 1, which depend smoothly on $m$. In other
words, they are symmetric sections of $\Tb^*\bM\otimes\Tb^*\bM$
which are in addition non-degenerate of Lorentzian signature.
Usually it is more convenient to work with the dual metric $G$, which is
then a symmetric section of $\Tb\bM\otimes\Tb\bM$
which is in addition non-degenerate of Lorentzian signature.

By non-degeneracy there is a nowhere vanishing b-density associated to the metric,
$|dg|$, which in local coordinates $(\tau,y)$ is given by $\sqrt{|\det g|}\frac{|d\tau|}{\tau}
\,|dy|$, and which gives rise to a Hermitian (positive definite!) inner
product on functions. There is also a non-degenerate, but not positive definite, inner
product on the fibers of the b-form bundle, $\Lambdab \bM$,
and thus, when combined
with the aforementioned Hermitian inner product on functions, an inner product
on differential forms which is not positive definite only due to the lack of
definiteness of the fiber inner product.
Thus, $A^*$ is defined, as a formal adjoint,
for any differential operator $A\in\Diffb^{\difford}(\bM;\Lambdab \bM)$
acting on sections of the
b-form bundle, such as the exterior derivative, $d$. Thus,
$g$ gives rise to the d'Alembertian,
$$
\Box_g=d^*d+dd^*\in\Diffb^2(\bM;\Lambda \bM),
$$
which preserves form degrees.
The d'Alembertian on functions is also denoted by $\Box_g$.
The principal symbol of $\Box_g$ is
$$
\sigma_{\bl,2}(\Box_g)=G.
$$

As discussed above,
the normal operator of $\Box_g$ on $\bM$ is $N(\Box_g)\in\DiffbI(\bM_\infty)$,
$\bM_\infty=X\times[0,\infty)_\tau$.
If $\bM=\bM_\infty$ (i.e.\ it is a product space to start with) and
if $\Box_g$ already has this invariance property under a product
decomposition, then the
normal operator can be identified with $\Box_g$ itself.
Taking the Mellin transform in $\tau$, we obtain a
family of operators, $P_\sigma$, on $X$, depending analytically
on $\sigma$, the b-dual variable of $\tau$.
The semiclassical principal symbol of $P_{h,z}=h^2P_{h^{-1}z}$
is just the dual metric $G$ on
the complexified cotangent bundle
$\TbC^*_m \bM$, $m=(x,\tau)$, evaluated on covectors
$\varpi+z\,\frac{d\tau}{\tau}$, where $\varpi$ is in the (real) span $\Pi$
of the `spatial variables'
$T^*_xX$; thus
$\Pi$ and $\frac{d\tau}{\tau}$ are
linearly independent. In general,
\begin{equation}\begin{split}\label{eq:Lorentz-metric-img}
&\langle \varpi+z\,\frac{d\tau}{\tau},\varpi+z\,\frac{d\tau}{\tau}\rangle_G\\
&=\langle \varpi+\re z\,\frac{d\tau}{\tau},\varpi+\re z\,\frac{d\tau}{\tau}\rangle_G
-(\im z)^2\langle \frac{d\tau}{\tau},\frac{d\tau}{\tau}\rangle_G\\
&\qquad
+2\imath\im z\langle \varpi+\re z\frac{d\tau}{\tau},\frac{d\tau}{\tau}\rangle_G.
\end{split}\end{equation}
For $\im z\neq 0$,
the vanishing of the imaginary part states that
$\langle \varpi+\re z\,\frac{d\tau}{\tau},\frac{d\tau}{\tau}\rangle_G=0$;
the real part is the first
two terms on the right hand side of \eqref{eq:Lorentz-metric-img}.

In the setting of Subsection~\ref{subsec:semi-abstract}
we want that when $\im z\neq 0$ and $\im p_{\semi,z}$ vanishes then $\re p_{\semi,z}$
does not vanish, i.e.\ that on the orthocomplement of the span
of $\frac{d\tau}{\tau}$ the metric should have the opposite sign as that of
$\langle \frac{d\tau}{\tau},\frac{d\tau}{\tau}\rangle_G$. For a Lorentzian
metric this is only possible if $\frac{d\tau}{\tau}$ is time-like (note that
$\varpi+\re z\,\frac{d\tau}{\tau}$ spans the whole fiber of the b-cotangent
bundle as $\re z$ and $\varpi\in\Pi$ vary), when, however,
this is automatically the case, namely the metric is negative definite
on this orthocomplement. {\em From now on we always assume that
$\frac{d\tau}{\tau}$ is time-like for $G$.}

Furthermore, for $z$ real, non-zero, the characteristic set of $p_{\semi,z}$
cannot intersect the
hypersurface
$\langle \varpi+\re z\frac{d\tau}{\tau},\frac{d\tau}{\tau}\rangle_G=0$, for
$G$ is negative definite on covectors satisfying this equality, so if the intersection
were non-empty,
$\varpi+\re z\frac{d\tau}{\tau}$ would vanish there, which cannot happen for
$\varpi\in\Pi$ since $\re z\neq 0$ by assumption.
Correspondingly, we can divide the semiclassical characteristic set
in two parts by
\begin{equation}\label{eq:Lorentz-semicl-char}
\Sigma_{\semi,\pm}\cap T^*X=\{\varpi\in \Sigma_{\semi}\cap T^*X:
\pm\langle \varpi+\re z\frac{d\tau}{\tau},\frac{d\tau}{\tau}\rangle_G>0\};
\end{equation}
note that by the definiteness of the quadratic form on this hypersurface, in
fact this separation
holds on the fiber-compactified bundle, $\overline{T}^*X$. In general,
one of the `components' $\Sigma_{\semi,\pm}$ may be empty.
However, in any case, when
$\im z\geq 0$, the sign
of the imaginary part of $p_{\semi,z}$ on $\Sigma_{\semi,\pm}$
is given by $\pm\im p_{\semi,z}\geq 0$,
as needed for the propagation of estimates: in $\Sigma_{\semi,+}$ we can propagate
estimates backwards, in $\Sigma_{\semi,-}$ we can propagate estimates forward.
For $\im z\leq 0$, the direction of propagation is reversed.

Moreover, for $m\in \bM$,
and with $\Pi$ denoting the `spatial' hyperplane in the real cotangent bundle,
$\Tb^*_m \bM$,
the Lorentzian nature of $G$
means that for $z$ real and non-zero, the intersection of
$\Pi+z\frac{d\tau}{\tau}$ with the zero-set of $G$ in $\Tb^*_q \bM$,
i.e.\ the characteristic set, has two components
if $G|_{\Pi}$ is Lorentzian, and one component if it is negative definite (i.e.\ Riemannian,
up to the sign). Further,
in the second case, on the only component
$\langle \varpi+\re z\frac{d\tau}{\tau},\frac{d\tau}{\tau}\rangle_G$
and $\langle \re z\frac{d\tau}{\tau},\frac{d\tau}{\tau}\rangle_G$ have the
same sign, so only $\Sigma_{\semi,\sgn(\re z)}$ can enter the elliptic region.

We also need information about $p_{\semi,z}-\imath q_{\semi,z}$, i.e.\
when the complex absorption has been added, with $q_{\semi,z}$ defined
for $z$ in an open set $\tilde\Omega\subset \Cx$. Here we need to choose
$q_{\semi,z}$ in such a way as to ensure
that $p_{\semi,z}-\imath q_{\semi,z}$ does not vanish when $\im z>0$,
but for real $z\neq 0$, $q_{\semi,z}$ is real and for $z$ sufficiently
close to $\RR$ with $\im z\geq 0$, $\mp \re q_{\semi,z}\geq
0$ on $\Sigma_{\semi,\pm}$ (as well as an ellipticity condition that
we discuss below \eqref{eq:Lorentz-q-form}). In order to arrange this, we
take
\begin{equation}\begin{split}\label{eq:Lorentz-q-form}
q_{\semi,z}=-\chi f_z\langle
\varpi+z\frac{d\tau}{\tau},\frac{d\tau}{\tau}\rangle_G,\qquad&\re
f_z\geq 0,\ z\in\RR\Rightarrow f_z\ \text{is real},\\
&\chi\geq 0,\ \text{independent of}\ z;
\end{split}\end{equation}
note that if in addition $f_z$ is bounded away from $0$ when $z$ is
bounded away from $0$ in $\RR$, then
the above conditions for real $z$ are then automatically satisfied
in view of \eqref{eq:Lorentz-semicl-char}. In addition, at points
where $\chi>0$, $p_{\semi,z}-\imath q_{\semi,z}$ does not vanish for
$z$ real, since the imaginary part, $q_{\semi,z}$, is non-zero except
when $\langle
\varpi+z\frac{d\tau}{\tau},\frac{d\tau}{\tau}\rangle_G=0$, but in that
case the real part satisfies $p_{\semi,z}<0$ by
\eqref{eq:Lorentz-semicl-char}.

Note that
$\langle
\varpi+z\frac{d\tau}{\tau},\frac{d\tau}{\tau}\rangle_G$ rather than the function
$\langle
\varpi+\re z\frac{d\tau}{\tau},\frac{d\tau}{\tau}\rangle_G$ appearing
above is used in the definition of $q_{\semi,z}$
to make sure that $q_{\semi,z}$ is holomorphic in $z$ (if $f_z$
is such). Thus, if we ensure that for $\im z>0$,
\begin{equation}\label{eq:p-iq-elliptic}
\im\left(\langle \varpi+z\frac{d\tau}{\tau},\frac{d\tau}{\tau}\rangle_G^{-1} p_{\semi,z}\right)>0,
\end{equation}
then $\im (\imath f)\geq 0$ shows that $p_{\semi,z}-\imath q_{\semi,z}\neq 0$ as desired.
We note that the imaginary part
of $\langle
\varpi+z\frac{d\tau}{\tau},\frac{d\tau}{\tau}\rangle_G$ is $\im z \langle
\frac{d\tau}{\tau},\frac{d\tau}{\tau}\rangle_G$, and thus is non-zero
as $\frac{d\tau}{\tau}$ is time-like and $\im z>0$. But the expression inside the
imaginary part on the
left hand side of \eqref{eq:p-iq-elliptic} is a positive
multiple of
$\langle
\varpi+\overline{z}\frac{d\tau}{\tau},\frac{d\tau}{\tau}\rangle_G
p_{\semi,z}$, so it suffices to consider the latter, whose imaginary
part is, with $\beta=\varpi+\re z\frac{d\tau}{\tau}$,
\begin{equation}\begin{split}\label{eq:p-iq-rotate}
&-\im z \langle\frac{d\tau}{\tau},\frac{d\tau}{\tau}\rangle_G
\left(\langle \beta,\beta\rangle_G
-(\im z)^2\langle
\frac{d\tau}{\tau},\frac{d\tau}{\tau}\rangle_G\right)
+\langle\beta,\frac{d\tau}{\tau}\rangle_G (2\im
z)\langle \beta,\frac{d\tau}{\tau}\rangle_G\\
&=\im z
\left(-\langle \beta,\beta\rangle_G \langle\frac{d\tau}{\tau},\frac{d\tau}{\tau}\rangle_G
+2\langle\beta,\frac{d\tau}{\tau}\rangle_G^2 +(\im z)^2\langle
\frac{d\tau}{\tau},\frac{d\tau}{\tau}\rangle_G^2
\right).
\end{split}\end{equation}
Now, as $\frac{d\tau}{\tau}$ time-like, the first two terms
inside the parentheses on
the right hand side give twice the positive definite stress-energy
tensor; the positive definite character is easily checked by writing
$\beta=\gamma+\lambda\frac{d\tau}{\tau}$ with\footnote{This is possible for
$\frac{d\tau}{\tau}$ time-like. Note further that typically $\gamma$
is {\em not} in the `spatial' slice $T^*_x X$; the latter need even not
be space-like.}
$\langle\gamma,\frac{d\tau}{\tau}\rangle_G=0$,
for then these terms give
\begin{equation}\label{eq:real-stress-energy}
\langle\frac{d\tau}{\tau},\frac{d\tau}{\tau}\rangle_G\left(-\langle\gamma,\gamma\rangle_G
+\lambda^2 \langle\frac{d\tau}{\tau},\frac{d\tau}{\tau}\rangle_G\right),
\end{equation}
and the Lorentzian character of $G$ then implies that $-G$ is positive
definite on the orthocomplement of the span of
$\frac{d\tau}{\tau}$. In view of the third term, $(\im z)^2\langle
\frac{d\tau}{\tau},\frac{d\tau}{\tau}\rangle_G^2$, on the right hand
side of \eqref{eq:p-iq-rotate}, we actually conclude that \eqref{eq:p-iq-elliptic} holds
(i.e. the inequality is strict) when $\im z>0$.

In fact, typically $p_{\semi,z}$ itself is not globally defined, so we
need to extend it beyond the domain where it {\em is}
defined. Typically one has a function $\mu$ on $X$ with $d\mu$
time-like\footnote{This actually does not matter for the discussion
  below, but due to Subsection~\ref{subsec:local-wave} it ensures that the
  choice of the extension is irrelevant.} for $\mu$ near $\mu_1\in\RR$, and $p_{\semi,z}$ is given by a Lorentzian b-metric in
$\mu\geq\mu_1$, but we need to extend
$p_{\semi,z}$ to $\mu<\mu_1$. For this purpose, we first let $\tilde
H$ to be a Riemannian metric on $X$, and then for some $j\geq 1$
integer we let
$$
\hat p_{\semi,z}=\left(\|\varpi\|^{2j}_{\tilde H}+z^{2j}\right)^{1/j};
$$
thus the case of $j=1$ actually corresponds to a Riemannian
b-metric in which $\frac{d\tau}{\tau}$ and $\varpi\in T^*_x X$ are
orthogonal, unlike in the case of $G$. Here we choose the branch of the $j$th root function which
is positive along the positive reals and has a branch cut along the
negative reals, and take as the domain of $\hat p_{\semi,z}$ the
values of $\varpi$ and $z$ for which $\|\varpi\|^{2j}_{\tilde
  H}+z^{2j}$ lies outside the branch cut. Thus, the complement $\cD_j$ of the
rays $z=re^{i\pi(2k+1)/(2j)}$, $k$ an integer, $r\geq 0$, is always in
the domain of $\hat p_{\semi,z}$, and thus as $j$ varies, these domains
cover any compact set of $\Cx$ disjoint from $0$.
Now let $\mu_0>\mu_0'>\mu_1'>\mu_1$, and choose a partition of unity
$\chi_1+\chi_2=1$, $\chi_j\geq 0$, with $\supp\chi_1\subset
(\mu_1',+\infty)$, $\supp\chi_2\subset (-\infty,\mu_0')$. Further,
with $\digamma>0$ a constant to be chosen, let
$\chi\geq 0$ be identically $\digamma$ on $[\mu_1',\mu_0']$ and supported in
$(\mu_1,\mu_0)$, let
$$
f_z=\left(\|\varpi\|^{2j}_{\tilde H}+z^{2j}\right)^{1/2j},
$$
where again the branch cut for the $2j$th root is along the negative
reals, so $\hat p_{\semi,z}=f_z^2$. In particular, note that $\re
f_z>0$ with this choice, and even $\re f_z^2>0$ if $j\geq 2$. Now, let
\begin{equation}\label{eq:extend-p-def}
\tilde p_{\semi,z}=\chi_1 p_{\semi,z}-\chi_2\hat p_{\semi,z},
\end{equation}
and let $q_{\semi,z}$ be defined by \eqref{eq:Lorentz-q-form}. We
already know from the above discussion that where $\chi_1=1$,
$\tilde p_{\semi,z}-\imath q_{\semi,z}$ satisfies our requirements. We claim
that where either $\chi_2>0$ or $\chi>0$, $\tilde p_{\semi,z}-\imath
q_{\semi,z}$ is actually elliptic; where $\chi_2=0$ but $\chi>0$, this
was checked above. Note that as $\|\varpi\|_{\tilde H}\to\infty$ and
$z$ in a compact set, semiclassical ellipticity becomes a statement
(namely, that of ellipticity) for the standard
principal symbol, $\tilde p-\imath q$, which is easy to check as
$$
\tilde p-\imath q=\chi_1 \langle\varpi,\varpi\rangle_G-\chi_2\|\varpi\|^{2}_{\tilde H}
-\imath \chi \|\varpi\|_{\tilde H}\langle\varpi,\frac{d\tau}{\tau}\rangle,
$$
homogeneous of degree $2$ in $\varpi$.
Indeed, the imaginary part of $\tilde p-\imath q$ only vanishes (for
$\varpi\neq 0$) when
either $\chi$ or $\langle\varpi,\frac{d\tau}{\tau}\rangle$ does. In
the former case, $\chi_2=1$ and $\chi_1=0$, so $\tilde p-\imath
q<0$. In the latter case, $\langle\varpi,\varpi\rangle_G$ is
negative definite, so the real part does not vanish as $\chi_1+\chi_2=1$. Hence, for
sufficiently large $\varpi$, $\tilde p_{\semi,z}-\imath q_{\semi,z}$
is elliptic in this region; we only need to consider whether it
vanishes for finite $\varpi$.
Next, if $z$ is real, $z\neq 0$, then $f_z>0$,
$$
\tilde p_{\semi,z}-\imath q_{\semi,z}=\chi_1
\langle\varpi+z\frac{d\tau}{\tau},\varpi+z\frac{d\tau}{\tau}\rangle_G-\chi_2 f_z^2
-\imath \chi f_z\langle\varpi+z\frac{d\tau}{\tau},\frac{d\tau}{\tau}\rangle,
$$
so again the imaginary part only  vanishes if either $\chi$ or
$\langle\varpi+z\frac{d\tau}{\tau},\frac{d\tau}{\tau}\rangle$ does. In
the former case, $\chi_2=1$ and $\chi_1=0$ so $\tilde
p_{\semi,z}-\imath q_{\semi,z}<0$. In the latter case,
$\langle\varpi+z\frac{d\tau}{\tau},\varpi+z\frac{d\tau}{\tau}\rangle_G$
is negative definite, so the real part of $\tilde p_{\semi,z}-\imath
q_{\semi,z}$ is negative. Thus, it remains to consider finite $\varpi$
and non-real $z$, and show that $\tilde p_{\semi,z}-\imath
q_{\semi,z}$ does not vanish then; we have a priori that for
$K\subset\cD_j$ compact (thus disjoint from the branch cuts), there exist
$C>0$ and $\delta>0$ such that for $z\in K$ either one of
$\|\varpi\|_{\tilde H}\geq C$, resp.\ $|\im z|\leq\delta$, implies
non-vanishing of $\tilde p_{\semi,z}-\imath
q_{\semi,z}$.

To see the remaining cases, i.e.\ when both $\im z>\delta$ and
$\|\varpi\|_{\tilde H}<C$, first assume that $\chi_2=1$, so $\chi_1=0$. Then
$$
\tilde p_{\semi,z}-\imath q_{\semi,z}=f_z\left(-\chi_2 f_z+\imath \chi\langle\varpi+z\frac{d\tau}{\tau},\frac{d\tau}{\tau}\rangle_G\right),
$$
and the real part of the second factor on the right hand side is
$$
-\chi_2 \re f_z-(\im z) \chi\langle\frac{d\tau}{\tau},\frac{d\tau}{\tau}\rangle_G,
$$
which is $<0$ as $\chi_2>0$ and $\im z\geq 0$, showing ellipticity.

Now, if neither $\chi_1$ nor $\chi_2$ vanish, then $\chi>0$.
First, suppose that,
with $\beta=\varpi+\re z\frac{d\tau}{\tau}$ as above,
$\langle\beta,\frac{d\tau}{\tau}\rangle_G=0$, and thus
$\langle\beta,\beta\rangle\leq 0$, with the inequality strict if
$\beta\neq 0$. Then 
$$
\tilde p_{\semi,z}-\imath
q_{\semi,z}=\chi_1\langle\beta,\beta\rangle_G
-\chi_1(\im z)^2 \langle\frac{d\tau}{\tau},\frac{d\tau}{\tau}\rangle_G-\chi_2
f_z^2- f_z\chi(\im z) \langle\frac{d\tau}{\tau},\frac{d\tau}{\tau}\rangle_G,
$$
so if $\re f_z^2>0$, the real part is negative, thus showing ellipticity.
On the other hand, if $\langle\beta,\frac{d\tau}{\tau}\rangle_G\neq 0$, then
we compute
\begin{equation*}
\langle\varpi+\overline{z}\frac{d\tau}{\tau},\frac{d\tau}{\tau}\rangle_G (\tilde p_{\semi,z}-\imath q_{\semi,z}).
\end{equation*}
For the $p_{\semi,z}$ part this is the computation performed in
\eqref{eq:p-iq-rotate} (with an extra factor, $\chi_1$, now); on the
other hand,
\begin{equation*}\begin{split}
\langle\varpi+\overline{z}\frac{d\tau}{\tau},\frac{d\tau}{\tau}\rangle_G
(-\chi_2\hat p_{\semi,z}-\imath q_{\semi,z})=&-\chi_2 f_z^2
\left(\langle\beta,\frac{d\tau}{\tau}\rangle_G-\imath\im z\langle \frac{d\tau}{\tau},\frac{d\tau}{\tau}\rangle_G\right)\\
&\qquad+\imath \chi f_z\left(\langle\beta,\frac{d\tau}{\tau}\rangle_G^2+(\im
z)^2\langle \frac{d\tau}{\tau},\frac{d\tau}{\tau}\rangle_G^2\right).
\end{split}\end{equation*}
Thus,
\begin{equation*}\begin{split}
&\im\left(\langle\varpi+\overline{z}\frac{d\tau}{\tau},\frac{d\tau}{\tau}\rangle_G
(-\chi_2\hat p_{\semi,z}-\imath q_{\semi,z})\right)\\
&\qquad=\chi_2 \re (f_z^2)\im z\langle
\frac{d\tau}{\tau},\frac{d\tau}{\tau}\rangle_G
-\chi_2 \im(f_z^2) \langle\beta,\frac{d\tau}{\tau}\rangle_G\\
&\qquad\qquad+\chi \re f_z\left(\langle\beta,\frac{d\tau}{\tau}\rangle_G^2+(\im
z)^2\langle \frac{d\tau}{\tau},\frac{d\tau}{\tau}\rangle_G^2\right)
\end{split}\end{equation*}
Combining this with \eqref{eq:p-iq-rotate} we note that the only term in
\begin{equation*}\begin{split}
\im\Big(
  \langle\varpi+\overline{z}\frac{d\tau}{\tau},\frac{d\tau}{\tau}\rangle_G
  (\tilde p_{\semi,z}-\imath q_{\semi,z})\Big)
\end{split}\end{equation*}
that is not automatically non-negative provided $\im z\geq 0$,
$\re f_z>0$, $\re (f_z^2)>0$ is $-\chi_2 \im(f_z^2)
\langle\beta,\frac{d\tau}{\tau}\rangle_G$.
Thus, if we arrange that
$$
2|\chi_2|\,|\im
f_z|\,|\langle\beta,\frac{d\tau}{\tau}\rangle_G|<\frac{\digamma}{2}
(\im
z)^2\langle \frac{d\tau}{\tau},\frac{d\tau}{\tau}\rangle_G^2,
$$
which is guaranteed by choosing $\digamma>0$ sufficiently large as
$\im z>\delta$ and $\beta$ is in a compact set,
then in view of the actual positivity of $\chi \re f_z (\im
z)^2\langle \frac{d\tau}{\tau},\frac{d\tau}{\tau}\rangle_G^2$, this
imaginary part does not vanish, completing the proof of ellipticity.

In summary, we have shown that {\em if
$\frac{d\tau}{\tau}$ is time-like} then
the assumptions on imaginary part of $p_{\semi,z}$, as well as on the
ellipticity of
$p_{\semi,z}-\imath q_{\semi,z}$ for non-real $z$,
in Section~\ref{sec:microlocal}
are automatically satisfied in the Lorentzian setting if $q_{\semi,z}$
is given by \eqref{eq:Lorentz-q-form}. Further, if we extend
$p_{\semi,z}$ to a new symbol, $\tilde p_{\semi,z}$ across a
hypersurface, $\mu=\mu_1$, in the manner
\eqref{eq:extend-p-def}, then with $\chi$, $\chi_1$ and $\chi_2$ as
discussed there, $\tilde p_{\semi,z}-\imath q_{\semi,z}$ satisfies the
requirements for $p_{\semi,z}-\imath q_{\semi,z}$, and in addition it
is elliptic in the extended part of the domain. We usually write
$p_{\semi,z}-\imath q_{\semi,z}$ for this extension.
Thus, these properties need not be checked
individually in specific cases.

We remark that $f_z$ as above arises from the standard quantization $F_\sigma$ of
$$
\left(\|\varpi\|^{2j}_{\tilde H}+\sigma^{2j}+C^{2j}\right)^{1/2j},
$$
for $C>0$ arbitrarily chosen; the large-parameter rescaling $h F_{h^{-1}z}$ of this has
the semiclassical principal symbol $f_z$. Then for the induced
operators $P_\sigma-\imath Q_\sigma$, the operators are holomorphic in
the domain $\Omega_{j,C}$ given with $\Cx$ with the half-lines
$e^{i\pi(2k+1)/(2j)}[C,+\infty)$, $k$ an integer, removed, and thus as
$j$ and $C$ vary, these domains
cover $\Cx$.

It is useful to note the following explicit calculation regarding the time-like
character of $\frac{d\tau}{\tau}$ if we are given a Lorentzian b-metric $g$
that, with respect to some local boundary defining function $\taut$ and local product
decomposition $U\times [0,\delta)_\taut$ of $\bM$ near $U\subset X$ open, is of the form
$G=(\taut\pa_{\taut})^2-\tilde G$ on $U\times[0,\delta)_\taut$,
$\tilde G$ a Riemannian metric on $U$. In this case, if we define
$\tau=\taut e^\phi$, $\phi$ a function on $X$, so
$\frac{d\tau}{\tau}=\frac{d\taut}{\taut}+d\phi$, then
$$
\langle \frac{d\tau}{\tau},\frac{d\tau}{\tau}\rangle_G=
\langle \frac{d\taut}{\taut},\frac{d\taut}{\taut}\rangle_G
-\langle d\phi,d\phi\rangle_G=1-\langle d\phi,d\phi\rangle_{\tilde G},
$$
so $\frac{d\tau}{\tau}$ is time-like if $|d\phi|_{\tilde G}<1$. Note that the effect
of such a coordinate change on the Mellin transform of the normal
operator of $\Box_g$ is conjugation by $e^{-\imath\sigma\phi}$ since
$\taut^{-\imath\sigma}=\tau^{-\imath\sigma}e^{\imath\sigma\phi}$. Such a coordinate
change is useful when $G$ has a product structure on $U\times[0,\delta)_\taut$,
but $\taut$ is only a local boundary defining function on $U\times[0,\delta)_\taut$
(the product
structure might not extend smoothly beyond $U$), in which case it is useful to
see if one can conserve the time-like nature of $\frac{d\taut}{\taut}$ for
a global boundary defining function. This is directly relevant for the study
of conformally compact spaces in Subsection~\ref{subsec:conf-comp-results}.

Finally, we remark that if $\frac{d\tau}{\tau}$ is time-like and $f$ is supported
in $\tau<\tau_0$, the forward problem
for the wave equation,
$$
\Box_g u=f,\ u|_{\tau>\tau_0}=0,
$$
is uniquely solvable on $\bM\setminus X$ near $X$ by standard energy estimates
(a priori, without more structure, with no estimates on growth at $X$),
see e.g.\ \cite[Chapter~23]{Hor}. Applying Lemma~\ref{lemma:Mellin-expand}
in this case, assuming that the normal operator family has the structure
stated there, gives another solution, which must be equal to $u$ by uniqueness.
Thus, $u$ has the expansion stated in the lemma.

\subsection{Wave equation localization}\label{subsec:local-wave}
In this section we recall energy estimates and their consequences
when $P_\sigma\in\Diff^2(X)$ is the wave
operator to leading order in a region $\waveopen\subset X$, i.e.\ when
for a Lorentzian metric $h$ on $\waveopen$,
$P_\sigma-\Box_h\in\Diff^1(\waveopen)$, and indeed when an analogous
statement holds in the `large parameter' sense as well, with the
latter naturally arising by Mellin transforming b-wave equations.
These results are not needed for the Fredholm properties, but are very useful in
describing the asymptotics of the solutions of the wave equation on
Kerr-de Sitter space as they show that certain terms arising from cutoffs do not affect the
solution. These are also useful for giving an alternative
explanation why the choice of the extension of the
modified Laplacian across the boundary is unimportant on an
(asymptotically) hyperbolic space.

So assume that one is also given a function $t:\waveopen\to(t_0,t_1)$ with $dt$ time-like and
with $t$ proper on $\waveopen$. Then one has the standard energy
estimate; see e.g.\ \cite[Section~2.8]{Taylor:Partial-I} for a version
of these estimates (in a slightly different setup):

\begin{prop}
Assume $t_0<T_0<T_0'<T_1$. Then
$$
\|u\|_{H^1(t^{-1}([T_0',T_1]))}\leq C
\left(\|u\|_{H^1(t^{-1}([T_0,T_0']))}+\|P_\sigma u\|_{L^2(t^{-1}([T_0,T_1]))}\right).
$$
\end{prop}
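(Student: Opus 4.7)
The plan is to prove this as a standard hyperbolic energy estimate by the multiplier method, using the timelike nature of $dt$ to produce a coercive energy on slices, and then absorbing the lower-order discrepancy $P_\sigma - \Box_h \in \Diff^1(\waveopen)$ via Gr\"onwall's inequality. Properness of $t$ on $\waveopen$ is what ensures the slices $\{t=T\}\subset \waveopen$ are compact and that there are no lateral boundary contributions to worry about, so the integration by parts only sees the two slices $\{t=T_0'\}$ and $\{t=T_1\}$.

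Concretely, I would introduce the stress-energy tensor $\mathcal{T}[u]_{\mu\nu}$ for $\Box_h$ on complex-valued $u$ (symmetric, built from $du\otimes d\overline u$ plus metric trace corrections so that $\mathcal{T}(V,V')\geq 0$ whenever $V,V'$ are both future-directed timelike, and such that $\mathcal{T}(V,V)$ controls $|du|^2_h+|u|^2$ pointwise on any spacelike hypersurface). The multiplier will be a future-directed timelike vector field $V$ proportional to $-\operatorname{grad}_h t$. Defining the energy on a level set
\begin{equation*}
E(T)=\int_{\{t=T\}}\mathcal{T}[u](V,N)\,dS,
\end{equation*}
with $N$ the $h$-unit normal, the timelike character of $dt$ makes $E(T)$ equivalent to $\|u\|_{H^1(\{t=T\})}^2$.

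The divergence theorem applied on $t^{-1}([T,T_1])$ to the vector field $W^\mu=\mathcal{T}[u]^{\mu\nu}V_\nu$ yields, after standard manipulation,
\begin{equation*}
E(T_1)\leq E(T)+C\int_T^{T_1}E(s)\,ds+C\|\Box_h u\|_{L^2(t^{-1}([T,T_1]))}^2,
\end{equation*}
where the middle term absorbs the deformation tensor $\nabla V$ and the contribution of $P_\sigma-\Box_h\in\Diff^1(\waveopen)$ through Cauchy--Schwarz: $|(P_\sigma-\Box_h)u|\leq C(|du|_h+|u|)$ pointwise, which is controlled by $E(s)$ after integration in $s$. Substituting $\Box_h u=P_\sigma u-(P_\sigma-\Box_h)u$ then gives the same inequality with $P_\sigma u$ in place of $\Box_h u$ (again absorbing the extra term into the integral of $E(s)$), and Gr\"onwall yields
\begin{equation*}
E(T_1)\leq C'\bigl(E(T)+\|P_\sigma u\|_{L^2(t^{-1}([T,T_1]))}^2\bigr),\qquad T\in[T_0,T_0'].
\end{equation*}

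The only mildly delicate point is that the right-hand side of the statement controls an $H^1$ norm on a \emph{slab} $t^{-1}([T_0,T_0'])$ rather than on a single slice. To bridge this, I would invoke a mean-value argument: since $\int_{T_0}^{T_0'}E(T)\,dT\leq C\|u\|_{H^1(t^{-1}([T_0,T_0']))}^2$, there exists $T^{*}\in[T_0,T_0']$ with $E(T^{*})\leq (T_0'-T_0)^{-1}\|u\|_{H^1(t^{-1}([T_0,T_0']))}^2$. Applying the Gr\"onwall bound started at $T=T^{*}$, and integrating the resulting pointwise bound $E(T_1)\lesssim E(T^*)+\|P_\sigma u\|_{L^2}^2$ (in fact with $T_1$ replaced by any $T\in[T_0',T_1]$) over $T\in[T_0',T_1]$ gives
\begin{equation*}
\|u\|_{H^1(t^{-1}([T_0',T_1]))}^2\leq C\sup_{T\in[T_0',T_1]}E(T)\leq C\bigl(\|u\|_{H^1(t^{-1}([T_0,T_0']))}^2+\|P_\sigma u\|_{L^2(t^{-1}([T_0,T_1]))}^2\bigr),
\end{equation*}
which is the desired estimate. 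The expected main (if modest) obstacle is bookkeeping the lower-order error from $P_\sigma-\Box_h$ uniformly in $\sigma$; since $\sigma$ is fixed here (not a high-energy parameter) this reduces to standard commutator estimates on the compact set $t^{-1}([T_0,T_1])$.
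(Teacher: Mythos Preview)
Your approach is correct and is the standard multiplier/stress-energy method. The paper does not actually supply a proof of this proposition: it simply states it as ``the standard energy estimate'' and refers to \cite[Section~2.8]{Taylor:Partial-I}. Your Gr\"onwall-based argument with a mean-value step to pass from a slab to a slice is exactly the sort of proof one finds in such references.

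One small correction: the usual stress-energy tensor for $\Box_h$ controls only $|du|^2$ on a spacelike slice, not $|du|^2+|u|^2$ as you wrote. To get the full $H^1$ control you either augment the current by a zeroth-order piece (e.g.\ $|u|^2 V$) or, more simply, bound $\|u(T)\|_{L^2}^2$ by $\|u(T^*)\|_{L^2}^2+2\int_{T^*}^T\|u\|_{L^2}\|\partial_t u\|_{L^2}\,ds$ and feed this into the Gr\"onwall loop. This is routine and does not affect the structure of your argument.

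For comparison, the paper \emph{does} give a detailed proof of the semiclassical version (the next proposition), and there the technique is different: instead of Gr\"onwall, it uses a commutant $\chi(t)W$ with a weight satisfying $\chi\leq -\gamma\chi'$ on $[T_0',T_1']$, so the lower-order errors are absorbed directly by the principal (derivative-of-weight) term. That device is needed in the semiclassical setting to keep the constants uniform in $h$; a Gr\"onwall argument over a fixed $t$-interval would produce an $e^{C/h}$ loss. For the present fixed-$\sigma$ statement, your approach is perfectly adequate.
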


We also note that in the various estimates in
Subsections~\ref{subsec:ellip-hyp}-\ref{subsec:complex-absorb} though
the error terms were stated globally as $\|u\|_{H^{-N}}$, in fact they
can be localized to any neighborhood $U$ of the projection to the base $X$ of $\WF'(G)$.

For us, even more important is a semiclassical version of this estimate. The setup is more
conveniently formulated in the large parameter setting, where the
large parameter is interpreted as the dual variable of an extra
variable of which the operator is independent. So with $\waveopen$ as above, consider the family
$P_\sigma\in\Diff^2(\waveopen)$ with large parameter dependence, and assume
that the large parameter principal symbol of $P_\sigma$, $p_\sigma$, is
the dual metric function $G$ on $T^*\waveopen\times \Tb^*\overline{\RR^+_\tau}$ of
an $\RR^+$-invariant (acting as dilations in the second factor on
$\waveopen\times\RR^+$) Lorentzian
b-metric $g$, where
$\sigma$ is the b-dual variable\footnote{We could also work with $T^*\RR$ and standard dual variables via a
logarithmic change of variables, changing dilations to translations,
but in view of the previous section, the b-setup is particularly
convenient.}
on $\Tb^*\overline{\RR^+}$. Suppose moreover that, as above, we are also given a function
$t:\waveopen\to(t_0,t_1)$
with $dt$ time-like and with $t$ proper on $\waveopen$. Then

\begin{prop}\label{prop:semicl-wave-prop}
Assume $t_0<T_0<T_0'<T_1<t_1$. Then, with $P_{h,z}=h^2 P_{h^{-1}z}$,
$$
\|u\|_{H^1_\semi(t^{-1}([T_0',T_1]))}\leq C
\left(\|u\|_{H^1_\semi(t^{-1}([T_0,T_0']))}+h^{-1}\|P_{h,z} u\|_{L^2(t^{-1}([T_0,T_1]))}\right).
$$
\end{prop}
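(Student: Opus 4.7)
The plan is a semiclassical adaptation of the classical multiplier-method energy estimate for the wave equation (compare \cite[Sec.~2.8]{Taylor:Partial-I}), exploiting that the large-parameter principal symbol of $P_\sigma$ is the dual metric function of the Lorentzian b-metric $g$, while $dt$ is timelike for $g$ on $\waveopen$.

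I would run a positive commutator argument with a self-adjoint multiplier $A_h\in\Psih^1(\waveopen)$ having semiclassical principal symbol $\chi(t)\langle V,\xi\rangle$, where $V$ is a future-timelike vector field on $\waveopen$ (e.g.\ any smooth extension of $-G^{-1}(dt,\cdot\,)$) and $\chi\in\CI(\RR)$ is supported in a compact subset of $(T_0,t_1)$, identically $1$ on $[T_0',T_1]$, with $\chi'\geq 0$ on $[T_0,T_0']$ and $\chi$ smoothly decreasing to $0$ on some $[T_1,T_1'']$ with $T_1''<t_1$. One then computes
\begin{equation*}
\langle -ih^{-1}[P_{h,z},A_h]u,u\rangle \;=\; 2h^{-1}\im\langle P_{h,z}u,A_hu\rangle.
\end{equation*}
The semiclassical principal symbol of $-ih^{-1}[P_{h,z},A_h]$ is the Poisson bracket $\{p_{\semi,z},\chi\langle V,\xi\rangle\}$, which splits into a $\chi'$ piece localized near $\supp\chi'$, and a $\chi$ piece in which the Hamilton derivative of $\langle V,\xi\rangle$ along the flow of $p_{\semi,z}$, by the standard pointwise positivity of the wave stress-energy tensor paired against two future-timelike vectors, is bounded below modulo lower order by a positive multiple of $\langle\xi+z\,d\tau/\tau,\xi+z\,d\tau/\tau\rangle_G+|u|^2$, yielding control of $\|u\|^2_{H^1_\semi(t^{-1}([T_0',T_1]))}$.

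The past boundary term from $\chi'\geq 0$ on $[T_0,T_0']$ is estimated directly by $\|u\|^2_{H^1_\semi(t^{-1}([T_0,T_0']))}$; the future boundary term from $\chi'\leq 0$ on $[T_1,T_1'']$ has the favorable sign, by the future-timelike choice of $V$ and the decreasing character of $\chi$ there, and is discarded. Cauchy--Schwarz on the right-hand pairing, $|\langle P_{h,z}u,A_hu\rangle|\leq \tfrac{h}{4}\|u\|^2_{H^1_\semi}+h^{-1}\|P_{h,z}u\|^2_{L^2}$, allows the $\tfrac{h}{4}\|u\|^2_{H^1_\semi}$ term to be absorbed into the positive commutator; dividing by $h$ and taking square roots yields the stated estimate. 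The main technical point is arranging the favorable sign of the future boundary contribution --- the semiclassical shadow of well-posedness for the forward wave equation --- and absorbing the $O(h^2)$ subprincipal commutator errors, e.g.\ by a Gronwall-type argument over the $t$-interval.
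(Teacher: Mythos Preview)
There is a genuine gap: you have the source of positivity backwards. In the decomposition
\[
\{p_{\semi,z},\chi(t)\langle V,\xi\rangle\}
=\chi'(t)\,\sH_{p_{\semi,z}}t\cdot\langle V,\xi\rangle+\chi(t)\,\{p_{\semi,z},\langle V,\xi\rangle\},
\]
it is the \emph{first} term, the $\chi'$ piece, that is (twice) the stress-energy tensor paired against the two timelike covectors $dt$ and $V^\flat$, and hence has a definite sign. The second term, $\{p_{\semi,z},\langle V,\xi\rangle\}$, is governed by the deformation tensor $\mathcal L_V G$ and has no sign in general. Your cutoff $\chi$ is identically $1$ on $[T_0',T_1]$, so $\chi'=0$ there and your commutator provides no positivity on precisely the region you want to control. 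The ``pointwise positivity of the stress-energy tensor'' you invoke simply does not apply to $\{p_{\semi,z},\langle V,\xi\rangle\}$.

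The paper's argument makes this explicit: with $V_\sigma=-\imath\chi(t)W_\sigma$ and $W=G(dt,\cdot)$, one computes $-\imath(V_\sigma^*P_\sigma-P_\sigma^*V_\sigma)=d_\sigma^*(\chi'A+\chi\tilde R)d_\sigma$ plus first-order terms, with $A$ positive definite (the stress-energy form) and $\tilde R$ of indefinite sign. The key device is to choose $\chi$ \emph{strictly decreasing} on all of $[T_0',T_1']$ and such that $-\chi'$ dominates $\chi$ there (concretely $\chi(s)=e^{\digamma/(s-T_1')}$ up to a cutoff near $T_0$, so $\chi\leq\gamma(-\chi')$ with $\gamma$ as small as desired); then $-\chi'A$ absorbs $\chi\tilde R$. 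A second point you do not address: to get $\|P_{h,z}u\|$ on $[T_0,T_1]$ rather than on the larger support of $\chi$, one replaces the smooth cutoff beyond $T_1$ by a Heaviside jump at $t=T_1$, whose distributional derivative contributes a boundary term of the same (favorable) sign as $-\chi'A$ and can be dropped. Finally, working with a \emph{differential} multiplier rather than a genuine $A_h\in\Psih^1$ is what eliminates the $O(h)$ remainder term $h\|u\|_{H^{-N}_\semi}$ on a larger set; a pseudodifferential commutator argument would leave exactly such an error, and your suggestion to absorb it ``by a Gronwall-type argument'' is not enough, since that residual term is what would obstruct the unique-continuation application (Proposition~\ref{prop:wave-local}) downstream.
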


\begin{proof} We start by remarking that the $L^2$ norm on the right
  hand side is just the $H^0_\semi$ norm, so this is a non-microlocal real
  principal type estimate except that there is no error term
  $h\|u\|_{H^N_\semi(t^{-1}([T_0,T_1]))}$ norm inside the parentheses
  on the right hand side (cf.\ the right hand side of
  \eqref{eq:radial-in-h}; although this is stated at a radial point,
  the estimate has a similar form, without constraints on $s$, away
  from these), and except that one would usually expect that both
$h^{-1}\|P_{h,z} u\|_{L^2(t^{-1}([T_0,T_1]))}$ and
$h\|u\|_{H^{-N}_\semi(t^{-1}([T_0,T_1]))}$ should in fact be taken on a
larger set, such as $t^{-1}([T_0,T'_1])$, $T'_1>T_1$. The point is
thus to gain these improvements; this is done by a version of the classical energy
estimates. We note that of these observations the only truly important
part is the absence of a term
$h\|u\|_{H^{-N}_\semi(t^{-1}([T_0,T_1']))}$, which is thus on a larger
set -- this would prevent the argument leading to
Proposition~\ref{prop:wave-local} below.

The following is essentially the standard proof of energy estimates,
see e.g.\ \cite[Section~2.8]{Taylor:Partial-I},
but in a different context. Here we phrase it as done in
\cite[Sections~3 and 4]{Vasy:AdS}. So consider $V_\sigma=-\imath Z_\sigma$,
$Z_\sigma=\chi(t)W_\sigma$, and let $W_\sigma$ be given by $W=G(dt,.)$ considered as a first order
differential operator on $\waveopen$. That is, on $\waveopen\times\RR^+$, $G(dt,.)$
gives a vector field of the form $W=W'+a\,\tau\pa_\tau$, with $W'$ a
vector field on $\waveopen$ and $a$ a function on $\waveopen$ (both independent of
$\tau$), and via the Mellin transform one can consider $W$ as
$W_\sigma=W'+a\sigma$, a $\sigma$-dependent first order differential operator
on $\waveopen$, with standard large-parameter dependence. Let $\Box$ be
the d'Alembertian of $g$, and let $\Box_\sigma$ be its Mellin
conjugate, so $P_\sigma-\Box_\sigma$ is first order, even in the large
parameter sense, on $\waveopen$. As usual in
energy estimates, we want to consider the `commutator'
\begin{equation}\label{eq:sigma-dep-comm}
-\imath(V^*_\sigma\Box_\sigma-\Box_\sigma V_\sigma).
\end{equation}
While this can easily be computed directly, in order to connect it to
the wave equation, we first recall the computation of
$-\imath(V^*\Box-\Box V)$ on $\waveopen\times\RR^+$, rephrase this in
terms of $\Diffb(\waveopen\times[0,\infty))$; note that
$\Box,V\in\Diffb(\waveopen\times[0,\infty))$ so
$$
-\imath(V^*\Box-\Box V)\in\Diffb^2(\waveopen\times[0,\infty)).
$$
Since all operators here are $\RR^+$-invariant, the b-expressions are
mostly a matter of notation.  We then Mellin transform to compute\footnote{
Operating with $-\log\tau$ in place of
$\tau$ one would have translation invariance, no changes required into the
b-notation, and one would use the Fourier transform.}
\eqref{eq:sigma-dep-comm}.

Then, the usual
computation, see \cite[Section~3]{Vasy:AdS} for it written down in
this form, using the standard summation convention, yields
\begin{equation}\begin{split}\label{eq:B_ij-formula}
&-\imath(V^*\Box-\Box V)=d^*Cd,\ C_{ij}=g_{i\ell}B_{\ell j}\\
&B_{ij}=-J^{-1}\pa_k(JZ^kG^{ij})
+G^{ik}(\pa_k Z^j)+G^{jk}(\pa_k Z^i),
\end{split}\end{equation}
where $C_{ij}$ are the matrix entries of $C$ relative to the basis $\{dz_\ell\}$
of the fibers of the cotangent bundle (rather than the b-cotangent
bundle), $z_\ell=y_\ell$ for $\ell\leq n-1$, the $y_\ell$ local
coordinates on a chart in $\waveopen$, $z_n=\tau$.
Expanding $B$ using $Z=\chi W$, and separating the terms
with $\chi$ derivatives, gives
\begin{equation}\begin{split}\label{eq:B_ij-exp-gen}
B_{ij}&=
G^{ik}(\pa_k Z^j)+G^{jk}(\pa_k Z^i)-J^{-1}\pa_k(JZ^kG^{ij})\\
&=(\pa_k \chi) \big(G^{ik}W^j+G^{jk}W^i-G^{ij}W^k\big)\\
&\qquad\qquad\qquad+\chi
\big(G^{ik}(\pa_k W^j)+G^{jk}(\pa_k W^i)-J^{-1}\pa_k(JW^kG^{ij})\big).
\end{split}\end{equation}
Multiplying the first term on the right hand side by
$\alpha_i\,\overline{\alpha_j}$ (and summing over $i,j$; here
$\alpha\in\Cx^n\simeq\Cx T^*_q(\waveopen\times\RR^+)$, $q\in\waveopen\times\RR^+$)
gives
\begin{equation}\begin{split}\label{eq:stress-energy}
E_{W,d\chi}(\alpha)&=
(\pa_k \chi) (G^{ik}W^j+G^{jk}W^i-G^{ij}W^k)\alpha_i\,\overline{\alpha_j}\\
&=( \alpha,d\chi)_G \,\overline{\alpha(W)}
+\alpha(W)\,( d\chi,\alpha)_G-d\chi(W) (\alpha,\alpha)_G=\chi'(t)\,E_{W,dt},\\
E_{W,dt}&=( \alpha,dt)_G \,\overline{\alpha(W)}
+\alpha(W)\,( dt,\alpha)_G-dt(W) (\alpha,\alpha)_G.
\end{split}\end{equation}
Now, $E_{W,dt}$ is twice the sesquilinear stress-energy tensor
associated to $\alpha$, $W$ and $dt$. This is well-known to be positive definite in
$\alpha$, i.e.\ $E_{W,d\chi}(\alpha)\geq 0$, with vanishing if and only if $\alpha=0$,
when
$W$ and $dt$ are both forward time-like for smooth Lorentz metrics,
see e.g.\ \cite[Section~2.7]{Taylor:Partial-I} or
\cite[Lemma~24.1.2]{Hor}; \eqref{eq:real-stress-energy} provides the
computation when $\alpha$ is real.

We change to a local basis of the b-cotangent bundle and use the
b-differential $\bdiff=(d_X,\tau\pa_\tau)$ and the local basis
$\{dy_1,\ldots,dy_{n-1},\frac{d\tau}{\tau}\}$ of the fibers of the b-cotangent
bundle, $\hat\pa_j=\delta_j\pa_j$, $\delta_j=1$ for $j\leq n-1$, $\delta_n=\tau$,
for the local basis of the fibers of the b-tangent bundle,
$\hat G^{ij}$, $\hat g_{ij}$ the corresponding metric entries, $\hat
Z^i$ the vector field components, yields
\begin{equation}\begin{split}\label{eq:b-B_ij-formula}
&-\imath(V^*\Box-\Box V)=\bdiff^*\,\hat C\,\bdiff,\ \hat C_{ij}=\hat g_{i\ell}\hat B_{\ell j}\\
&\hat B_{ij}=-J^{-1}\delta_k^{-1}\delta_i^{-1}\delta_j^{-1}\hat\pa_k(J\delta_k\hat Z^k\delta_i\delta_j\hat G^{ij})
+\hat G^{ik}(\delta_j^{-1}\hat\pa_k \delta_j\hat Z^j)+\hat G^{jk}(\delta_i^{-1}\hat\pa_k \delta_i\hat Z^i).
\end{split}\end{equation}
While the $\delta$-factors may have non-vanishing derivatives in the
above expresson for $\hat B$, if these are differentiated, $\chi$ in
$\hat Z^i=\chi\hat W^i$ is not, so we conclude that
\begin{equation*}\begin{split}
\hat B_{ij}=&(\hat\pa_k \chi) \big(\hat G^{ik}\hat W^j+\hat G^{jk}\hat
W^i-\hat G^{ij}\hat W^k\big)\\
&\qquad+\chi
\big(\hat G^{ik}\delta_j^{-1}(\hat \pa_k \delta_jW^j)+G^{jk}\delta_i^{-1}(\hat\pa_k
\delta_i W^i)\\
&\qquad\qquad\qquad-J^{-1}\delta_k^{-1}\delta_i^{-1}\delta_j^{-1}\hat\pa_k(J\delta_k\delta_i\delta_jW^kG^{ij})\big),
\end{split}\end{equation*}
and so
\begin{equation*}
\hat C=\chi' A+\chi R,
\end{equation*}
with $A$ positive definite.

The Mellin transformed version of \eqref{eq:b-B_ij-formula} finally
computes \eqref{eq:sigma-dep-comm}; it reads as
\begin{equation*}
-\imath(V^*_\sigma\Box_\sigma-\Box_\sigma V_\sigma)=d^*_\sigma
\hat C d_\sigma,\ \hat C_{ij}=g_{i\ell}B_{\ell j},
\ \hat C=\chi'A+\chi R
\end{equation*}
where $d_\sigma=(d_X,\sigma)$, with the last component being
multiplication by $\sigma$, and $A$ is positive definite.
Correspondingly,
\begin{equation}\begin{split}\label{eq:perturbed-wave-sigma-comm}
-\imath(V^*_\sigma P_\sigma-P_\sigma^* V_\sigma)&=-\imath(V^*_\sigma
\Box_\sigma-\Box_\sigma V_\sigma) -\imath V^*_\sigma
(P_\sigma-\Box_\sigma)+\imath (P_\sigma-\Box_\sigma)^* V_\sigma\\
&=d^*_\sigma
\tilde C d_\sigma+\hat E \chi d_\sigma+d_\sigma^* \chi\hat E^*,
\end{split}\end{equation}
where $\hat E=(\hat E_X,\hat E')$, $\hat E_X\in\CI(\waveopen;TX)$, $\hat
E'\in\CI(\waveopen)$,
and
$$
\tilde C=\chi'A+\chi \tilde R,
$$
since the contribution of $P_\sigma-\Box_\sigma$ to second order terms
in the large parameter sense is only via terms not differentiated in $\chi$.
A standard argument, given below, making $\chi'$ large relative to $\chi$, can be
used to complete the proof.

Indeed, let
$\chi_0(s)=e^{-1/s}$ for $s>0$, $\chi_0(s)=0$ for $s\leq 0$, $\chi_1\in\CI(\RR)$
identically 1 on $[1,\infty)$, vanishing on $(-\infty,0]$,
Thus,
$s^2\chi_0'(s)=\chi_0(s)$ for $s\in\RR$.
Now let $T_1'\in(T_1,t_1)$, and
consider
$$
\chi(s)=\chi_0(-\digamma^{-1}(s-T'_1))\chi_1((s-T_0)/(T_0'-T_0)),
$$ 
so
$$
\supp\chi\subset [T_0,T'_1]
$$ 
and
$$
s\in [T'_0,T'_1]\Rightarrow \chi'=-\digamma^{-1}\chi_0'(-\digamma^{-1}(s-T'_1)),
$$
so
$$
s\in [T'_0,T'_1]\Rightarrow \chi=-\digamma^{-1} (s-T'_1)^2\chi',
$$
so for $\digamma>0$ sufficiently large, this is bounded by a small
multiple of $\chi'$, namely
\begin{equation}\label{eq:chip-gamma-est}
s\in [T'_0,T'_1]\Rightarrow \chi\leq -\gamma\chi',
\ \gamma=(T'_1-T'_0)^2\digamma^{-1}.
\end{equation}
In particular, for sufficiently large
$\digamma$,
$$
-(\chi' A+\chi R)\geq -\chi' A/2
$$
on $[T'_0,T'_1]$. Thus,
$$
\langle d^*_\sigma \tilde Cd_\sigma u,u\rangle\geq -\frac{1}{2}\langle \chi'
A d_\sigma u,d_\sigma u\rangle-C'\|d_\sigma u\|^2_{L^2(t^{-1}([T_0,T'_0]))}.
$$
So for some $c_0>0$,  by \eqref{eq:perturbed-wave-sigma-comm},
\begin{equation}\begin{split}\label{eq:Box-V-est}
c_0&\|(-\chi')^{1/2}d_\sigma u\|^2\leq-\frac{1}{2}\langle \chi'
A d_\sigma u,d_\sigma u\rangle\\
&\leq C'\|d_\sigma
u\|^2_{L^2(t^{-1}([T_0,T'_0]))}+C'\|\chi^{1/2}P_\sigma
u\|\|\chi^{1/2}d_\sigma u\|+C'\|\chi^{1/2} u\|\|\chi^{1/2}d_\sigma u\|\\
&\leq C'\|d_\sigma
u\|^2_{L^2(t^{-1}([T_0,T'_0]))}+C'\|\chi^{1/2}P_\sigma
u\|^2+2C'\gamma\|(-\chi')^{1/2}d_\sigma u\|^2\\
&\qquad\qquad\qquad \qquad\qquad\qquad+C'\gamma^{-1}|\sigma|^{-1}\|(-\chi')^{1/2}d_\sigma u\|^2,
\end{split}\end{equation}
where we used $\|\chi^{1/2}u\|\leq |\sigma|^{-1}\|\chi^{1/2}d_\sigma
u\|$ (which holds in view of the last component of $d_\sigma$).
Thus, choosing first $\digamma>0$ sufficiently large (thus $\gamma>0$
is sufficiently small), and then $|\sigma|$
sufficiently
large, the last two terms on the right hand
side can be absorbed into the left hand side. Rewriting in the
semiclassical notation gives the desired result, except that
$\|P_{h,z} u\|_{L^2_\semi(t^{-1}([T_0,T_1]))}$ is replaced by
$\|P_{h,z} u\|_{L^2_\semi(t^{-1}([T_0,T'_1]))}$
(or $\|\chi^{1/2}P_{h,z}
u\|_{L^2_\semi(t^{-1}([T_0,T'_1]))}$). This however is easily
remedied by replacing $\chi$ by
$$
\chi(s)=H(T_1-s)\chi_0(-\digamma^{-1}(s-T'_1))\chi_1((s-T_0)/(T_0'-T_0)),
$$
where $H$ is the Heaviside step function (the characteristic function
of $[0,\infty)$) so $\supp\chi\subset [T_0,T_1]$. Now $\chi$ is not
smooth, but either approximating $H$ by smooth bump functions and
taking a limit, or indeed directly performing the calculation,
integtating on the domain with boundary $t\leq T_1$, the contribution
of the derivative of $H$ to $\chi'$ is a delta distribution at
$t=T_1$, corresponding to a boundary term on the domain, which has the
same sign as the derivative of $\chi_0$. Thus, with $\cS_{T_1}$ the
hypersurface $\{t=T_1\}$, \eqref{eq:Box-V-est}
holds in the form
\begin{equation*}\begin{split}
c_0&\|d_\sigma u\|^2_{L^2(t^{-1}([T'_0,T_1]))}+c_0\|d_\sigma u\|^2_{L^2(\cS_{T_1})}\\
&\leq C'\|d_\sigma
u\|^2_{L^2(t^{-1}([T_0,T'_0]))}+C'\|\chi^{1/2}P_\sigma
u\|^2+2C'\gamma\|(-\chi')^{1/2}d_\sigma u\|^2\\
&\qquad\qquad\qquad \qquad\qquad\qquad+C'\gamma^{-1}|\sigma|^{-1}\|(-\chi')^{1/2}d_\sigma u\|^2.
\end{split}\end{equation*}
Now one can simply drop the second term from the left hand side and
proceed as above; the
semiclassical rewriting now proves the claimed result.
\end{proof}

Suppose now that $t$ is a globally defined function on $X$, with
$t|_\waveopen$ having the properties discussed above, and such that
$p_{\semi,z}$ is {\em semiclassically non-trapping, resp.\ mildly
trapping}, in $t^{-1}((-\infty,T_1])$, in the sense that in
Definitions~\ref{Def:non-trap}, resp.\ Definition~\ref{Def:mild-trap}, $\elliptic(q_{\semi,z})$
is replaced by $\overline{T}^*_{t=T_1}X$ (and $X$ itself is replaced by $t^{-1}((-\infty,T_1])$).
Proposition~\ref{prop:semicl-wave-prop}
can be used to show directly that when $\re\sigma$ is
large, $\sigma$ is in a strip, if $\supp P_\sigma u\subset
t^{-1}([T_1,+\infty))$, then $\supp u\subset
t^{-1}([T_1,+\infty))$. Indeed, by the discussion preceding
Theorem~\ref{thm:classical-absorb-strip}, if $P_\sigma-\imath
Q_\sigma$ is semiclassically non-trapping, we have, with $T_0'<T_0''<T_1$, and
suitably large $s$ (which we may take to satisfy $s\geq 1$),
$$
\|u\|_{H^s_\semi(t^{-1}(-\infty,T_0'])}\leq
C\big(h^{-1}\|P_{h,z} u\|_{H^{s-1}_\semi(t^{-1}(-\infty,T_0''])}
+h\|u\|_{H^{-N}_\semi(t^{-1}(-\infty,T_0''))}\big).
$$
If instead $P_\sigma-\imath Q_\sigma$ is mildly trapping of
order $\varkappa$ then
$$
\|u\|_{H^s_\semi(t^{-1}(-\infty,T_0'])}\leq
C\big(h^{-\varkappa-1}\|P_{h,z} u\|_{H^{s-1}_\semi(t^{-1}(-\infty,T_0''])}
+h\|u\|_{H^{-N}_\semi(t^{-1}(-\infty,T_0''))}\big).
$$
Using this in combination with
Proposition~\ref{prop:semicl-wave-prop} yields
$$
\|u\|_{H^1_\semi(t^{-1}(-\infty,T_1])}\leq
C\big(h^{-\varkappa-1}\|P_{h,z}u\|_{H^{s-1}_\semi(t^{-1}(-\infty,T_1])}
+h\|u\|_{H^{-N}_\semi(t^{-1}(-\infty,T_0''])}\big).
$$
Now, for sufficiently small $h$, the second term on the right hand
side can be absorbed into the left hand side to yield
\begin{equation}\label{eq:semicl-wave-loc}
\|u\|_{H^1_\semi(t^{-1}(-\infty,T_1])}\leq
Ch^{-\varkappa-1}\|P_{h,z}u\|_{H^{s-1}_\semi(t^{-1}(-\infty,T_1])}.
\end{equation}
This shows that for $h$ sufficiently small, i.e.\ $\re\sigma$
sufficiently large, the vanishing of $P_\sigma u$ in $\{t<T_1\}$ gives
that of $u$ in the same region.

Moving now to the operator $P_\sigma-\imath Q_\sigma$, we thus conclude:

\begin{prop}\label{prop:wave-local}
Suppose $\waveopen$, $t$, $P_\sigma$ are as discussed before the
statement of Proposition~\ref{prop:semicl-wave-prop}, with $t$
globally defined on $X$, and $P_\sigma$, $Q_\sigma$ as in
Theorem~\ref{thm:classical-absorb-strip} or as in Theorem~\ref{thm:classical-absorb-glued}.
Suppose also that $p_{\semi,z}$ is semiclassically mildly trapping
in $t^{-1}((-\infty,T_1])$ in the sense discussed above.
Finally, suppose that the Schwartz kernel of $Q_\sigma$ is supported in
$t^{-1}((T_1,+\infty))\times t^{-1}((T_1,+\infty))$.
If $\supp f\subset
t^{-1}([T_1,+\infty))$, then $\supp (P_\sigma-\imath Q_\sigma)^{-1}f\subset
t^{-1}([T_1,+\infty))$. 
\end{prop}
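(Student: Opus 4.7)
Set $u = (P_\sigma - \imath Q_\sigma)^{-1} f$, so $(P_\sigma - \imath Q_\sigma) u = f$. The first observation is purely algebraic: by the hypothesis on $\supp f$ and on the Schwartz kernel of $Q_\sigma$, both $f$ and $Q_\sigma u$ vanish on the open set $\{t < T_1\}$ (the latter because the first factor of the kernel support forces $Q_\sigma v$ to be supported in $t^{-1}((T_1,+\infty))$ for any $v$). Consequently $P_\sigma u = 0$ on $\{t < T_1\}$. The goal is then to upgrade this to the vanishing of $u$ itself on $\{t < T_1\}$, for every $\sigma$ where the resolvent is defined.

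The heart of the argument is to apply the localized semiclassical estimate \eqref{eq:semicl-wave-loc}, which was derived just above the proposition precisely from the mild-trapping hypothesis in $t^{-1}((-\infty,T_1])$ combined with the energy estimate of Proposition~\ref{prop:semicl-wave-prop}. For $\sigma$ with $\im\sigma > -C_0$ and $|\re\sigma|$ sufficiently large (so that $h = |\sigma|^{-1}$ is small enough to absorb the $h\|u\|_{H^{-N}_\semi}$ error into the left-hand side, as carried out above the proposition), this estimate gives
\begin{equation*}
\|u\|_{H^1_{\semi}(t^{-1}((-\infty,T_1]))} \leq C h^{-\varkappa-1} \|P_{h,z} u\|_{H^{s-1}_{\semi}(t^{-1}((-\infty,T_1]))} = 0,
\end{equation*}
since $P_\sigma u = 0$ on $\{t < T_1\}$ from the first step (and $u \in \cX^s$ has sufficient regularity for the estimate to apply). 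Therefore $u \equiv 0$ on $\{t < T_1\}$ for all such $\sigma$.

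To extend to arbitrary $\sigma$ in the domain of the meromorphic family $R(\sigma) = (P_\sigma - \imath Q_\sigma)^{-1}$, I would use analytic continuation. For fixed $f \in \cY^s$ and any $\phi \in \CI_c(\{t < T_1\})$, the pairing $\sigma \mapsto \langle R(\sigma) f, \phi\rangle$ is a meromorphic scalar function on the connected domain $\Cx_s \cap \Omega$ (by Theorem~\ref{thm:classical-absorb-strip} or \ref{thm:classical-absorb-glued}). By the previous paragraph it vanishes on the non-empty open subset $\{\im\sigma > -C_0,\ |\re\sigma| > \sigma_0\}$, so by uniqueness of meromorphic continuation it vanishes identically wherever defined. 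Since $\phi$ was arbitrary, this gives $\supp u \subset t^{-1}([T_1,+\infty))$ as claimed. The only delicate point—and the main thing that needs to be verified carefully—is that the derivation of \eqref{eq:semicl-wave-loc} genuinely applies here: namely, that the kernel support condition on $Q_\sigma$ makes it semiclassically invisible in $t^{-1}((-\infty,T_1])$, so that the mild-trapping hypothesis on $p_{\semi,z}$ alone (rather than on $p_{\semi,z} - \imath q_{\semi,z}$) suffices to drive both the propagation-of-singularities step in $t < T_0'$ and the energy-type step from $T_0'$ up to $T_1$.
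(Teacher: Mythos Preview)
Your proof is correct and follows essentially the same route as the paper: use the kernel support of $Q_\sigma$ and of $f$ to get $P_\sigma u=0$ on $\{t<T_1\}$, apply the semiclassical estimate \eqref{eq:semicl-wave-loc} for $|\re\sigma|$ large in a strip to conclude $u=0$ there, then extend to all $\sigma$ by meromorphic continuation. The paper phrases the last step in terms of the operator $\psi(P_\sigma-\imath Q_\sigma)^{-1}\phi$ with cutoffs $\psi,\phi$ rather than via scalar pairings, but this is only a cosmetic difference; your concluding caveat about the applicability of \eqref{eq:semicl-wave-loc} is not a genuine obstacle, since that estimate was derived in the text immediately before the proposition under exactly the mild-trapping hypothesis on $p_{\semi,z}$ in $t^{-1}((-\infty,T_1])$ stated there, with no reference to $Q_\sigma$.
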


\begin{proof}
Note that $u=(P_\sigma-\imath Q_\sigma)^{-1}f$ satisfies
$(P_\sigma-\imath Q_\sigma) u=f$, so in view of the support condition
on $Q_\sigma$, $\supp P_\sigma u\subset t^{-1}([T_1,+\infty))$.
For $\sigma$ in a strip $|\im\sigma|<C'$ and with $\re\sigma$
sufficiently large, the proposition then follows from
\eqref{eq:semicl-wave-loc}. Thus, with $\psi$ supported in
$t^{-1}((-\infty,T_1))$, $\phi$ supported in $t^{-1}((T_1,+\infty))$, and $\sigma$ in
this region, $\psi (P_\sigma^{-1}-\imath Q_\sigma)\phi=0$. By the meromorphicity of
$\psi (P_\sigma-\imath Q_\sigma)^{-1}\phi$,
$\psi (P_\sigma-\imath Q_\sigma^{-1})\phi=0$ follows for all $\sigma$.
\end{proof}

One reason this proposition is convenient is that it shows that for
$\psi$ supported in $t^{-1}((-\infty,T_1))$, $\psi  (P_\sigma-\imath
Q_\sigma^{-1})$ is independent of the choice of $Q_\sigma$ (satisfying
the general conditions); analogous results hold for modifying
$P_\sigma$ in $t^{-1}((T_1,+\infty))$. Indeed, let $Q'_\sigma$ be another operator satisfying
conditions analogous to those on $Q_\sigma$ for $\sigma$ in some open set
$\Omega'\subset\Cx$, and let $(\cX')^s\subset H^s$ be the corresponding function space in
place of $\cX^s$ (note that $\cY^s=H^{s-1}$ is independent of $Q_\sigma$);
thus $(P_\sigma-\imath Q_\sigma)^{-1}:\cY^s\to\cX^s$ and
$(P_\sigma-\imath Q'_\sigma)^{-1}\cY^s\to(\cX')^s$ are meromorphic on $\Omega\cap\Omega'$.
Then for $\sigma\in\Omega\cap\Omega'$ which is not
a pole of either $(P_\sigma-\imath Q_\sigma)^{-1}$ or
$(P_\sigma-\imath Q'_\sigma)^{-1}$, and for $f\in\cY^s$,
let $u=(P_\sigma-\imath Q_\sigma)^{-1}f$,
$u'=(P_\sigma-\imath Q'_\sigma)^{-1}f$. Then $(P_\sigma-\imath
Q_\sigma)u'=f+\imath (Q_\sigma-Q'_\sigma)u'\in H^{s-2}$, and thus,
provided $s-1>(1-\beta\im\sigma)/2$ (rather than this inequality holding merely
for $s$),
$$
u'=(P_\sigma-\imath Q_\sigma)^{-1}f+(P_\sigma-\imath
Q_\sigma)^{-1}\imath (Q_\sigma-Q'_\sigma)u',
$$
so
$$
\psi (P_\sigma-\imath Q'_\sigma)^{-1}f=\psi(P_\sigma-\imath
Q_\sigma)^{-1}f
$$
since $\psi(P_\sigma-\imath
Q_\sigma)^{-1}\imath (Q_\sigma-Q'_\sigma)=0$ in view of the support
properties of $Q_\sigma$ and $Q'_\sigma$. In particular, one may drop
the particular choice of $Q_\sigma$ from the notation; note also that
this also establishes the equality for $s>(1-\beta\im\sigma)/2$ since $(P_\sigma-\imath
Q_\sigma)^{-1}$ is independent of $s$ in this range in the sense of
Remark~\ref{rem:inv-indep-of-s}. This is particularly helpful
if for $\sigma$ in various open subsets $\Omega_j$ of $\Cx$ we
construct different operators $Q^{(j)}_\sigma$; if for instance for
each $\Omega_j$, $(P_\sigma-\imath Q^{(j)}_\sigma)^{-1}$ is meromorphic,
resp.\ holomorphic, the same follows for the single operator family
(independent of $j$) $\psi (P_\sigma-\imath Q_\sigma)^{-1}$ where we
now we write $Q_\sigma$ for any of the valid choices (i.e.\
$Q_\sigma=Q^{(j)}_\sigma$ for any one of the $j$'s such that
$\sigma\in\Omega_j$).
We then have the
following extension of Lemma~\ref{lemma:Mellin-expand}.

\begin{cor}\label{cor:Mellin-expand}
Suppose $\cP$
is invariant under dilations in $\tau$ for functions supported near
$\tau=0$,
and the normal operator family $\hat N(\cP)$ is of the form
$P_\sigma$ satisfying the conditions\footnote{Again, as discussed in
Remark~\ref{rem:off-real-not-imp}, the large $\im\sigma$ assumptions
only affect the existence part below, and do so relatively mildly.} of
Section~\ref{sec:microlocal},
and such that there are is an open cover of $\Cx$ by sets $\Omega_j$,
and for each $j$ there is an operator $Q^{(j)}_\sigma$ satisfying
the conditions of Section~\ref{sec:microlocal},
including
semiclassical non-trapping. Let $t$ be as in
Proposition~\ref{prop:wave-local}, $\psi$ supported in
$t^{-1}((-\infty,T_1))$, identically $1$ on $t<T_1'$.
Let $\sigma_j$ be the poles of the meromorphic
family\footnote{As remarked above, these are independent of the choice
 of $j$ for $\sigma\in\Cx$ as long as $\sigma\in\Omega_j$.}
$\psi (P_\sigma-\imath Q_\sigma)^{-1}$. Then for $\ell<\beta^{-1}(2s-\difford+1)$,
$\ell\neq-\im\sigma_j$ for any $j$,
$\cP u=f$ in $t<T_1$, $u$ tempered, supported near $\tau=0$,
$f\in \tau^\ell \Hb^{s-\difford+1}(\bM_\infty)$, in $t<T_1'$,
$u$ has an asymptotic expansion
\begin{equation}\label{eq:Mellin-simple-non-trap-mod}
u=\sum_j\sum_{\kappa\leq m_j} \tau^{\imath\sigma_j}(\log |\tau|)^\kappa a_{j\kappa}+u'
\end{equation}
with $a_{j\kappa}\in\CI(X)$ and $u'\in \tau^\ell \Hb^{s}(\bM_\infty)$.

If instead the family $P_\sigma-\imath Q_\sigma$ is semiclassically mildly trapping
of order $\varkappa$ in a $C_0$-strip then for $\ell<C_0$ and
$f\in \tau^\ell \Hb^{s-\difford+1+\varkappa}(\bM_\infty)$ one has, in $t<T_1'$,
\begin{equation}\label{eq:Mellin-simple-mild-trap-mod}
u=\sum_j\sum_{\kappa\leq m_j} \tau^{\imath\sigma_j}(\log |\tau|)^\kappa a_{j\kappa}+u'
\end{equation}
with $a_{j\kappa}\in\CI(X)$ and $u'\in \tau^\ell \Hb^{s}(\bM_\infty)$.

Conversely, given $f$ in the indicated spaces, with $f$
supported near $\tau=0$, a solution $u$ of $\cP u=f$ of the form
\eqref{eq:Mellin-simple-non-trap-mod}, resp.\ \eqref{eq:Mellin-simple-mild-trap-mod},
supported near $\tau=0$ exists in $t<T_1'$.

In either case,
the coefficients $a_{j\kappa}$ are given by the Laurent coefficients
of $\psi(\cP-\imath\cQ)^{-1}$ at the poles $\sigma_j$ applied to $f$,
with simple poles corresponding
to $m_j=0$.

If $f=\sum_j\sum_{\kappa\leq m'_j} \tau^{\alpha_j} (\log |\tau|)^\kappa b_{j\kappa}+f'$, with
$f'$ in the spaces indicated above for $f$, and $b_{j\kappa}\in H^{s-\difford+1}(X)$,
analogous results hold when the expansion of $f$ is added to the form of
\eqref{eq:Mellin-simple-non-trap-mod} and \eqref{eq:Mellin-simple-mild-trap-mod},
in the sense of the extended union of index sets, see
\cite[Section~5.18]{Melrose:Atiyah}.

Further, the result is stable under sufficiently small dilation-invariant
perturbations in the b-sense, i.e.\ if $\cP'$ is sufficiently close to
$\cP$ in $\Psib^\difford(\bM_\infty)$ with real principal symbol, then there is a
similar expansion for solutions of $\cP' u=f$ in $t<T_1'$.

For $\cP^*$ in place of $\cP$, analogous results apply, but
we need $\ell<-\beta^{-1}(2s-\difford+1)$, and the $a_{j\kappa}$ are
not smooth, but have wave front set in the Lagrangians $\Lambda_\pm$.
\end{cor}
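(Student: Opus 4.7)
The strategy is to reduce to Lemma~\ref{lemma:Mellin-expand} and Proposition~\ref{prop:Mellin-expand} by assembling the local inverses $\psi(P_\sigma-\imath Q^{(j)}_\sigma)^{-1}$ into a single global meromorphic family on $\Cx_s$, and then performing the contour-shift argument of the earlier lemma. First I would verify that, for $\sigma\in\Omega_j\cap\Omega_{j'}\cap\Cx_s$, the two choices $\psi(P_\sigma-\imath Q^{(j)}_\sigma)^{-1}$ and $\psi(P_\sigma-\imath Q^{(j')}_\sigma)^{-1}$ coincide: this is exactly the content of the discussion preceding the statement of the corollary (which in turn relies on Proposition~\ref{prop:wave-local} and the support assumption that each $Q^{(j)}_\sigma$ has Schwartz kernel in $t^{-1}((T_1,\infty))\times t^{-1}((T_1,\infty))$, while $\psi$ is supported in $t<T_1$). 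The pieces therefore glue to a meromorphic family $R(\sigma)$ on $\Cx_s$, whose poles are by definition the $\sigma_j$.

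Next, since $\cP$ is dilation-invariant and $\hat N(\cP)=P_\sigma$, the Mellin transform of $\cP u=f$ reads $P_\sigma\hat u(\sigma)=\hat f(\sigma)$ on any line $\im\sigma=-\alpha$ contained in the region of holomorphy of $\hat u$. I would pick $\alpha$ large enough that $u\in\tau^\alpha\Hb^r$ (so $\hat u$ is defined on this line) and that $R(\sigma)$ has no poles with $\im\sigma\geq -\alpha$; this is possible by the large-$\im\sigma$ discussion of Subsection~\ref{subsec:semi-abstract} together with Remark~\ref{rem:off-real-not-imp-for-exp}. On this line, pick any $j$ with $\sigma\in\Omega_j$ and let $\tilde v=(P_\sigma-\imath Q^{(j)}_\sigma)^{-1}\hat f(\sigma)$; then $P_\sigma(\tilde v-\hat u)=\imath Q^{(j)}_\sigma\tilde v$, whose support lies in $t>T_1$. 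The key claim, which I address below, is that this forces
\begin{equation*}
\psi\hat u(\sigma)=\psi(P_\sigma-\imath Q^{(j)}_\sigma)^{-1}\hat f(\sigma)=R(\sigma)\hat f(\sigma)
\end{equation*}
on the contour.

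Granted this identity, I would then proceed exactly as in Lemma~\ref{lemma:Mellin-expand}: apply the inverse Mellin transform and shift the contour from $\im\sigma=-\alpha$ down to $\im\sigma=-\ell$. Cauchy's theorem produces the terms $\tau^{\imath\sigma_j}(\log|\tau|)^\kappa a_{j\kappa}$ from the poles of $R(\sigma)$, with $a_{j\kappa}\in\CI(X)$ since the ranges of the Laurent coefficients lie there (these are generalized resonant states, regular by the radial-point propagation of Proposition~\ref{prop:micro-out}). The remainder $u'$ arises from integration on the shifted contour, and is controlled in $\tau^\ell\Hb^s$ by the large-$\sigma$ bounds on $R(\sigma)$ provided in Theorem~\ref{thm:classical-absorb-strip} in the non-trapping case, or Theorem~\ref{thm:classical-absorb-glued} in the mildly trapping case with the extra $\varkappa$ derivatives on $f$ accounting for the $|\sigma|^\varkappa$ loss, cf.\ Remark~\ref{rem:estimates-for-expansion}. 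The partial-expansion version for $f$ follows by linearity; converse existence follows by defining $u$ via the inverse Mellin transform of $R(\sigma)\hat f$ on a contour below all relevant poles; stability under perturbations follows from Subsection~\ref{subsec:stability}; the adjoint variant follows dually, with the radial-point orders reversed so that the coefficients are only Lagrangian distributions at $\Lambda_\pm$.

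The main obstacle is justifying the identity $\psi\hat u=R(\sigma)\hat f$ on $\im\sigma=-\alpha$. The difference $w=\tilde v-\hat u$ satisfies $P_\sigma w=\imath Q^{(j)}_\sigma\tilde v$, so $P_\sigma w=0$ in $t<T_1$; I need to conclude $\psi w=0$. Both $\hat u$ and $\tilde v$ lie in semiclassical Sobolev spaces with polynomial bounds in $\sigma$, so the semiclassical energy estimate of Proposition~\ref{prop:semicl-wave-prop}, together with the propagation-of-support consequence \eqref{eq:semicl-wave-loc} underlying Proposition~\ref{prop:wave-local}, shows that for $\re\sigma$ sufficiently large in any fixed strip the cutoff $\psi w$ vanishes; by meromorphy of both sides in $\sigma$ and the assumption that the line $\im\sigma=-\alpha$ lies below the poles, the identity extends to the entire line. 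Once this step is in hand, the rest of the argument is a direct transcription of the proofs of Lemma~\ref{lemma:Mellin-expand} and Proposition~\ref{prop:Mellin-expand}.
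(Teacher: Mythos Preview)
Your proposal is correct and follows essentially the same strategy as the paper: Mellin transform, establish $\psi\,\mathcal{M}u=\psi(P_\sigma-\imath Q_\sigma)^{-1}\mathcal{M}f$ on a high line, then shift the contour. The one place you take a detour is in justifying the key identity. The paper's route is shorter: from $P_\sigma\mathcal{M}u=\mathcal{M}f$ it writes $(P_\sigma-\imath Q_\sigma)\mathcal{M}u=\mathcal{M}f-\imath Q_\sigma\mathcal{M}u$, inverts, and obtains
\[
\mathcal{M}u=(P_\sigma-\imath Q_\sigma)^{-1}\mathcal{M}f-(P_\sigma-\imath Q_\sigma)^{-1}\imath Q_\sigma\mathcal{M}u.
\]
Since $Q_\sigma\mathcal{M}u$ is supported in $t>T_1$, a single application of Proposition~\ref{prop:wave-local} shows the second term is supported in $t\geq T_1$, so $\psi$ annihilates it for \emph{all} $\sigma$ at once. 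Your argument with $w=\tilde v-\hat u$, the estimate \eqref{eq:semicl-wave-loc} for large $|\re\sigma|$, and analytic continuation is valid (note $(P_\sigma-\imath Q_\sigma)w=\imath Q_\sigma\hat u$, so you could also have applied Proposition~\ref{prop:wave-local} directly to $w$), but it essentially re-proves that proposition inside the argument rather than citing it.
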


\begin{rem}\label{rem:Mellin-expand}
Proposition~\ref{prop:Mellin-expand} has an analogous extension, but we do
not state it here explicitly.

Further, estimates analogous to
Remark~\ref{rem:estimates-for-expansion} are applicable, with the
norms of restrictions to $t<T'_1$ bounded in terms of the norms of
restrictions to $t<T_1$.
\end{rem}

\begin{proof}
We follow the proof of Lemma~\ref{lemma:Mellin-expand} closely.
Again, we first consider the expansion, and
let $\alpha,r\in\RR$ be such that
$u\in\tau^\alpha \Hb^r(\bM_\infty)$ and $\psi(P_\sigma-\imath
Q_\sigma)^{-1}$ has\footnote{Recall that this operator, when
  considered as a product, refers to $\psi(P_\sigma-\imath
Q^{(j)}_\sigma)^{-1}$, with $j$ appropriately chosen.} no
poles on $\im\sigma= -\alpha$. These $\alpha,r$ exist due to
the vanishing of $u$ for $\tau>1$ and the absence of poles of $\psi(P_\sigma-\imath
Q^{(j)}_\sigma)^{-1}$ for $\re\sigma$ large, $\sigma$ in a strip; then also
$u\in\tau^\alpha (1+\tau)^{-N}\Hb^r(\bM_\infty)$ for all $N$.
The Mellin transform of the PDE, a priori on $\im\sigma=-\alpha$, is
$P_\sigma\cM u=\cM f$, and thus $(P_\sigma-\imath Q_\sigma)\cM
u=f-\imath Q_\sigma \cM u$. Thus,
\begin{equation}\label{eq:Mellin-PDE-mod}
\cM u=(P_\sigma-\imath Q_\sigma)^{-1}\cM f-(P_\sigma-\imath Q_\sigma)^{-1}\imath Q_\sigma \cM u
\end{equation}
there. Restricting to $t<T_1$, the last term vanishes by
Proposition~\ref{prop:wave-local}, so
$$
\cM u|_{t<T'_1}=(P_\sigma-\imath Q_\sigma)^{-1}\cM f|_{t<T'_1}
$$
If $f\in \tau^\ell \Hb^{s-\difford+1}(\bM_\infty)$, then
shifting the contour of integration to $\im\sigma=-\ell$, we
obtain contributions from the poles of $(P_\sigma-\imath Q_\sigma)^{-1}$, giving
the expansion in \eqref{eq:Mellin-simple-non-trap-mod} and
\eqref{eq:Mellin-simple-mild-trap-mod} by Cauchy's theorem.
The error term $u'$ is what one obtains
by integrating along the new contour in view of the high energy bounds
on $(P_\sigma-\imath Q_\sigma)^{-1}$ (which differ as one changes one's
assumption from non-trapping to mild trapping), and the assumptions on $f$.

Conversely, to obtain existence, let $\alpha<\min(\ell,-\sup\im\sigma_j)$ and
define $u\in \tau^\alpha \Hb^{s}(\bM_\infty)$ by
$$
\cM u=(P_\sigma-\imath Q_\sigma)^{-1}\cM f,
$$
using the inverse Mellin transform with $\im\sigma=-\alpha$.
Then
$$
P_\sigma\cM u=\cM f+\imath Q_\sigma\cM u,
$$
and so $P_\sigma\cM u|_{t<T_1}=\cM f|_{t<T_1}$. Thus,
the expansion follows by the first part of the
argument. The support property of $u$ follows from Paley-Wiener, taking
into account holomorphy in $\im\sigma>-\alpha$, and the
estimates on
$\cM f$ and $\psi(P_\sigma-\imath Q_\sigma)^{-1}$ there.

Finally, stability under perturbations follows for the same reasons as
those stated
in Lemma~\ref{lemma:Mellin-expand} once one remarks that the
$Q_\sigma^{(j)}$ used for $\cP$ can actually be used for $\cP'$ as
well provided these two are sufficiently close, since the relationship
between $P_\sigma$ and $Q_\sigma^{(j)}$ is via ellipticity considerations,
and these are preserved under small perturbations of $P_\sigma$.
\end{proof}

\section{De Sitter space and conformally compact spaces}\label{sec:dS}

\subsection{De Sitter space as a symmetric space}
Rather than starting with the static picture of de Sitter space, we consider it
as a Lorentzian symmetric space. We follow the treatment of
\cite{Vasy:De-Sitter} and \cite{Melrose-SaBarreto-Vasy:Asymptotics}.
De Sitter space $M$ is given by the hyperboloid
$$
z_1^2+\ldots+z_n^2=z_{n+1}^2+1\ \text{in}\ \Real^{n+1}
$$
equipped with the pull-back $g$ of the Minkowski metric
$$
dz_{n+1}^2-dz_1^2-\ldots-dz_n^2.
$$
Introducing polar coordinates $(R,\theta)$ in $(z_1,\ldots,z_n)$,
so
$$
R=\sqrt{z_1^2+\ldots+z_n^2}=\sqrt{1+z_{n+1}^2},
\ \theta=R^{-1}(z_1,\ldots,z_n)\in\sphere^{n-1}
,\ \taut=z_{n+1},
$$
the hyperboloid
can be identified with $\Real_\taut\times\sphere^{n-1}_\theta$ with the
Lorentzian metric
\begin{equation*}
g=\frac{d\taut^2}{\taut^2+1}-(\taut^2+1)\,d\theta^2,
\end{equation*}
where $d\theta^2$ is the standard Riemannian metric on the sphere.
For $\taut>1,$ set $x=\taut^{-1}$, so the metric becomes
$$
g=\frac{(1+x^2)^{-1}\,dx^2-(1+x^2)\,d\theta^2}{x^2}.
$$
An analogous formula holds for $\taut<-1$, so compactifying
the real line to an interval $[0,1]_T$, with $T=x=\taut^{-1}$
for $x<\frac{1}{4}$ (i.e.\ $\taut>4$), say, and $T=1-|\taut|^{-1}$, $\taut<-4$,
gives a compactification, $\hM,$ of
de Sitter space on which the metric is conformal to a non-degenerate
Lorentz metric. There is natural generalization, to 
{\em asymptotically de Sitter-like spaces} $\hM$,
which are diffeomorphic to compactifications $[0,1]_T\times Y$
of $\RR_\taut\times Y$, where $Y$ is a compact manifold
without boundary, and $\hM$ is equipped
with a Lorentz metric on its interior which is conformal
to a Lorentz metric smooth up to the boundary. These
space-times are Lorentzian analogues of the much-studied conformally
compact (Riemannian) spaces. On this class of space-times the solutions of
the Klein-Gordon equation were analyzed by Vasy
in \cite{Vasy:De-Sitter}, and were shown to have simple asymptotics
analogous to those for generalized eigenfunctions on conformally compact manifolds.

\begin{thm*}(\cite[Theorem~1.1.]{Vasy:De-Sitter})
Set $s_\pm(\lambda)=\frac{n-1}{2}\pm\sqrt{\frac{(n-1)^2}{4}-\lambda}.$
If $s_+(\lambda)-s_-(\lambda)\notin\Nat,$ any solution $u$ of the Cauchy
problem for $\Box-\lambda$ with $\CI$ initial data at $\taut=0$ is of the form
\begin{equation*}
u=x^{s_+(\lambda)}v_++x^{s_-(\lambda)}v_-,\ v_\pm\in\CI(\hM).
\end{equation*}
If $s_+(\lambda)-s_-(\lambda)$ is an integer, the same conclusion holds if
$v_-\in\CI(\hM)$ is replaced by $v_-=\CI(\hM)
+x^{s_+(\lambda)-s_-(\lambda)}\log x\,\CI(\hM)$.
\end{thm*}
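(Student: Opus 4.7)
\medskip

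The plan is to deduce the expansion from the general Mellin expansion result (Corollary~\ref{cor:Mellin-expand}, or more directly Lemma~\ref{lemma:Mellin-expand}) by verifying that the Klein--Gordon operator on asymptotically de Sitter space fits the abstract microlocal framework of Section~\ref{sec:microlocal}. First, existence and uniqueness of the solution $u$ of the Cauchy problem with smooth data at $\taut=0$ follow from standard hyperbolic theory on the interior, and classical energy estimates (applied in the $x$ direction in the asymptotic region) show that $u$ is tempered, so its Mellin transform in $x$ makes sense in a half-plane. Restriction to a neighborhood of the boundary component at $\taut\to+\infty$ puts $\Box_g-\lambda$ in the b-setting with boundary defining function $x=\taut^{-1}$ on $\hM$, and a symmetric treatment handles $\taut\to-\infty$.

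The second step is to identify the normal operator family. A direct calculation with the de Sitter metric in the form $g=x^{-2}\bigl((1+x^2)^{-1}dx^2-(1+x^2)d\theta^2\bigr)$ shows that, in the b-sense, $N(\Box_g-\lambda)$ has indicial polynomial
\begin{equation*}
I(s)=s(s-(n-1))+\lambda,
\end{equation*}
whose roots are precisely $s_\pm(\lambda)$. Hence the Mellin-transformed family $P_\sigma=\widehat{N(\Box_g-\lambda)}(\sigma)$ is (at least formally) singular exactly at $\sigma_\pm$ with $\imath\sigma_\pm = s_\pm(\lambda)$, and these are the candidate leading exponents in the expansion.

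The third and main step is to verify the microlocal hypotheses of Section~\ref{sec:microlocal} for $P_\sigma$ on a closed extension $X$ of the boundary sphere $\partial\hM$ across the conformal infinity. One extends the smooth structure by passing to $\mu=x^2$ as in Subsection~\ref{subsec:conf-comp-results} (so that ``evenness'' produces a smooth operator across $\{\mu=0\}$), conjugates by the appropriate power of $x$, and removes a vanishing factor of $\mu$; the resulting operator has principal symbol whose characteristic set splits as $\Sigma_+\cup\Sigma_-$ with a radial set $\Lambda_\pm\subset N^*\{\mu=0\}\setminus o$ satisfying \eqref{eq:weight-definite}--\eqref{eq:rho-0-property}, and whose subprincipal symbol at $L_\pm$ is of the form \eqref{eq:subpr-form} with a computable $\tilde\beta$. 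One then inserts a complex absorbing $Q_\sigma$ supported on the ``other side'' $\mu<0$ to kill bicharacteristics escaping there, so that the semiclassical non-trapping assumption (Definition~\ref{Def:non-trap}) holds. Theorem~\ref{thm:classical-absorb-strip} then gives meromorphy of $(P_\sigma-\imath Q_\sigma)^{-1}$ with non-trapping high energy bounds in strips, and the contribution from the absorbing region is removed on the physical side by Proposition~\ref{prop:wave-local} (or directly by the reasoning of Subsection~\ref{subsec:local-wave}). The expected obstacle here is the verification of the subprincipal sign condition \eqref{eq:tilde-beta-form} precisely enough to pin down which half-plane of $\sigma$ contains which poles, but this is a finite symbol calculation on the standard model.

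Once this framework is in place, Corollary~\ref{cor:Mellin-expand} (applied through the isomorphism between solutions of the wave equation and Mellin images, exactly as in Lemma~\ref{lemma:Mellin-expand}) gives an asymptotic expansion of $u$ in powers $x^{\imath\sigma_j}(\log x)^\kappa$, summed over the poles $\sigma_j$ of $(P_\sigma-\imath Q_\sigma)^{-1}$ in the half-plane between $\im\sigma=-\alpha_0$ (dictated by the a priori temperedness) and $\im\sigma=-\infty$. The leading poles are at $\imath\sigma=s_\pm(\lambda)$; the iterative step in the proof of Proposition~\ref{prop:Mellin-expand} generates the further indicial roots $s_\pm(\lambda)+k$, $k\in\Nat$, coming from the error $(\Box_g-\lambda)-N(\Box_g-\lambda)\in x\Diffb^2(\hM)$. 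In the generic case $s_+(\lambda)-s_-(\lambda)\notin\Nat$ these two sequences are disjoint, all poles are simple, and the two resulting Taylor-type series in $x$ can be Borel-summed to yield $v_\pm\in\CI(\hM)$, giving the stated form $u=x^{s_+}v_++x^{s_-}v_-$. When $s_+(\lambda)-s_-(\lambda)\in\Nat$, the two sequences of indicial roots collide, producing double poles of the resolvent at those $\sigma$'s; the corresponding Laurent expansion contributes the $x^{s_+-s_-}\log x\,\CI(\hM)$ term stated in the theorem. The only remaining task is the bookkeeping to conclude that the formal series obtained in each $x^{s_\pm+k}$ power is the Taylor series of a function smooth up to the boundary, which is standard for b-operators whose indicial roots have been fully resolved.
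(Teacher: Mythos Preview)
This theorem is not proved in the present paper; it is quoted from \cite{Vasy:De-Sitter} as background. The paper explicitly states in Subsection~\ref{subsec:conf-comp-results} that its methods give a \emph{weaker} conclusion: the framework of Section~\ref{sec:microlocal} applies to de Sitter space only after blowing up a boundary point $q_+\in\partial\hM$, and yields an expansion in the defining function $\tau$ of the front face of $[\hM;q_+]$, not in $x$ on $\hM$. As noted after \eqref{eq:dS-asymp}, smoothness of $v_\pm$ on all of $\hM$ is strictly stronger than what the methods here deliver.

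Your proposal has a genuine gap: it conflates the temporal defining function $x=\taut^{-1}$ on $\hM$ (the variable in which the theorem's expansion is stated) with the spatial defining function in the conformally compact picture of Subsection~\ref{subsec:conf-comp-results}. If you Mellin transform $\Box_g-\lambda$ in the temporal $x$ on $\hM$, the resulting b-normal operator family on $\partial\hM=\sphere^{n-1}$ is just multiplication by the indicial polynomial you wrote down, because the tangential Laplacian carries a factor of $x^2$ (cf.\ \eqref{eq:conf-comp-Lap-form} with the sign of $h$ flipped): de Sitter space is a $0$-metric, not a non-degenerate Lorentzian b-metric, at $\partial\hM$. There is then no characteristic set, no radial set $\Lambda_\pm$, and nothing for Section~\ref{sec:microlocal} to verify; moreover $\sphere^{n-1}$ is already closed, so there is no ``extension across $\mu=0$'' to perform. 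The radial-point structure you invoke appears only \emph{after} blow-up, in the $(n-1)$-dimensional problem at the front face, where $\mu=1-r^2$ is a \emph{spatial} variable and the event horizon is $\{\mu=0\}$; that setting gives asymptotics on the blown-up space, not on $\hM$. The order-by-order expansion you sketch at the end is essentially the $0$-calculus argument of \cite{Vasy:De-Sitter} itself rather than the method of this paper, and the summation to genuinely smooth $v_\pm$ requires a parametrix construction, not merely iterated indicial inversion via Proposition~\ref{prop:Mellin-expand}.
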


\begin{figure}[ht]
\begin{center}
\mbox{\epsfig{file=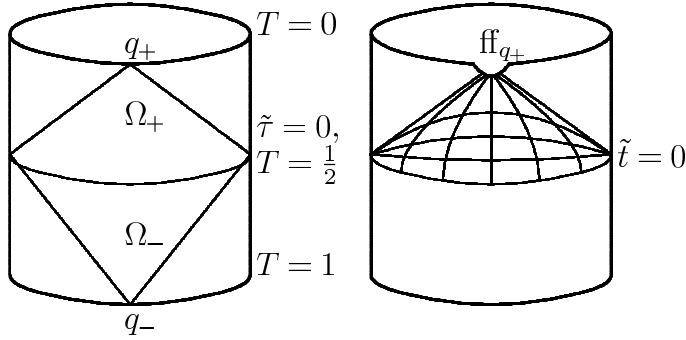}}
\end{center}
\caption{On the left, the compactification of de Sitter space with the
backward light cone from $q_+=(1,0,0,0)$ and forward light cone from $q_-
=(-1,0,0,0)$ are
shown. $\Omega_+$, resp.\ $\Omega_-$, denotes the intersection of these
light cones with $\tilde\tau>0$, resp.\ $\tilde\tau<0$.
On the right, the blow up of de Sitter space at $q_+$ is shown. The
interior of the light cone inside the front face $\ff_{q_+}$ can
be identified with the spatial part of the static model of de Sitter
space. The spatial and temporal coordinate lines for the static model
are also shown.}
\label{fig:deSitter-space}
\end{figure}

One important feature of asymptotically de Sitter spaces is the following:
a conformal factor, such as $x^{-2}$ above, does not change the image
of null-geodesics, only reparameterizes them. More precisely, recall that
null-geodesics
are merely projections to $M$ of null-bicharacteristics of the metric function
in $T^* M$. Since $p\mapsto\sH_p$ is a derivation, $ap\mapsto a\sH_p+p\sH_a$,
which is $a\sH_p$ on the characteristic set of $p$. Thus, the null-geodesics of
de Sitter space are the same (up to reparameterization) as those of the metric
$$
(1+x^2)^{-1}\,dx^2-(1+x^2)\,d\theta^2
$$
which is smooth on the compact space $\hM$.

\subsection{The static model of a part of de Sitter space}
The simple structure of de Sitter metric (and to some extent of
the asymptotically de Sitter-like metrics) can be hidden by
blowing up certain submanifolds of the boundary of $\hM$.
In particular, the {\em static model} of de Sitter space arises by
singling out a point on
$\sphere^{n-1}_{\theta}$,
e.g.\ $q_0=(1,0,\ldots,0)\in\sphere^{n-1}\subset\RR^n$.
Note that $(\theta_2,\ldots,\theta_n)\in\RR^{n-1}$ are local
coordinates on $\sphere^{n-1}$ near $q_0$.
Now consider
the intersection of the backward light cone
from $q_0$ considered as a point $q_+$ at future infinity, i.e.\ 
where $T=0$, and the forward
light cone from $q_0$ considered as a point $q_-$ at past infinity,
i.e.\ where $T=1$.
These intersect the equator $T=1/2$ (here $\taut=0$)
in the same set, and together form a `diamond', $\hat\Omega$, with a conic
singularity at $q_+$ and $q_-.$ Explicitly $\hat\Omega$
is given by $z_2^2+\ldots+z_n^2\leq 1$ inside the
hyperboloid. If $q_+$, $q_-$ are blown up, as well as
the corner $\pa\Omega\cap\{\taut=0\}$, i.e.\ where
the light cones intersect $\taut=0$ in $\hat\Omega$,
we obtain a manifold
$\bM$,
which can be blown down to (i.e.\ is a blow up of) the space-time
product $[0,1]\times\BB^{n-1}$, with
$\BB^{n-1}=\{Z\in\RR^{n-1}:\ |Z|\leq 1\}$ on which the Lorentz metric has
a time-translation invariant warped product form. Namely, first considering
the interior $\Omega$ of $\hat\Omega$ we introduce
the global (in $\Omega$) standard static
coordinates $(\tilde t,Z)$, given by (with the expressions
involving $x$ valid near $T=0$)
\begin{equation*}\begin{split}
(\BB^{n-1})^\circ\ni Z&=(z_2,\ldots,z_n)=x^{-1}\sqrt{1+x^2}(\theta_2,\ldots,\theta_n),\\
\sinh \tilde t&=\frac{z_{n+1}}{\sqrt{z_1^2-z_{n+1}^2}}=(x^2-(1+x^2)(\theta_2^2+
\ldots+\theta_n^2))^{-1/2}.
\end{split}\end{equation*}
It is convenient to rewrite these as well in terms of polar coordinates
in $Z$ (valid away from $Z=0$):
\begin{equation*}\begin{split}
r&=\sqrt{z_2^2+\ldots+z_n^2}=\sqrt{1+z_{n+1}^2-z_1^2}
=x^{-1}\sqrt{1+x^2}\sqrt{\theta_2^2+\ldots
+\theta_n^2},\\
\sinh \tilde t&=\frac{z_{n+1}}{\sqrt{z_1^2-z_{n+1}^2}}=(x^2-(1+x^2)(\theta_2^2+
\ldots+\theta_n^2))^{-1/2}=x^{-1}(1-r^2)^{-1/2},\\
&\ \omega=r^{-1}(z_2,\ldots,z_n)=(\theta_2^2+\ldots+\theta_n^2)^{-1/2}
(\theta_2,\ldots,\theta_n)\in\sphere^{n-2}.
\end{split}\end{equation*}
In these coordinates the
metric becomes
\begin{equation}\label{eq:static-polar-orig}
(1-r^2)\,d\tilde t^2-(1-r^2)^{-1}dr^2-r^2\,d\omega^2,
\end{equation}
which is a special case of the de Sitter-Schwarzschild metrics with vanishing
mass, $M=0$,
and cosmological constant $\Lambda=3$, see Section~\ref{sec:Kerr}.
Correspondingly, the dual metric is
\begin{equation}\label{eq:static-polar}
(1-r^2)^{-1}\pa_{\tilde t}^2-(1-r^2)\pa_r^2-r^{-2}\pa_{\omega}^2.
\end{equation}
We also rewrite this in terms of coordinates valid at the origin, namely
$Y=r\omega$:
\begin{equation}\label{eq:static-Eucl}
(1-|Y|^2)^{-1}\pa_{\tilde t}^2+(\sum_{j=1}^{n-1} Y_j\pa_{Y_j})^2-\sum_{j=1}^{n-1}\pa_{Y_j}^2.
\end{equation}

\subsection{Blow-up of the static model}
We have already seen that de Sitter space has a smooth conformal
compactification; the singularities in the metric of the form
\eqref{eq:static-polar-orig} at $r=1$ must thus be
artificial. On the other hand, the metric is already well-behaved for $r<1$ bounded
away from $1$, so we want the coordinate change to be smooth there --- this
means smoothness in valid coordinates ($Y$ above) at the origin
as well. This singularity can be removed by a blow-up on an appropriate
compactification.
We phrase this at first in a way that is closely related to our treatment of
Kerr-de Sitter space, and the Kerr-star coordinates. So let
$$
t=\tilde t+h(r),\ h(r)=-\frac{1}{2}\log\mu,\ \mu=1-r^2.
$$
Note that $h$ is smooth at the origin,
A key feature of this change of coordinates is
$$
h'(r)=-\frac{r}{\mu}=-\frac{1}{\mu}+\frac{1}{1+r},
$$
which is $-\mu^{-1}$ near $r=1$ modulo terms smooth at $r=1$. Other coordinate
changes with this property would also work.
Let
$$
\tau=e^{-t}=\frac{e^{-\tilde t}}{\mu^{1/2}}.
$$
Thus, if we compactify static space-time as $\BB^{n-1}_{r\omega}\times[0,1]_T$,
with $T=\tau$ for say $t>4$, then this procedure amounts to blowing up
$T=0$, $\mu=0$ parabolically. (If we used $\tau=e^{-2t}$, everything would
go through, except there would be many additional factors of $2$; then
the blow-up would be homogeneous, i.e.\ spherical.)
Then the dual metric becomes
$$
-\mu\pa_r^2-2r\pa_r\tau\pa_\tau+\tau^2\pa_{\tau}^2-r^{-2}\pa_{\omega}^2,
$$ 
or
$$
-4r^2\mu\pa_\mu^2+4r^2\tau\pa_{\tau}\pa_\mu+\tau^2\pa_{\tau}^2-r^{-2}\pa_{\omega}^2,
$$
which is a non-degenerate Lorentzian b-metric\footnote{See Section~\ref{sec:Mellin-Lorentz} for a quick introduction to b-geometry and further references.} on $\RR^{n-1}_{r\omega}\times[0,1)_\tau$,
i.e.\ it extends smoothly and non-degenerately across the `event horizon', $r=1$.
Note that in coordinates valid near $r=0$ this becomes
$$
(\sum_j Y_j\pa_{Y_j})^2-2(\sum_j Y_j\pa_{Y_j})\tau\pa_\tau+\tau^2\pa_{\tau}^2-\sum_j\pa_{Y_j}^2=(\tau\pa_\tau-\sum_j Y_j\pa_{Y_j})^2-\sum_j\pa_{Y_j}^2.
$$
In slightly different notation, this agrees with the symbol of
\cite[Equation~(7.3)]{Vasy:De-Sitter}.

We could have used other equivalent local coordinates;
for instance
replaced $e^{-\tilde t}$ by $(\sinh \tilde t)^{-1}$, in which case the coordinates
$(r,\tau,\omega)$
we obtained are replaced by
\begin{equation}\label{eq:equiv-coords-dS}
r,\ \rho= (\sinh \tilde t)^{-1}/(1-r^2)^{1/2}=x,\ \omega.
\end{equation}
As expected, in these coordinates the metric would still be a smooth and
non-degenerate
b-metric.
These coordinates
also show that Kerr-star-type coordinates are smooth in the interior of
the front face on
the blow-up of our conformal compactification of de Sitter space at $q_+$.
\footnote{If we had worked with $e^{-2t}$ instead of $e^{-t}$ above,
we would obtain $x^2$ as the
defining function of the temporal face, rather than $x$.}
In summary we have reproved (modulo a few
details):

\begin{lemma}(See \cite[Lemma~2.1]{Melrose-SaBarreto-Vasy:Asymptotics} for
a complete version.)
The lift of $\hat\Omega$ to the blow up
$[\hM;q_+,q_-]$ is a $\CI$ manifold
with corners, $\bar\Omega$. Moreover, near the front faces $\ff_{q_\pm}$,
$\bar\Omega$ is naturally
diffeomorphic to a neighborhood of the temporal faces $\tf_\pm$ in
the $\CI$ manifold with corners obtained from
$[0,1]_T\times\BB^{n-1}$ by blowing up $\{0\}\times\pa\BB^{n-1}$ and
$\{1\}\times\pa\BB^{n-1}$ in the parabolic manner indicated
in \eqref{eq:equiv-coords-dS}; here $\tf_\pm$ are the lifts of
$\{0\}\times\pa\BB^{n-1}$ and
$\{1\}\times\pa\BB^{n-1}$.
\end{lemma}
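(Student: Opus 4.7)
The plan is to reduce the lemma to an explicit verification in the static coordinates $(\tilde t, Z)$ together with their reparametrizations \eqref{eq:equiv-coords-dS}. First I would check smoothness of the lift $\bar\Omega$ in $[\hM;q_+,q_-]$. Away from the blown up corner $\pa\hat\Omega\cap\{\taut=0\}$ and from the two conic vertices $q_\pm$, the set $\hat\Omega$ is cut out of the ambient conformal compactification by a single smooth defining inequality (the lift of the light cone), and is therefore a manifold with boundary there. Near $q_\pm$, the conic singularity is resolved by the radial blow up: the light cone from $q_\pm$, when pulled back to the front face $\ff_{q_\pm}$, restricts to a smooth hypersurface, as one sees from the homogeneity of the light cone quadric under the dilations used to define the blow up.

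Next I would construct the diffeomorphism. On the interior of $\hat\Omega$ the static coordinates $(\tilde t, r, \omega)$, or equivalently $(\tilde t, Z)$ with $Z=r\omega\in (\BB^{n-1})^\circ$, give a diffeomorphism onto $\RR_{\tilde t}\times (\BB^{n-1})^\circ$. I would then compactify the $\tilde t$ factor by the standard choice $T=(\sinh\tilde t)^{-1}$ for $\tilde t\gg 1$ (and symmetrically near $\tilde t\to-\infty$), so that $T$ is a smooth boundary defining function for $\hM$ near $q_+$; indeed from the formula $\sinh\tilde t=x^{-1}(1-r^2)^{-1/2}$, one has $T=x\,(1-r^2)^{1/2}$ in a neighborhood of $q_+$. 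This exhibits, in a neighborhood of $q_+$, the map from $[0,1]_T\times \BB^{n-1}$ to $\hM$ as the canonical projection which collapses $\{0\}\times\pa\BB^{n-1}$ to $q_+$, with the collapsing locus being exactly $\{T=0,\,\mu=0\}$, $\mu=1-r^2$.

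The third step is to verify that the two blow ups are identified. On the product side one performs a parabolic blow up of $\{T=0\}\cap\{\mu=0\}$ in which $T$ has parabolic weight $1$ and $\mu$ has weight $2$, i.e.\ $\mu^{1/2}/T$, $T$, $\omega$ form valid coordinates on one of the two natural coordinate patches of the blow up near the interior of the new front face $\tf_+$. On the de Sitter side the blow up of $q_+$ has local defining function $x$ for $\ff_{q_+}$, and the lift of the equatorial face $\pa\hat\Omega\cap\{\taut=0\}$ is parametrized there by $r\in[0,1]$, $\omega\in\sphere^{n-2}$; the coordinates \eqref{eq:equiv-coords-dS}, namely $(r,\rho=x,\omega)$, manifestly form a smooth chart. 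The identity $T=x(1-r^2)^{1/2}=\rho\mu^{1/2}$ then shows that $\rho = T/\mu^{1/2}$ and $r$ (equivalently $\mu$) are precisely the parabolic blow up coordinates on the product side, giving a diffeomorphism between the two sets of local coordinates. A symmetric argument at $q_-$, using $T'=1-T=\mathcal O(e^{\tilde t})$ and the same formulas with the sign of $\tilde t$ reversed, handles the other vertex.

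The main obstacle is bookkeeping: one must check that the parabolic weights chosen on the product side (with $T$ of weight $1$ and $\mu$ of weight $2$) correctly reproduce the blow up of the point $q_+$ in $\hM$, rather than some other resolution, and that the equatorial corner $\{\taut=0\}\cap\pa\hat\Omega$ glues compatibly with $\{1/2\}\times\pa\BB^{n-1}$ in the product. The first point is exactly the content of the identity $T=\rho\mu^{1/2}$ above, where the exponent $1/2$ on $\mu$ encodes the parabolic nature; the second is purely a matter of comparing the smooth structures away from $T=0,1$, where both sides are already smooth manifolds with corners and the static chart provides the diffeomorphism directly. The smoothness and non-degeneracy of the resulting b-metric, as computed after \eqref{eq:static-Eucl}, serves as an a posteriori check that the choice of parabolic weights is correct.
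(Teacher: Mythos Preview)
Your proof plan is correct and follows essentially the same route as the paper: the paper's argument is the discussion immediately preceding the lemma, which introduces the Kerr-star-type coordinate change $t=\tilde t+h(r)$, $\tau=e^{-t}$, observes that $\tau$ and $\mu^{1/2}$ are related by a smooth nonvanishing factor to $x$ via the formula $\sinh\tilde t=x^{-1}(1-r^2)^{-1/2}$, and concludes (as you do) that this exhibits the parabolic blow-up of $\{T=0,\mu=0\}$ in the product picture as the ordinary blow-up of $q_+$ in $\hM$. One small wording correction: the map from the product to $\hM$ collapses \emph{all} of $\{0\}\times(\BB^{n-1})^\circ$ to $q_+$, not just $\{0\}\times\pa\BB^{n-1}$; the corner $\{0\}\times\pa\BB^{n-1}$ is where the map is genuinely indeterminate and the parabolic resolution is needed.
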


It is worthwhile comparing the de Sitter space wave asymptotics
of \cite{Vasy:De-Sitter},
\begin{equation}\label{eq:dS-asymp}
u=x^{n-1}v_++v_-,\ v_+\in\CI(\hM),\ v_-\in\CI(\hM)+x^{n-1}(\log x)\CI(\hM),
\end{equation}
with our main result, Theorem~\ref{thm:exp-decay}.
The fact that the coefficients in the de Sitter expansion
are $\CI$ on $\hM$ means that on $\bM$, the leading terms are
constant. Thus, \eqref{eq:dS-asymp} implies (and is much stronger than)
the statement that $u$ decays to a constant on $\bM$ at an exponential
rate.

\subsection{D'Alembertian and its Mellin transform}\label{subsec:dS-Mellin}
Consider the d'Alembertian, $\Box_g$, whose principal symbol, including
subprincipal terms, is given by the metric function. Thus, writing
b-covectors as
$$
\xi\,d\mu+\sigma\,\frac{d\tau}{\tau}+\eta\,d\omega,
$$ 
we have
\begin{equation}\label{eq:Box-symbol-dS}
G=\sigma_{\bl,2}(\Box)=
-4r^2\mu\xi^2+4r^2\sigman\xi+\sigman^2-r^{-2}|\eta|^2,
\end{equation}
with $|\eta|^2_{\omega}$ denoting the dual metric function on the sphere.
Note that there is a polar coordinate singularity at $r=0$; this is resolved
by using actually valid coordinates $Y=r\omega$ on $\RR^{n-1}$ near the origin;
writing b-covectors as 
$$
\sigma\,\frac{d\tau}{\tau}+\zeta\,dY,
$$ 
we have
\begin{equation}\begin{split}\label{eq:Box-symbol-dS-orig}
G=\sigma_{\bl,2}(\Box)&=
(Y\cdot\zeta)^2-2(Y\cdot\zeta)\sigman+\sigman^2-|\zeta|^2=(Y\cdot\zeta-\sigman)^2-|\zeta|^2,\\
&\qquad Y\cdot\zeta=\sum_j Y_j\cdot\zeta_j,\ |\zeta|^2=\sum_j\zeta_j^2.
\end{split}\end{equation}
Since there are no interesting phenomena at the origin, we may ignore this point below.

Via conjugation by the (inverse) Mellin transform, see Subsection~\ref{subsec:Mellin},
we obtain a family of operators $P_\sigma$
depending on $\sigma$ on $\RR^{n-1}_{r\omega}$
with both principal and high energy ($|\sigma|\to\infty$)
symbol given by \eqref{eq:Box-symbol-dS}.
Thus, the principal symbol of $P_\sigma\in\Diff^2(\RR^{n-1})$, including in the
high energy sense ($\sigma\to\infty$), is
\begin{equation}\begin{split}\label{eq:p-sig-symbol-dS}
p_{\full}&=-4r^2\mu\xi^2+4r^2\sigman\xi+\sigman^2-r^{-2}|\eta|_{\omega}^2\\
&=(Y\cdot\zeta)^2-2(Y\cdot\zeta)\sigman+\sigman^2-|\zeta|^2
=(Y\cdot\zeta-\sigman)^2-|\zeta|^2.
\end{split}\end{equation}
The Hamilton vector field is
\begin{equation}\begin{split}\label{eq:Ham-vf-p-sig-dS}
\sH_{p_\full}&=4r^2(-2\mu\xi+\sigman)\pa_\mu-r^{-2}\sH_{|\eta|^2_{\omega}}
-(4(1-2r^2)\xi^2-4\sigman\xi-r^{-4}|\eta|^2_{\omega})\pa_\xi\\
&=2 (Y\cdot\zeta-\sigman)(Y\cdot\pa_Y-\zeta\cdot\pa_{\zeta})-2\zeta\cdot\pa_Y,
\end{split}\end{equation}
with $\zeta\cdot\pa_Y=\sum\zeta_j\pa_{Y_j}$, etc.
Thus, in the standard `classical' sense, which effectively means
letting $\sigma=0$, the principal symbol is
\begin{equation}\begin{split}\label{eq:p-sig-symbol-dS-standard}
p=\sigma_2(P_\sigma)
&=-4r^2\mu\xi^2-r^{-2}|\eta|_{\omega}^2\\
&=(Y\cdot\zeta)^2-|\zeta|^2,
\end{split}\end{equation}
while the Hamilton vector field is
\begin{equation}\begin{split}\label{eq:Ham-vf-p-sig-dS-standard}
\sH_p&=-8r^2\mu\xi\pa_\mu-r^{-2}\sH_{|\eta|^2_{\omega}}
-(4(1-2r^2)\xi^2-r^{-4}|\eta|^2_{\omega})\pa_\xi\\
&=2 (Y\cdot\zeta)(Y\cdot\pa_Y-\zeta\cdot\pa_{\zeta})-2\zeta\cdot\pa_Y,
\end{split}\end{equation}
Moreover, the imaginary part of the subprincipal symbol, given by the principal
symbol of $\frac{1}{2\imath}(P_\sigma-P_\sigma^*)$, is
$$
\sigma_1(\frac{1}{2\imath}(P_\sigma-P_\sigma^*))=4r^2(\im\sigman)\xi
=-2(Y\cdot\zeta)\im\sigman.
$$

When comparing these with \cite[Section~7]{Vasy:De-Sitter}, it is important
to keep in mind that what is denoted by $\sigma$ there (which we
refer to as $\tilde\sigma$ here to avoid confusion) is $\imath\sigman$
here corresponding to the
Mellin transform, which is a decomposition in terms of $\tau^{\imath\sigma}\sim
x^{\imath\sigman}$,
being replaced by weights $x^{\tilde\sigma}$ in \cite[Equation~(7.4)]{Vasy:De-Sitter}.

One important feature of
this operator is that
$$
N^*\{\mu=0\}=\{(\mu,\omega,\xi,\eta):\ \mu=0,\ \eta=0\}
$$
is invariant under the classical flow (i.e.\ effectively letting $\sigma=0$).
Let
$$
N^*S\setminus o=\Lambda_+\cup\Lambda_-,
\qquad \Lambda_\pm=N^*S\cap\{\pm\xi>0\},\qquad S=\{\mu=0\}.
$$
Let $L_\pm$ be the image of $\Lambda_\pm$ in $S^*\RR^{n-1}$.
Next we analyze the flow at $\Lambda_\pm$. First,
\begin{equation}\label{eq:eta-loc}
\sH_p|\eta|^2_{\omega}=0
\end{equation}
and
\begin{equation}\label{eq:mu-exact-ev}
\sH_p\mu=-8r^2\mu\xi=-8\xi\mu+a\mu^2\xi
\end{equation}
with $a$ being $\CI$ in $T^*X$, and homogeneous of
degree $0$. 
While, in the spirit of linearizations, we used
an expression in \eqref{eq:mu-exact-ev} that is linear in the coordinates whose
vanishing defines $N^*S$, the key point is that
$\mu$ is an elliptic multiple of $p$
in a linearization sense,
so one can simply use $\hat p=p/|\xi|^2$ (which is homogeneous
of degree $0$, like $\mu$), in its place.

\begin{figure}[ht]
\begin{center}
\mbox{\epsfig{file=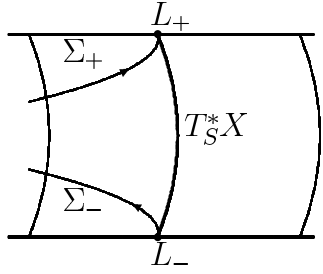}}
\end{center}
\caption{The cotangent bundle near the event horizon $S=\{\mu=0\}$.
It is drawn in a fiber-radially compactified view. The boundary of the fiber
compactificaton is the cosphere
bundle $S^*\RR^{n-1}$; it is the surface of the cylinder shown.
$\Sigma_\pm$ are the components of the (classical) characteristic set containing
$L_\pm$. They lie in $\mu\leq 0$, only meeting $S^*_S\RR^{n-1}$ at $L_\pm$.
Semiclassically, i.e.\ in the interior of $\overline{T}^*\RR^{n-1}$, for $z=h^{-1}\sigma>0$,
only the component of the semiclassical characteristic set containing $L_+$ can
enter $\mu>0$. This is reversed for $z<0$.}
\label{fig:event-horizon-bundle1}
\end{figure}

It is convenient to rehomogenize \eqref{eq:eta-loc} in terms
of $\hat\eta=\eta/|\xi|$. To phrase this
more invariantly, consider the fiber-compactification
$\overline{T}^*\RR^{n-1}$ of $T^*\RR^{n-1}$, see Subsection~\ref{subsec:micro-gen}.
On this space, the classical principal symbol, $p$, is (essentially) a function
on $\pa\overline{T}^*\RR^{n-1}=S^*\RR^{n-1}$.
Then at fiber infinity near $N^*S$, we can take $(|\xi|^{-1},\etah)$
as coordinates on the fibers of the cotangent bundle, with $\rhot=|\xi|^{-1}$
defining $S^*X$ in $\overline{T}^*X$.
Then $|\xi|^{-1}\sH_p$ is a $\CI$ vector field in this region and
\begin{equation}\label{eq:eta-hat-loc}
|\xi|^{-1}\sH_p|\hat\eta|^2=|\hat\eta|^2\sH_p|\xi|^{-1}
=-4(\sgn\xi) |\hat\eta|^2
+\tilde a,
\end{equation}
where $\tilde a$ vanishes cubically at $N^*S$, i.e.\ \eqref{eq:weight-definite} holds.
In similar notation we have
\begin{equation}\begin{split}\label{eq:dS-Ham-2}
&\sH_p|\xi|^{-1}=-4\sgn(\xi)+\tilde a',\\
&|\xi|^{-1}\sH_p\mu=-8(\sgn\xi)\mu.
\end{split}\end{equation}
with $\tilde a'$ smooth (indeed, homogeneous degree zero without the
compactification) vanishing at $N^*S$.
As the vanishing of $\hat\eta,|\xi|^{-1}$ and $\mu$ defines
$\pa N^*S$, we conclude that
$L_-=\pa\Lambda_-$ is a source, while $L_+=\pa\Lambda_+$ is a sink,
in the sense that all nearby
bicharacteristics (in fact, including semiclassical (null)bicharacteristics, since
$\sH_p|\xi|^{-1}$ contains the additional information needed) converge
to $L_\pm$ as the parameter along the bicharacteristic goes to $\pm\infty$.
In particular, the quadratic defining function of $L_\pm$ given by 
$$
\rho_0=\widehat{\tilde p}+\hat p^2,\ \text{where}\ \hat p=|\xi|^{-2}p,
\ \widehat{\tilde p}=|\etah|^2,
$$
satisfies \eqref{eq:rho-0-property}.

The imaginary part of the subprincipal
symbol at $L_\pm$ is given by
$$
(4\sgn(\xi))\im\sigman|\xi|;
$$
here $(4\sgn(\xi))$ is pulled out due to \eqref{eq:dS-Ham-2}, namely its
size relative to $\sH_p|\xi|^{-1}$ matters, with a change of sign,
see Subsection~\ref{subsec:micro-gen}, thus
\eqref{eq:subpr-form}-\eqref{eq:tilde-beta-form} hold.
This corresponds to the fact\footnote{This needs the analogous statement
for full subprincipal symbol,
not only its imaginary part.}
that $(\mu\pm \imath 0)^{\imath\sigman}$, which are Lagrangian
distributions
associated to $\Lambda_\pm$, solve the PDE
modulo an error that is two orders lower than what one might a priori expect,
i.e.\ $P_\sigma (\mu\pm \imath 0)^{\imath\sigman}\in (\mu\pm i0)^{\imath\sigman}\CI(\RR^{n-1})$.
Note that $P_\sigma$ is second order, so one should lose two orders a priori;
the characteristic nature of $\Lambda_\pm$ reduces the loss to $1$, and the
particular choice of exponent eliminates the loss. This has much in common
with $e^{\imath\lambda/x}x^{(n-1)/2}$ being an approximate solution in asymptotically
Euclidean scattering. The precise situation for Kerr-de Sitter space
is more delicate as the Hamilton vector
field does not vanish at $L_\pm$, but this\footnote{This would be relevant for a full Lagrangian analysis, as done
e.g.\ in \cite{RBMSpec}, or in a somewhat different, and more complicated,
context by Hassell, Melrose and Vasy
in \cite{Hassell-Melrose-Vasy:Spectral, Hassell-Melrose-Vasy:Microlocal}.}
is irrelevant for our estimates: only a quantitative version of the source/sink
statements and the imaginary part of the subprincipal symbol are
relevant.

While $(\mu\pm \imath 0)^{\imath\sigman}$ is singular regardless of $\sigma$
apart from integer coincidences (when this should be corrected anyway), it
is interesting to note that for $\im\sigma>0$ this is not bounded at $\mu=0$,
while for $\im\sigma<0$ it vanishes there. This is interesting because if
one reformulates the problem as one in $\mu\geq 0$, as was done
for instance by S\'a Barreto and Zworski \cite{Sa-Barreto-Zworski:Distribution}, and
later by Melrose, S\'a Barreto and Vasy \cite{Melrose-SaBarreto-Vasy:Asymptotics} 
for de Sitter-Schwarzschild space
then one obtains an operator that is essentially (up to
a conjugation and a weight, see below)
the Laplacian on an asymptotically hyperbolic space at energy
$\sigman^2+\frac{(n-2)^2}{4}$ --- more
precisely its normal operator (which encodes its behavior near $\mu=0$) is a
multiple of that of the hyperbolic Laplacian. Then the growth/decay
behavior corresponds to the usual scattering theory phenomena, but
in our approach smooth extendability across $\mu=0$ is the
distinguishing feature of the solutions we want, not growth/decay.
See Remark~\ref{rem:ah-decay} for
more details.

\subsection{Global behavior of the characteristic set}
First remark that $\langle\frac{d\tau}{\tau},\frac{d\tau}{\tau}\rangle_G=1>0$,
so $\frac{d\tau}{\tau}$ is time-like. Correspondingly all the results of
Subsection~\ref{subsec:Lorentz} apply. In particular, \eqref{eq:Lorentz-semicl-char}
gives that
the characteristic set is divided into two
components with $\Lambda_\pm$ in different components. It is easy to make
this explicit:
points with $\xi=0$, or equivalently $Y\cdot\zeta=0$,
cannot lie in the characteristic set.
Thus,
$$
\Sigma_\pm=\Sigma\cap\{\pm\xi>0\}=\Sigma\cap\{\mp (Y\cdot\zeta)>0\}.
$$

While it is not important here since the characteristic set in $\mu\geq 0$
is localized at $N^*S$, hence one has a similar
localization for nearby $\mu$, for global purposes (which we do not need here),
we point out
that
$\sH_p\mu=-8r^2\mu\xi$.
Since $\xi\neq 0$ on $\Sigma$, and in $\Sigma$, $r=1$ can only happen
at $N^*S$, i.e.\ only at the radial set, the $\CI$ function
$\mu$ provides a negative global escape function which is increasing
on $\Sigma_+$, decreasing on $\Sigma_-$. Correspondingly, bicharacteristics
in $\Sigma_+$ travel from infinity to $L_+$, while in $\Sigma_-$ they travel from
$L_-$ to infinity.

\subsection{High energy, or semiclassical, asymptotics}
We are also interested in the high energy behavior, as $|\sigma|\to\infty$.
For the associated semiclassical problem one obtains a family of operators
$$
P_{h,z}=h^2P_{h^{-1}z},
$$
with $h=|\sigma|^{-1}$, and $z$ corresponding to $\sigma/|\sigma|$ in the unit
circle in $\Cx$.
Then the semiclassical principal symbol $p_{\semi,z}$ of $P_{h,z}$ is a function
on $T^*\RR^{n-1}$.
As in Section~\ref{sec:microlocal},
we are interested in $\im z\geq -Ch$, which
corresponds to $\im\sigma\geq -C$.
It is sometimes convenient to think of $p_{\semi,z}$, and its rescaled Hamilton vector field,
as objects on $\overline{T}^*\RR^{n-1}$.
Thus,
\begin{equation}\begin{split}\label{eq:p-h-symbol-dS}
p_{\semi,z}=\sigma_{2,h}(P_{h,z})
&=-4r^2\mu\xi^2+4r^2\zn\xi+\zn^2-r^{-2}|\eta|_{\omega}^2\\
&=(Y\cdot\zeta)^2-2(Y\cdot\zeta)\zn+\zn^2-|\zeta|^2
=(Y\cdot\zeta-\zn)^2-|\zeta|^2.
\end{split}\end{equation}
We make the general discussion of Subsection~\ref{subsec:Lorentz}
explicit. First,
\begin{equation}\label{eq:im-p-h-symbol-dS}
\im p_{\semi,z}=2\im \zn(2r^2\xi+\re \zn)=-2\im \zn(Y\cdot \zeta-\re \zn).
\end{equation}
In particular, for $z$ non-real, $\im p_{\semi,z}=0$ implies
$2r^2\xi+\re \zn=0$, i.e.\ $Y\cdot\zeta-\re \zn=0$,
which means that $\re p_{\semi,z}$ is
\begin{equation}\label{eq:re-p-h-symbol-dS}
-r^{-2}(\re \zn)^2-(\im \zn)^2-r^{-2}|\eta|^2_\omega=-(\im \zn)^2-|\zeta|^2<0,
\end{equation}
i.e.\ $p_{\semi,z}$ is semiclassically elliptic on $T^*\RR^{n-1}$, but {\em not}
at fiber infinity, i.e.\ at $S^*\RR^{n-1}$ (standard ellipticity is lost only
in $r\geq 1$, of course). Explicitly, if we introduce
for instance
$$
(\mu,\omega,\nu,\hat\eta),\qquad \nu=|\xi|^{-1},\ \hat\eta=\eta/|\xi|,
$$
as valid projective coordinates
in a (large!) neighborhood of $L_\pm$ in $\overline {T}^* \RR^{n-1}$, then
\begin{equation*}\begin{split}
\nu^{2}p_{\semi,z}=-4r^2\mu+4r^2(\sgn\xi)\zn\nu+\zn^2\nu^2-r^{-2}|\hat\eta|_{\omega}^2
\end{split}\end{equation*}
so
$$
\nu^{2}\im p_{\semi,z}=4r^2(\sgn\xi)\nu\im \zn+2\nu^2\re \zn\im \zn
$$
which automatically vanishes at $\nu=0$, i.e.\ at $S^*\RR^{n-1}$. Thus,
for $\sigma$ large and pure imaginary, the semiclassical problem adds no
complexity to the `classical' quantum problem, but of course it does not
simplify it. In fact, we need somewhat more information at the characteristic
set, which is thus at $\nu=0$ when $\im z$ is bounded away from $0$:
\begin{equation*}\begin{split}
&\nu\ \text{small},\ \im z\geq 0\Rightarrow (\sgn\xi)\im p_{\semi,z}\geq 0
\Rightarrow \pm\im p_{\semi,z}\geq 0\ \text{near}\ \Sigma_{\semi,\pm},\\
&\nu\ \text{small},\ \im z\leq 0\Rightarrow (\sgn\xi)\im p_{\semi,z}\leq 0
\Rightarrow \pm\im p_{\semi,z}\geq 0\ \text{near}\ \Sigma_{\semi,\pm},\\
\end{split}
\end{equation*}
which, as we have seen, means that for $P_{h,z}$ with $\im z>0$ one can
propagate estimates forwards along the bicharacteristics where $\xi<0$ (in
particular, away from $L_-$, as the latter is a source) and
backwards where $\xi>0$ (in particular, away from $L_+$,
as the latter is a sink), while for $P^*_{h,z}$ the directions are reversed. The
directions are also reversed if $\im z$ switches sign. This
is important because it gives invertibility for $z=\imath$ (corresponding
to $\im\sigma$ large positive, i.e.\ the physical halfplane), but does not give
invertibility for $z=-\imath$ negative.

We now return to the claim that even semiclassically, for $z$ almost
real\footnote{So the operator is not semiclassically elliptic on $T^*\RR^{n-1}$;
as mentioned above, for $\im z$ uniformly bounded away from
$\RR$, we have ellipticity in $T^*\RR^{n-1}$.},
the characteristic
set can be divided into two components $\Sigma_{\semi,\pm}$,
with $L_\pm$ in different
components.
As explained in Subsection~\ref{subsec:Lorentz}
the vanishing of the factor following $\im z$ in
\eqref{eq:im-p-h-symbol-dS} gives a hypersurface that separates $\Sigma_{\semi}$
into two parts; this can be easily checked also by a direct computation.
Concretely, this is the hypersurface given by
\begin{equation}\label{eq:dS-sep-hyp}
0=2r^2\xi+\re \zn=-(Y\cdot\zeta-\re \zn),
\end{equation}
and so
$$
\Sigma_{\semi,\pm}=\Sigma_{\semi}\cap\{\mp (Y\cdot\zeta-\re \zn)>0\}.
$$

We finally need more information about the global semiclassical dynamics. Here
all null-bicharacteristics go to either $L_+$ in the forward direction
or to $L_-$ in the backward direction, and escape to infinity in the other direction.
Rather than proving this at once, which depends on the global non-trapping structure
on $\RR^{n-1}$,
we first give an argument that is local near the event horizon, and suffices for
the extension discussed below for asymptotically hyperbolic spaces.

As stated above, first, we are only concerned about semiclassical dynamics in $\mu>\mu_0$,
where $\mu_0<0$ might be close to $0$. To analyze this, we observe that
the semiclassical Hamilton vector field is
\begin{equation}\begin{split}\label{eq:Ham-vf-p-h-dS}
\sH_{p_{\semi,z}}&=4r^2(-2\mu\xi+\zn)\pa_\mu-r^{-2}\sH_{|\eta|^2_{\omega}}
-(4(1-2r^2)\xi^2-4\zn\xi-r^{-4}|\eta|^2_{\omega})\pa_\xi\\
&=2(Y\cdot\zeta-\zn)(Y\cdot\pa_Y-\zeta\cdot\pa_\zeta)
-2\zeta\cdot\pa_Y;
\end{split}\end{equation}
here we are concerned about $z$ real.
Thus,
$$
\sH_{p_{\semi,z}}(Y\cdot\zeta)=-2|\zeta|^2,
$$
and $\zeta=0$ implies $p_{\semi,z}=\zn^2$, so $\sH_{p_{\semi,z}}(Y\cdot\zeta)$ has a negative
upper bound on the characteristic set in compact subsets of $T^*\{r<1\}$;
note that the characteristic set is compact in $T^*\{r\leq r_0\}$ if $r_0<1$
by standard ellipticity. Thus, bicharacteristics have to leave $\{r\leq r_0\}$ for
$r_0<1$ in both the forward and backward direction (as $Y\cdot\zeta$ is
bounded over this region on the characteristic set). We already know the dynamics
near $L_\pm$, which is the only place where the characteristic set intersects
$S^*_S\RR^{n-1}$, namely $L_+$ is a sink and $L_-$ is a source.
Now, at $\mu=0$, $\sH_{p_{\semi,z}}\mu=z$, which is positive when $z>0$, so bicharacteristics
can only cross $\mu=0$ in the inward direction. In view of our preceding observations,
thus, once a bicharacteristic crossed $\mu=0$, it has to tend to $L_+$. As
bicharacteristics in a neighborhood of $L_+$ (even in $\mu<0$) tend
to $L_+$ since $L_+$ is a sink, it follows that in $\Sigma_{\semi,+}$
the same is true in $\mu>\mu_0$ for
some $\mu_0<0$. On the other hand, in a neighborhood of $L_-$ all
bicharacteristics emanate from $L_-$ (but cannot cross into $\mu>0$ by our
observations), so leave $\mu>\mu_0$ in the forward direction. These
are all the relevant features of the bicharacteristic flow for our purposes
as we shall place a complex absorbing potential near $\mu=\mu_0$ in the next
subsection.

However, it is easy to see the global claim by noting that
$\sH_{p_{\semi,z}}\mu=4r^2(-2\mu\xi+\zn)$,
and this cannot vanish on $\Sigma_{\semi}$ in $\mu<0$,
since where it vanishes, a simple calculation gives $p_{\semi,z}=4\mu\xi^2-r^{-2}|\eta|^2$.
Thus, $\sH_{p_{\semi,z}}\mu$ has a constant sign on $\Sigma_{\semi,\pm}$
in $\mu<0$, so combined with the observation above that all bicharacteristics escape
to $\mu=\mu_0$ in the appropriate direction, it shows that all bicharacteristics
in fact escape to infinity in that direction\footnote{There is in fact a not too complicated
global escape function, e.g.
$$
f=\frac{2Y\cdot\zeta-\re \zn}{2\sqrt{1+|Y|^2}(Y\cdot\zeta-\re \zn)}
=\frac{2Y\cdot\hat\zeta-\re \zn|\zeta|^{-1}}
{2\sqrt{1+|Y|^2}(Y\cdot\hat\zeta-\re \zn|\zeta|^{-1})},
$$
which is a smooth function on the characteristic set in $T^*\RR^{n-1}$
as $Y\cdot\zeta\neq \re \zn$ there; further, it extends smoothly
to the characteristic set
in $\overline{T}^*\RR^{n-1}$ away from $L_\pm$ since $\sqrt{1+|Y|^2}(Y\cdot\hat\zeta-\re \zn|\zeta|^{-1})$ vanishes only there near $S^*\RR^{n-1}$ (where these
are valid coordinates), at which it has conic points.
This function
arises in a straightforward manner when one reduces Minkowski space,
$\RR^n=\RR^{n-1}_{z'}\times\RR_t$ with metric $g_0$,
to the boundary of its radial compactification,
as described in Section~\ref{sec:Minkowski}, and uses the natural
escape function
\begin{equation}\label{eq:Minkowski-escape}
\tilde f=\frac{t t^*-z' (z')^*}{t^*\sqrt{t^2+|z'|^2}}
\end{equation}
there; here
$t^*$ is the dual variable of $t$ and $(z')^*$ of $z'$, outside the origin.}.

In fact, for applications, it is also useful to remark that for $\alpha\in T^*X$,
\begin{equation}\label{eq:mu-convex-ah}
0<\mu(\alpha)<1,\ p_{\semi,z}(\alpha)=0\Mand
(\sH_{p_{\semi,z}}\mu)(\alpha)=0\Rightarrow (\sH_{p_{\semi,z}}^2\mu)(\alpha)<0.
\end{equation}
Indeed,
as $\sH_{p_{\semi,z}}\mu=4r^2(-2\mu\xi+\zn)$, the hypotheses imply
$\zn=2\mu\xi$
and $\sH_{p_{\semi,z}}^2\mu=-8r^2\mu\sH_{p_{\semi,z}}\xi$, so we only
need to show that $\sH_{p_{\semi,z}}\xi>0$ at these points. Since
$$
\sH_{p_{\semi,z}}\xi=-4(1-2r^2)\xi^2+4\zn\xi+r^{-4}|\eta|^2_{\omega}
=4\xi^2+r^{-4}|\eta|^2_{\omega}=4r^{-2}\xi^2,
$$
where the second equality uses $\sH_{p_{\semi,z}}\mu=0$ and the third
uses that in addition $p_{\semi,z}=0$, this follows from
$2\mu\xi=\zn\neq 0$, so $\xi\neq 0$.
Thus, $\mu$ can be used for gluing
constructions as in \cite{Datchev-Vasy:Gluing-prop}.

\subsection{Complex absorption}\label{subsec:complex-absorb-dS}
The final step of fitting $P_\sigma$ into our general microlocal framework is
moving the problem to a compact manifold, and adding a complex absorbing
second order operator. We thus consider a compact manifold without boundary
$X$ for which $X_{\mu_0}=\{\mu>\mu_0\}$, $\mu_0<0$,
say, is identified as an open subset with smooth boundary;
it is convenient to take $X$ to be the double\footnote{In fact, in the de Sitter context,
this essentially means moving to the boundary of $n$-dimensional
Minkowski space, where our $(n-1)$-dimensional model is the `upper hemisphere',
see Section~\ref{sec:Minkowski}. Thus,
doubling over means working with the whole boundary, but putting an absorbing
operator near the equator, corresponding to the usual Cauchy hypersurface
in Minkowski space,
and solving from the radial points at
both the future and past light cones towards the equator --- this would
be impossible without the complex absorption.} of $X_{\mu_0}$.

It is convenient to separate the `classical' (i.e.\ quantum!) and `semiclassical'
problems, for in the former setting trapping does not matter, while in the
latter it does.

\begin{figure}[ht]
\begin{center}
\mbox{\epsfig{file=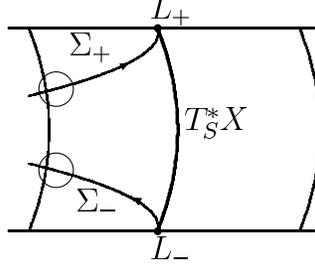}}
\end{center}
\caption{The cotangent bundle near the event horizon $S=\{\mu=0\}$.
It is drawn in a fiber-radially compactified view, as in Figure~\ref{fig:event-horizon-bundle1}. The circles on the left show the support of $q$; it has opposite signs on
the two disks corresponding to the opposite directions of propagation relative
to the Hamilton vector field.}
\label{fig:event-horizon-damp1}
\end{figure}

Ultimately, we want to extend $P_\sigma$ to $X$ (as currently it is
only defined near $X_{\mu_0}$), and introduce a complex
absorbing operator $Q_\sigma\in\Psi_{\cl}^2(X)$
with principal symbol $q$, such that $h^2Q_{h^{-1}z}\in\Psihcl^2(X)$ with
semiclassical principal symbol $q_{\semi,z}$, and such that $p\pm\imath q$
is elliptic near $\pa X_{\mu_0}$, i.e.\ near $\mu=\mu_0$, $Q_\sigma$ is supported
there (say, in $\mu<\mu_0/2$)
and which satisfies that the $\mp q\geq 0$ on $\Sigma_{\pm}$.
In fact, it is convenient
to also arrange that $p\pm\imath q$ are elliptic near $X\setminus X_{\mu_0}$; the region
we added is thus irrelevant in the sense that it does not complicate
the analysis.
In view of Subsection~\ref{subsec:local-wave},
the solution in, say, $\mu>\mu_0/2$
is {\em unaffected} by thus modifying $P_\sigma$, i.e.\ working with $P_\sigma$
and $P_\sigma-\imath Q_\sigma$ is equivalent for this purpose, so the
region we added is irrelevant in this sense as well.

First, for the `classical' problem (i.e.\ completely ignoring the
issue of uniform bounds as $\sigma\to\infty$), we
can make $Q_\sigma$ actually independent of $\sigma$. Indeed, it is
straightforward to write down $q$ with the required properties
(so $q$ is independent of $\sigma$), as we now do; quantizing it in a standard
$\sigma$-independent manner gives a desired $Q_\sigma$; now $Q_\sigma$
depends holomorphically on $\sigma$ (since there is no
$\sigma$-dependence at all).
Concretely, as discussed in Subsection~\ref{subsec:Lorentz},
this can be achieved by defining $\tilde
p=\chi_1 p-\chi_2\hat p$, where $\hat p$ is a Riemannian metric
function (or is simply a homogeneous degree 2, positive function),
$\chi_1+\chi_2=1$, $\chi_1=1$ on $X_{\mu_0}$, is supported nearby,
$\chi_j\geq 0$,
letting $q=-2r^2\xi \tilde p^{1/2}\chi(\mu)=(Y\cdot\zeta) \tilde
p^{1/2}\chi(\mu)$,
$\chi\geq 0$ supported near $\mu_0$, identically equal to a positive constant where neither
$\chi_1$ nor $\chi_2$ vanishes. As
$p<0$ when $\xi=0$, on the set where $\chi=1$, $\tilde p\pm\imath q$ does not
vanish (for the vanishing of $q$ there implies that $p<0$, and
$-\hat p$ is always negative), while if $\chi_2=1$, $\tilde p<0$ so
$\tilde p\pm\imath q$ does not
vanish,
and if $\chi_1=1$, $\tilde p\pm\imath q=p\pm\imath q$ behave the required
way as $\mp q\geq 0$ on $\Sigma_\pm$. Thus, renaming $\tilde p$ as $p$
(since we consider it an extension of $p$) $p\pm\imath q$ satisfy
our requirements.

An alternative to this extension would be simply adding a boundary at $\mu=\mu_0$;
this is easy to do since this is a space-like hypersurface, but this is
slightly unpleasant from the point of view of microlocal analysis as one has
to work on a manifold with boundary (though as mentioned this is easily done,
see Remark~\ref{rem:add-bdy}).

For the semiclassical problem
we need to increase the requirements on $Q_\sigma$.
As explained in Remarks~\ref{rem:off-real-not-imp}
and \ref{rem:off-real-not-imp-for-exp},
the main point is to ensure that the requirements for almost real $z$,
i.e.\ $\im z=\cO(h)$ (corresponding to $\sigma$ in a
strip) are satisfied.
For this we need in addition, in the semiclassical notation, semiclassical ellipticity
near $\mu=\mu_0$, i.e.\ that
$p_{\semi,z}\pm\imath q_{\semi,z}$ are elliptic near $\pa X_{\mu_0}$, i.e.\ near $\mu=\mu_0$,
and which satisfies that the $\mp q_{\semi,z}\geq 0$ on
$\Sigma_{\semi,\pm}$. While this is extremely easy to arrange
if we ignore holomorphy in $z$, a bit of care is required to ensure
the latter. Following \eqref{eq:Lorentz-q-form}, and taking into
account
\eqref{eq:im-p-h-symbol-dS},
we take
\begin{equation*}
q_{\semi,z}=-(2r^2\xi+z)f_z\chi(\mu)=(Y\cdot\zeta-z)f_z\chi(\mu),
\end{equation*}
with
\begin{equation*}
f_z=(|\zeta|^{2j}+z^{2j})^{1/2j},
\end{equation*}
$\chi\in\CI_c(\RR)$, $\chi\geq 0$ is supported near $\mu_0$, and is identically $1$ in a
smaller neighborhood of $\mu_0$, $j\geq 1$ integer and the branch of
the $2j$th root function is chosen so that it is defined on
$\Cx\setminus(-\infty,0)$ and is non-negative when the argument is
non-negative, thus the real part of this root is $\geq 0$ on
$\Cx\setminus(-\infty,0)$ with vanishing only at the origin.
In fact, we can make $Q_\sigma$ the (standard)
quantization of
$$
(Y\cdot\zeta-\sigma) (|\zeta|^{2j}+\sigma^{2j}+C^{2j})^{1/2j}\chi(\mu),
$$
where $C>0$ is chosen suitably large; then $Q_\sigma$ is holomorphic
away from the inverse images of the branch cuts, and in particular
when $|\im \sigma|<C\im e^{\imath\pi/j}$.
Here we can take even $j=1$ and then choose $C$ greater than the width
of the strip we want to study. However, by allowing $j$ to vary, we
obtain an open cover of $\Cx$ by domains $\Omega_j$ of holomorphy for
$Q^{(j)}_\sigma$ as discussed in Subsection~\ref{subsec:local-wave},
and as $\re f\geq 0$, Subsection~\ref{subsec:Lorentz} shows that
$Q^{(j)}_\sigma$ in fact satisfies the required semiclassical properties
in $\Omega_j$.
Again, we extend $P_\sigma$ to $X$ ($Q_\sigma$ can already be
considered as being defined on $X$ in view of $\supp\chi$), for
instance in a manner analogous to the `classical' one discussed above,
i.e.\ replacing $p_{\semi,z}$ by $\chi_1 p_{\semi,z}-\chi_2\hat
p_{\semi,z}$, with
$\hat p=(\|\zeta\|^{2j}+z^{2j})^{1/j}$, with $\|.\|^2$ denoting a
Riemannian metric function on $X$. Then
$p\pm\imath q$
and $p_{\semi,z}\pm\imath q_{\semi,z}$ are elliptic near $X\setminus
X_{\mu_0}$, as desired, as discussed in Subsection~\ref{subsec:Lorentz}.

\subsection{More general metrics}\label{subsec:more-gen}
If the operator is replaced by one on a neighborhood of
$Y_y\times(-\delta,\delta)_\mu$ with full principal symbol (including high energy terms)
\begin{equation}\begin{split}\label{eq:p-sig-symbol-ah}
&-4(1+a_1)\mu\xi^2+4(1+a_2)\sigman\xi+(1+a_3)\sigman^2-|\eta|_{h}^2,
\end{split}\end{equation}
and $h$ a family of Riemannian metrics on $Y$ depending smoothly
on $\mu$, $a_j$ vanishing at $\mu=0$,
then the local behavior of this operator
$P_\sigma$ near the `event horizon' $Y\times\{0\}$
is exactly as in the de Sitter setting. If we start with a compact manifold
$X_0$ with boundary $Y$ and a neighborhood of the boundary identified
with $Y\times[0,\delta)_\mu$ with the operator of the form above, and which
is elliptic in $X_0$ (we only need to assume this away from $Y\times[0,\delta/2)$, say),
including in the non-real high energy sense (i.e.\ for $z$ away from $\RR$ when
$\sigma=h^{-1} z$)
then we can extend the operator smoothly to one on $X_{\mu_0}$, $\mu_0=-\delta$,
which enjoys all the properties above, except semiclassical non-trapping. If
we assume that $X_0^\circ$ is non-trapping in the usual sense, the semiclassical
non-trapping property also follows. In addition, for $\mu>0$ sufficiently small,
\eqref{eq:mu-convex-ah} also holds since $\eta$ is small when
$\sH_{p_{\semi,z}}\mu=0$ and $p_{\semi,z}=0$, for the former gives
$\zn=2(1+a_2)^{-1}(1+a_1)\mu\xi$, and then the latter gives
$$
4(1+a_1)\left(1+\frac{(1+a_1)(1+a_3)}{(1+a_2)^2}\mu\right)\mu\xi^2=|\eta|^2_h,
$$
so the contribution of $|\eta|_{h}^2$
to $\sH_{p_{\semi,z}}\xi$, which can be large elsewhere even at $\mu=0$,
is actually small.

\subsection{Results}\label{subsec:conf-comp-results}
The preceding subsections show that for the Mellin transform of $\Box_g$
on $n$-dimensional de Sitter space, all the hypotheses needed in
Section~\ref{sec:microlocal} are satisfied, thus analogues of the results stated
for Kerr-de Sitter space in the introduction,
Theorems~\ref{thm:complete-absorb}-\ref{thm:exp-decay}, hold. It is important
to keep in mind, however, that there is no trapping to remove, so
Theorem~\ref{thm:complete-absorb} applies with $Q_\sigma$ supported
outside the event horizon, and one does not need gluing or the result of
Wunsch and Zworski \cite{Wunsch-Zworski:Resolvent}. In particular,
Theorem~\ref{thm:glued} holds with arbitrary $C'$,
without the logarithmic or polynomial loss.
As already mentioned
when discussing \cite[Theorem~1.1]{Vasy:De-Sitter} at the beginning of
this section, this
is weaker than the result of
\cite[Theorem~1.1]{Vasy:De-Sitter}, since there one has smooth asymptotics
without a blow-up of a boundary point\footnote{Note that our methods work equally
well for asymptotically de Sitter spaces in the sense of \cite{Vasy:De-Sitter};
after the blow up, the boundary metric is `frozen' at the point that is
blown up, hence the induced problem at the front face is the same
as for the de Sitter metric with asymptotics given by this `frozen' metric.}.

We now reinterpret our results on the Mellin transform side in terms of
$(n-1)$-dimensional hyperbolic space. Let $\BB^{n-1}_{1/2}$
be $\BB^{n-1}=\{r\leq 1\}$ with $\nu=\sqrt{\mu}$ added to the smooth
structure.
For the purposes of the discussion below,
we identify the interior $\{r<1\}$ of $\BB^{n-1}_{1/2}$
with the Poincar\'e ball model of hyperbolic $(n-1)$-space $(\HH^{n-1},g_{\HH^{n-1}})$.
Using polar coordinates around the origin, let $\cosh\rho=\nu^{-1}$,
$\rho$ is the distance from the origin. The Laplacian on $\HH^{n-1}$ in
these coordinates is
\begin{equation*}
\Delta_{\HH^{n-1}}=D_{\rho}^2-\imath(n-2)\coth\rho\,D_{\rho}+(\sinh\rho)^{-2}
\Delta_{\omega}.
\end{equation*}

It is shown in \cite[Lemma~7.10]{Vasy:De-Sitter} that in $r<1$, 
and with $s$ be such that $2s=\imath\sigman-\frac{n}{2}$,
\begin{equation}\begin{split}\label{eq:conj-formula-dS}
&(1-r^2)^{-s}P_\sigma(1-r^2)^s
=\nu^{\frac{n}{2}-\imath\sigman}P_\sigma\nu^{\imath\sigman-\frac{n}{2}}\\
&=-\nu^{-1}\left(\Delta_{\HH^{n-1}}-\sigman^2-\left(\frac{n-2}{2}\right)^2
-\nu^2\frac{n(n-2)}{4}\right)\nu^{-1}\\
&=-\cosh\rho\left(\Delta_{\HH^{n-1}}-\sigman^2-\left(\frac{n-2}{2}\right)^2
-(\cosh\rho)^{-2}\frac{n(n-2)}{4}\right)\cosh\rho.
\end{split}
\end{equation}

We thus deduce:

\begin{prop}\label{prop:hyp-resolvent-construct}
The inverse $\cR(\sigma)$ of
$$
\Delta_{\HH^{n-1}}-\sigma^2-\left(\frac{n-2}{2}\right)^2
-(\cosh\rho)^{-2}\frac{n(n-2)}{4}
$$
has a meromorphic continuation from
$\im\sigma>0$ to $\Cx$ with poles with finite rank residues as a map
$\cR(\sigma):\dCI(\BB^{n-1})\to\dist(\BB^{n-1})$, and with
non-trapping estimates in every strip $|\im\sigma|<C$, $|\re\sigma|\gg 0$:
$s>\frac{1}{2}+C$,
\begin{equation}\label{eq:non-trap-hyp-model}
\|(\cosh\rho)^{(n-2)/2-\imath\sigma} \cR(\sigma)f\|_{H^s_{|\sigma|^{-1}}(\BB^{n-1})}
\leq C|\sigma|^{-1}\|(\cosh\rho)^{(n+2)/2-\imath\sigma}f\|_{H^{s-1}_{|\sigma|^{-1}}(\BB^{n-1})},
\end{equation}
where the Sobolev spaces are those on $\BB^{n-1}$ (rather than $\BB^{n-1}_{1/2}$).
If $\supp f\subset (\BB^{n-1})^\circ$,
the $s-1$ norm on $f$ can be replaced by the $s-2$ norm.

The same conclusion holds for small {\em even} $\CI$ perturbations, vanishing
at $\pa\BB^{n-1}_{1/2}$, of $g_{\HH^{n-1}}$
in the class of conformally compact metrics,
or the addition of (not necessarily small) $V\in \mu\CI(\BB^{n-1})$.
\end{prop}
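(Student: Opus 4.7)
The plan is to deduce the proposition by transferring the Fredholm/meromorphy package already established for $P_\sigma-\imath Q_\sigma$ on the compact manifold $X$ back to the hyperbolic ball via the explicit conjugation \eqref{eq:conj-formula-dS}. The preceding subsections (in particular Subsections~\ref{subsec:dS-Mellin} and \ref{subsec:complex-absorb-dS}) verify that $P_\sigma$, after extension across $\mu=0$ and across $\mu=\mu_0<0$ together with the complex absorber $Q_\sigma$ from Subsection~\ref{subsec:complex-absorb-dS}, fits all the hypotheses of Section~\ref{sec:microlocal}: radial points at $L_\pm$ over $\{\mu=0\}$, real principal type elsewhere on the characteristic set, absorption at the added boundary, and (as shown explicitly) semiclassical non-trapping. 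Thus Theorems~\ref{thm:classical-absorb}, \ref{thm:classical-absorb-sector} and \ref{thm:classical-absorb-strip} apply and give, on $\cX^s \to \cY^s$, meromorphy of $(P_\sigma-\imath Q_\sigma)^{-1}$ on $\Cx_s$ with finite rank poles, invertibility for $\im\sigma$ large, and non-trapping estimates of the form $\|u\|_{H^s_{|\sigma|^{-1}}}\leq C|\sigma|^{-1}\|(P_\sigma-\imath Q_\sigma)u\|_{H^{s-1}_{|\sigma|^{-1}}}$ in any strip $|\im\sigma|<C'$ with $|\re\sigma|$ large.

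Next, I localize. Since $\supp Q_\sigma\subset\{\mu<\mu_0/2\}$ while the image of $\BB^{n-1}_{1/2}$ sits in $\{\mu\geq 0\}$, the local wave/independence argument of Subsection~\ref{subsec:local-wave} (in particular Proposition~\ref{prop:wave-local} and the subsequent observation that $\psi(P_\sigma-\imath Q_\sigma)^{-1}$ is independent of the admissible choice of $Q_\sigma$ and of the extension of $P_\sigma$ into $\mu<0$) shows that $(P_\sigma-\imath Q_\sigma)^{-1}$ restricted to inputs and outputs supported in $\{\mu>0\}$ is canonical and depends only on $P_\sigma$ in that region. Now I invoke \eqref{eq:conj-formula-dS}: setting $\nu=(\cosh\rho)^{-1}$, the displayed identity is
$$
\nu^{n/2-\imath\sigma}\,P_\sigma\,\nu^{\imath\sigma-n/2} = -\nu^{-1}\Big(\Delta_{\HH^{n-1}}-\sigma^2-\Big(\tfrac{n-2}{2}\Big)^2-\nu^2\tfrac{n(n-2)}{4}\Big)\nu^{-1},
$$
so the operator in the proposition, call it $L_\sigma$, satisfies $L_\sigma = -\nu^{n/2+1-\imath\sigma}P_\sigma\nu^{\imath\sigma-n/2+1}$. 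Consequently
$$
\cR(\sigma) = -\nu^{n/2-1-\imath\sigma}\,(P_\sigma-\imath Q_\sigma)^{-1}\,\nu^{\imath\sigma-n/2-1}
$$
on functions supported in $\{\mu>0\}$; meromorphy on $\Cx$ and finite rank of residues for $\cR(\sigma):\dCI(\BB^{n-1})\to\dist(\BB^{n-1})$ then follow immediately from the same properties of $(P_\sigma-\imath Q_\sigma)^{-1}$, after noting that the conjugating factor $\nu^{\imath\sigma-n/2-1}$ maps $\dCI(\BB^{n-1})$ into suitable weighted b-Sobolev spaces on $X$.

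For the estimate \eqref{eq:non-trap-hyp-model}, I substitute $g=\nu^{\imath\sigma-n/2-1}f$ into the non-trapping estimate for $(P_\sigma-\imath Q_\sigma)^{-1}$. The weight on the right, $(\cosh\rho)^{(n+2)/2-\imath\sigma}=\nu^{-(n+2)/2+\imath\sigma}$, exactly cancels $\nu^{\imath\sigma-n/2-1}$ (using $(n+2)/2-(n/2+1)=0$), and similarly the weight $(\cosh\rho)^{(n-2)/2-\imath\sigma}$ on the left cancels $\nu^{n/2-1-\imath\sigma}$. Thus \eqref{eq:non-trap-hyp-model} reduces precisely to the non-trapping estimate of Theorem~\ref{thm:classical-absorb-strip} with $\difford=2$, and the improvement to $H^{s-2}$ for compactly supported $f$ comes from elliptic regularity of $P_\sigma$ in the interior, in the usual real-principal-type / elliptic-estimate combination from Subsection~\ref{subsec:ellip-hyp}.

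Finally, for stability under small even perturbations of $g_{\HH^{n-1}}$ (vanishing at $\pa\BB^{n-1}_{1/2}$) and addition of $V\in\mu\CI(\BB^{n-1})$, I apply Subsection~\ref{subsec:more-gen}: evenness of the perturbation, by Guillarmou's characterization, means that after passing to the $\mu$-variable the operator takes exactly the form \eqref{eq:p-sig-symbol-ah} with $a_j$ vanishing at $\mu=0$, so the extended operator on $X$ is a small perturbation in $\Psi^2_{\cl}(X)$, with real principal symbol, of the unperturbed $P_\sigma$; the term $V\in\mu\CI(\BB^{n-1})$ contributes a lower-order perturbation that does not affect principal or semiclassical principal symbols. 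The stability result of Subsection~\ref{subsec:stability} then gives meromorphy and the same non-trapping estimates for the perturbed resolvent, concluding the proof. The only step that requires genuine care, rather than mere bookkeeping, is checking that the radial-point subprincipal structure at $L_\pm$ is preserved under these perturbations (so that $\beta$, $\beta_0$ are perturbed only slightly and the threshold regularity $s>\tfrac12+C$ is stable); this is precisely the content of Remark~\ref{rem:stable}.
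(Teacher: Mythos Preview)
Your overall strategy matches the paper's: transport the meromorphy and non-trapping estimates for $(P_\sigma-\imath Q_\sigma)^{-1}$ on $X$ back to $\BB^{n-1}$ via the conjugation \eqref{eq:conj-formula-dS}. There is, however, one step you skip that the paper carries out explicitly. The proposition asserts that \emph{the} resolvent $\cR(\sigma)$, i.e.\ the $L^2$ inverse existing for $\im\sigma\gg 0$ by self-adjointness and positivity of $\Delta_{\HH^{n-1}}$, continues meromorphically. Your formula $-\nu^{n/2-1-\imath\sigma}(P_\sigma-\imath Q_\sigma)^{-1}\nu^{\imath\sigma-n/2-1}$ produces \emph{a} meromorphic right inverse of $L_\sigma$ on $\dCI(\BB^{n-1})$, but you never check that it agrees with the spectral-theoretic $\cR(\sigma)$ in the physical half-plane. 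The paper closes this by observing that for $\im\sigma\gg 0$ the constructed $u'=\nu^{-\imath\sigma+n/2}\nu^{-1}\tilde u|_{r<1}$ lies in $\nu^{(n-2)/2-\imath\sigma}\CI(\BB^{n-1})\subset L^2(\BB^{n-1},|dg_{\HH^{n-1}}|)$, whence $u'=\cR(\sigma)f$ by uniqueness of the $L^2$ solution. Without this identification your argument gives a meromorphic family but not a meromorphic \emph{continuation} of the resolvent.

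A secondary difference: rather than invoking Subsection~\ref{subsec:local-wave} to establish that the restriction to $\{\mu>0\}$ is independent of $Q_\sigma$ and of the extension, the paper simply writes $\cR(\sigma)f=\nu^{-\imath\sigma+n/2}\nu^{-1}R_s(P_\sigma-\imath Q_\sigma)^{-1}E_{s-1}\nu^{\imath\sigma-n/2}\nu^{-1}f$ with explicit extension and restriction maps $E_s,R_s$; independence of choices then follows from the $L^2$ identification just described rather than from wave propagation. Your route through Proposition~\ref{prop:wave-local} is valid, but the paper's is more self-contained here. Finally, the paper adds a paragraph (not required by the statement) showing via a Carleman-type argument from \cite[Proposition~5.3]{Vasy:De-Sitter} that any extra poles of $(P_\sigma-\imath Q_\sigma)^{-1}$ beyond those of $\cR(\sigma)$ have resonant states supported in $\mu<c<0$; you omit this, which is fine for the proposition as stated.
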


\begin{proof}
By self-adjointness and positivity of $\Delta_{\HH^{n-1}}$,
$$
\left(\Delta_{\HH^{n-1}}-\sigma^2-\left(\frac{n-2}{2}\right)^2
-\nu^2\frac{n(n-2)}{4}\right) u=f\in \dCI(\BB^{n-1})
$$
has a unique solution $u=\cR(\sigma)f\in L^2(\BB^{n-1}_{1/2},|dg_{\HH^{n-1}}|)$ when
when $\im\sigma\gg0$.
On the other hand, let $\tilde f_0=\nu^{\imath\sigma-n/2}\nu^{-1} f$
in $r\leq 1$, and $\tilde f_0$ still
vanishes to infinite order at $r=1$. Let $\tilde f$ be an arbitrary smooth
extension of $\tilde f_0$ to the compact manifold $X$ on which
$P_{\sigman}-\imath Q_{\sigman}$ is defined.
Let $\tilde u=(P_{\sigman}-\imath Q_{\sigman})^{-1}\tilde f$,
with $(P_{\sigman}-\imath Q_{\sigman})^{-1}$ given by our results in
Section~\ref{sec:microlocal}; this satisfies
$(P_{\sigman}-\imath Q_{\sigman})\tilde u=\tilde f$ and $\tilde u\in\CI(X)$.
Thus, $u'=\nu^{-\imath\sigma+n/2}\nu^{-1} \tilde u|_{r<1}$ satisfies
$u'\in\nu^{(n-2)/2-\imath\sigma}\CI(\BB^{n-1})$, and
$$
\left(\Delta_{\HH^{n-1}}-\sigma^2-\left(\frac{n-2}{2}\right)^2
-\nu^2\frac{n(n-2)}{4}\right)u'=f
$$
by \eqref{eq:conj-formula-dS} (as $Q_\sigma$ is supported in $r>1$).
Since $u'\in L^2(\BB^{n-1},|dg_{\HH^{n-1}}|)$ for $\im\sigma>0$,
by the aforementioned
uniqueness, $u=u'$.

To make the extension from $\BB^{n-1}$ to $X$ more systematic,
let $E_s:H^s(\BB^{n-1})\to H^s(X)$ be a continuous extension operator,
$R_s:H^s(X)\to H^s(\BB^{n-1})$ the restriction map. Then, as we have just
seen, for $f\in\dCI(\BB^{n-1})$,
\begin{equation}\label{eq:hyp-res-repr}
\cR(\sigma)f=\nu^{-\imath\sigma+n/2}\nu^{-1} R_s(P_{\sigman}-\imath Q_{\sigman})^{-1}E_{s-1}\nu^{\imath\sigma-n/2}\nu^{-1} f.
\end{equation}
Thus, the first half of the proposition (including the non-trapping estimate)
follows immediately from the
results
of Section~\ref{sec:microlocal}. Note also that this proves that every pole of
$\cR(\sigma)$ is a pole of $(P_\sigma-\imath Q_\sigma)^{-1}$ (for otherwise
\eqref{eq:hyp-res-repr}
would show $\cR(\sigma)$ does not have a pole either),
but it is possible for $(P_\sigma-\imath Q_\sigma)^{-1}$ to have poles which are not poles of
$\cR(\sigma)$. However, in the latter case, the Laurent coefficients of
$(P_\sigma-\imath Q_\sigma)^{-1}$ would be annihilated by multiplication by $R_s$ from the
left, i.e.\ the resonant states (which are smooth) would be supported in $\mu\leq 0$,
in particular vanish to infinite order at $\mu=0$.

In fact, a stronger statement can be made: by a calculation completely analogous
to what we just performed, we can easily see that in $\mu<0$, $P_\sigma$
is a conjugate (times a power of $\mu$) of a
Klein-Gordon-type operator on $(n-1)$-dimensional de Sitter space
with $\mu=0$ being the boundary (i.e.\ where time goes to infinity). Thus,
if $\sigma$ is not a pole of $\cR(\sigma)$ and
$(P_\sigma-\imath Q_\sigma)\tilde u=0$ then one would have a
solution $u$ of this Klein-Gordon-type equation near $\mu=0$, i.e.\ infinity,
that rapidly vanishes at infinity. It is shown
in \cite[Proposition~5.3]{Vasy:De-Sitter} by a Carleman-type estimate that this
cannot happen; although there $\sigma^2\in\RR$ is assumed, the argument given
there goes through almost verbatim in general. Thus, if $Q_\sigma$ is supported
in $\mu<c$, $c<0$, then $\tilde u$ is also supported in $\mu<c$. This argument
can be iterated for Laurent coefficients of higher order poles; their range (which
is finite dimensional) contains only functions supported
in $\mu<c$.

We now turn to the perturbation.
After the conjugation, division by $\mu^{1/2}$ from both sides, elements of
$V\in\mu\CI(\BB^{n-1})$ can be extended to become elements of $\CI(\RR^{n-1})$,
and they do not affect any of the structures discussed in Section~\ref{sec:microlocal},
so the results automatically go through. Operators of the form $x^2L$,
$L\in\Diffbeven(\BB^{n-1}_{1/2})$, i.e.\ with even coefficients with respect to
the local product structure, become elements of $\Diffb(\BB^{n-1})$ after conjugation
and division by $\mu^{1/2}$ from both sides. Hence, they can be
smoothly extended across $\pa\BB^{n-1}$, and they do not affect either the principal
or the subprincipal symbol at $L_\pm$
in the classical sense. They do, however, affect the classical symbol elsewhere and the
semiclassical symbol everywhere, thus the semiclassical
Hamilton flow, but under the smallness assumption
the required properties are preserved, since the dynamics is non-degenerate
(the rescaled Hamilton vector field on $\overline{T}^*\RR^{n-1}$ does not vanish)
away from the radial points.
\end{proof}

Without the non-trapping estimate,
this is a special case of a result of Mazzeo and Melrose
\cite{Mazzeo-Melrose:Meromorphic}, with improvements by
Guillarmou \cite{Guillarmou:Meromorphic}. The point is that first, we do not
need the machinery of the zero calculus here, and second, the analogous result
holds true on arbitary asymptotically hyperbolic spaces, with the non-trapping
estimates holding under dynamical assumptions (namely, no trapping). The poles
were actually computed in \cite[Section~7]{Vasy:De-Sitter} using special
algebraic properties, within the
Mazzeo-Melrose framework; however, given the Fredholm properties our
methods here give, the rest of the algebraic computation in \cite{Vasy:De-Sitter}
go through. Indeed,
the results are stable under perturbations\footnote{Though of course the resonances
vary with the perturbation, in the same manner as they would vary when perturbing
any other Fredholm problem.}, provided they fit into the framework after
conjugation and the weights.
In the context
of the perturbations (so that the asymptotically hyperbolic structure is preserved)
though with evenness conditions relaxed, the non-trapping estimate
is almost the same as in \cite{Melrose-SaBarreto-Vasy:Semiclassical}, where it
is shown by a parametrix construction; here the estimates are slightly stronger.

In fact, by the discussion of Subsection~\ref{subsec:more-gen}, we deduce
a more general result, which in particular, for even metrics,
generalizes the results
of Mazzeo and Melrose \cite{Mazzeo-Melrose:Meromorphic},
Guillarmou \cite{Guillarmou:Meromorphic}, and adds high-energy non-trapping
estimates under non-degeneracy assumptions. It also adds the semiclassically
outgoing property which is useful for resolvent gluing, including
for proving non-trapping bounds microlocally away from trapping, provided
the latter is mild, as shown by Datchev and Vasy \cite{Datchev-Vasy:Gluing-prop,
Datchev-Vasy:Trapped}.

\begin{thm}\label{thm:conf-compact-high}
Suppose that $(X_0,g_0)$ is an $(n-1)$-dimensional manifold with boundary with
an even conformally compact metric and boundary defining function $x$. Let
$X_{0,\even}$ denote the even version of $X_0$, i.e.\ with the boundary defining
function replaced by its square with respect to a decomposition in which
$g_0$ is even. Then
the inverse of
$$
\Delta_{g_0}-\left(\frac{n-2}{2}\right)^2-\sigma^2,
$$
written as $\cR(\sigma):L^2\to L^2$,
has a meromorphic continuation from
$\im\sigma\gg0$ to $\Cx$,
$$
\cR(\sigma):\dCI(X_0)\to\dist(X_0),
$$
with poles with finite rank residues. If in addition
$(X_0,g_0)$ is non-trapping, then
non-trapping estimates hold in every strip $|\im\sigma|<C$, $|\re\sigma|\gg 0$: for
$s>\frac{1}{2}+C$,
\begin{equation}\label{eq:non-trap-conf-comp}
\|x^{-(n-2)/2+\imath\sigma} \cR(\sigma)f\|_{H^s_{|\sigma|^{-1}}(X_{0,\even})}
\leq \tilde C|\sigma|^{-1}\|x^{-(n+2)/2+\imath\sigma}f\|_{H^{s-1}_{|\sigma|^{-1}}(X_{0,\even})}.
\end{equation}
If $f$ is supported in $X_0^\circ$,
the $s-1$ norm on $f$ can be replaced by the $s-2$ norm.

If instead $\Delta_{g_0}-\sigma^2$ satisfies mild
trapping assumptions with order $\varkappa$ estimates in a $C_0$-strip,
see Definition~\ref{Def:mild-trap}, then the mild trapping estimates
hold, with $|\sigma|^{\varkappa-1}$ replacing $|\sigma|^{-1}$ on the right hand side
of \eqref{eq:non-trap-conf-comp}, as long as $C\leq C_0$.

Furthermore, for $\re z>0$, $\im z=\cO(h)$,
the resolvent $\cR(h^{-1}z)$ is {\em semiclassically outgoing with a loss
of $h^{-1}$} in the sense that
if $f$ has compact support in $X_0^\circ$, $\alpha\in T^*X$ is in
the semiclassical characteristic set and if $\WFh^{s-1}(f)$ is disjoint from
the backward bicharacteristic from $\alpha$, then
$\alpha\notin\WFh^s(h^{-1}\cR(h^{-1}z)f)$.
\end{thm}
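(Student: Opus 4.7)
The plan is to reduce the asymptotically hyperbolic problem on $X_0$ to the microlocal framework of Section~\ref{sec:microlocal} by passing to the even structure $X_{0,\even}$, conjugating, and extending across the boundary, exactly as foreshadowed in the introduction and carried out for exact hyperbolic space in Proposition~\ref{prop:hyp-resolvent-construct}. Concretely, I work in the product decomposition near $Y$ where the even metric takes the form $g_0 = (d x^2 + h(x^2,y,dy))/x^2$, and I set $\mu = x^2$, so that the smooth structure of $X_{0,\even}$ is the natural one. A direct computation, generalizing \eqref{eq:conj-formula-dS}, shows that
\begin{equation*}
\mu^{-n/4 - 1/2 + \imath\sigma/2}\bigl(\Delta_{g_0} - (n-2)^2/4 - \sigma^2\bigr)\mu^{n/4 + 1/2 - \imath\sigma/2}\mu^{-1/2}
\end{equation*}
is (up to a harmless sign/factor) an operator whose principal symbol, in coordinates $(\mu,y)$ with dual $(\xi,\eta)$, has exactly the local form \eqref{eq:p-sig-symbol-ah} at $\mu = 0$, with $h$ a family of Riemannian metrics on $Y$ depending smoothly (and, by evenness, genuinely smoothly in $\mu$) on $\mu$. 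This operator thus extends smoothly to $\mu < 0$; I double $X_{0,\even}$ across $Y$ (or glue in an arbitrary smooth extension and cap off) to obtain a compact manifold without boundary $X$ and an operator $P_\sigma \in \Diff^2(X)$ extending the conjugated operator.

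The next step is to verify that $P_\sigma$ satisfies all the hypotheses of Section~\ref{sec:microlocal}, as was done in Subsections~\ref{subsec:dS-Mellin}--\ref{subsec:more-gen} for the de Sitter case and generalized in Subsection~\ref{subsec:more-gen}. The characteristic set splits into $\Sigma_\pm$ by the sign of $\xi$; the radial sets $L_\pm \subset S^*X$ sit over $N^*Y$ with $L_-$ a source and $L_+$ a sink; the subprincipal symbol at $L_\pm$ takes the form \eqref{eq:subpr-form} with the correct sign of $\beta_0$ from \eqref{eq:weight-definite}. Away from a neighborhood of $\overline{X_{0,\even}}$, I add a complex absorbing operator $Q_\sigma$ as in Subsection~\ref{subsec:complex-absorb-dS}, with $Q^{(j)}_\sigma$ holomorphic in overlapping domains $\Omega_j$ covering $\Cx$. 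The non-trapping hypothesis on $(X_0,g_0)$, combined with the local dynamics near $L_\pm$ worked out in Subsection~\ref{subsec:more-gen} (bicharacteristics in $\mu > 0$ are pushed into $\mu < 0$ by the $\sH_{p_{\semi,z}}\mu$ computation, and those in $\mu < 0$ escape into the absorbing region), gives the semiclassical non-trapping property in the sense of Definition~\ref{Def:non-trap}. Under the mild trapping hypothesis, the corresponding property of Definition~\ref{Def:mild-trap} is inherited.

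With these verifications in place, Theorems~\ref{thm:classical-absorb-strip} and \ref{thm:classical-absorb-glued} apply to $P_\sigma - \imath Q_\sigma$ on $X$, yielding meromorphy of $(P_\sigma - \imath Q_\sigma)^{-1}$ on $\Cx_s$ and the high-energy estimate \eqref{eq:high-energy-glued} in the appropriate half-plane (respectively strip). I then transfer the resolvent back to $X_0$ as in Proposition~\ref{prop:hyp-resolvent-construct}: with $E_s$ a continuous extension from $X_{0,\even}$ to $X$ and $R_s$ restriction, the formula
\begin{equation*}
\cR(\sigma) f \;=\; \mu^{-1/2} \mu^{n/4 + 1/2 - \imath\sigma/2}\, R_s\, (P_\sigma - \imath Q_\sigma)^{-1}\, E_{s-1}\, \mu^{-n/4 - 1/2 + \imath\sigma/2}\mu^{-1/2} f
\end{equation*}
(rewritten in terms of $x = \sqrt{\mu}$ to produce the $x^{-(n-2)/2 + \imath\sigma}$ and $x^{-(n+2)/2 + \imath\sigma}$ weights of \eqref{eq:non-trap-conf-comp}) exhibits $\cR(\sigma)$ as a meromorphic family on $\Cx$ with poles contained in those of $(P_\sigma - \imath Q_\sigma)^{-1}$. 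Uniqueness of $\cR(\sigma)$ as the $L^2$-resolvent for $\im\sigma \gg 0$ is guaranteed by self-adjointness and positivity of $\Delta_{g_0}$. The non-trapping estimate \eqref{eq:non-trap-conf-comp} is the direct translation of the semiclassical estimate on $X$, with the replacement $s-1 \to s-2$ for compactly supported $f$ coming from elliptic regularity in the interior of $X_{0,\even}$ (since there $P_\sigma$ is elliptic for real $\sigma$). The final, semiclassically outgoing, assertion follows from Theorem~\ref{thm:semicl-outg}, applied with $f$ compactly supported in $X_0^\circ$ so that its image under $E_{s-1}$ composed with the weight is a semiclassically admissible distribution on $X$ to which the theorem applies.

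The only genuinely delicate point, and the main obstacle, is showing that spurious poles and resonant states of $(P_\sigma - \imath Q_\sigma)^{-1}$ (which may exist because of the artificial extension and absorber) do not contaminate $\cR(\sigma)$ on $X_0$; this requires, as in Proposition~\ref{prop:hyp-resolvent-construct}, showing that the Laurent coefficients of the extended resolvent at such points are supported in the extended region $\{\mu < 0\}$, via a Carleman-type argument on the de Sitter side of $Y$ as in \cite[Proposition~5.3]{Vasy:De-Sitter}, together with Proposition~\ref{prop:wave-local} to ensure independence of the choice of $Q_\sigma$. Everything else is a bookkeeping exercise in applying the abstract results of Section~\ref{sec:microlocal} and the conjugation identity generalizing \eqref{eq:conj-formula-dS}.
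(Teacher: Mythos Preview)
Your overall strategy matches the paper's: pass to $\mu=x^2$, conjugate and rescale to land in the class \eqref{eq:p-sig-symbol-ah}, extend across $Y$, add complex absorption, and invoke Section~\ref{sec:microlocal}. However, you skip two technical steps that the paper singles out as the actual content of the proof, and you misidentify the ``only genuinely delicate point''.

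First, after the $\mu^{-1/2}\mu^{\imath\sigma/2-n/4}(\cdot)\mu^{-\imath\sigma/2+n/4}\mu^{-1/2}$ conjugation one arrives at \eqref{eq:ah-prefinal-conj-form}, which is \emph{not} semiclassically elliptic at the zero section when $\im z\neq 0$ (the full symbol has no $-\sigma^2$ term). The paper fixes this with a further conjugation by $(1+\mu)^{\imath\sigma/4}$, producing \eqref{eq:ah-final-conj-form}; without it the off-real semiclassical ellipticity required in Subsection~\ref{subsec:semi-abstract} (hence Theorem~\ref{thm:classical-absorb-sector} and the existence part of meromorphy) fails. You do not mention this step.

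Second, the symbol computation in Subsection~\ref{subsec:more-gen} is purely local near $\mu=0$. To get semiclassical non-trapping \emph{globally} on $X_0^\circ$ one must extend the weight $e^\phi=\mu^{1/2}(1+\mu)^{-1/4}$ (equivalently, choose a global $\tau$) into the interior so that $\frac{d\tau}{\tau}$ remains timelike, i.e.\ $|d\phi|_{G_0}<1$; the paper devotes a paragraph to checking this is possible and that $\phi$ may even be taken constant on large compact sets. Your proposal glides over this, but without it the semiclassical principal symbol of the conjugated operator in the interior need not have the structure required for the dynamical hypotheses of Section~\ref{sec:microlocal}.

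Finally, what you flag as the hard part---spurious poles from the extension/absorber contaminating $\cR(\sigma)$---is already dealt with in Proposition~\ref{prop:hyp-resolvent-construct} and Subsection~\ref{subsec:local-wave}; the paper's proof of Theorem~\ref{thm:conf-compact-high} does not revisit it. Conversely, the paper does pause to explain the sign reversal (the constructed operator is the \emph{negative} of the class \eqref{eq:p-sig-symbol-ah}), which flips the Hamilton vector field and hence the direction in the semiclassically outgoing statement; you should note this when invoking Theorem~\ref{thm:semicl-outg}.
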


We remark that although in order
to go through without changes, our methods require the
evenness property, it is not hard to deduce more restricted results without
this. Essentially one would have operators with coefficients that have a conormal
singularity at the event horizon; as long as this is sufficiently mild relative
to what is required for the analysis, it does not affect the results. The problems
arise for the analytic continuation, when one needs strong function spaces
($H^s$ with $s$ large); these are not preserved when one multiplies by the
singular coefficients.

\begin{proof}
Suppose that $g_0$ is an even asymptotically hyperbolic metric. Then
we may choose a product decomposition near the boundary such that
\begin{equation}\label{eq:ah-g-0-prod}
g_0=\frac{dx^2+h}{x^2}
\end{equation}
there,
where $h$ is an even family of metrics; it is convenient to take $x$ to be a
globally defined boundary defining function. Then
\begin{equation}\label{eq:conf-comp-Lap-form}
\Delta_{g_0}=(xD_x)^2+\imath(n-2+x^2\gamma) (xD_x)+x^2\Delta_{h},
\end{equation}
with $\gamma$ even. Changing to coordinates $(\mu,y)$, $\mu=x^2$, we
obtain
\begin{equation}\label{eq:Lap-in-mu}
\Delta_{g_0}=4(\mu D_{\mu})^2+2\imath(n-2+\mu\gamma) (\mu D_{\mu})+\mu\Delta_{h},
\end{equation}
Now we conjugate by $\mu^{-\imath\sigman/2+n/4}$ to obtain
\begin{equation*}\begin{split}
&\mu^{\imath\sigman/2-n/4}(\Delta_{g_0}-\frac{(n-2)^2}{4}-\sigman^2)\mu^{-\imath\sigman/2+n/4}\\
&=4(\mu D_{\mu}-\sigman/2-\imath n/4)^2+2\imath(n-2+\mu\gamma) (\mu D_{\mu}-\sigman/2-\imath n/4)\\
&\qquad\qquad\qquad\qquad+\mu\Delta_{h}-\frac{(n-2)^2}{4}-\sigman^2\\
&=4(\mu D_\mu)^2-4\sigman(\mu D_\mu)+\mu\Delta_h
-4\imath (\mu D_\mu) +2\imath\sigman-1
+2\imath\mu\gamma (\mu D_{\mu}-\sigman/2-\imath n/4).
\end{split}\end{equation*}
Next we multiply by $\mu^{-1/2}$ from both sides to obtain
\begin{equation}\begin{split}\label{eq:ah-prefinal-conj-form}
&\mu^{-1/2}\mu^{\imath\sigman/2-n/4}(\Delta_{g_0}-\frac{(n-2)^2}{4}-\sigman^2)
\mu^{-\imath\sigman/2+n/4}\mu^{-1/2}\\
&=4\mu D_\mu^2-\mu^{-1}-4\sigman D_\mu-2\imath\sigman\mu^{-1}+\Delta_h
-4\imath D_\mu+2\mu^{-1}+2\imath\sigman\mu^{-1}-\mu^{-1}\\
&\qquad\qquad+2\imath\gamma (\mu D_{\mu}-\sigman/2-\imath (n-2)/4)\\
&=4\mu D_\mu^2-4\sigman D_\mu+\Delta_h
-4\imath D_\mu+2\imath\gamma (\mu D_{\mu}-\sigman/2-\imath (n-2)/4).
\end{split}\end{equation}
This is certainly in $\Diff^2(X)$, and for $\sigma$ (almost) real, is equivalent
to the form we want via conjugation by a smooth function, with exponent
depending on $\sigma$. The latter would
make no difference even semiclassically in the real regime as it is conjugation
by an elliptic semiclassical FIO. However, in the
non-real regime (where we would like ellipticity) it does; the present operator is
not semiclassically elliptic at the zero section.
So finally we conjugate by $(1+\mu)^{\imath\sigman/4}$ to obtain
\begin{equation}\begin{split}\label{eq:ah-final-conj-form}
&4\mu D_\mu^2-4\sigman D_\mu-\sigman^2+\Delta_h
-4\imath D_\mu+2\imath\gamma (\mu D_{\mu}-\sigman/2-\imath (n-2)/4)
\end{split}\end{equation}
modulo terms that can be absorbed into the error terms in
the {\em negative} of operators in the class \eqref{eq:p-sig-symbol-ah}.

We still need to check that $\mu$ can be appropriately chosen in the interior
away from the region of validity of the product decomposition \eqref{eq:ah-g-0-prod}
(where we had no requirements so far on $\mu$). This only matters for semiclassical
purposes, and (being smooth and non-zero in the interior)
the factor $\mu^{-1/2}$ multiplying from both sides does not affect
any of the relevant properties (semiclassical ellipticity and possible non-trapping
properties), so can be ignored --- the same is true for $\sigma$ independent
powers of $\mu$.

To do so, it is useful to think of $(\taut \pa_\taut)^2-G_0$, $G_0$ the dual
metric of $g_0$, as a Lorentzian b-metric on $X_0^\circ\times[0,\infty)_{\taut}$.
From this perspective, we want to
introduce a new boundary defining function $\tau=\taut e^\phi$, with our
$\sigma$ the b-dual variable of $\tau$ and $\phi$ a function on $X_0$,
i.e.\ with our $\tau$ already
given, at least near $\mu=0$, i.e.\ $\phi$ already fixed there, namely
$e^{\phi}=\mu^{1/2}(1+\mu)^{-1/4}$. Recall from the end of
Subsection~\ref{subsec:Lorentz} that such a change of variables amounts
to a conjugation on the Mellin transform side by $e^{-\imath\sigma\phi}$. Further,
properties of the Mellin transform are preserved provided
$\frac{d\tau}{\tau}$ is globally time-like, which, as noted at
the end of Subsection~\ref{subsec:Lorentz}, is satisfied if $|d\phi|_{G_0}<1$.
But, reading off the dual metric from the principal symbol of \eqref{eq:Lap-in-mu},
$$
\frac{1}{4}
\left|d(\log\mu-\frac{1}{2}\log(1+\mu))\right|^2_{G_0}=\left(1-\frac{\mu}{2(1+\mu)}\right)^2<1
$$
for $\mu>0$, with a strict bound as long as $\mu$ is bounded away from
$0$. Correspondingly, $\mu^{1/2}(1+\mu)^{-1/4}$ can be extended to a function
$e^\phi$ on all of $X_0$ so that $\frac{d\tau}{\tau}$ is time-like,
and we may even require that $\phi$ is constant on
a fixed (but arbitrarily large) compact subset of $X_0^\circ$. Then, after conjugation by $e^{-\imath\sigma\phi}$ all of the
semiclassical requirements of Section~\ref{sec:microlocal} are satisfied.
Naturally, the semiclassical properties could be easily checked directly for the conjugate
of $\Delta_{g_0}-\sigma^2$ by the so-extended $\mu$.

Thus, all of the results of Section~\ref{sec:microlocal} apply. The only
part that needs some explanation is the direction of propagation for the
semiclassically outgoing condition. For $\re z>0$, as in the de Sitter case,
null-bicharacteristics in $X_0^\circ$ must go to $L_+$, hence lie in $\Sigma_{\semi,+}$.
Theorem~\ref{thm:semicl-outg} states {\em backward} propagation of regularity
for the operator considered there. However, the operator we just constructed is
the negative of the class considered in \eqref{eq:p-sig-symbol-ah}, and under
changing the sign of the operator, the Hamilton vector field also changes
direction, so semiclassical estimates (or $\WFh$)
indeed propagate in the forward direction.
\end{proof}

\begin{rem}
We note that if the dual metric $G_1$ on $X_0$ is of the form $\kappa^2 G_0$,
$G_0$ the dual of $g_0$ as in \eqref{eq:ah-g-0-prod}, then
$$
\Delta_{G_1}-\kappa^2\frac{(n-2)^2}{4}-\sigman^2
=\kappa^2(\Delta_{G_0}-\frac{(n-2)^2}{4}-(\sigman/\kappa)^2).
$$
Thus, with $\mu$ as above, and with $\tilde P_\sigma$ the conjugate
of $\Delta_{G_0}-\frac{(n-2)^2}{4}-(\sigman/\kappa)^2$, of the form
\eqref{eq:ah-final-conj-form} (modulo error terms as described there) then
with $e^\phi=\mu^{1/(2\kappa)}(1+\mu)^{-1/(4\kappa)}$ extended into the interior of $X_0$
as above,
we have
\begin{equation*}\begin{split}
\mu^{-1/2}\mu^{n/4}e^{\imath\sigma\phi}(\Delta_{g_1}-\kappa^2\frac{(n-2)^2}{4}-\sigman^2)
e^{-\imath\sigma\phi}\mu^{n/4}\mu^{-1/2}=\kappa^2\tilde P_{\sigma/\kappa}.
\end{split}\end{equation*}
Now, $P_\sigma=\kappa^2\tilde P_{\sigma/\kappa}$ still satisfies all the assumptions
of Section~\ref{sec:microlocal}, thus directly conjugation by $e^{-\imath\sigma\phi}$ and multiplication
from both sides by $\mu^{-1/2}$ gives an operator to which the results of
Section~\ref{sec:microlocal} apply. This is relevant because if we have an asymptotically
hyperbolic manifold with ends of different sectional curvature, the manifold
fits into the general framework directly, including the semiclassical estimates\footnote{For `classical' results, the interior is automatically irrelevant.}.
A particular example is de Sitter-Schwarzschild space, on which resonances
and wave propagation were analyzed from this asymptotically hyperbolic perspective in
\cite{Sa-Barreto-Zworski:Distribution, Bony-Haefner:Decay, Melrose-SaBarreto-Vasy:Asymptotics}; this is a special case of the Kerr-de Sitter family discussed in
Section~\ref{sec:Kerr}. The stability of estimates for operators such as $P_\sigma$
under small smooth, in the b-sense, perturbations of the coefficients
of the
associated d'Alembertian means that all the properties of de Sitter-Schwarzschild
obtained by this method are also valid for Kerr-de Sitter with sufficiently small
angular momentum. However, working directly with Kerr-de Sitter space, and showing
that it satisfies the assumptions of Section~\ref{sec:microlocal} on its own, gives
a better result; we accomplish this in Section~\ref{sec:Kerr}.
\end{rem}

\begin{rem}\label{rem:ah-decay}
We now return to our previous remarks regarding the fact that our solution disallows
the conormal singularities $(\mu\pm i0)^{\imath\sigman}$ from the perspective
of conformally compact spaces of dimension $n-1$. The
two indicial roots on these spaces\footnote{Note that $\mu=x^2$.}
correspond to the asymptotics $\mu^{\pm\imath\sigman/2+(n-2)/4}$ in $\mu>0$.
Thus for the operator
$$
\mu^{-1/2}\mu^{\imath\sigman/2-n/4}
(\Delta_{g_0}-\frac{(n-2)^2}{4}-\sigman^2)
\mu^{-\imath\sigman/2+n/4}\mu^{-1/2},
$$
or indeed $P_\sigma$, they correspond to
$$
\left(\mu^{-\imath\sigman/2+n/4}\mu^{-1/2}\right)^{-1}\mu^{\pm\imath\sigman/2+(n-2)/4}=\mu^{\imath\sigman/2\pm\imath\sigman/2}.
$$
Here the indicial root $\mu^0=1$ corresponds to the smooth solutions
we construct for $P_\sigma$, while $\mu^{\imath\sigman}$ corresponds
to the conormal behavior we rule out. Back to the original Laplacian, thus,
$\mu^{-\imath\sigman/2+(n-2)/4}$ is the allowed asymptotics
and $\mu^{\imath\sigman/2+(n-2)/4}$ is the disallowed one. Notice that $\re\imath\sigma
=-\im\sigma$, so the disallowed solution is growing at $\mu=0$
relative to the allowed one, as expected in the physical half plane, and the
behavior reverses when $\im\sigma<0$. Thus, in the original asymptotically
hyperbolic picture one has to distinguish two different rates of growths, whose
relative size changes.
On the other hand, in our approach, we rule out the singular solution and allow
the non-singular (smooth one), so there is no change in behavior at all
for the analytic continuation.
\end{rem}

\begin{rem}\label{rem:asymp-dS}
For {\em even} asymptotically de Sitter metrics on an $(n-1)$-dimensional
manifold $X'_0$ with boundary, the methods
for asymptotically hyperbolic spaces work, except $P_\sigma-\imath Q_\sigma$
and $P_\sigma^*+\imath Q_\sigma^*$ switch roles, which does not affect
Fredholm properties, see Remark~\ref{rem:dual-Fredholm}.
Again, evenness means that
we may choose a product decomposition near the boundary such that
\begin{equation}\label{eq:dS-g-0-prod}
g_0=\frac{dx^2-h}{x^2}
\end{equation}
there,
where $h$ is an even family of Riemannian metrics; as above, we take $x$ to be a
globally defined boundary defining function. Then with $\mut=x^2$, so $\mut>0$
is the Lorentzian region, $\overline{\sigma}$ in place of $\sigma$ (recalling that
our aim is to get to $P_\sigma^*+\imath Q_\sigma^*$) the above calculations for
$\Box_{g_0}-\frac{(n-2)^2}{4}-\overline{\sigma}^2$ in place of
$\Delta_{g_0}-\frac{(n-2)^2}{4}-\sigma^2$ leading to
\eqref{eq:ah-prefinal-conj-form} all go through with $\mu$ replaced by $\mut$,
$\sigma$ replaced by $\overline{\sigma}$ and
$\Delta_h$ replaced by $-\Delta_h$. Letting $\mu=-\mut$, and conjugating by
$(1+\mu)^{\imath\overline{\sigma}/4}$ as above, yields
\begin{equation}\begin{split}\label{eq:dS-final-conj-form}
&-4\mu D_\mu^2+4\overline{\sigman} D_\mu+\overline{\sigman}^2-\Delta_h
+4\imath D_\mu+2\imath\gamma (\mu D_{\mu}-\overline{\sigman}/2-\imath (n-2)/4),
\end{split}\end{equation}
modulo terms that can be absorbed into the error terms in
operators in the class \eqref{eq:p-sig-symbol-ah}, i.e.\ this is indeed
of the form $P_\sigma^*+\imath Q_\sigma^*$ in the framework of
Subsection~\ref{subsec:more-gen}, at least near $\mut=0$. If now $X'_0$
is extended to a manifold without boundary in such a way that in $\mut<0$,
i.e.\ $\mu>0$, one has a classically elliptic, semiclassically either
non-trapping or mildly trapping problem, then all the results of
Section~\ref{sec:microlocal} are applicable.
\end{rem}

\section{Minkowski space}\label{sec:Minkowski}
Perhaps our simplest example is Minkowski space $M=\RR^n$ with the metric
$$
g_0=dz_{n}^2-dz_1^2-\ldots-dz_{n-1}^2.
$$
Also, let $\hM=\overline{\RR^n}$ be the radial (or geodesic)
compactification of space-time,
see \cite[Section~1]{RBMSpec};
thus $\hM$ is the $n$-ball, with boundary $X=\sphere^{n-1}$. Writing
$z'=(z_1,\ldots,z_{n-1})=r\omega$ in terms of Euclidean product coordinates,
and $t=z_n$,
local coordinates on $\hM$ in  $|z'|>\ep|z_n|$, $\ep>0$, are given by
\begin{equation}\label{eq:near-spatial-inf}
s=\frac{t}{r},\ \rho=r^{-1},\ \omega,
\end{equation}
while in $|z_n|>\ep|z'|$, by
\begin{equation}\label{eq:near-temporal-inf}
\tilde\rho=|t|^{-1},\ Z=\frac{z'}{|t|}.
\end{equation}
Note that in the overlap, the curves given by $Z$ constant are the same as
those given by $s,\omega$ constant, but the actual defining function of
the boundary we used, namely $\tilde\rho$ vs.\ $\rho$,  differs, and does so
by a factor which is constant on each fiber. For some
purposes it is useful to fix a global boundary defining function, such
as $\hat\rho=(r^2+t^2)^{-1/2}$. We remark that if one takes
a Mellin transform of functions supported near infinity along these curves,
and uses conjugation by the Mellin transform to obtain families of operators
on $X=\pa\hM$, the effect of changing the boundary defining function in this
manner is conjugation
by a non-vanishing factor which does not affect most relevant
properties of the induced
operator on the boundary, so one can use local defining functions when convenient.

The metric $g_0$ is
a Lorentzian scattering metric in the sense of Melrose \cite{RBMSpec} (where,
however, only the Riemannian case was discussed) in that
it is a symmetric non-degenerate bilinear form on the scattering tangent bundle of
$\hM$ of Lorentzian signature. This would be the appropriate locus of analysis
of the Klein-Gordon operator, $\Box_{g_0}-\lambda$ for $\lambda>0$, but
for $\lambda=0$ the scattering problem becomes degenerate at the zero section
of the scattering cotangent bundle at infinity. However, one can convert $\Box_{g_0}$
to a non-degenerate b-operator on $\hat M$: it is of the form $\hat\rho^2\tilde P$,
$\tilde P\in\Diffb^2(X)$,
where $\hat\rho$ is a defining function of the boundary. In fact, following
Wang \cite{Wang:Thesis}, we consider (taking into account the different
notation for dimension)
\begin{equation}\begin{split}
&\rho^{-2}\rho^{-(n-2)/2}\Box_{g_0}\rho^{(n-2)/2}=\Box_{\tilde g_0}+\frac{(n-2)(n-4)}{4};\\
&\tilde G_0=(1-s^2)\pa_s^2-2(s\pa_s)(\rho\pa_\rho)-(\rho\pa_\rho)^2-\pa_\omega^2,
\end{split}\end{equation}
with $\tilde G_0$ being the dual metric of $\tilde g_0$.
Again, this $\rho$ is not a globally valid defining function, but changing
to another one does not change the properties we need\footnote{Only when
$\im\sigma\to\infty$ can such a change matter.} where this is a valid
defining function.
It is then a straightforward calculation that the induced operator on the boundary
is
$$
P'_\sigma=D_s(1-s^2)D_s-\sigma (sD_s+D_s s)-\sigma^2-\Delta_\omega +\frac{(n-2)(n-4)}{4},
$$
In the other coordinate region, where $\rhot$ is a valid defining function, and $t>0$,
it is even easier to compute
\begin{equation}\label{eq:Box-g0-spatial-form}
\Box_{g_0}=\rhot^2\left((\rhot D_{\rhot})^2+2(\rhot D_\rhot)ZD_Z+(ZD_Z)^2-\Delta_Z
-\imath(\rhot D_\rhot)-\imath ZD_Z\right),
\end{equation}
so after Mellin transforming $\rhot^{-2}\Box$, we obtain
$$
\tilde P_\sigma=(\sigma-\imath/2)^2+\frac{1}{4}
+2(\sigma-\imath/2) Z D_Z+(ZD_Z)^2-\Delta_Z.
$$
Conjugation by $\rhot^{(n-2)/2}$ simply replaces $\sigma$ by
$\sigma-\imath\frac{n-2}{2}$, yielding that the Mellin transform $P_\sigma$
of $\rhot^{-(n-2)/2}\rhot^{-2}\Box_{g_0}\rhot^{(n-2)/2}$
is
\begin{equation}\begin{split}\label{eq:Minkowski-temporal-inf-op}
P_\sigma&=
(\sigma-\imath(n-1)/2)^2++\frac{1}{4}+
2(\sigma-\imath(n-1)/2) Z D_Z+(ZD_Z)^2-\Delta_Z\\
&=(ZD_Z+\sigma-\imath(n-1)/2)^2+\frac{1}{4}-\Delta_Z.
\end{split}\end{equation}
Note that $P_\sigma$ and $P'_\sigma$ are not the same operator in different coordinates;
they are related by a $\sigma$-dependent conjugation.
The operator $P_\sigma$ in \eqref{eq:Minkowski-temporal-inf-op}
is {\em almost} exactly the operator arising from de Sitter space on the front face,
see the displayed equation after \cite[Equation~7.4]{Vasy:De-Sitter} (the $\sigma$
in \cite[Equation~7.4]{Vasy:De-Sitter} is $\imath\sigma$ in our notation as already
remarked in Section~\ref{sec:dS}),
with the only change that our $\sigma$ would need to be replaced by $-\sigma$,
and we need to add $\frac{(n-1)^2}{4}-\frac{1}{4}$ to our operator. (Since replacing
$t>0$ by $t<0$ in the region we consider reverses the sign when relating $D_\rho$
and $D_t$, the signs would agree with those from the discussion after
\cite[Equation~7.4]{Vasy:De-Sitter}
at the backward light cone.) However,
we need to think of this as the {\em adjoint} of an operator of
the type we considered in Section~\ref{sec:dS} up to
Remark~\ref{rem:asymp-dS}, or after
\cite[Equation~7.4]{Vasy:De-Sitter} due to the way we need to propagate estimates.
(This is explained below.) Thus, we think of $P_\sigma$ as the adjoint (with respect
to $|dZ|$) of
\begin{equation*}\begin{split}
P_\sigma^*&=(ZD_Z+\overline{\sigma}-\imath(n-1)/2)^2 +\frac{1}{4}-\Delta_Z\\
&=
(\overline{\sigma}-\imath(n-1)/2)^2+2(\overline{\sigma}-\imath(n-1)/2) Z D_Z
+(ZD_Z)^2 +\frac{1}{4}-\Delta_Z\\
&=(ZD_Z+\overline{\sigma}-\imath (n-1))(ZD_Z+\overline{\sigma})-\Delta_Z+\frac{1}{4}-\frac{(n-1)^2}{4},
\end{split}\end{equation*}
which is like the de Sitter operator after \cite[Equation~7.4]{Vasy:De-Sitter}, except,
denoting $\sigma$ of that paper by $\check\sigma$,
$\imath\check\sigma=\overline{\sigma}$, and we need to take
$\lambda=\frac{(n-1)^2}{4}-\frac{1}{4}$ in \cite[Equation~7.4]{Vasy:De-Sitter}.
Thus,
all of the analysis of Section~\ref{sec:dS} applies.

In particular, note that $P_\sigma$ is elliptic inside the light cone, where $s>1$,
and hyperbolic outside the light cone, where $s<1$.
It follows from Subsection~\ref{subsec:conf-comp-results}
that $P_\sigma$ is a conjugate of
the hyperbolic Laplacian plus a potential (decaying quadratically in the usual
conformally compact sense) inside the light cones\footnote{As pointed out
to the author by Gunther Uhlmann, this means that the Klein model
of hyperbolic space is the one induced by the Minkowski boundary reduction.}, and of
the Klein-Gordon operator plus a potential
on de Sitter space outside the light cones: with
$\nu=(1-|Z|^2)^{1/2}=(\cosh\rho_{\HH^{n-1}})^{-1}$,
\begin{equation*}\begin{split}
&\nu^{\frac{n}{2}-\imath\sigman}P_\sigma\nu^{\imath\sigman-\frac{n}{2}}\\
&=-\nu^{-1}\left(\Delta_{\HH^{n-1}}-\sigman^2
-\frac{(n-2)^2}{4}
-\nu^2\frac{n(n-2)-(n-1)^2}{4}\right)\nu^{-1}\\
&=-\cosh\rho_{\HH^{n-1}}\left(\Delta_{\HH^{n-1}}-\sigman^2
-\frac{(n-2)^2}{4}
+\frac{1}{4}(\cosh\rho_{\HH^{n-1}})^{-2}\right)\cosh\rho_{\HH^{n-1}}.
\end{split}
\end{equation*}
We remark that in terms of dynamics on $\Sb^*\hM$, as discussed in
Subsection~\ref{subsec:Mellin}, there is a sign difference in the
normal to the boundary component of the Hamilton vector field (normal in the
b-sense, only), so in terms of the full b-dynamics (rather than normal family
dynamics) the radial points here are sources/sinks, unlike the saddle points
in the de Sitter case. This is closely related to the appearance of adjoints in the
Minkowski problem (as compared to the de Sitter one).

This immediately assures that not only the wave equation on Minkowski
space fits into our framework, wave propagation on it
is stable under small smooth perturbation in $\Diffb^2(X)$ of
$\hat\rho^2\Box_{g_0}$ which have real principal symbol.

Further, it is shown in \cite[Corollary~7.18]{Vasy:De-Sitter} that the problem
for $P_\sigma^*$ is invertible in the interior of hyperbolic space,
but with the behavior that
corresponds to our more global point of view at the boundary, unless
$$
-\imath \overline{\sigma}=
\check\sigma\in-\frac{n-1}{2}\pm\sqrt{\frac{(n-1)^2}{4}-\lambda}-\Nat
=-\frac{n-1}{2}\pm\frac{1}{2}-\Nat=-\frac{n-2}{2}-\Nat,
$$
i.e.
\begin{equation}\label{eq:Minkowski-res}
\sigma\in -\imath(\frac{n-2}{2}+\Nat).
\end{equation}
Recall also from the proof of Proposition~\ref{prop:hyp-resolvent-construct}
that $(P_\sigma-\imath Q_\sigma)^{-1}$ may have additional poles as compared
to the resolvent of the asymptotically hyperbolic model, but the resonant states would
vanish in a neighborhood of the event horizon and the elliptic region --- with
the vanishing valid in a large region, denoted by $\mu>c$, $c<0$, there,
depending on the support of $Q_\sigma$. For
$(P_\sigma^*+\imath Q_\sigma^*)^{-1}$ the resonant states corresponding to these
additional poles may have
support in the elliptic region, but their coefficients are given by pairing
with resonant states of $(P_\sigma-\imath Q_\sigma)^{-1}$. Thus, if $f$ vanishes
in $\mu<c$ then $(P_\sigma^*+\imath Q_\sigma^*)^{-1}f$ only has the poles
given by the asymptotically hyperbolic model.

To recapitulate, $P_\sigma$ is
of the form described in Section~\ref{sec:microlocal},
at least if we restrict away from the backward
light cone\footnote{The latter is only done to avoid combining for the same operator
the estimates we state below for an operator $P_\sigma$ and its adjoint; as follows
from the remark above regarding the sign of $\sigma$,
for the operator here, the microlocal picture near the backward light cone
is like that for the $P_\sigma$
considered in Section~\ref{sec:microlocal}, and near the forward
light cone like that for $P_\sigma^*$.
It is thus fine to include both the backward and the forward light cones; we
just end up with a combination of the problem we study here and its adjoint, and
with function spaces much like in \cite{RBMSpec,Vasy-Zworski:Semiclassical}.}.
To be more precise, for the forward problem for the wave equation,
the {\em adjoint} of the
operator $P_\sigma$
we need to study satisfies the properties in Section~\ref{sec:microlocal},
i.e.\ singularities are propagated towards the radial points at the forward light cone,
which means that our solution lies in the `bad' dual spaces -- of course, these
are just the singularities corresponding to the radiation field of Friedlander
\cite{Friedlander:Radiation}, see also \cite{Sa-Barreto-Wunsch:Radiation},
which is singular on the radial compactification of Minkowski space. However,
by elliptic regularity or microlocal propagation of singularities,
we of course automatically have estimates in better spaces away from the boundary
of the light cone. We also need complex absorbtion supported, say,
near $s=-1/2$ in the coordinates \eqref{eq:near-spatial-inf}, as in
Subsection~\ref{subsec:complex-absorb-dS} (i.e.\ the role of $\mu$
there is played by $s$). If we wanted
to, we could instead add a boundary
at $s=-1/2$, or indeed at $s=0$ (which would give the standard Cauchy problem),
see Remark~\ref{rem:add-bdy}. By standard uniqueness results based on energy
estimates, this does not affect the solution in $s>0$, say, when the forcing $f$
vanishes in $s<0$ and we want the solution $u$ to vanish there as well.

We thus deduce from Lemma~\ref{lemma:Mellin-expand} and the analysis
of Section~\ref{sec:microlocal}:

\begin{thm}
Let $K$ be a compact subset of the interior of the light cone at infinity on $\hM$.
Suppose that $g$ is a Lorentzian scattering metric and $\hat\rho^2\Box_g$
is sufficiently close to $\hat\rho^2\Box_{g_0}$ in $\Diffb^2(\hM)$, with $n$ the
dimension of $\hM$. Then
solutions of the wave equation $\Box_g u=f$ vanishing in $t<0$
and $f\in\dCI(\hM)=\cS(\RR^n)$
have a polyhomogeneous asymptotic expansion in the sense of
\cite{Melrose:Atiyah}
in $K$
of the form $\sim\sum_j\sum_{k\leq m_j} a_{jk}\hat\rho^{\delta_j}(\log|\hat\rho|)^k$,
with $a_{jk}$ in $\CI$, and with
$$
\delta_j=\imath\sigma_j+\frac{n-2}{2},
$$
with $\sigma_j$ being
a point of non-invertibility of $P_\sigma$ on the appropriate function
spaces. On Minkowski space, the exponents are given by
$$
\delta_j=\imath(-\imath\frac{n-2}{2}-\imath j)+\frac{n-2}{2}=n-2+j,\ j\in\Nat,
$$
and they depend continuously on the perturbation if one perturbs the metric.
A distributional version holds globally.

For polyhomogeneous $f$ the analogous conclusion holds, except that one has
to add to the set of exponents (index set) the index set of $f$, increased by $2$
(corresponding factoring out $\hat\rho^2$ in \eqref{eq:Box-g0-spatial-form}),
in the sense of extended unions \cite[Section~5.18]{Melrose:Atiyah}.
\end{thm}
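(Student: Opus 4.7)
The plan is to reduce the theorem to the Mellin transform machinery of Section~\ref{sec:Mellin-Lorentz} applied to $\hat\rho^2\Box_g$, viewed as a b-operator on $\hM$. First, by standard energy estimates (which work since $d\hat\rho/\hat\rho$ is time-like for $g_0$ and hence for small b-perturbations), the forward problem $\Box_g u = f$, $u|_{t<0}=0$, has a unique, at most polynomially weighted (in $\hat\rho$) solution. Writing $u = \hat\rho^{(n-2)/2} v$, the equation for $v$ involves the b-operator $\hat\rho^{-(n-2)/2}\hat\rho^{-2}\Box_g\hat\rho^{(n-2)/2}$, whose Mellin-transformed normal operator family, near the forward light cone at infinity, agrees with the family $P_\sigma$ of \eqref{eq:Minkowski-temporal-inf-op} up to a small b-perturbation.

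Next I would verify the hypotheses of Section~\ref{sec:microlocal}. As indicated in the excerpt, the sign conventions force us to work with the adjoint: $P_\sigma^*$ has its radial points at the conormal bundle of the light cone at infinity, and the forward-problem condition on $u$ corresponds to propagating estimates for $P_\sigma^*$ outward from these radial points, exactly as in the adjoint setting of Propositions~\ref{prop:micro-in} and the discussion leading to \eqref{eq:exist-est}. The operator is elliptic inside the light cone (hyperbolic region) and of real principal type outside, with standard real principal type propagation. To reach a compact manifold without boundary, as in Subsection~\ref{subsec:complex-absorb-dS}, I would double across, e.g., $s=-1/2$ in the spatial-infinity chart \eqref{eq:near-spatial-inf} and add a complex absorbing $Q_\sigma$ supported in $s<-1/4$; by the support/independence arguments of Subsection~\ref{subsec:local-wave} (combined with the vanishing $u|_{t<0}=0$), this modification does not affect the solution in any compact $K$ inside the forward light cone.

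To locate the poles of $(P_\sigma-\imath Q_\sigma)^{-1}$ I would use the conjugation of Subsection~\ref{subsec:conf-comp-results}: inside the light cone $P_\sigma$ is, up to a $\hat\rho$-weight and conjugation, the hyperbolic Laplacian on $\HH^{n-1}$ plus a decaying potential. By \cite[Corollary~7.18]{Vasy:De-Sitter} this resolvent is meromorphic with poles exactly at the indicial set \eqref{eq:Minkowski-res}, namely $\sigma_j\in-\imath(\tfrac{n-2}{2}+\Nat)$; the proof of Proposition~\ref{prop:hyp-resolvent-construct} shows that the possibly extra poles of the globally absorbed inverse have resonant states supported away from $K$, hence do not contribute to $u|_K$. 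For a small b-perturbation of the metric the principal symbol and radial-point structure are preserved, and Subsection~\ref{subsec:stability} gives the continuous dependence of the (finitely many, in any strip) poles.

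Finally, I would apply Corollary~\ref{cor:Mellin-expand}, or more precisely its iterated version Proposition~\ref{prop:Mellin-expand}, to $v$. Since $f\in\dCI(\hM)$, its index set is empty, and on Minkowski the operator is dilation-invariant near the boundary, so Lemma~\ref{lemma:Mellin-expand} already yields the expansion $v\sim\sum_j\sum_{\kappa\leq m_j}\hat\rho^{\imath\sigma_j}(\log|\hat\rho|)^\kappa a_{j\kappa}$ in $K$; multiplying by $\hat\rho^{(n-2)/2}$ gives the stated exponents $\delta_j=\imath\sigma_j+\tfrac{n-2}{2}=n-2+j$. For a perturbation, dilation invariance of the normal operator at $\partial\hM$ is retained and the error $(\cP-N(\cP))\in\hat\rho\Diffb^2$ is absorbed by the iterative Proposition~\ref{prop:Mellin-expand}, generating precisely the integer shifts $\imath\sigma_j+l$ in the index set, giving the polyhomogeneity. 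The main technical obstacle I anticipate is the careful matching between the two coordinate charts \eqref{eq:near-spatial-inf}, \eqref{eq:near-temporal-inf} (with their different local defining functions) and the verification that the globally chosen $Q_\sigma$ and the extension across $s=-1/2$ genuinely preserve both the non-trapping dynamics and the location of poles relevant to $K$; this is handled by combining the independence statements of Subsection~\ref{subsec:local-wave} with the argument, sketched in the proof of Proposition~\ref{prop:hyp-resolvent-construct}, that spurious resonant states of the absorbed problem vanish in the hyperbolic (interior-of-light-cone) region.
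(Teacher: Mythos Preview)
Your proposal is correct and follows essentially the same route as the paper: the paper's argument is precisely the discussion preceding the theorem, which sets up $P_\sigma$ as the adjoint of a de Sitter-type operator, adds complex absorption near $s=-1/2$ as in Subsection~\ref{subsec:complex-absorb-dS}, identifies the poles via the hyperbolic Laplacian connection and \cite[Corollary~7.18]{Vasy:De-Sitter} giving \eqref{eq:Minkowski-res}, and then invokes Lemma~\ref{lemma:Mellin-expand} together with the analysis of Section~\ref{sec:microlocal}. Your explicit mention of Proposition~\ref{prop:Mellin-expand} for the non-dilation-invariant b-perturbations and of the support/independence arguments of Subsection~\ref{subsec:local-wave} fills in details the paper leaves implicit, but the strategy is the same.
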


\begin{rem}
Here a compact $K$ is required since we allow drastic perturbations that may
change where the light cone hits infinity. If one imposes more structure,
so that the light cone at infinity is preserved, one can get more
precise results.

As usual, the smallness of the perturbation is only relevant to the extent
that rough properties of the global dynamics and the local dynamics
at the radial points are preserved (so the analysis is only impacted via dynamics).
There are no size restrictions on perturbations if one keeps the relevant features
of the dynamics.
\end{rem}

In a different class of spaces, namely asymptotically conic Riemannian spaces,
analogous and
more precise results exist for the induced product wave equation,
see especially the work of Guillarmou, Hassell
and Sikora \cite{Gui-Hass-Sik:Resolvent-III}; the decay rate in their work is
the same in {\em odd} dimensional space-time (i.e.\ even dimensional space).
In terms of space-time, these
spaces look like a blow-up of the `north and south poles' $Z=0$ of Minkowski
space, with product type structure in terms of space time, but general smooth
dependence on $\omega$ (with the sphere in $\omega$ replaceable by
another compact manifold). In that paper a parametrix is constructed for $\Delta_g$
at all energies by combining a series of preceding papers. Their conclusion in even
dimensional space-time is one order better; this is presumably the result
of a global (as opposed to local, via complex absorbing potentials near, say,
$t/r=-1/2$) cancellation. It is a very interesting question whether our
analysis can be extended to non-product versions of their setting.

Note that for the Mellin transform of
$\Box_{g_0}$ one can perform a more detailed analysis, giving Lagrangian regularity
at the light cone, with high energy control. This would be preserved for
other metrics that preserve the light cone at infinity to sufficiently high order.
The result is an expansion on the $\hM$ blown up at the boundary
of the light cone, with the singularities corresponding to the Friedlander
radiation field. However, in this relatively basic paper we do not pursue this further.

\section{The Kerr-de Sitter metric}\label{sec:Kerr}

\subsection{The basic geometry}\label{subsec:Kerr-geo}
We now give a brief description of the Kerr-de Sitter metric on
\begin{equation*}\begin{split}
&M_\delta=X_\delta\times [0,\infty)_\tau,
\ X_\delta=(r_--\delta,r_++\delta)_r\times\sphere^2,\\
&X_+=(r_-,r_+)_r\times\sphere^2,
\ X_-=\big((r_--\delta,r_++\delta)_r \setminus [r_-,r_+]_r\big)\times\sphere^2,
\end{split}\end{equation*}
where $r_\pm$ are specified later.
We refer the reader to the excellent treatments of the geometry by
Dafermos and Rodnianski \cite{Dafermos-Rodnianski:Black,
Dafermos-Rodnianski:Axi} and Tataru and Tohaneanu
\cite{Tataru-Tohaneanu:Local, Tataru:Local} for details, and Dyatlov's paper
\cite{Dyatlov:Quasi-normal} for the set-up and most of the notation we adopt.

Away from the north and south poles $q_\pm$
we use spherical coordinates
$(\theta,\phi)$ on $\sphere^2$:
$$
\sphere^2\setminus\{q_+,q_-\}=(0,\pi)_\theta\times
\sphere^1_{\phi}.
$$
Thus, away from $[0,\infty)_\mut\times[0,\infty)_\tau\times\{q_+,q_-\}$,
the Kerr-de Sitter space-time is
$$
(r_--\delta,r_++\delta)_r\times[0,\infty)_\tau\times(0,\pi)_\theta\times
\sphere^1_{\phi}
$$ 
with the metric we specify momentarily. 

The Kerr-de Sitter metric has a very similar microlocal
structure at the event horizon to de Sitter
space.
Rather than specifying the metric $g$, we specify the dual metric; it is
\begin{equation}\begin{split}\label{eq:metric-Kerr}
G=-\rho^{-2}\Big(&\mut\pa_r^2
+\frac{(1+\gamma)^2}{\kappa\sin^2\theta}(\alpha\sin^2\theta\pa_{\tilde t}+\pa_{\tilde \phi})^2+\kappa\pa_\theta^2\\
&\qquad-\frac{(1+\gamma)^2}{\mut}
((r^2+\alpha^2)\pa_{\tilde t}+\alpha\pa_{\tilde \phi})^2\Big)
\end{split}\end{equation}
with $r_s,\Lambda,\alpha$ constants, $r_s,\Lambda\geq 0$,
\begin{equation*}\begin{split}
&\rho^2=r^2+\alpha^2\cos^2\theta,\\
&\mut=(r^2+\alpha^2)(1-\frac{\Lambda r^2}{3})-r_s r,\\
&\kappa=1+\gamma\cos^2\theta,\\
&\gamma=\frac{\Lambda\alpha^2}{3}.
\end{split}\end{equation*}
While $G$ is defined for all values of the parameters $r_s,\Lambda,\alpha$,
with $r_s,\Lambda\geq 0$, we make further restrictions.
Note that under the rescaling
$$
r'=\sqrt{\Lambda}r,\ \tilde t'=\sqrt{\Lambda} \tilde t,
\ r_s'=\sqrt{\Lambda} r_s,\ \alpha'=\sqrt{\Lambda}\alpha,\ \Lambda'=1,
$$
$\Lambda^{-1}G$ would have the same form, but with all the unprimed variables replaced
by the primed ones. Thus, effectively, the general
case $\Lambda>0$ is reduced to $\Lambda=1$.

Our first assumption is that $\mut(r)=0$ has two positive roots $r=r_\pm$,
$r_+>r_-$, with
\begin{equation}\label{eq:digamma-def}
\digamma_\pm=\mp\frac{\pa\mut}{\pa r}|_{r=r_\pm}>0;
\end{equation}
$r_+$ is the de Sitter end, $r_-$ is the Kerr end. Since $\mut$ is a quartic
polynomial, is $>0$ at $r=0$ if $|\alpha|>0$, and goes to $-\infty$
at $\pm\infty$, it can have at most 3 positive roots; the derivative
requirements imply that these three positive roots
exist, and $r_\pm$ are the larger two of these. If $\alpha=0$,
\eqref{eq:digamma-def} is satisfied if and only if
$0<\frac{9}{4}r_s^2\Lambda<1$. Indeed, if \eqref{eq:digamma-def} is satisfied,
$\frac{\pa}{\pa r}(r^{-4}\mut)$ must have a zero between $r_-$ and $r_+$,
where $\mut$ must be positive; $\frac{\pa}{\pa r}(r^{-4}\mut)=0$ gives
$r=\frac{3}{2}r_s$, and then $\mut(r)>0$ gives $1>\frac{9}{4}r_s^2\Lambda$.
Conversely, if $0<\frac{9}{4}r_s^2\Lambda<1$, then the cubic polynomial
$r^{-1}\mut=r-\frac{\Lambda}{3}r^3-r_s$ is negative at $0$ and at $+\infty$, and
thus will have exactly two positive roots if it is positive at one point, which is the
case at $r=\frac{3}{2} r_s$. Indeed, note that
$r^{-4}\mut=r^{-2}-\frac{\Lambda}{3}-r_sr^{-3}$
is a cubic polynomial in $r^{-1}$, and
$\pa_r(r^{-4}\mut)=-2r^{-3}\left(1-\frac{3r_s}{2r}\right)$,
so $r^{-4}\mut$ has a non-degenerate critical point at
$r=\frac{3}{2}r_s$, and if $0\leq \frac{9}{4}r_s^2\Lambda<1$, then the
value of $\mut$ at this critical point is positive.
Correspondingly, for small $\alpha$ (depending on $\frac{9}{4}r_s^2\Lambda$,
but with uniform estimates in compact subintervals of $(0,1)$),
$r_\pm$ satisfying \eqref{eq:digamma-def} still exist.

We next note that for $\alpha$ not necessarily zero, if \eqref{eq:digamma-def} is
satisfied then
$\frac{d^2\mut}{dr^2}=2-\frac{2}{3}\Lambda\alpha^2-4\Lambda r^2$
must have a positive zero, so
we need
\begin{equation}\label{eq:gamma-size}
0\leq \gamma=\frac{\Lambda\alpha^2}{3}<1,
\end{equation}
i.e.\ \eqref{eq:digamma-def} implies \eqref{eq:gamma-size}.

Physically, $\Lambda$ is the
cosmological constant, $r_s=2M$ the Schwarzschild radius, with $M$ being
the mass of the black hole, $\alpha$ the angular momentum. Thus,
de Sitter-Schwarzschild space is the particular case with $\alpha=0$, while
further de Sitter space is the case when $r_s=0$ in which limit $r_-$ goes to the origin
and simply `disappears', and Schwarzschild space is the case when $\Lambda=0$, in
which case $r_+$ goes to infinity, and `disappears', creating an asymptotically
Euclidean end. On the other hand,
Kerr is the special case $\Lambda=0$, with again $r_+\to\infty$,
so the structure near the event horizon is unaffected, but the de Sitter end
is replaced by a different, asymptotically Euclidean, end. One should note, however,
that of the limits $\Lambda\to 0$, $\alpha\to 0$ and $r_s\to 0$, the only
non-degenerate one is $\alpha\to 0$; in both other cases the geometry changes
drastically corresponding to the disappearance of the de Sitter, resp.\ the black hole,
ends. Thus, arguably, from a purely mathematical point of view,
de Sitter-Schwarzschild space-time is the most natural
limiting case.
Perhaps the best way to follow this section then is to keep
de Sitter-Schwarzschild space in mind. Since our
methods are stable, this automatically gives the case of small $\alpha$;
of course working directly with $\alpha$ gives better results.

In fact, from the point of view of our setup, all the relevant features are symbolic,
including dependence on the Hamiltonian dynamics. Thus, the only not
completely straightforward part in showing that our abstract hypotheses are
satisfied is the semi-global study of dynamics. The dynamics of the
rescaled Hamilton flow depends
smoothly on $\alpha$, so it is automatically well-behaved for finite
times for small $\alpha$ if it is such for $\alpha=0$;
here rescaling is understood
on the fiber-radially compactified cotangent bundle $\overline{T}^*X_\delta$
(so that one has a smooth
dynamical system whose only non-compactness comes from that of the
base variables). The only place where dynamics matters for unbounded times are
critical points or trapped orbits of the Hamilton vector field.
In $S^*X_\delta=\pa\overline{T}^*X_\delta$, one can analyze the structure
easily for all $\alpha$, and show that for a specific range of
$\alpha$,
given below implicitly by \eqref{eq:alpha-restrict}, the only critical points/trapping is at
fiber-infinity $SN^*Y$ of the conormal bundle of the event
horizon $Y$.
We also analyze the semiclassical dynamics
(away from $S^*X_\delta=
\pa\overline{T}^*X_\delta$) directly for
$\alpha$ satisfying \eqref{eq:semicl-alpha-limit}, which allows
$\alpha$ to be comparable to $r_s$. We show that in this range of
$\alpha$ (subject to \eqref{eq:digamma-def} and \eqref{eq:alpha-restrict}),
the only trapping is hyperbolic trapping,
which was analyzed by Wunsch and Zworski
\cite{Wunsch-Zworski:Resolvent}; further, we also show that
the trapping is normally hyperbolic for small $\alpha$, and is thus
structurally stable then.

In summary, apart from the full analysis of semiclassical dynamics, we work with
arbitrary $\alpha$ for which \eqref{eq:digamma-def}  and
\eqref{eq:alpha-restrict} holds, which are both natural constraints, since it is
straightforward to check the requirements of Section~\ref{sec:microlocal} in this
generality. Even in the semiclassical setting, we work under the
relatively large $\alpha$ bound, \eqref{eq:semicl-alpha-limit}, to
show hyperbolicity of the trapping, and it is only for normal
hyperbolicity that we deal with (unspecified) small $\alpha$.

We now put the metric \eqref{eq:metric-Kerr} into a form needed for the analysis.
Since the metric
is not smooth b-type in terms of $r,\theta,\tilde\phi,e^{-\tilde t}$,
in order to eliminate the $\mut^{-1}$ terms we let
$$
t=\tilde t+h(r),\ \phi=\tilde\phi+P(r)
$$
with
\begin{equation}\label{eq:h-P-form}
h'(r)=\mp\frac{1+\gamma}{\mut}(r^2+\alpha^2)\mp c,
\qquad P'(r)=\mp\frac{1+\gamma}{\mut}\alpha
\end{equation}
near $r_\pm$. Here $c=c(r)$ is a smooth function of $r$ (unlike $\mut^{-1}$!), that is
to be specified.
One also needs to specify the behavior in $\mut>0$ bounded away from $0$, much
like we did so in the asymptotically hyperbolic setting; this affects semiclassical
ellipticity for $\sigma$ away from the reals as well as semiclassical propagation
there. We at first focus on the `classical' problem, however, for which the
choice of $c$ is irrelevant.
Then the dual metric becomes
\begin{equation*}\begin{split}
G=-\rho^{-2}\Big(&\mut\big(\pa_r\mp c\pa_t\big)^2
\mp2(1+\gamma)(r^2+\alpha^2)\big(\pa_r \mp c\pa_t\big)
\pa_t\\
&\mp 2(1+\gamma)\alpha\big(\pa_r\mp  c\pa_t\big)\pa_\phi
+\kappa\pa_\theta^2+\frac{(1+\gamma)^2}{\kappa\sin^2\theta}
(\alpha\sin^2\theta\pa_{t}+\pa_{\phi})^2\Big).
\end{split}\end{equation*}

We write
$\tau=e^{-t}$, so $-\tau\pa_\tau=\pa_t$, and b-covectors as
$$
\xi\,dr+\sigma\,\frac{d\tau}{\tau}+\eta\,d\theta+\zeta\,d\phi,
$$
so
\begin{equation*}\begin{split}
\rho^2 G
&=-\mut\big(\xi\pm  c\sigma\big)^2
\mp2(1+\gamma)(r^2+\alpha^2)\big(\xi\pm  c\sigma\big)
\sigma\\
&\qquad\pm 2(1+\gamma)\alpha\big(\xi\pm  c\sigma\big)\zeta
-\kappa\eta^2
-\frac{(1+\gamma)^2}{\kappa\sin^2\theta}(-\alpha\sin^2\theta\sigma+\zeta)^2.
\end{split}\end{equation*}
Note that the sign of $\xi$ here is the {\em opposite} of the sign in our de Sitter
discussion in Section~\ref{sec:dS} where it was the dual variable (thus
the symbol of $D_\mu$) of $\mu$,
which is $r^{-2}\mut$ in the present notation,
since $\frac{d\mut}{dr}<0$ at the de Sitter end, $r=r_+$.

A straightforward calculation shows
$\det g=(\det G)^{-1}=-(1+\gamma)^4\rho^4\sin^2\theta$, so apart from the
usual polar coordinate singularity at $\theta=0,\pi$, which is an artifact of
the spherical coordinates and is discussed below, we see at once that $g$ is
a smooth Lorentzian b-metric. In particular, it is non-degenerate, so the
d'Alembertian $\Box_g=d^*d$ is a well-defined b-operator, and
$$
\sigma_{\bl,2}(\rho^2 \Box_g)=\rho^2 G.
$$
Factoring out $\rho^2$ does not affect any of the statements below but simplifies
some formulae, see Footnote~\ref{footnote:factor} and Footnote~\ref{footnote:subpr}
for general
statements; one could also work with $G$ directly.

\subsection{The `spatial' problem: the Mellin transform}
The Mellin transform, $P_\sigma$, of $\rho^2\Box_g$ has the same principal
symbol, including in the high energy sense,
\begin{equation}\begin{split}\label{eq:p-symb-off-poles}
p_{\full}=\sigma_{\full}(P_\sigma)=&-\mut\big(\xi\pm  c\sigma\big)^2 \mp2(1+\gamma)(r^2+\alpha^2)\big(\xi\pm  c\sigma\big)\sigma\\
&\pm 2(1+\gamma)\alpha\big(\xi\pm  c\sigma\big)\zeta-\tilde p_{\full}
\end{split}\end{equation}
with
$$
\tilde p_{\full}=\kappa\eta^2+\frac{(1+\gamma)^2}{\kappa\sin^2\theta}(-\alpha\sin^2\theta\sigma+\zeta)^2,
$$
so $\tilde p_{\full}\geq 0$ for real $\sigma$.
Thus,
\begin{equation*}\begin{split}
&\sH_{p_\full}=\Big(-2\mut\big(\xi\pm  c\sigma\big) \mp2(1+\gamma)(r^2+\alpha^2)\sigma\pm 2(1+\gamma)\alpha\zeta\Big)\pa_r\\
&\qquad\qquad
-\Big(-\frac{\pa\mut}{\pa r}\big(\xi\pm  c\sigma\big)^2
\mp 4r(1+\gamma)\sigma\big(\xi\pm  c\sigma\big)
\pm\frac{\pa c}{\pa r}\tilde c\sigma\Big)\pa_\xi\\
&\qquad\qquad\pm 2(1+\gamma)\alpha\big(\xi\pm  c\sigma\big)\pa_\phi
-\sH_{\tilde p_{\full}},\\
&\tilde c=
-2\mut\big(\xi\pm  c\sigma\big) \mp2(1+\gamma)(r^2+\alpha^2)\sigma\pm 2(1+\gamma)\alpha\zeta.
\end{split}\end{equation*}

To deal with $q_+$ given by $\theta=0$ ($q_-$ being similar), let
$$
y=\sin\theta\sin\phi,\ z=\sin\theta\cos\phi,\ \text{so}\ \cos^2\theta=1-(y^2+z^2).
$$
We can then perform a similar calculation yielding that if $\lambda$ is the
dual variable to $y$ and $\nu$ is the dual variable to $z$ then
$$
\zeta=z\lambda-y\nu
$$
and
\begin{equation*}\begin{split}
\tilde p_\full&=(1+\gamma\cos^2\theta)^{-1}\Big((1+\gamma)^2(\lambda^2+\nu^2)
+\tilde p''\Big)+\tilde p^\sharp_\full,\\
\tilde p^\sharp_\full&=(1+\gamma\cos^2\theta)^{-1}
(1+\gamma)^2(2\alpha\sin^2\theta\sigma-\zeta)\alpha\sigma,
\end{split}\end{equation*}
with $\tilde p''$ smooth and vanishing quadratically at the origin.
Correspondingly, by \eqref{eq:p-symb-off-poles}, $P_\sigma$ is indeed
smooth at $q_\pm$. Thus, one can perform all symbol calculations away from
$q_\pm$, since the results will extend smoothly to $q_\pm$, and correspondingly
from now on we do not emphasize these two poles.

In the sense of `classical' microlocal analysis, we thus have:
\begin{equation}\begin{split}\label{eq:p-symb-off-poles-classical}
&p=\sigma_2(P_\sigma)
=-\mut\xi^2 \pm 2(1+\gamma)\alpha\xi\zeta-\tilde p,
\qquad \tilde p=\kappa\eta^2+\frac{(1+\gamma)^2}{\kappa\sin^2\theta}\zeta^2\geq 0,\\
&\sH_p=\Big(-2\mut\xi \pm 2(1+\gamma)\alpha\zeta\Big)\pa_r
\pm 2(1+\gamma)\alpha\xi\pa_\phi+\frac{\pa\mut}{\pa r}\xi^2\pa_\xi
-\sH_{\tilde p}.
\end{split}\end{equation}

\subsection{Microlocal geometry of Kerr-de Sitter space-time}
As already stated in Section~\ref{sec:microlocal},
it is often convenient to consider the fiber-radial compactification
$\overline{T}^*X_\delta$ of the
cotangent bundle $T^*X_\delta$, 
with $S^*X_{\delta}$ considered as the boundary at fiber-infinity
of $\overline{T}^*X_\delta$.

We let
$$
\Lambda_+=N^*\{\mut=0\}\cap\{\mp\xi>0\},
\ \Lambda_-=N^*\{\mut=0\}\cap\{\pm\xi>0\},
$$
with the sign inside the braces corresponding to that of $r_\pm$.
This is
consistent with our definition of $\Lambda_\pm$ in the de Sitter case.
We let $L_\pm=\pa\Lambda_\pm\subset S^*X_{\delta}$.
Since $\Lambda_+\cup\Lambda_-$ is given by $\eta=\zeta=0$, $\mut=0$,
$\Lambda_\pm$ are
preserved by the {\em classical} dynamics (i.e.\ with $\sigma=0$). Note that the
special structure of $\tilde p$ is irrelevant for the purposes of this observation;
only the quadratic vanishing at $L_\pm$ matters. Even for other local aspects
of analysis, considered below,
the only relevant part\footnote{This could be relaxed: quadratic behavior with small
leading term would be fine as well; quadratic
behavior follows from $\sH_p$ being tangent to $\Lambda_\pm$; smallness
is needed so that $\sH_p|\xi|^{-1}$ can be used to dominate this in terms
of homogeneous dynamics, so that the dynamical character of $L_\pm$ (sink/source)
is as desired.}, is that $\sH_p\tilde p$
vanishes cubically at $L_\pm$,
which in some sense reflects the behavior
of the linearization of $\tilde p$.

\begin{figure}[ht]
\begin{center}
\mbox{\epsfig{file=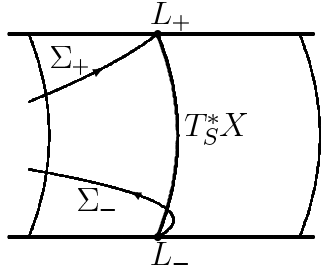}}
\end{center}
\caption{The cotangent bundle near the event horizon $S=\{\mut=0\}$.
It is drawn in a fiber-radially compactified view.
$\Sigma_\pm$ are the components of the (classical) characteristic set containing
$L_\pm$. The characteristic set crosses the event horizon on both components; here
the part near $L_+$ is hidden from view. The projection of this region to the
base space is the ergoregion. Semiclassically, i.e.\ the interior of
$\overline{T}^*X_\delta$, for $z=h^{-1}\sigma>0$,
only $\Sigma_{\semi,+}$ can
enter $\mut>\alpha^2$, see the paragraph after \eqref{eq:c-est}.}
\label{fig:kerr-horizon-bundle1}
\end{figure}

To analyze the dynamics near $L_\pm$ on the characteristic set,
starting with the classical dynamics,
we note that
$$
\sH_{\tilde p}r=0,\ \sH_{\tilde p}\xi=0,\ \sH_p\zeta=0,
\ \sH_{\tilde p}\tilde p=0,\ \sH_p\tilde p=0;
$$
note that $\sH_p\tilde p=0$ and $\sH_p\zeta=0$ correspond to the integrability
of the Hamiltonian dynamical system; these were observed by
Carter~\cite{Carter:Global}
in the Kerr setting.
Furthermore,
\begin{equation}\label{eq:Kerr-weight-est}
\sH_p|\xi|^{-1}|_{S^*X_{\delta}}=-(\sgn\xi ) \frac{\pa\mut}{\pa r},
\end{equation}
so at $\pa N^*\{\mut=0\}$ it is given by $\pm(\sgn\xi )\digamma_\pm$,
which is bounded
away from $0$. We note that
\begin{equation}\label{eq:Kerr-hom-tilde-p-est}
|\xi|^{-1}\sH_p(|\xi|^{-2}\tilde p) |_{S^*X_{\delta}}
=-2 (\sgn\xi )\frac{\pa\mut}{\pa r}\tilde p|\xi|^{-2}.
\end{equation}
Since $\tilde p=0$ and $\mut\neq 0$
implies $\xi=0$ on the classical characteristic set (i.e.\ when we take
$\sigma=0$), which cannot happen on $S^*X$ (we are away from the zero section!),
this shows that the Hamilton vector field
is non-radial except possibly at $\Lambda_\pm$.
Moreover,
\begin{equation*}\begin{split}
\sH_p\left(\mut\mp 2(1+\gamma)\alpha\frac{\zeta}{\xi}\right) |_{S^*X_{\delta}}
&=-2|\xi|\left(\mut\mp 2(1+\gamma)\alpha\frac{\zeta}{\xi}\right) (\sgn\xi)
\frac{\pa\mut}{\pa r};
\end{split}\end{equation*}
as usual, this corresponds to $\hat p=|\xi|^{-2}p$ at $L_\pm$.
Finally, the imaginary part of the subprincipal symbol at $L_\pm$ is
\begin{equation}\begin{split}\label{eq:subpr-Kerr}
&((\sgn\xi ) \frac{\pa\mut}{\pa r})(\beta_\pm\im\sigma)|\xi|,\ \text{where}\\
&\beta_\pm=\mp 2\left(\frac{d\mut}{dr}\right)^{-1}(1+\gamma)(r^2+\alpha^2)|_{r=r_\pm}
=2\digamma_\pm^{-1}(1+\gamma)(r_\pm^2+\alpha^2)>0;
\end{split}\end{equation}
here $(\sgn\xi ) \frac{\pa\mut}{\pa r}$ was factored out in view of
\eqref{eq:Kerr-weight-est}, \eqref{eq:subpr-form} and \eqref{eq:weight-definite}.

Thus, $L_+$ is a sink,
$L_-$ a source. Furthermore, in the classical sense, $\xi=0$ is disjoint from
the characteristic set in the region of validity of the form \eqref{eq:p-symb-off-poles}
of the operator,
as well as at the poles of the sphere (i.e.\ the only issue is when
$r$ is farther from $r_\pm$), so the
characteristic set has two components there with $L_\pm$ lying
in different components. We note that that
as $\gamma<1$,
$\kappa\sin^2\theta=\sin^2\theta(1+\gamma)-\gamma\sin^4\theta$ 
has its maximum at $\theta=\pi/2$, where it is $1$.
Since on the characteristic set
\begin{equation}\label{eq:kerr-cs}
\alpha^2\xi^2+(1+\gamma)^2\zeta^2\geq
\pm2(1+\gamma)\alpha\xi\zeta=\tilde p+\mut\xi^2\geq\eta^2
+(1+\gamma)^2\zeta^2+\mut\xi^2
\end{equation}
and $\xi\neq 0$, we conclude that
\begin{equation}\label{eq:char-set-est}
\mut\leq \alpha^2
\end{equation}
there, so this form of the operator remains valid, and the characteristic set can indeed
be divided into two
components, separating $L_\pm$.

Next, we note that if $\alpha$ is so large that at $r=r_0$ with
$\frac{d\mut}{dr}(r_0)=0$, one has $\mut(r_0)=\alpha^2$, then letting $\eta_0=0$,
$\theta_0=\frac{\pi}{2}$, $\xi_0\neq 0$,
$\zeta_0=\pm\frac{\alpha}{1+\gamma}\xi_0$, the bicharacteristics through
$(r_0,\theta_0,\phi_0,\xi_0,\eta_0,\zeta_0)$ are stationary for any $\phi_0$,
so the operator is classically
trapping in the strong sense that not only is the Hamilton vector
field radial, but it vanishes. Since such vanishing means that weights
cannot give positivity in positive commutator estimates, see
Section~\ref{sec:microlocal},
it is natural to impose the restriction on $\alpha$ that
\begin{equation}\label{eq:alpha-restrict}
r_0\in(r_+,r_-),\ \frac{d\mut}{dr}(r_0)=0\Rightarrow \alpha^2<\mut(r_0).
\end{equation}
Under this assumption, by \eqref{eq:char-set-est}, the ergoregions from
the two ends do not intersect.

Finally, we show that bicharacteristics leave the region $\mut>\mut_0$,
where $\mut_0<0$ is such that $\frac{d\mut}{dr}$ is bounded away from $0$ on
$[\mut_0,(1+\ep)\alpha^2]_\mut$ for some $\ep>0$, which completes
checking the hypotheses in the classical sense. Note that by \eqref{eq:digamma-def}
and \eqref{eq:alpha-restrict}
such $\mut_0$ and $\ep$ exists.
To see this, we use $\tilde p$
to measure the size of the characteristic set over points in the base. Using
$ab\leq (1+\ep)a^2+b^2/(1+\ep)$ and $\kappa\sin^2\theta\leq 1$,
we note that on the characteristic set
$$
(1+\ep)\alpha^2\xi^2+\frac{(1+\gamma)^2}{1+\ep}\zeta^2\geq
\tilde p+\mut\xi^2\geq\frac{\ep}{1+\ep}\tilde p+\frac{(1+\gamma)^2}{1+\ep}\zeta^2+\mut\xi^2,
$$
so
$$
((1+\ep)\alpha^2-\mut)\geq\frac{\ep}{1+\ep}|\xi|^{-2}\tilde p,
$$
where now both sides are homogeneous of degree zero, or equivalently functions
on $S^*X_{\delta}$. Note that $\tilde p=0$ implies that $\xi\neq 0$ on $S^*X_\delta$,
so our formulae make sense.
By \eqref{eq:Kerr-hom-tilde-p-est}, using that $\frac{\pa\mut}{\pa r}$
is bounded away from $0$, $|\xi|^{-2}\tilde p$ is growing exponentially in the
forward/backward direction along the flow as long as the flow remains in a region
$\mut\geq\mut_0$,
where the form of the operator is valid (which is automatic in this region, as
farther on `our side' of the event horizon, $X_+$,
where the form of the operator
is not valid, it is elliptic), which shows
that the bicharacteristics have to leave this region. As noted already, this proves
that the operator fits into our framework in the classical sense.

\subsection{Semiclassical behavior}\label{subsec:Kerr-semicl}
The semiclassical principal symbol is
\begin{equation}\begin{split}\label{eq:p-semicl-Kerr}
p_{\semi,z}=&-\mut\big(\xi\pm  cz\big)^2
\mp2(1+\gamma)(r^2+\alpha^2)\big(\xi\pm  cz\big) z
\pm 2(1+\gamma)\alpha\big(\xi\pm  cz\big)
\zeta-\tilde p_{\semi,z}
\end{split}\end{equation}
with
$$
\tilde p_{\semi,z}=\kappa\eta^2+\frac{(1+\gamma)^2}{\kappa\sin^2\theta}(-\alpha\sin^2\theta z+\zeta)^2.
$$
Recall now that $M_\delta=X_\delta\times[0,\infty)_\tau$, and that, due
to Section~\ref{subsec:Lorentz}, we need
to choose $c$ in our definition of $\tau$
so that $\frac{d\tau}{\tau}$ is time-like with respect to $G$. But
$$
\langle \frac{d\tau}{\tau},\frac{d\tau}{\tau}\rangle_G=-\mut c^2-2c(1+\gamma)(r^2+\alpha^2)-\frac{\alpha^2(1+\gamma)^2\sin^2\theta}{\kappa},
$$
and as this must be positive for all $\theta$, we need to arrange that
\begin{equation}\label{eq:c-est}
\mut c^2+2c(1+\gamma)(r^2+\alpha^2)+\alpha^2(1+\gamma)^2<0,
\end{equation}
and this in turn suffices.
Note that $c=-\mut^{-1}(1+\gamma)(r^2+\alpha^2)$ automatically satisfies
this in $\mut>0$; this would correspond to undoing our change of coordinates
in \eqref{eq:h-P-form} (which is harmless away from $\mut=0$, but of course
$c$ needs to be smooth at $\mut=0$).
At $\mut=0$, \eqref{eq:c-est} gives a (negative)
upper bound for $c$; for $\mut>0$ we have an interval of possible values of
$c$; for $\mut<0$ large negative values of $c$ always work. Thus, we may choose
a smooth function $c$ such that \eqref{eq:c-est} is satisfied everywhere, and we may further arrange that $c=-\mut^{-1}(1+\gamma)(r^2+\alpha^2)$ for $\mut>\mut_1$
where $\mut_1$ is an arbitrary positive constant;
in this
case, as already discussed, $p_{\semi,z}$ is semiclassically elliptic when $\im z\neq 0$.

Note also that, as discussed in Subsection~\ref{subsec:Lorentz}, there
is only one component of the characteristic set in $\mut>\alpha^2$ by
\eqref{eq:char-set-est}, namely $\Sigma_{\semi,\sgn(\re z)}$.

It remains to discuss trapping. Note that the dynamics depends continuously
on $\alpha$, with $\alpha=0$ being the de Sitter-Schwarzschild case, when
there is no trapping near the event horizon, so the same holds for Kerr-de Sitter
with slow rotation.
Below we first describe the dynamics in de Sitter-Schwarzschild space
explicitly,
and then, in \eqref{eq:semicl-alpha-limit}, give an explicit range of
$\alpha$
in which the non-trapping dynamical assumption, apart from hyperbolic
trapping, is satisfied.

First, on de Sitter-Schwarzschild space, recalling that $c$ is irrelevant
for the dynamics for real
$z$, we may take $c=0$ (i.e.\ otherwise we would simply
change this calculation by the effect of a symplectomorphism, corresponding
to a conjugation, which we note does not affect the `base' variables
on the cotangent bundle). Then
\begin{equation*}\begin{split}
p_{\semi,z}=-\mut\xi^2\mp2r^2\xi z-\tilde p_{\semi,z},
\qquad \tilde p_{\semi,z}=\eta^2+\frac{\zeta^2}{\sin^2\theta},
\end{split}\end{equation*}
so
$$
\sH_{p_{\semi,z}}=-2(\mut\xi\pm r^2 z)\pa_r+\left(\frac{\pa\mut}{\pa r}\xi^2\pm
4r z\xi\right)\pa_\xi-\sH_{\tilde p_{\semi,z}},
$$
hence $\sH_{p_{\semi,z}}r=-2(\mut\xi\pm r^2 z)$, and so
$\sH_{p_{\semi,z}}r=0$ implies $\mp z=r^{-2}\mut\xi$. We first note that
$\sH_{p_{\semi,z}}r$ cannot vanish in $T^*X_\delta$ in $\mut\leq 0$
(though it can vanish at fiber infinity at $L_\pm$) since (for $z\neq 0$)
\begin{equation}\label{eq:no-stationary-pt-dS-Sch}
\mut\leq 0\Mand
\sH_{p_{\semi,z}}r=0\Rightarrow \mut\xi\neq 0\Mand p_{\semi,z}=\mut\xi^2-\tilde p_{\semi,z}<0.
\end{equation}
It remains to consider $\sH_{p_{\semi,z}}r=0$ in $\mut>0$. At such a point
$$
\sH_{p_{\semi,z}}^2r=-2\mut\sH_{p_{\semi,z}}\xi
=-2\mut\xi^2\left(\frac{\pa\mut}{\pa r}-4r^{-1}\mut\right)
=-2\mut\xi^2 r^4\frac{\pa(r^{-4}\mut)}{\pa r},
$$
so as $\mp z=r^{-2}\mut\xi$, so $\xi\neq 0$, by the discussion after
\eqref{eq:digamma-def},
$$
\mut>0,\ \pm(r-\frac{3}{2} r_s)>0,
\ \sH_{p_{\semi,z}}r=0\Rightarrow \pm \sH_{p_{\semi,z}}^2r>0,
$$ 
and thus the gluing hypotheses of \cite{Datchev-Vasy:Gluing-prop} are
satisfied arbitrarily close to\footnote{Or far from, in $\mut>0$.} $r=\frac{3}{2} r_s$.
Furthermore, as $p_{\semi,z}=-\mut^{-1}(\mut\xi\pm r^2 z)^2+r^4z^2-\tilde p_{\semi,z}$,
if $r=\frac{3}{2} r_s$, $\sH_{p_{\semi,z}}r=0$ and $p_{\semi,z}=0$
then $\tilde p_{\semi,z}=r^4z^2$, so
with
$$
\Gamma_z=\{r=\frac{3}{2} r_s,\ \mut\xi\pm r^2 z=0,\ \tilde p_{\semi,z}=r^4z^2\},
$$
we have
\begin{equation}\label{eq:dS-Sch-convex}
p_{\semi,z}(\varpi)=0,\ \mut(\varpi)>0,\ \varpi\notin\Gamma_z,
\ (\sH_{p_{\semi,z}}r)(\varpi)=0\Rightarrow (\pm \sH_{p_{\semi,z}}^2r)(\varpi)>0,
\end{equation}
with $\pm$ corresponding to whether $r>\frac{3}{2} r_s$ or $r<\frac{3}{2} r_s$.
In particular, taking into account
\eqref{eq:no-stationary-pt-dS-Sch},
$r$ gives rise to an escape function in $T^*X_\delta\setminus\Gamma_z$
as discussed in
Footnote~\ref{footnote:convex-escape}, and $\Gamma_z$ is the only
possible
trapping. (In this statement we ignore fiber infinity.)
Correspondingly, if one regards
a compact interval $I$ in $(r_-,\frac{3}{2} r_s)$, or $(\frac{3}{2} r_s,r_+)$ as 
the gluing region, for sufficiently small $\alpha$, for $r\in I$,
$\sH_{p_{\semi,z}}r=0$ still implies $\pm \sH_{p_{\semi,z}}^2r>0$, and
\cite{Datchev-Vasy:Gluing-prop}  is applicable. If instead one works with
compact subsets of $\{\mut>0\}\setminus\Gamma_z$, one has
non-trapping dynamics for $\alpha$ small.

Since in \cite[Section~2]{Wunsch-Zworski:Resolvent}
Wunsch and Zworski only check normal hyperbolicity in Kerr space-times
with sufficiently small angular momentum, in order to use their
general results for normally hyperbolic trapped sets,
we need to check that Kerr-de Sitter space-times are
still normally hyperbolic. For this, with small $\alpha$, we follow
\cite[Section~2]{Wunsch-Zworski:Resolvent}, and note that for $\alpha=0$
the linearization of the flow at $\Gamma$ in the normal variables
$r-\frac{3}{2}r_s$ and $\mut\xi\pm r^2 z$ is
$$
\begin{bmatrix}r-\frac{3}{2}r_s\\ \mut\xi\pm r^2 z\end{bmatrix}'
=\begin{bmatrix}0&-2(\frac{3}{2}r_s)^4 z^2\mut|_{r=\frac{3}{2}r_s}^{-1}\\-2&0\end{bmatrix}
\begin{bmatrix}r-\frac{3}{2}r_s\\\mut\xi\pm r^2 z\end{bmatrix}
+\cO((r-\frac{3}{2}r_s)^2+(\mut\xi\pm r^2 z)^2),
$$
so the eigenvalues of the linearization are $\lambda=\pm 3\sqrt{3}r_s z
\big(1-\frac{9}{4}\Lambda r_s^2\big)^{-1/2}$, in agreement with the result
of \cite{Wunsch-Zworski:Resolvent} when $\Lambda=0$. The rest of the
arguments concerning the flow in \cite[Section~2]{Wunsch-Zworski:Resolvent}
go through. In particular, when analyzing the flow {\em within}
$\Gamma=\cup_{z>0}\Gamma_z$, the pull backs of both $dp$ and $d\zeta$
are {\em exactly} as in the Schwarzschild setting (unlike the normal dynamics, which
has different eigenvalues), so the arguments of
\cite[Proof of Proposition~2.1]{Wunsch-Zworski:Resolvent} go
through unchanged, giving normal hyperbolicity for small $\alpha$ by
the structural stability.

We now check the hyperbolic nature of trapping for larger values of
$\alpha$. With $c=0$, as above,
\begin{equation*}
p_{\semi,z}=-\mut\xi^2\mp
2(1+\gamma)\big((r^2+\alpha^2)z-\alpha\zeta\big)\xi-\tilde p_{\semi.z},
\end{equation*}
and in the region $\mut>0$ this can be rewritten as
\begin{equation*}
p_{\semi,z}=-\mut\left(\xi\pm
  \frac{1+\gamma}{\mut}\big((r^2+\alpha^2)z-\alpha\zeta\big)
\right)^2+\frac{(1+\gamma)^2}{\mut}\big((r^2+\alpha^2)z-\alpha\zeta\big)^2-\tilde p_{\semi.z};
\end{equation*}
note that
the first term would be just $-\mut\xi^2$ in the
original coordinates \eqref{eq:metric-Kerr} which are valid in $\mut>0$. Thus,
\begin{equation}\label{eq:semicl-r-deriv-Kerr}
\sH_{p_{\semi.z}}r =-2\left(\mut\xi\pm  (1+\gamma)\big((r^2+\alpha^2)z-\alpha\zeta\big)\right).
\end{equation}
Correspondingly,
$$
\mut\leq 0,\ \sH_{p_{\semi,z}}r=0\Rightarrow
p_{\semi,z}=\mut\xi^2-\tilde p_{\semi.z}\leq 0,
$$
and equality on the right hand side implies $\zeta=\alpha\sin^2\theta
z$, so $\sH_{p_{\semi,z}}r=(r^2+\alpha^2\cos^2\theta)z>0$, a
contradiction, showing that
in $\mut\leq 0$, $\sH_{p_{\semi,z}}r$ cannot vanish on the
characteristic set.

We now turn to $\mut>0$, where
\begin{equation*}
\sH_{p_{\semi,z}}r=0\Rightarrow\sH^2_{p_{\semi,z}}r=-2\mut\sH_{p_{\semi,z}}\xi
=2\mut (1+\gamma)^2\frac{\pa}{\pa
  r}\left(\mut^{-1}\big((r^2+\alpha^2)z-\alpha\zeta\big)^2\right).
\end{equation*}
Thus, we are interested in critical points of
$$
F=\mut^{-1}\big((r^2+\alpha^2)z-\alpha\zeta\big)^2,
$$
and whether these are non-degenerate.
We remark that
\begin{equation*}\begin{split}
\sH_{p_{\semi,z}}r=0\Mand (r^2+\alpha^2)z-\alpha\zeta=0\Mand p_{\semi,z}=0&\Rightarrow
\xi=0\Mand \tilde p_{\semi,z}=0;\\
\tilde p_{\semi,z}=0\Mand (r^2+\alpha^2)z-\alpha\zeta=0\Mand p_{\semi,z}=0&\Rightarrow (r^2+\alpha^2\cos^2\theta)z=0;
\end{split}\end{equation*}
which is a contradiction, so $(r^2+\alpha^2)z-\alpha\zeta$ does not
vanish when $p_{\semi,z}$ and $\sH_{p_{\semi,z}}r$ do.
Note that
\begin{equation}\label{eq:F-prime-Kerr}
\frac{\pa F}{\pa r}=-\big((r^2+\alpha^2)z-\alpha\zeta\big)\mut^{-2} f,
\qquad f=\big((r^2+\alpha^2)z-\alpha\zeta\big)\frac{\pa\mut}{\pa r}-4r\mut z,
\end{equation}
so
\begin{equation}\label{eq:zeta-at-F-crit}
\frac{\pa F}{\pa r}=0\Mand (r^2+\alpha^2)z-\alpha\zeta\neq 0\Rightarrow \frac{\pa^2 F}{\pa r^2}
=-\big((r^2+\alpha^2)z-\alpha\zeta\big)\mut^{-2}\frac{\pa f}{\pa r}.
\end{equation}
Also, from \eqref{eq:F-prime-Kerr},
$$
\mut>0,\ f=0\Rightarrow \frac{\pa\mut}{\pa r}\neq 0.
$$
Now
\begin{equation}\label{eq:f-prime-Kerr}
\frac{\pa f}{\pa r}=\big((r^2+\alpha^2)z-\alpha\zeta\big)
\frac{\pa^2\mut}{\pa r^2}
-4\mut z-2rz\frac{\pa\mut}{\pa r},
\end{equation}
and
$$
\frac{\pa F}{\pa r}=0\Rightarrow (r^2+\alpha^2)z-\alpha\zeta=\frac{4r\mut z}{\frac{\pa\mut}{\pa r}},
$$
so substituting into \eqref{eq:f-prime-Kerr},
\begin{equation}\label{eq:f-prime-Kerr-no-zeta}
\frac{\pa\mut}{\pa r}\frac{\pa f}{\pa r}=4r\mut z
\frac{\pa^2\mut}{\pa r^2}
-4z \mut  \frac{\pa\mut}{\pa r}-2rz\left(\frac{\pa\mut}{\pa r}\right)^2.
\end{equation}
Thus,
\begin{equation*}\begin{split}
\frac{\pa\mut}{\pa r}\frac{\pa f}{\pa r}
=2z\left(2\mut\left(r\frac{\pa^2\mut}{\pa r^2}-3\frac{\pa\mut}{\pa r}\right)
-(r\frac{\pa\mut}{\pa r}-4\mut)\frac{\pa\mut}{\pa r}\right),
\end{split}\end{equation*}
so taking into account
\begin{equation*}\begin{split}
r\frac{\pa\mut}{\pa
  r}-4\mut&=-2\left(1-\frac{\Lambda\alpha^2}{3}\right)r^2+3r_s
r-4\alpha^2,\\
r\frac{\pa^2\mut}{\pa r^2}-3\frac{\pa\mut}{\pa r}&=
-4\left(1-\frac{\Lambda\alpha^2}{3}\right)r+3r_s,\\
r\frac{\pa^2\mut}{\pa r^2}-3\frac{\pa\mut}{\pa r}&=
\frac{2}{r}\left(r\frac{\pa\mut}{\pa r}-4\mut\right)-3r_s+\frac{8\alpha^2}{r},
\end{split}\end{equation*}
we obtain
\begin{equation*}\begin{split}
\frac{\pa\mut}{\pa r}\frac{\pa f}{\pa r}
=2z\left(-\frac{1}{r}\left(r\frac{\pa\mut}{\pa r}-4\mut\right)^2
-\frac{2\mut}{r}(3r_s r-8\alpha^2)\right).
\end{split}\end{equation*}

We claim that if $|\alpha|<r_s/2$ then $r_->r_s/2$. To see this, note that
for $r=r_s/2$,
$$
\mut(r)=\left(\frac{r_s^2}{4}+\alpha^2\right)
\left(1-\frac{\Lambda r_s^2}{12}\right) -\frac{r_s^2}{2}<0;
$$
since at $\alpha=0$, $r_->r_s/2$, we deduce that $r_->r_s/2$ for
$|\alpha|<r_s/2$. Making the slightly stronger assumption,
\begin{equation}\label{eq:semicl-alpha-limit}
|\alpha|<\frac{\sqrt{3}}{4} r_s,
\end{equation}
we obtain that for $\mut>0$, $r>r_-$,
$3r_s r-8\alpha^2>\frac{3}{2}r_s^2-8\alpha^2>0$, so
$$
z\frac{\pa\mut}{\pa r}\frac{\pa f}{\pa r}<0.
$$
Thus, when $\frac{\pa F}{\pa r}=0$, using \eqref{eq:zeta-at-F-crit},
\begin{equation}\label{eq:non-deg-crit-pts-Kerr}
\frac{\pa^2 F}{\pa r^2}
=-\big((r^2+\alpha^2)z-\alpha\zeta\big)\mut^{-2}\frac{\pa f}{\pa r}
=-\frac{4r}{\mut\left(\frac{\pa\mut}{\pa r}\right)^2} z\frac{\pa\mut}{\pa r}\frac{\pa f}{\pa r}>0,
\end{equation}
so critical points of $F$ are all non-degenerate and are
minima. Correspondingly, as $F\to+\infty$ as $\mut\to 0$ in $\mut>0$,
the critical point $r_c$ of $F$ exists and is unique in $(r_-,r_+)$
(when $\zeta$ is fixed),
depends smoothly on $\zeta$,
and $\frac{\pa F}{\pa r}>0$ if $r>r_c$, and $\frac{\pa F}{\pa r}<0$ if
$r<r_c$. Thus,
$$
\mut>0,\ \pm(r-r_c)>0,
\ \sH_{p_{\semi,z}}r=0\Rightarrow \pm \sH_{p_{\semi,z}}^2r>0,
$$
giving the natural generalization of \eqref{eq:dS-Sch-convex},
allowing the application of the results of
\cite{Datchev-Vasy:Gluing-prop}.
Since $\sH_{p_{\semi,z}}r$ cannot vanish in $\mut\leq 0$ (apart from
fiber infinity, which is understood already), we conclude that $r$
gives rise to an escape function, as in Footnote~\ref{footnote:convex-escape}, away from
$$
\Gamma_z=\{\varpi:\ \frac{\pa F}{\pa r}(\varpi)=0,\ (\sH_{p_{\semi,z}}
r)(\varpi)=0,\ p_{\semi,z}(\varpi)=0\},
$$
which is a smooth submanifold as the differentials of the defining
functions
are linearly independent on it in view of
\eqref{eq:non-deg-crit-pts-Kerr}, \eqref{eq:semicl-r-deriv-Kerr}, and
the
definition of $\tilde p_{\semi,z}$ (as the latter is independent of $r$ and $\xi$).

The linearization of the Hamilton flow at $\Gamma_z$ is
\begin{equation*}\begin{split}
&\begin{bmatrix}r-r_c\\ \mut\xi\pm (1+\gamma)\big((r^2+\alpha^2)z-\alpha\zeta\big)
\end{bmatrix}'\\
&\qquad=\begin{bmatrix}0&-\mut(1+\gamma)^2\frac{\pa^2 F}{\pa r^2}\\-2&0\end{bmatrix}
\begin{bmatrix}r-r_c\\ \mut\xi\pm
  (1+\gamma)\big((r^2+\alpha^2)z-\alpha\zeta\big)\end{bmatrix}\\
&\qquad\qquad\qquad
+\cO\Big((r-r_c)^2+\big(\mut\xi\pm (1+\gamma)\big((r^2+\alpha^2)z-\alpha\zeta\big) \big)^2\Big),
\end{split}\end{equation*}
so by \eqref{eq:non-deg-crit-pts-Kerr}, the linearization is
non-degenerate, and is indeed hyperbolic. This suffices for the
resolvent estimates of \cite{Wunsch-Zworski:Resolvent} for exact
Kerr-de Sitter, but for
stability one also needs to check normal hyperbolicity.
While it is quite straightforward to check that the only degenerate
location is $\eta=0$, $\theta=\frac{\pi}{2}$, the computation of the
Morse-Bott non-degeneracy in the spirit of \cite[Proof of
Proposition~2.1]{Wunsch-Zworski:Resolvent}, where it is done for Kerr
spaces with small angular momentum, is rather involved, so we do not
pursue this here (for small angular momentum in Kerr-de Sitter space, the de
Sitter-Schwarzschild calculation above implies normal hyperbolicity already).

In addition, in view of an overall sign difference between our convention and
that of \cite{Wunsch-Zworski:Resolvent} for the operator we are considering,
\cite{Wunsch-Zworski:Resolvent} requires the positivity
of $z\frac{\pa}{\pa z} p_{\semi,z}$ for $z\neq 0$. (Note that the notation for $z$
is also different; our $z$ is $1+z$ in the notation of
\cite{Wunsch-Zworski:Resolvent},
so our $z$ being near $1$
corresponds to the $z$ of \cite{Wunsch-Zworski:Resolvent} being near $0$.)
Unlike the flow, whose behavior is independent of
$c$ when $z$ is real, this fact does depend on the choice of $c$. Note that
in the high energy version, this corresponds to the positivity
of $\sigma\frac{\pa}{\pa \sigma} p_\full$.
Now, $p_\full=\langle \sigma\,\frac{d\tau}{\tau}+\varpi,\sigma\,\frac{d\tau}{\tau}+\varpi\rangle_G$, with $\varpi\in\Pi$, the `spatial' hyperplane, identified
with $T^*X$ in $\Tb^*\bM$, so
\begin{equation*}\begin{split}
\sigma\pa_\sigma p_{\full}&=2\langle \sigma\,\frac{d\tau}{\tau},\sigma\,\frac{d\tau}{\tau}\rangle_G+2 \langle \sigma\,\frac{d\tau}{\tau},\varpi\rangle_G\\
&=\sigma^2\langle \frac{d\tau}{\tau},\frac{d\tau}{\tau}\rangle_G
+\langle \sigma\,\frac{d\tau}{\tau}+\varpi,\sigma\,\frac{d\tau}{\tau}+\varpi\rangle_G
-\langle\varpi,\varpi\rangle_G.
\end{split}\end{equation*}
Thus, if non-zero elements of $\Pi$ are space-like and $\frac{d\tau}{\tau}$ is time-like,
$\sigma\pa_\sigma p_{\full}>0$ for $\sigma\neq 0$ on the characteristic set
of $p_\full$. If $c$ is such that $c=-\mut^{-1}(1+\gamma)(r^2+\alpha^2)$
near $r=\frac{3}{2}r_s$, which as we mentioned can be arranged, and
which corresponds to undoing our change of coordinates
in \eqref{eq:h-P-form}, then directly from \eqref{eq:metric-Kerr} both the time-like
and space-like statements hold, completing our checking of the hypotheses
of \cite{Wunsch-Zworski:Resolvent}, and thus their result is applicable for
de Sitter-Schwarzschild space-times. As these results are structurally
stable, see the proof of \cite[Proposition~2.1]{Wunsch-Zworski:Resolvent}, the result
follows for Kerr-de Sitter spaces with angular momenta satisfying \eqref{eq:semicl-alpha-limit}.

\subsection{Complex absorption}\label{subsec:Kerr-absorb}
The final step of fitting $P_\sigma$ into a general microlocal framework is
moving the problem to a compact manifold, and adding a complex absorbing
second order operator.
This section is {\em almost completely parallel} to
Subsection~\ref{subsec:complex-absorb-dS}
in the de
Sitter case; the only change is that absorption needs to be added at the
trapped set as well.

We thus consider a compact manifold without boundary
$X$ for which $X_{\delta}$ is identified as an open subset with smooth boundary;
we can again take $X$ to be the double of $X_{\delta}$.
As in the de Sitter case, we discuss the
`classical' and `semiclassical' cases separately,
for in the former setting trapping does not matter, while in the
latter it does.

We then introduce a complex
absorbing operator $Q_\sigma\in\Psi_{\cl}^2(X)$ with principal symbol $q$,
such that $h^2Q_{h^{-1}z}\in\Psihcl^2(X)$ with
semiclassical principal symbol $q_{\semi,z}$, and such that $p\pm\imath q$
is elliptic near $\pa X_{\delta}$, i.e.\ near $\mut=\mut_0$, the
Schwartz kernel of $Q_\sigma$ is supported in $\mut<\mut_0+\ep'$ for
some sufficiently small $\ep'>0$,
and which satisfies that the $\pm q\geq 0$
on $\Sigma_\mp$.
Having done this, we extend $P_\sigma$ and $Q_\sigma$ to $X$ in such a way that
$p\pm\imath q$ are elliptic near $X\setminus X_{\delta}$; the region
we added is thus irrelevant.  In particular, as the event horizon is characteristic
for the wave equation, the solution in the exterior of the event horizons
is {\em unaffected} by thus modifying $P_\sigma$, i.e.\ working with $P_\sigma$
and $P_\sigma-\imath Q_\sigma$ is equivalent for this purpose.

Again, as in de Sitter space,
an alternative to this extension would be simply adding a boundary at $\mut=\mut_0$;
this is easy to do since this is a space-like hypersurface, see
Remark~\ref{rem:add-bdy}.

For the semiclassical problem, when $z$ is almost real
we need to increase the requirements on $Q_\sigma$.
As in the de Sitter setting, discussed in Subsection~\ref{subsec:complex-absorb-dS},
we need in addition, in the semiclassical notation, semiclassical ellipticity
near $\mut=\mut_0$, i.e.\ that
$p_{\semi,z}\pm\imath q_{\semi,z}$ are elliptic near $\pa X_{\delta}$, i.e.\ near $\mut=\mut_0$,
and which satisfies that $\pm q_{\semi,z}\geq 0$ on
$\Sigma_{\semi,\mp}$.
Following the general prescription of Subsection~\ref{subsec:Lorentz},
as well as the discussion of
Subsection~\ref{subsec:complex-absorb-dS},
this can be achieved by taking $Q_\sigma$ the (standard)
quantization of
\begin{equation}\label{eq:Kerr-absorb}
-\langle \xi\,dr+\sigma\frac{d\tau}{\tau}+\eta\,d\theta+\zeta\,d\phi,
\frac{d\tau}{\tau}\rangle_G \,
(\|\xi\,dr+\eta\,d\theta+\zeta\,d\phi\|_{\tilde
  H}^{2j}+\sigma^{2j}+C^{2j})^{1/2j}\chi(\mu),
\end{equation}
where $\tilde H$ is a {\em Riemannian} dual metric on $X$, $\chi\geq 0$ as in
Subsection~\ref{subsec:complex-absorb-dS} supported near
$\mut=\mut_0$,
$C>0$ is chosen suitably large, and the branch of the $2j$th
root as in Subsection~\ref{subsec:complex-absorb-dS}. One can again
combine $p$ with a Riemannian metric function $\|.\|^2_{\tilde H}$, to replace
$p$ by $\chi_1 p-\chi_2\hat p_{\semi,z}$, $\hat
p_{\semi,z}=(\|.\|^{2j}_{\tilde H}+z^{2j})^{1/j}$,
as in Subsection~\ref{subsec:complex-absorb-dS}.

We recall that in the proof of
Theorem~\ref{thm:classical-absorb-glued} one also need to arrange
semiclassical ellipticity (i.e.\ define an appropriate $Q'_\sigma$) for an appropriate
perturbation of $p_{\semi,z}$ at the trapped set, which
is in $X_+$; we now make this more explicit. This is only an issue if $z$ is real; otherwise the
Lorentzian nature of the metric means that $p_{\semi,z}$ itself is
elliptic, as discussed in Subsection~\ref{subsec:Lorentz}.
To achieve this, we want $q'_{\semi,z}$ elliptic
on the trapped set; since this is in $\Sigma_{\semi,\sgn{\re z}}$, we
need $q'_{\semi,z}\leq 0$ there. To do so, we simply add a microlocal absorbing term
$Q'_\sigma$ supported microlocally near the trapping with
$h^2Q_{h^{-1}z}$ having
semiclassical principal symbol $q'_{\semi,z}$. We {\em do not} need to
arrange that $Q'_\sigma$ is holomorphic in $\sigma$; thus simply
quantizing a $q'_{\semi,z}$ of compact support on $T^*X$ and with
smooth dependence on $z\in\RR\setminus \{0\}$ suffices.
Then with $Q_\sigma$ as above (so we do not change $Q_\sigma$)
$P_\sigma-\imath Q_\sigma$ is holomorphic, and its inverse is
meromorphic, with non-trapping large energy estimates in closed cones disjoint from $\RR$
in the upper half plane, corresponding to the semiclassical estimates
for non-real $z$ given in Theorem~\ref{thm:classical-absorb-sector}. To see that large energy estimates also hold for
$(P_\sigma-\imath Q_\sigma)^{-1}$ near
$\RR$, namely in $\im\sigma>-C$, one considers
$(P_\sigma-\imath(Q_\sigma+Q'_\sigma))^{-1}$, which enjoys such
estimates but is not holomorphic, and then use the semiclassical
resolvent gluing of Datchev-Vasy \cite{Datchev-Vasy:Gluing-prop}
together with the semiclassical normally hyperbolic trapping estimates
of Wunsch-Zworski \cite{Wunsch-Zworski:Resolvent}, to conclude the
same estimates for $(P_\sigma-\imath Q_\sigma)^{-1}$.
For $\alpha$ as in \eqref{eq:semicl-alpha-limit}, the dynamics
(away from the
radial points) has only the hyperbolic trapping (and for small
$\alpha$, it is normally hyperbolic);
however, our results apply more generally, as long as the dynamics has
the same non-trapping character (so $\alpha$ might be even larger as
\eqref{eq:semicl-alpha-limit} may not be optimal).
Note also that since the trapping is in a compact subset of $X_+=\{\mut>0\}$,
we arranged that the complex absorption $Q_\sigma+ Q'_\sigma$
is the sum of two terms: one supported
near the trapping in $X_+$, the other in $\mut<0$; this is useful for relating
our construction to that of Dyatlov \cite{Dyatlov:Quasi-normal} in the appendix.

This completes the setup. Now all of the results of Section~\ref{sec:microlocal}
are applicable, proving all the theorems stated in the introduction on Kerr-de Sitter
spaces, Theorems~\ref{thm:complete-absorb}-\ref{thm:exp-decay}. Namely,
Theorem~\ref{thm:complete-absorb} follows from
Theorem~\ref{thm:classical-absorb-strip},
Theorem~\ref{thm:spatial-absorb}
follows from Theorem~\ref{thm:classical-absorb-sector},
Theorem~\ref{thm:glued} follows from Theorem~\ref{thm:classical-absorb-glued}.
Finally Theorem~\ref{thm:exp-decay} is an immediate consequence of
Theorem~\ref{thm:glued}, the Mellin transform result,
Corollary~\ref{cor:Mellin-expand}, in the Kerr-de Sitter setting,
or the appropriately modified, as indicated in Remark~\ref{rem:Mellin-expand}, version of
Proposition~\ref{prop:Mellin-expand} for general b-perturbations
(so $\pa_{\tilde t}$ may no longer be Killing, and the space-time may no
longer be stationary), together with the fact that $d\mut$ is time-like
in $\mut<0$ (since $p_{\full}$, considered as a quadratic form,
evaluated at $\xi=1$, $\sigma=0$, $\zeta=0$, $\eta=0$, is positive
then) and
Proposition~\ref{prop:wave-local}.

%\vfill\eject

\appendix

\section{Comparison with cutoff resolvent constructions}

\begin{center}
{\sc By Semyon Dyatlov}\footnote{S.D.'s address is
Department of Mathematics, University of California, Berkeley, CA 94720-3840, USA,
and e-mail address is \texttt{dyatlov@math.berkeley.edu}.}

\end{center}
\vspace*{3mm}

In this appendix, we will first examine the relation of the resolvent
considered in the present paper to the cutoff resolvent for slowly
rotating Kerr--de Sitter metric constructed in~\cite{Dyatlov:Quasi-normal} using
separation of variables and complex contour deformation near the event
horizons. Then, we will show how to extract information on the
resolvent beyond event horizons from information about the cutoff
resolvent.

First of all, let us list some notation of~\cite{Dyatlov:Quasi-normal} along with its
analogues in the present paper:
\begin{center}
\begin{tabular}{| r | l | r | l |}
\hline
Present paper & \cite{Dyatlov:Quasi-normal} &
Present paper & \cite{Dyatlov:Quasi-normal}\\
\hline
$\alpha$ & $a$ & $r_s$ & $2M_0$\\
$\gamma$ & $\alpha$ & $\tilde\mu$ & $\Delta_r$\\
$\kappa$ & $\Delta_\theta$ & $F_\pm$ & $A_\pm$\\
$\tilde t,\tilde\phi$ & $t,\varphi$ &
$t,\phi$ & $t^*,\varphi^*$\\
$\omega$ & $\sigma$ & $e^{-i\sigma h(r)}P_\sigma e^{i\sigma h(r)}$ & $-P_g(\sigma)$\\
$X_+$ & $M$ & $K_\delta$ & $M_K$ \\
\hline
\end{tabular}
\end{center}
\medskip
The difference between $P_g(\omega)$ and $P_\sigma$ is due to the fact
that $P_g(\omega)$ was defined using Fourier transform in the $\tilde t$
variable and $P_\sigma$ is defined using Fourier transform in the
variable $t=\tilde t+h(r)$. We will henceforth use the notation of the
present paper.

We assume that $\delta>0$ is small and fixed, and $\alpha$ is small
depending on $\delta$. Define
$$
K_\delta=(r_-+\delta,r_+-\delta)_r\times \mathbb S^2.
$$
Then~\cite[Theorem~2]{Dyatlov:Quasi-normal} gives a family of operators
$$
R_g(\sigma):L^2(K_\delta)\to H^2(K_\delta)
$$
meromorphic in $\sigma\in \mathbb C$ and such that
$P_g(\sigma)R_g(\sigma)f=f$ on $K_\delta$ for each $f\in
L^2(K_\delta)$.

\begin{prop}
Assume that the complex absorbing operator $Q_\sigma$ satisfies the
assumptions of Section~\ref{subsec:Kerr-absorb} in the `classical' case and furthermore,
its Schwartz kernel is supported in $(X \setminus X_+)^2$. Let
$R_g(\sigma)$ be the operator constructed in~\cite{Dyatlov:Quasi-normal} and
$R(\sigma)=(P_\sigma-iQ_\sigma)^{-1}$ be the operator defined in
Theorem~1.2 of the present paper. Then for each $f\in
C_0^\infty(K_\delta)$,
\begin{equation}\label{ae:r-g-omega-eq}
-e^{i\sigma h(r)}R_g(\sigma)e^{-i\sigma h(r)}f=R(\sigma)f|_{K_\delta}.
\end{equation}
\end{prop}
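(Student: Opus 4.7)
The strategy is to show that both sides of \eqref{ae:r-g-omega-eq} satisfy the same equation on $K_\delta$ as meromorphic families in $\sigma$, and then to identify them in the physical half-plane via their common interpretation as the Mellin transform of a forward solution, with agreement throughout $\mathbb{C}$ then following by analytic continuation.

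First I would use the support hypothesis on $Q_\sigma$: since its Schwartz kernel is supported in $(X\setminus X_+)^2$ and $K_\delta\subset X_+$, setting $u(\sigma):=R(\sigma)f$ with $f\in C_0^\infty(K_\delta)$, the identity $(P_\sigma-iQ_\sigma)u=f$ on $X$ restricts on $K_\delta$ to $P_\sigma u=f$. Using the conjugation relation $e^{-i\sigma h(r)}P_\sigma e^{i\sigma h(r)}=-P_g(\sigma)$, the restriction $v(\sigma):=e^{-i\sigma h(r)}u|_{K_\delta}$ satisfies
\begin{equation*}
P_g(\sigma)v(\sigma)=-e^{-i\sigma h(r)}f\quad\text{on }K_\delta,
\end{equation*}
while the function $w(\sigma):=-R_g(\sigma)e^{-i\sigma h(r)}f$ satisfies the same equation on $K_\delta$ by definition of $R_g(\sigma)$ from \cite{Dyatlov:Quasi-normal}. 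Both $v$ and $w$ are meromorphic $\sigma$-families of distributions on $K_\delta$, so it suffices to prove $v(\sigma)=w(\sigma)$ on some non-empty open subset of $\mathbb{C}$.

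The natural such set is a region in the upper half-plane with $\im\sigma$ sufficiently large, where both resolvent families admit a common concrete interpretation. For $R(\sigma)$, this is supplied by Lemma~\ref{lemma:Mellin-expand} together with the discussion at the end of Subsection~\ref{subsec:Lorentz} and Proposition~\ref{prop:wave-local}: for $\im\sigma\gg 0$, $R(\sigma)f$ arises as the Mellin transform in $\tau$ of the unique forward solution to $\Box_g U=F$, where $F$ is a compactly supported extension whose Mellin transform recovers $f$; moreover, the restriction of this solution to $X_+$ is intrinsic to $X_+$ and independent of the particular choice of $Q_\sigma$ supported outside $X_+$, by Proposition~\ref{prop:wave-local}. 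For $R_g(\sigma)$, the construction in~\cite{Dyatlov:Quasi-normal} via complex contour deformation near the event horizons is set up precisely to agree, in the physical half-plane, with the Mellin transform of the same forward solution on $X_+$, up to the sign coming from $P_g(\sigma)=-e^{-i\sigma h(r)}P_\sigma e^{i\sigma h(r)}$ and the shift of boundary-defining function from $e^{-\tilde t}$ to $e^{-t}$ (which is exactly absorbed by the conjugation factor $e^{\pm i\sigma h(r)}$ in \eqref{ae:r-g-omega-eq}).

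The main obstacle will be carefully matching the two physical half-plane descriptions, since $R(\sigma)$ is defined on an artificially extended and complex-absorbed manifold while $R_g(\sigma)$ is defined via separation of variables and contour deformation on $K_\delta$ only; one must verify that both produce the same Mellin transform of the forward Cauchy solution on $X_+$ for $\im\sigma\gg 0$, taking care of the change of variables $t=\tilde t+h(r)$. Once this identification is in place, uniqueness of meromorphic continuation immediately gives $v=w$ as meromorphic families on $\mathbb{C}$, and rearranging yields \eqref{ae:r-g-omega-eq}.
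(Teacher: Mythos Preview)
Your overall strategy---show both sides satisfy the same equation on $K_\delta$, match them in the physical half-plane $\im\sigma\gg 0$, then invoke meromorphic continuation---is the same skeleton as the paper's proof. The difference is in how the matching step is carried out.

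You propose to identify each side separately with the Mellin transform of the forward Cauchy solution on $X_+$. For $R(\sigma)$ this is indeed supplied by the machinery of Section~\ref{sec:Mellin-Lorentz}. But for $R_g(\sigma)$ you simply assert that Dyatlov's construction ``is set up precisely to agree'' with this Mellin transform in the physical half-plane, and then flag this as ``the main obstacle'' without resolving it. That is exactly where the content lies: $R_g(\sigma)$ is built in~\cite{Dyatlov:Quasi-normal} via separation of variables and contour deformation, not via a forward problem, so the identification you want is not automatic and would itself require a uniqueness argument of the same strength as the one the paper uses.

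The paper's route is more direct and avoids this detour. After reducing to a fixed angular mode $f\in\mathcal D'_k$ (a step you omit, but which is needed to invoke~\cite[Theorem~1]{Dyatlov:Quasi-normal}), it uses Dyatlov's Theorems~1 and~3 to extend the $R_g$ side to all of $X_+$ as a function \emph{smooth up to the event horizons}. The $R(\sigma)$ side has the same property. Their difference $u$ then solves $P_\sigma u=0$ on $X_+$ and is smooth up to the horizons; passing to $\tilde u(t,\cdot)=e^{-it\sigma}u$ for $\im\sigma>C_e$, one gets a smooth solution of $\Box_g\tilde u=0$, and~\cite[Proposition~1.1]{Dyatlov:Quasi-normal} (a growth bound for such solutions) forces $u=0$. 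This replaces your ``both are the Mellin transform of the forward solution'' with a single uniqueness statement for smooth homogeneous solutions, which is what the cited results from~\cite{Dyatlov:Quasi-normal} supply directly.
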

\begin{proof} 
The proof follows~\cite[Proposition~1.2]{Dyatlov:Quasi-normal}. Denote by $u_1$ the
left-hand side of~\eqref{ae:r-g-omega-eq} and by $u_2$ the right-hand
side.  Without loss of generality, we may assume that $f$ lies in the
kernel $\mathcal D'_k$ of the operator $D_\phi-k$, for some $k\in
\mathbb Z$; in this case, by~\cite[Theorem~1]{Dyatlov:Quasi-normal}, $u_1$ can be
extended to the whole $X_+$ and solves the equation $P_\sigma u_1=f$
there. Moreover, by~\cite[Theorem~3]{Dyatlov:Quasi-normal}, $u_1$ is smooth up to the
event horizons $\{r=r_\pm\}$. Same is true for $u_2$; therefore, the
difference $u=u_1-u_2$ solves the equation $P_\sigma(u)=0$ and is
smooth up to the event horizons.

Since both sides of~\eqref{ae:r-g-omega-eq} are meromorphic, we may
further assume that $\Imag\sigma>C_e$, where $C_e$ is a large
constant.  Now, the function $\tilde u(t,\cdot)=e^{-it\sigma}u(\cdot)$
solves the wave equation $\Box_g \tilde u=0$ and is smooth up to the
event horizons in the coordinate system $(t,r,\theta,\phi)$;
therefore, if $C_e$ is large enough, by~\cite[Proposition~1.1]{Dyatlov:Quasi-normal}
$\tilde u$ cannot grow faster than $\exp(C_e t)$.  Therefore, $u=0$ as
required.
\end{proof}
Now, we show how to express the resolvent $R(\sigma)$ on the whole
space in terms of the cutoff resolvent $R_g(\sigma)$ and the
nontrapping parametrix constructed in the present paper.  Let
$Q_\sigma$ be as above, but with the additional assumption of
semiclassical ellipticity near $\partial X_\delta$, and $Q'_\sigma$ be
an operator satisfying the assumptions of Section~\ref{subsec:Kerr-absorb} in the
`semiclassical' case on the trapped set. Moreover, we require that the semiclassical
wavefront set of $|\sigma|^{-2}Q'_\sigma$ be compact and
$Q'_\sigma=\chi Q'_\sigma=Q'_\sigma\chi$, where $\chi\in
C_0^\infty(K_\delta)$.  Such operators exist for $\alpha$ small
enough, as the trapped set is compact and located $O(\alpha)$ close to
the photon sphere $\{r=3r_s/2\}$ and thus is far from the event
horizons.  Denote $R'(\sigma)=(P_\sigma-iQ_\sigma-iQ'_\sigma)^{-1}$;
by Theorem~\ref{thm:classical-absorb-strip} applied in the case of Section~\ref{subsec:Kerr-absorb}, for each $C_0$
there exists a constant $\sigma_0$ such that for $s$ large enough,
$\Imag\sigma>-C_0$, and $|\Realp\sigma|>\sigma_0$,
$$
\|R'(\sigma)\|_{H_{|\sigma|^{-1}}^{s-1}\to H_{|\sigma|^{-1}}^s}\leq C|\sigma|^{-1}.
$$
We now use the identity
\begin{equation}\label{ae:gluing-identity}
R(\sigma)=R'(\sigma)-R'(\sigma)(iQ'_\sigma+Q'_\sigma(\chi R(\sigma)\chi)Q'_\sigma) R'(\sigma).
\end{equation}
(To verify it, multiply both sides of the equation by
$P_\sigma-iQ_\sigma-iQ'_\sigma$ on the left and on the right.) 
Combining~\eqref{ae:gluing-identity} with the fact that for each $N$,
$Q'_\sigma$ is bounded $H^{-N}_{|\sigma|^{-1}}\to H^N_{|\sigma|^{-1}}$
with norm $O(|\sigma|^2)$, we get for $\sigma$ not a pole of $\chi
R(\sigma)\chi$,
\begin{equation}\label{ae:gluing-estimate}
\|R(\sigma)\|_{H_{|\sigma|^{-1}}^{s-1}\to H_{|\sigma|^{-1}}^s}\leq C(1+|\sigma|^2\|\chi R(\sigma)\chi\|_{L^2(K_\delta)\to L^2(K_\delta)}).
\end{equation}
Also, if $\sigma_0$ is a pole of $R(\sigma)$ of algebraic multiplicity
$j$, then we can multiply the identity~\eqref{ae:gluing-identity} by
$(\sigma-\sigma_0)^j$ to get an estimate similar
to~\eqref{ae:gluing-estimate} on the function $(\sigma-\sigma_0)^j
R(\sigma)$, holomorphic at $\sigma=\sigma_0$.

The discussion above in particular implies that the cutoff resolvent
estimates of~\cite{Bony-Haefner:Decay} also hold for the resolvent $R(\sigma)$. Using
the Mellin transform, we see that the resonance expansion
of~\cite{Bony-Haefner:Decay} is valid for any solution $u$ to the forward time Cauchy
problem for the wave equation on the whole $M_\delta$, with initial
data in a high enough Sobolev class; the terms of the expansion are
defined and the remainder is estimated on the whole $M_\delta$ as
well.

\section*{Acknowledgments}
A.V.\ is very grateful to Maciej Zworski, Richard Melrose, Semyon
Dyatlov, Mihalis Dafermos,
Gunther Uhlmann, Jared
Wunsch, Rafe Mazzeo,
Kiril Datchev, Colin Guillarmou and Dean Baskin for very helpful discussions,
for their enthusiasm for this project and for carefully reading parts
of this manuscript. Special thanks are due to Semyon
Dyatlov in this regard who noticed an incomplete argument in an
earlier version of this paper in holomorphicity considerations, and to
Mihalis Dafermos, who urged the author to supply details to the
argument at the end of Section~\ref{sec:Kerr}, which resulted in the
addition of Subsection~\ref{subsec:local-wave}, as well as the part of
Subsection~\ref{subsec:Lorentz} covering the complex absorption, to the main body of
the argument, and that of Subsection~\ref{subsec:stability} for
stability considerations.

A.V.\ gratefully
  acknowledges partial support from the National Science Foundation under grants number  
  DMS-0801226 and DMS-1068742 and from a Chambers Fellowship at Stanford University, as well
as the hospitality of Mathematical Sciences Research Institute in
Berkeley. S.D.\ is grateful
for partial support from
the National Science Foundation under grant number DMS-0654436.

\def\cprime{$'$} \def\cprime{$'$}

\end{document}